\documentclass[11pt,letterpaper]{amsart}
\usepackage{xcolor}
\usepackage{amsmath} 
\usepackage{amsthm}
\usepackage{amssymb}
\makeatletter 
\usepackage{tikz-cd}
\usepackage{tikz,float,caption}
\usetikzlibrary{arrows.meta}
\usetikzlibrary{decorations.pathmorphing}
\usetikzlibrary{decorations.pathreplacing}
\usetikzlibrary{patterns} 
\usetikzlibrary{arrows.meta,calc,decorations.markings,patterns,cd,patterns.meta}

\usepackage[bb=ams,scr=euler]{mathalfa}

\usepackage{bbding} 

\usepackage[colorlinks=true,linkcolor=blue, citecolor=blue]{hyperref}
\usepackage{tikz,float,caption}
\usetikzlibrary{arrows.meta,calc,decorations.markings,patterns,cd,patterns.meta}

\usepackage{enumitem}
\setenumerate{
  label=(\roman*),
}

\newtheorem{theorem}{Theorem}[section]

\newtheorem{corollary}[theorem]{Corollary}

\newtheorem{prop}[theorem]{Proposition}
\newtheorem{lemma}[theorem]{Lemma}
\newtheorem{definition}[theorem]{Definition}
\newtheorem{remark}[theorem]{Remark}

\newtheorem{question}[theorem]{Question}

\newtheorem{alphatheorem}{Theorem}

\newcommand{\abs}[1]{\left|#1\right|}

\renewcommand{\d}{\mathrm{d}}
\renewcommand{\H}{\mathbb{H}}
\newcommand{\A}{\mathbb{A}}
\newcommand{\id}{\mathrm{id}}
\newcommand{\Fl}{\mathcal{F}}

\newcommand{\kh}{\mathrm{kh}}

\newcommand{\FD}{\mathrm{FD}}
\newcommand{\eps}{\epsilon}

\newcommand{\pr}{\mathrm{pr}}
\newcommand{\R}{\mathbb{R}}
\newcommand{\D}{\mathbb{D}}
\newcommand{\C}{\mathbb{C}}

\newcommand{\la}{\lambda}

\def\@secnumfont{\bfseries}
\renewcommand\section{\@startsection{section}{1}{0pt}{-3.5ex \@plus -1ex \@minus -.2ex}{2.3ex \@plus.2ex}{\centering\itshape}}

\renewcommand{\subsection}{\@startsection{subsection}{2}%
  \z@{.5\linespacing\@plus.7\linespacing}{-.5em}%
  {\normalfont\itshape}}

\renewcommand{\subsubsection}{\@startsection{subsubsection}{3}%
  \z@{.5\linespacing\@plus.7\linespacing}{-.5em}%
  {\normalfont\itshape}}

\renewcommand{\u}[1]{\underline{#1}}

\newcommand{\Z}{\mathbb{Z}}

\renewcommand{\t}[1]{\widetilde{#1}}

\newcommand{\Tr}{\mathrm{T}}
\newcommand{\m}{\mathrm{m}}
\newcommand{\p}{\mathrm{p}}
\newcommand{\pss}{\mathrm{PSS}}
\newcommand{\Ham}{\mathrm{Ham}}
\newcommand{\Symp}{\mathrm{Symp}}

\newcommand{\CF}{\mathrm{CF}}
\newcommand{\HF}{\mathrm{HF}}
\newcommand{\QH}{\mathrm{QH}}
\renewcommand{\H}{\mathrm{H}}
\renewcommand{\QH}{\mathrm{QH}}
\newcommand{\fix}{\mathrm{Fix}}
\newcommand{\Hof}{\mathrm{Hof}}
\newcommand{\Fix}{\mathrm{Fix}}

\newcommand{\K}{\mathbb{K}}
\newcommand{\om}{\omega}

\title[]{Invariant sets and spectral rigidity of Hamiltonian diffeomorphisms}
\author{Habib Alizadeh and Egor Shelukhin}
\makeatother

\begin{document}
\begin{abstract}
We solve a well-known question of Polterovich from 2002 regarding the rigidity of Hamiltonian diffeomorphisms in Hofer's metric in dimension two: every non-trivial Hamiltonian diffeomorphism of a surface of genus at least one has all positive iterations separated from the identity in Hofer's metric. In higher dimensions, we make progress on the case of autonomous Hamiltonian flows: we prove Hofer non-recurrence for symplectically hyperbolic manifolds, and completely characterize Hofer recurrence for functions of the moment polytope on monotone toric manifolds. These results apply equally well to Viterbo's spectral metric, settling cases of the $\gamma$-rigidity conjecture, and therefore in many cases also to the $C^0$-metric.

Our approach relies on the philosophy that symplectically visible invariant sets govern symplectic non-recurrence phenomena. In dimension two, we use invariant annuli provided by low-dimensional dynamics, and apply methods of $C^0$ symplectic topology to approximately invariant Lagrangian submanifolds. For symplectically hyperbolic manifolds, the invariant sets are provided by sublevel and superlevel sets of autonomous Hamiltonians, while in the toric case, they comprise Lagrangian torus fibers. Across the arguments, we use quantitative Lagrangian Floer theory and its relation to notions of classical dynamics. \end{abstract}

\maketitle
\setcounter{tocdepth}{1} 

\tableofcontents

\section{Introduction}\phantomsection\label{sec:introduction}
Invariant sets of diffeomorphisms have been an important notion in classical dynamics, hyperbolic dynamics in particular. For instance, due to a celebrated result of Katok \cite{katok-89}, \cite[Suppl. S.]{KH-book}, hyperbolic invariant sets capture the topological entropy of surface diffeomorphisms. Recently the reflection of such sets in Floer theory was studied in \cite{cgg-barcode-entropy, cgg-generic-behaviour, cgg-growth-of-floer-barcode} among other works. In this paper, we study the implications of invariant sets, which are not in general hyperbolic to elliptic dynamics, understood from the metric perspective in symplectic topology. 

Namely, we study the following {\em rigidity} question regarding Hamiltonian diffeomorphisms of symplectic manifolds. Given a non-trivial Hamiltonian diffeomorphism $\phi$ of a symplectic manifold $M,$ when can the iterations $\phi^k$ of $\phi$ have an accumulation point at the identity? In other words, according to different terminologies, when is $\phi$ {\em recurrent} or {\em rigid} or an {\em approximate identity}? Of course this question depends on the choice of topology on the group $\Ham(M,\om)$ of Hamiltonian diffeomorphisms. Interesting examples of recurrent mappings were extensively studied in the literature: see \cite{gg-aid} for a discussion and \cite{leroux-seyfaddini, joksimovic-seyfaddini, gg-pseudo-rotations, Bramham-15} for investigations in symplectic topology. Our main interest in this paper is showing that recurrence in symplectic dynamics is generally scarce.

A well-known question of Polterovich going back to \cite{Polterovich-02} and motivated by the observation that iterations of a given $\phi$ in $\Ham(M,\om)$ cannot {\em converge} to the identity in any non-degenerate norm $\nu$ as, for all $k\in\Z,$ \[\max\{\nu(\phi^k),\nu(\phi^{k+1})\} \geq \frac{1}{2}\nu(\phi),\] asserts that on a closed {\em symplectically aspherical} symplectic manifold, namely $\langle [\om],A\rangle = 0, \langle c_1(TM),A\rangle = 0$ for all classes $A \in \pi_2(M),$ the iterations $\phi^k$ must be separated from the identity in Hofer's celebrated metric \cite{hofer-metric,polterovich-lagrangian,lalonde-mcduff}, see also \cite{polterovich-book}. We recall that this metric is defined as $d_{\Hof}(\phi,\psi) = ||\psi \phi^{-1}||_{\Hof}$ for the Hofer norm given by \[||\theta||_{\Hof} = \inf_{\phi^1_H = \theta} \int_0^1 (\max_M H_t - \min_M H_t )\,dt,\] where $H \in C^\infty(\R/\Z \times M, \R)$ is a time-dependent Hamiltonian and $(\phi^t_H)_{t \in [0,1]}$ is the flow of the vector field $X^t_H$ satisfying $i_{X^t_H} \om = - dH_t,$ $H_t(x) = H(t,x)$ for all $(t,x) \in \R/\Z \times M.$ 
Note that recurrence is a typical feature of maps which can be inscribed in an action of a compact group. For symplectically aspherical manifolds, Polterovich proved \cite{Polterovich-02} that there are no non-trivial actions of compact Lie groups by Hamiltonian diffeomorphisms, whence by the smooth Hilbert-Smith conjecture \cite{bochner-montgomery} there are no continuous such compact group actions either. This provides additional motivation for the above question.

The question also makes sense for the spectral metric \cite{MR1157321, schwarz-00, MR2103018, oh-duke}, discussed in \S\ref{sec:spectral_invariants}, a Floer-theoretic lower bound for Hofer's metric, which depends on a choice of coefficients for Floer theory, and for various metrics of a more classical nature: the $C^0$-metric for example. We note that it follows from \cite{Polterovich-02,Sugimoto-23, gg-aid} that the same question has a rather quick solution in the $C^1$ metric, and in dimension two classical dynamics provides a solution in the $C^0$ metric \cite{kp-recurrent}. However, Polterovich's question and its variant for the spectral metric posed by \c{C}ineli-Ginzburg-G\"urel \cite{cgg-generic-behaviour}, known also as the {\em $\gamma$-rigidity conjecture}, have hitherto eluded a complete resolution even in dimension two. 

Partial results were obtained in \cite{cgg-generic-behaviour} proving the conjecture for the spectral norm for Hamiltonian diffeomorphisms belonging to a $C^{\infty}$-open and dense set and in \cite{SW-unpublished}, see also \cite{S-HS}, it was shown to hold along {\em prime} iterations for all closed manifolds $(M,\om)$ with $\langle [\om],A\rangle \langle c_1(TM),A\rangle \leq 0$ for all $A \in \pi_2(M),$ where the norm is Hofer's norm, or spectral norm over the integers. Furthermore, for surfaces of genus at least one, non-recurrence of {\em strongly non-degenerate} Hamiltonian diffeomorphisms is known for the spectral and Hofer norms \cite[Cor. 3.4]{cgg-generic-behaviour}. However, none of the methods so far appears to  generalize naturally to a resolution of the full question, even for symplectically aspherical manifolds.

We make new progress on this question in dimension $2$ as well as in higher dimensions.

\subsection*{Surfaces} Our first main, and most definitive, result is to resolve Polterovich's question, and in its version for the spectral norm with arbitrary coefficients, in dimension two.   

\begin{alphatheorem}\phantomsection\label{thm:main_theorem_introduction}
Let $(\Sigma,\sigma)$ be a closed symplectic surface of genus at least one. For every Hamiltonian diffeomorphism $\phi \in \Ham(\Sigma,\sigma),$ $\phi \neq \id$ there exists $\delta > 0$ such that \[\gamma(\phi^k) \geq \delta\] for all integers $k \geq 1.$ In particular $||\phi^k||_{\Hof} \geq \delta$ for all $k \geq 1.$
\end{alphatheorem}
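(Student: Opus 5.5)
We argue by contradiction: assume there is a sequence $k_j \to \infty$ with $\gamma(\phi^{k_j}) \to 0$. The abstract's philosophy suggests locating an invariant set of $\phi$ that is visible to Lagrangian Floer theory, and showing that near-identity behaviour in $\gamma$ forces $\phi$ to fix this set in a rigid way. We then exploit $\phi \neq \id$.

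\textbf{Step 1: an invariant annulus.} From low-dimensional dynamics we extract an open $\phi$-invariant annulus $A \subset \Sigma$ that is essential or otherwise of controlled area, on which $\phi$ is not the identity. Since $\phi \neq \id$, there is a point $x$ with $\phi(x) \neq x$. Consider the rotation behaviour of $\phi$ relative to $\mathrm{Fix}(\phi)$, via a maximal isotopy and the Le Calvez transverse foliation. Either $\phi$ has a non-contractible or positive-entropy behaviour, or all orbits are "rotation-like". In the former case, non-recurrence already follows in $C^0$ by \cite{kp-recurrent} or entropy arguments; these must then be promoted to $\gamma$, which would require care. In the recurrent case, the Franks--Le Calvez theory of area-preserving, Hamiltonian-type homeomorphisms of surfaces gives invariant annuli $A$ foliated by approximately invariant essential circles. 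Moreover, the rotation numbers on the two boundary components of $A$ differ, since $\phi\neq\id$ near $x$.

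\textbf{Step 2: Lagrangian circles and spectral estimates.} Inside $A$ we choose two disjoint embedded circles $L_0, L_1$ that are non-displaceable, or at least have non-vanishing Lagrangian Floer homology. This is possible for non-contractible circles in genus $\geq 1$, or for contractible circles bounding suitable area. We then use the bound, stated earlier in the paper via quantitative Floer theory, that $\gamma(\phi^k)$ controls the Lagrangian spectral norm $\gamma(L,\phi^k(L))$ from above. So $\gamma(\phi^{k_j}) \to 0$ forces $\phi^{k_j}(L)$ to converge to $L$ in the Lagrangian spectral metric. The $C^0$-rigidity results (of the type of $\gamma$-convergence implying Hausdorff convergence for curves on surfaces) then give $\phi^{k_j}(L_i) \to L_i$ in the Hausdorff sense.

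\textbf{Step 3: contradiction via rotation.} Since the rotation numbers on $A$ are non-constant, pick circles whose rotation numbers differ by $\rho$. A twist by $k\rho$ mod $1$ should produce a spectral gap. The most direct route is to compute $\ell(L_0,\phi^{k}) - \ell(L_1,\phi^{k})$, the difference of Lagrangian spectral invariants, as essentially $k$ times the flux/action difference between the circles modulo the period lattice. For $\gamma(\phi^{k})$ small, this difference must be close to a fixed value, for instance the difference obtained at $k=0$. Rather than arguing through a subsequence, we take $\delta$ as an explicit margin, namely the distance of the rotational action defect from the lattice, using the quasi-additivity of spectral invariants along iterations.

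\textbf{Main obstacle.} The main obstacle is the non-smooth, merely approximately invariant nature of the annulus boundaries. The circles $L_i$ are only $C^0$-approximately invariant, so Floer theory must be applied to $C^0$-perturbations. This requires a $C^0$ continuity estimate for Lagrangian spectral invariants under Hamiltonian isotopies supported near the circles, with error uniform in $k$. A second difficulty is ensuring that the case of genus $\geq 1$ excludes the sphere-like phenomena (irrational pseudo-rotations) where the conclusion fails. I would first attempt this by using non-contractible circles, whose Floer theory over the universal cover carries no bubbling.
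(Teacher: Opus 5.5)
Your outline has the right shape: an invariant annulus, two circles, and an action difference that grows with $k$. But the steps that actually carry the proof are either missing or fail as stated.

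\textbf{Steps 1 and 2.} Step 1 is not an argument. $C^0$ non-recurrence cannot be promoted to a bound on $\gamma$, because small $\gamma$ does not force $C^0$-smallness. So positive entropy has to be handled inside Floer theory. The paper uses hyperbolic periodic points, whose Floer cylinders have energy bounded below independently of $k$. This gives a bar of length $\geq\epsilon$ in the barcode of every $\phi^k$, hence $\gamma(\phi^k)\geq\beta(\phi^k)\geq\epsilon$. In zero entropy, an invariant annulus with non-trivial rotation interval is a deep external input (Guih\'eneuf--Le Calvez--Passeggi--Tal); ``$\phi\neq\id$ near $x$'' does not produce it.

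Step 2 fails in the inessential case, which is the heart of the matter. A contractible circle in genus $\geq 1$ bounds a single Maslov-two disk, so its Floer homology vanishes and there is no Lagrangian spectral norm to compare with. Moreover, $\gamma$-closeness of curves does not give Hausdorff closeness (think of thin fingers of small area). What replaces it is Floer homology in an action window shorter than the area $\hbar$ of the disk. This is non-zero in degree one as soon as $\gamma(\psi)<\hbar/2$, by the double module action $\mathrm{m}(\mathrm{m}(-\otimes\alpha)\otimes\alpha')=\mathrm{m}(-\otimes\mathrm{pp}(\alpha\otimes\alpha'))$ together with the bound that boundary depth is at most $\gamma(\psi)$.

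The essential case does not use Lagrangian circles at all. One lifts $\phi^k$ to large finite covers (residual finiteness of surface groups), where it displaces many fundamental domains of the lifted annulus. Combining $\gamma(\widetilde{\phi}_n)\leq\gamma(\phi)$ with energy--capacity then gives $\gamma(\phi^k)\to\infty$.

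\textbf{Step 3.} Step 3 lacks the mechanism that makes the difference grow. In the paper the two circles are the boundary components $L,L'$ of the annulus. They are exactly invariant, each carries a contractible fixed point, and there are no fixed points in the interior. Then:
\begin{itemize}
\item The index-one capped chord on each circle is unique, since the index fixes the capping; so there is no lattice ambiguity.
\item Its action is $\mu\hbar/2+\mathcal{A}_H(q)$.
\item The difference $\mathrm{D}(\phi)$ of these two actions equals $\int_A\rho_{\widetilde{\phi}}\,\omega+(\hbar'-\hbar)(\mu/2-\rho_{\widetilde{\phi}}(L))$.
\item Hence $\mathrm{D}(\phi^k)=k\mathrm{D}(\phi)$, and $\mathrm{D}(\phi)\neq 0$: otherwise some lift has zero mean rotation number and Franks' theorem gives an interior fixed point.
\end{itemize}
So what you need is a fixed-point-free annulus, obtained from the Franks--Handel structure theorem after passing to iterates, with fixed points on its prime-end boundary. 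The ``distance from a lattice'' margin you propose is the mechanism of the toric case, not this one.

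\textbf{The $C^0$ obstacle.} A continuity estimate with error uniform in $k$ is neither available nor needed. The paper instead:
\begin{itemize}
\item constructs $\psi_p$ with $\gamma(\psi_p)\to 0$ such that $\psi_p^{-1}\phi$ exactly preserves smooth sub-annuli $A_p$ exhausting $A$, using prime ends together with Khanevsky-type and Entov--Polterovich--Py fragmentation arguments;
\item perturbs once more to $\phi'_p=\psi'_p\psi_p^{-1}\phi$ to create contractible fixed points on $\partial A_p$;
\item shows, via continuity of the rotation function, that the iteration threshold beyond which $\gamma((\phi'_p)^n)\geq\hbar_p/2-\delta/2$ is uniform in $p$;
\item applies $\gamma((\phi'_p)^n)\leq n(\gamma(\psi_p)+\gamma(\psi'_p))+\gamma(\phi^n)$ with $n$ fixed and $p\to\infty$.
\end{itemize}
Because $n$ is fixed before $p\to\infty$, errors growing linearly in $n$ do no harm.
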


The proof of this theorem appears in \S\ref{sec:proof_main_theorem}.

\begin{remark}
A similar result holds for non-identity compactly supported Hamiltonian diffeomorphisms of compact surfaces with boundary except the disk. This is an immediate consequence of Theorem \ref{thm:main_theorem_introduction} and a phenomenon called locality of spectral invariants that holds in particular for Liouville domains with contact type and incompressible boundary inside aspherical symplectic manifolds \cite{MR4587314}; see also \cite{MR3548467, alizadeh-shelukhin-locality-26}.
\end{remark}

The remaining case in dimension $2$ for Polterovich's question is the sphere $S^{2}$. First examples of Hamiltonian diffeomorphisms $\phi$ for which $\gamma(\phi^{n_{k}})$ accumulates at zero for some sequence $\{n_{k}\}$ are given by rational rotations. Moreover, irrational rotations and in fact all pseudo-rotations are known to satisfy $\liminf_{k\geq 1}\gamma(\phi^{k}) = 0$; see \cite{gg-pseudo-rotations}. If $h_{top}(\phi)>0$ then one could use the hyperbolic invariant sets provided by Katok \cite{katok-89} to bound $\gamma(\phi^{k})$ from below uniformly in $k$; see \cite{cgg-generic-behaviour}. Therefore, the remaining cases are the Hamiltonian diffeomorphisms of $S^{2}$ that are not torsion, have at least $3$ periodic points and $h_{top}(\phi) = 0$. This is exactly the setting where the structure theorem of Franks and Handel \cite{franks-handel-zero-ent}, see also \cite{LeCalvez-Tal-forcing-theory-18}, apply and one can find many $\phi$-invariant open topological annuli. These annuli can possibly be used to uniformly bound $\gamma(\phi^{k})$ in $k$; see \S\ref{sec:proof_main_theorem} where the proofs are based on this idea and are successfully applied in the case of higher genus surfaces; such invariant annuli in higher genus surfaces are provided by Theorem \ref{thm:up_coming_work_of_LCT}. However, our methods in \S\ref{sec:proof_main_theorem} do not seem to apply directly to the case of the sphere. Therefore it would be interesting to investigate the following question:

\begin{question}
Suppose $\phi\in \Ham(S^{2})$ is not torsion, has at least $3$ periodic points and $h_{top}(\phi) = 0$. Does there exist a constant $\delta>0$ so that $\gamma(\phi^{k}) \geq \delta$ for all $k\geq 1$ ? 
\end{question}

\subsection*{Higher dimensions}

We also make new progress towards a resolution of the  question of Polterovich regarding Hofer's metric and of the $\gamma$-rigidity conjecture in higher dimensions, where we primarily work with autonomous Hamiltonian flows. 

To state our first result, we recall that a closed symplectic manifold $(M,\om)$ is called {\em symplectically hyperbolic}, a notion first introduced by Gromov in K\"ahler geometry \cite{gromov-kahlerhyp} and then studied by Polterovich \cite{Polterovich-02} in symplectic topology for dynamical purposes, if the lift $\widetilde{\om}$ of $\om$ to the universal cover $\t{M}$ of $M$ is exact and has a primitive $\la,$ $\widetilde{\om} = d\t{\la},$ whose norm $||\t{\la}||$ with respect to a metric $\t{g}$ lifted from a Riemannian metric $g$ on $M$ is a bounded function on $\t{M}$; see \S\ref{sec:autonomous_ham_higher_dim} for more details. For example, products of Riemann surfaces of genera at least two are symplectically hyperbolic.

In Section \ref{sec:autonomous_ham_higher_dim} we prove the following result as Theorem \ref{mainthm:auto_on_symp_hyperb}.

\begin{alphatheorem}\phantomsection\label{thm: hyp}
Let $(M,\om)$ be a closed symplectically hyperbolic manifold and $H \in C^\infty(M,\R)$ a non-constant function. Then there exists $\delta>0$ such that \[\gamma(\phi^t_H) \geq \delta \] for all $t \geq 1,$ where $\gamma$ is the spectral norm taken with arbitrary coefficients. In particular, $||\phi^t_H ||_{\Hof} \geq \delta$ for all $t \geq 1.$      
\end{alphatheorem}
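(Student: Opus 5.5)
The plan is to follow the abstract's philosophy: the invariant sets are sublevel and superlevel sets of $H$, and symplectic hyperbolicity is what makes them visible to Floer theory. Without loss of generality $\max H > 0 > \min H$ after adding a constant (this does not change $\phi^t_H$). Choose regular values $a<b$ of $H$ with $\min H < a < b < \max H$. Let $U = \{H<a\}$ and $V=\{H>b\}$; both are nonempty open sets invariant under the flow $\phi^t_H$ for all $t$.

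The key intermediate claim is a lower bound for the spectral norm of any Hamiltonian diffeomorphism that preserves such a set. If $\psi$ preserves $U$ and contains a small displaceable ball $B \subset U$, one would like to deduce $\gamma(\psi) \geq c(B)>0$. However, that statement is false for $\psi=\id$, so the bound must use that $\phi^t_H$ genuinely moves points. We therefore use the flow itself. Consider an autonomous cutoff $F = f\circ H$, where $f$ is monotone and constant outside $[a,b]$. Then $\phi^t_H$ commutes with $\phi^s_F$.

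On a symplectically hyperbolic manifold, the action functional lifted to $\widetilde M$ is controlled by the bounded primitive $\widetilde\lambda$. Its key consequence (Polterovich) is that contractible periodic orbits of $sH$ of long period have action close to $s$ times the critical values of $H$, up to an error linear in the length of the orbit. Since orbits of $H$ have length at most $C\,s\,\max|X_H|$, the action is $s H(x) + O(s)$. This is too weak naively, so we use the standard argument: the spectral invariants $c(\phi^t_H)$ grow linearly in $t$. Precisely, by the hyperbolicity estimate of Polterovich/Gürel, one has $c([pt],\phi^t_H) \geq t(\max H) - C$ and $c([M],\phi^t_H)\leq t \min H + C$, for a constant $C$ independent of $t$. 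Note the orientation: $\gamma$ measures $c([M])-c([pt])$ up to sign conventions. This yields $\gamma(\phi^t_H)\geq t(\max H-\min H) - 2C$, which handles all large $t$, say $t\geq T_0$.

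For $t\in[1,T_0]$ compactness suffices. The map $t\mapsto\gamma(\phi^t_H)$ is continuous, and it is nonzero for each such $t$. Indeed, $\gamma(\psi)=0$ forces $\psi=\id$ by nondegeneracy of the spectral norm. Moreover, a nonconstant autonomous flow on a symplectically aspherical manifold cannot return to the identity at any time $t>0$. This holds because otherwise the flow would give a circle action, and Polterovich's theorem excludes nontrivial compact group actions on such manifolds. Hence the minimum over $[1,T_0]$ is positive, and $\delta$ is the smaller of the two bounds.

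The main obstacle is the uniform linear estimate on spectral invariants with $C$ independent of $t$ and valid for arbitrary coefficients. To prove it, I would first treat a $C^2$-small nondegenerate perturbation of $H$ that is Morse, and compute actions of the capped 1-periodic orbits of $tH$ on $\widetilde M$. Constant orbits at critical points have action exactly $t\,H(x)$. For nonconstant contractible orbits, the bounded primitive gives $|\mathcal A - t\,\overline{H}| \leq \|\widetilde\lambda\|_\infty\cdot\mathrm{length}$. I expect the length bound needed here to be the delicate point; if it fails, I would instead localize to the invariant sets $U, V$ and use the energy–capacity inequality there.
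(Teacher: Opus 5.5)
There is a genuine gap. Your large-$t$ case rests on the estimate $\gamma(\phi^t_H)\geq t(\max H-\min H)-2C$ with $C$ independent of $t$, and this estimate is false. Take a closed surface of genus $\geq 2$, which is symplectically hyperbolic, and let $H$ be a bump function supported in a small disk $D$. Every $\phi^t_H$ is supported in $D$, so the energy--capacity inequality (Remark~\ref{rem:spec_cap_of_disks_in_high_genus_surf}) gives $\gamma(\phi^t_H)\leq \mathrm{area}(D)$ for all $t$, while $t(\max H-\min H)\to\infty$.

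The failure is exactly the point you flagged. For a nonconstant contractible $1$-periodic orbit of $tH$ through $x$, the bounded primitive only gives $|\mathcal{A}-tH(x)|\leq C_M\,t\max_M|X_H|$. That error is of the same order as the main term, and nothing forces the spectral invariants to be carried by constant orbits. Your compactness argument on $[1,T_0]$ is correct, but it only helps once a bound for large $t$ exists. The paragraph with $F=f\circ H$ is never used.

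The missing idea is to apply the isoperimetric estimate only where $X_H$ is small, and to prove a uniform lower bound (not growth) by contradiction.

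\begin{enumerate}
\item Normalize $\max H=-\min H=m$ and suppose $\gamma(\phi^{n_k}_H)\to 0$ with $n_k\to\infty$. Since $\gamma(\phi^{n_k}_H)=c([M],n_kH)+c([M],-n_kH)$, you may pass to a subsequence and possibly replace $H$ by $-H$ to get $c([\mathrm{pt}],n_kH)\leq c([M],n_kH)\leq c_0$.
\item On $\Sigma_\delta=H^{-1}((m-\delta,m])$ the field $X_H$ is uniformly small. Hence every contractible $T$-periodic orbit in $\Sigma_\delta$ has action at least $T(m-\epsilon(\delta))$ with $\epsilon(\delta)\to 0$, far above $c_0$ when $T=n_k$ is large.
\item By spectrality and continuity, replace $n_kH$ by $n_k\,\theta\circ H$, with $\theta$ constant near $m$. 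Then compose with a small radial Hamiltonian $F$ supported in a ball $B\subset\Sigma_{\delta/2}$ that displaces a smaller ball. Neither step changes $c([M],\cdot)$ or $c([\mathrm{pt}],\cdot)$, because every orbit created or moved has action near $n_k(m-\delta/2)$ up to a bounded error.
\item Using $c([M],-G)=-c([\mathrm{pt}],G)$, this gives $\gamma(\phi^{n_k}_H)=\gamma\big((n_k\,\theta\circ H)\#F\big)$. The energy--capacity inequality bounds the right side below by a constant depending only on $B$, a contradiction.
\end{enumerate}

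Your sets $\{H<a\}$ and $\{H>b\}$ for regular values are the wrong localization. What makes the action estimate uniform in $t$ is working in thin neighborhoods of the extremal level sets, where $dH$ is small.
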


It would be interesting to extend this result to arbitrary symplectically aspherical manifolds, which we plan to investigate in the future. This would likely prove the full conjecture by a stabilization construction; however, our methods do not appear to apply to this case directly at the moment.

Beyond the aspherical case, we note that $H \in C^\infty(M,\R)$ appearing as functions of integrals in an integrable Hamiltonian system generally have many invariant sets. For instance, if $(M,\om)$ is a symplectic toric manifold, with $T^n = (\R/\Z)^n$ acting on $M$ with moment map $\mu: M \to \R^n,$ every torus fiber $L_{\eta} := \mu^{-1}(\eta)$ is an invariant set of the flow of every Hamiltonian $H = h \circ \mu$ on $M$ given by a smooth function $h$ on $\R^n.$

Note that if $h(v) = \langle \alpha, v \rangle + b$ for $\alpha \in (\R^n)^\vee$ and $b \in \R$ is an affine function, then $(\phi^t_H)$ for $t \geq 1$ is not separated from the identity. Indeed, this flow generates the action of a closed subgroup $S$ of $T^n,$ whose identity component is a subtorus $T.$ Hence there is a sequence $t_k \in [1,\infty),$ $t_k \to \infty$ as $k \to \infty,$ with $\phi^{t_k}$ corresponding to the action of $\tau_k \in T$ with $\tau_k \to 0$ in $T.$ Therefore $\phi^{t_k} \to \id$ in $C^\infty$ topology and hence in all the norms we have discussed. We prove under the additional assumption that $(M,\om)$ is monotone, that is $[\om]|_{\pi_2(M)} = \kappa \cdot c_1(TM)|_{\pi_2(M)},$ for $\kappa > 0,$ that the above $h$ is the only possible such example; see Theorem \ref{thm:full_int_case_proj_space1} in Section \ref{sec:autonomous_ham_higher_dim}. This result resolves a greatly generalized version of the last part of \cite[Problem 7.9]{cggm-topics}.

\begin{alphatheorem}\phantomsection\label{thm: mon tor}
Let $(M,\om)$ be a closed monotone symplectic toric manifold with moment map $\mu: M \to \R^n,$ and $H = h\circ \mu \in C^\infty(M,\R)$ is a non-affine function of the moment map, that is $\nabla h$ is not constant on the image polytope of $\mu$. Then there exists $\delta>0$ such that \[\gamma(\phi^t_H) \geq \delta \] for all $t \geq 1,$ where $\gamma$ is the spectral norm taken with arbitrary coefficients. In particular, $||\phi^t_H ||_{\Hof} \geq \delta$ for all $t \geq 1.$     \end{alphatheorem}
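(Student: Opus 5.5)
The plan is to use the Lagrangian torus fibers $L_\eta = \mu^{-1}(\eta)$ as invariant sets and to detect the displacement of the flow through Lagrangian Floer theory of pairs of fibers. Since $\nabla h$ is not constant on the polytope $P = \mu(M)$, there are two interior points $\eta_0, \eta_1 \in \mathrm{int}(P)$ with $\nabla h(\eta_0) \neq \nabla h(\eta_1)$. On $L_\eta$, the flow $\phi^t_H$ acts as translation by $t\,\nabla h(\eta) \in \R^n/\Z^n$.

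By monotonicity, the special fiber $L_{\eta_*}$ over the barycenter of $P$ is Floer-theoretically nonzero, with suitable local systems or bounding cochains, and this holds for arbitrary coefficients. By deforming the potential with bulk or local systems (Fukaya--Oh--Ohta--Ono), all fibers in a neighborhood become nonzero objects. Instead of that, I would use the monotone setting directly, via the following reduction.

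\textbf{Step 1: reduce to the barycenter.} Choose $\eta_0 = \eta_*$ and $\eta_1$ near $\eta_*$ with $\nabla h(\eta_1) \neq \nabla h(\eta_*)$. Such an $\eta_1$ exists: if $\nabla h$ were constant near $\eta_*$, I would instead pick a point where $\nabla h$ varies and modify $H$ by the $T^n$-equivariant symplectomorphism techniques, though this part is unclear. The cleaner route is to work with $L_{\eta_*}$ and the fibers $L_{\eta_* + \epsilon v}$, which are Hamiltonian isotopic to deformations of $L_{\eta_*}$ inside a Weinstein neighbourhood with nonzero Floer homology, once the appropriate local system holonomy is included.

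\textbf{Step 2: the spectral estimate.} Use the Lagrangian spectral norm inequality $\gamma(\phi) \geq \gamma(L, \phi(L))$, or more precisely the estimate of the Lagrangian spectral invariant of $\phi$ relative to $L$ by the Hamiltonian spectral norm. For the flow, $\phi^t_H$ preserves each fiber. Consider instead the composed action on a Lagrangian correspondence: the graph of $\phi^t_H$ in $M \times M^-$ intersected with the product torus $L_{\eta_*} \times L_{\eta_1}$-type fibers. The Floer complex of $L_{\eta}$ with the image of a nearby fiber under $\phi^t_H$ can then be computed explicitly. The key quantity is the difference of actions of the $H$-flow on fibers, namely $h(\eta) - \langle \nabla h(\eta), \eta\rangle$ up to the translation, which is Legendre-type. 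Its variation between the fibers near $\eta_*$ is bounded below by the non-affineness of $h$, measured by $|h(\eta_1) - h(\eta_*) - \langle \nabla h(\eta_*), \eta_1 - \eta_*\rangle|$, independently of $t$ modulo the $\Z^n$-periodicity. Concretely, $\gamma(\phi^t_H) \geq c(L_{\eta_*}, H^t) - c(L_{\eta_*}, \text{point class})$ will be governed by $t\,h$ restricted to fibers, and the torus-lattice ambiguity is absorbed only when $t\nabla h(\eta)$ is integral for all $\eta$ simultaneously, which is impossible on an open set unless $\nabla h$ is locally constant.

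\textbf{Main obstacle.} The hardest step is obtaining a lower bound uniform in $t$, because the fiber rotations $t\nabla h(\eta)$ can be nearly integral at $\eta_*$ for a sequence of times. I would handle this by a pigeonhole argument over a small ball $B$ of fibers. For every $t \geq 1$, some $\eta \in B$ has $t(\nabla h(\eta) - \nabla h(\eta_*))$ at definite distance from $\Z^n$, since the map $\eta \mapsto t\nabla h(\eta)$ has image covering a set whose diameter is proportional to $t$. Invoking nondisplaceability with nonvanishing Floer homology of $L_\eta$, by Cho or FOOO via the potential function's critical points for monotone toric manifolds and arbitrary coefficients after shifting the fiber, then yields an energy lower bound $\delta$.
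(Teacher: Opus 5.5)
Using the invariant fibers and letting the tested fiber depend on $t$ is the right starting point. The mechanism you propose, however, cannot produce the bound, and several facts you rely on are false. Since $\phi^t_H(L_\eta)=L_\eta$ for every $\eta$, nothing is displaced, so nondisplaceability or global nonvanishing of $\HF(L_\eta)$ gives no energy lower bound. A fiber $L_\eta$ with $\eta\neq\eta_*$ is not Hamiltonian isotopic to (a deformation of) $L_{\eta_*}$: the areas $\kappa-\langle\eta,v_i\rangle$ of its Maslov-two disks are not all equal, so it is not even monotone. Nor do local systems or bulk deformations make such fibers Floer-nontrivial in general: in $\C P^n$ every fiber other than the Clifford torus is displaceable. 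Your Step 2 quantity is also useless. Since $tH\equiv t\,h(\eta_*)$ on $L_{\eta_*}$, Lagrangian control shifts all Lagrangian spectral invariants of $L_{\eta_*}$ by exactly $t\,h(\eta_*)$. Any difference of two of them is therefore independent of $t$, and an inequality $\gamma(\phi^t_H)\ge$ (such a difference) would also hold at $t=0$, forcing the difference to be $\le 0$. Finally, the rotation vector $t\nabla h(\eta)$ modulo $\Z^n$ does not enter the relevant actions. A chord of $tH$ from $L_\eta$ to itself, capped thinly inside the fiber, has action $t\,h(\eta)$, and the ambiguity in its action is the period group of Maslov-zero disks on $L_\eta$. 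The Legendre-type expression $h(\eta)-\langle\nabla h(\eta),\eta\rangle$ is what appears for periodic orbits, not for chords.

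The missing idea is the small-window nonvanishing of Theorem \ref{mainthm:non_van_floer_hom} (Claim 2 in the proof of Theorem \ref{thm:full_int_case_proj_space1}). It holds for \emph{every} closed Lagrangian $L$, with no assumption on $\HF(L)$, no local systems, and any coefficients: if $\gamma(\psi)<\hbar(L,J)/2$, then $\HF^{I+c}_*(L,\psi)\neq 0$ for a window of width about $\gamma(\psi)$ placed at $c=c([M],\psi)$. This window is the same for all $L$. The reason is that the double module action by near-optimal representatives of the unit for $H$ and $\bar H$ equals, on windows shorter than $\hbar(L,J)$, the module action by the unit for the zero Hamiltonian, i.e. the inclusion (Proposition \ref{prop:pss_modul_incl_comm_diag}). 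Hence the index-one chord actions of any two invariant fibers must lie within about $\gamma(\psi)$ of each other. By Lemma \ref{lem:spec_form}, for $\psi=\phi^\ell$ these actions lie in the finite set $\ell h(0)+\{0,\kappa,\dots,n\kappa\}$ on the monotone fiber $L_0$, and in $\ell h(a\zeta)+\bigcup_{i=0}^{n}(a\Z+i\kappa)$ on $L_{a\zeta}$. One takes any ray $a\mapsto a\zeta$ along which $h$ is not affine; the second fiber may be far from the barycenter, which removes the difficulty you flag in Step 1. Since $\Phi_h(a)=(h(0)-h(a\zeta))/a$ is non-constant, for each large $\ell$ one chooses a fiber $a=a(\ell)\in[r',s']$ with $\ell\Phi_h(a)\in\Z+\rho$, which keeps the two sets at distance at least $\rho r'$. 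Because the fiber moves with $\ell$, one also needs a uniform lower bound on $\hbar(L_{a\zeta},J_a)$ over $a\in[r',s']$, which is supplied by Lemma \ref{lem:fukaya_trick}.
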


While our exact methods do not appear to extend far beyond the monotone case, we expect that the same statement should apply to arbitrary closed symplectic toric manifolds, which we will investigate in a future work.

Finally we remark that the results on the spectral norm equally apply to the $C^{0}$-norm in the cases where the spectral norm is known to be $C^{0}$-continuous. This is shown for surfaces by Seyfaddini \cite{MR3123670}, for all symplectically aspherical manifolds by Buhovsky-Humilière-Seyfaddini \cite{MR4263685}, for negative monotone symplectic manifolds by Kawamoto \cite{MR4504915}, and for complex projective spaces by Shelukhin \cite{MR4480149}.

\subsection*{Outlines of the proofs}
We briefly outline the proofs of our main results and highlight the new ideas involved.

\subsubsection*{Theorem \ref{thm:main_theorem_introduction}}

The proof of this result consists of a number of steps. First, we rely on a result of \c{C}ineli-Ginzburg-G\"urel \cite[Theorem 3.1, Example 3.2]{cgg-generic-behaviour} stating that the theorem holds if $\phi$ has positive topological entropy.

Therefore we assume that $\phi$ has zero topological entropy. In this case, a result of \cite{GLCPT-25} produces an open $\phi$-invariant topological annulus $A$ in $\Sigma.$ 

If $A$ is essential in $\Sigma,$ a variation on a classical argument of Lalonde, McDuff, and Polterovich, which is however not an immediate generalization, lets us prove that $\gamma(\phi^k) \to \infty$ as $k \to \infty$ (see Theorem \ref{mainthm:ess_smth_inv_ann}). The proof involves a covering argument for the spectral norm as well as a delicate argument related to rotation numbers and residual finiteness of surface groups.

The main challenge of the proof is the case where $A$ is inessential. Our approach to this case is twofold: a solution of the case of a smooth invariant annulus, using Floer theory and ergodic theory, and a reduction of the general case to this one by means of $C^0$ symplectic topology and continuity arguments both on the Floer side and on the classical dynamics side. We now describe these two steps in slightly greater detail.

First, we solve the case where $A$ is the interior of a closed embedded annulus with smooth boundary (see Theorem \ref{mainthm:smth_cont_ann_case}). The proof proceeds by first showing, in the spirit of \cite[Theorem E]{kislev-shelukhin-21}, that if $\gamma(\phi^k)$ is much smaller than the areas of the disks bounded by the boundary circles $L, L'$ of $A,$ then $L,L'$ have Hamiltonian chords of a flow generating $\phi^k$ with equal index, say index zero, and actions which differ by roughly $\gamma(\phi^k).$ On the other hand, by a small perturbation we may assume that $\phi,$ and hence $\phi^k,$ admits a contractible fixed point on each of $L, L'.$ We then use ergodic theory and index calculations to relate the difference of actions of such chords to the integral of the rotation number of $\phi^k$ inside the annulus (see Theorem \ref{mainthm:diff_act_chords_vs_ave_rot_num}). Finally, a theorem of Franks and Handel \cite[Proposition 2.6]{franks-handel-zero-ent} (see also \cite{franks1988generalizations}) together with a direct inspection of our formula implies that the action difference is non-zero and grows with the iterations of $\phi.$ This provides a contradiction which finishes the argument. 

Second, for a general open invariant annulus $A,$ by work of Franks and Handel \cite{franks-handel-zero-ent}, passing to a sufficiently large prime iteration of $\phi,$ we may reduce consideration to the ``regular" case where the complement of $A$ consists of two connected components with non-empty interior; see \S\ref{sec:non_smooth_non_reg_non_ess_inv_ann}. We then construct, by methods of $C^0$ symplectic topology explained in great details in \S\ref{sec:fragmentation_c0_approximation}, two families of Lagrangian circles $L_p, L'_p$ in $A$ converging to the boundary components of its prime-end compactification, and constituting the boundary components of a nested sequence of sub-annuli $A_p$ of $A,$ as well as a sequence of Hamiltonian diffeomorphisms $\psi_p$ supported in $A$ with $\psi_p(A_p) = \phi(A_p)$ for all $p \geq 1$ and $\gamma(\psi_p) \to 0$ as $p \to \infty$ while $\psi_p$ tends to the identity in the $C^0$ norm on the prime-end compactification, in a suitable sense; this is done in Theorem \ref{mainthm:approx_smth_inv_annulus} by an elaborate variation on the arguments of \cite{MR2884036} incorporating the theory of prime end compactification. 
Moreover by a small perturbation, we may assume that $\phi_p = \psi_p^{-1}\phi$ has a contractible fixed point on $L_p, L'_p$; see Lemma \ref{lem:pert_to_make_fixed_points}. Therefore the smooth case applies to $\phi_p,$ and we prove that it does so uniformly in $p$ by suitable continuity statements for the $\gamma$-norm and the total rotation number. The latter is proved in Lemma \ref{lem:mean_rot_vec_non_zero_forallp} which uses a deep result of Franks and Handel \cite[Theorem 1.4 (1)]{franks-handel-zero-ent} on continuity of the rotation number function; see also the work of Le Calvez and Tal \cite{lecalvez-tal-22}. This provides a contradiction for $p$ sufficiently large. Curiously, this argument appears to be one of the first applications of $C^0$ symplectic topology to smooth symplectic dynamics.





\subsubsection*{Theorem \ref{thm: hyp}}

We modify $H$ near the invariant sublevel set $X$ of $\phi^t = \phi^t_H$ near its minimum and superlevel set $Y$ near its maximum, which are independent of $t,$  in such a way that it becomes constant in those sets without changing the spectral norm of $\phi^t.$ Then we modify $\phi^t$ again in these regions so that it displaces a fixed ball, again without changing the spectral norm. The result follows by an application of Usher's energy-capacity inequality for the spectral norm \cite{usher-bound-depth-13}. 

The possibility to make these modifications relies on action estimates for $t$-periodic orbits of $H$ in the invariant sets $X$ and in $Y$ of the Hamiltonian flow, which are made possible by the symplectic hyperbolicity condition. See Theorem \ref{mainthm:auto_on_symp_hyperb} and Proposition \ref{lem:just_const}.

\subsubsection*{Theorem \ref{thm: mon tor}}

The proof consists in a combination of the argument for the smooth case of Theorem \ref{thm:main_theorem_introduction} producing two Lagrangian fibers with chords of the same index and action difference $D$ at most $\gamma(\phi^k)$, together with an explicit calculation of these actions in terms of the function $h.$ We choose one of the Lagrangians to be the monotone toric fiber, and the other one to be the fiber over a point near which $\nabla h$ is not constant and, by a bit of number theory, arranged so that the difference $D$ is bounded from below uniformly in $k.$ This finishes the proof. See Theorem \ref{thm:full_int_case_sphere} for the case of $S^{2}$ and Theorem \ref{thm:full_int_case_proj_space1} for general monotone toric manifolds.

Finally, we also prove a partial result, Theorem \ref{thm:comp_int_sys}, of a similar kind, which applies to completely integrable Hamiltonian systems beyond symplectic toric manifolds. The methods are based on a truncated version of the module action of Hamiltonian Floer homology on the Lagrangian Floer homology developed in \cite{albers-10,biran-cornea-09,MR4879080, cant-zhang-24}.

\subsection*{Organization of the paper}

In Section \ref{sec:lag_ham_floer_theory} we recall preliminaries on Lagrangian and Hamiltonian Floer theory, which we use along the whole paper. Section \ref{sec:indices} discussing indices and Section \ref{sec:module_action} discussing the module action of Hamiltonian Floer theory on Lagrangian Floer theory in small action windows are particularly important. Sections \ref{sec:zero_entropy_franks_handel} and \ref{sec:theory_prime_ends} present preliminaries on low-dimensional dynamics: the theory of Franks and Handel regarding diffeomorphisms of the sphere of zero topological entropy, a discussion of mean rotation vector and its implications to the existence of fixed points in particular, and a brief presentation of the theory of prime end compactification.
Section \ref{sec:fragmentation_c0_approximation} proves necessary $C^0$ approximation and fragmentation results in order to produce approximately invariant Lagrangian circles. The proof of Theorem \ref{thm:main_theorem_introduction} is contained in Section \ref{sec:proof_main_theorem}. The proofs of the higher-dimensional results, Theorems \ref{thm: hyp} and \ref{thm: mon tor} appear in Section \ref{sec:autonomous_ham_higher_dim}. 

\section*{Acknowledgments}
We thank Leonid Polterovich for numerous discussions on related subjects and for suggesting to investigate invariant sets as a unifying principle after we obtained preliminary results in this project. We also thank Dan Cristofaro-Gardiner and Abror Pirnapasov for a fruitful collaboration on a different question in low-dimensional symplectic dynamics which has improved our understanding of classical techniques related to entropy zero Hamiltonian diffeomorphisms. Finally, we thank the Institute for Advanced Study for its wonderful research atmosphere during the second semester of 2025 when work on this project began. 
H.A. was partially supported by National Key R\&D Program of China Grant No. 2023YFA1010500 and NSFC Grants No. 12301081. E.S. was partially supported by an NSERC Discovery grant, by the Courtois chair in fundamental research, by a Sloan research fellowship, by an FRQNT teams grant, and, at the IAS, by the National Science Foundation under Grant No. DMS-1926686 during the academic year 2024-2025 as Von Neumann fellow and by an anonymous donor during the academic year 2026-2027.

\section{Lagrangian and Hamiltonian Floer theory}\phantomsection\label{sec:lag_ham_floer_theory}
Here, we briefly describe filtered Lagrangian Floer homology, one of the main ingredients in the proof the main theorem; for a detailed account of the constructions we refer to \cite{kislev-shelukhin-21, MR3850107, MR3524783}. The ambient symplectic manifold $(M,\omega)$ is always closed and monotone, $[\omega] = \lambda_{M} c_{1}$ as classes in $\H^{2}(M,\R)$ for some $\lambda_{M} > 0$, and the Lagrangian submanifolds are assumed to be monotone, $[\omega] = \lambda_{L}\mu_{L}$ as classes in $\H^{2}(M,L, \R)$ for some $\lambda_{L} > 0$ where $\mu_{L}$ is the Maslov class; see \S\ref{sec:indices}. It is easy to check that $\lambda_{M} = 2\lambda_{L}$. For $H: M \times S^{1} \rightarrow \R$ the Hamilton's equation is $\omega(X_{H}, -) = -dH$ and the generated Hamiltonian flow is denoted by $\phi_{H,t}$ and the time-one map by $\phi_{H}$.

\subsection{Hamiltonian Floer homology}\phantomsection\label{sec:abs_ham_floer_theory}
Fix $H: M \times S^{1} \rightarrow \R$. Let $LM$ be the space of contractible free loops in $M$ and let $\t{L}M$ be its universal cover. Consider $c: \pi_{1}(LM) \rightarrow N\Z$ be the homomorphism defined by $c(\alpha) = c_{1}(A_{\alpha})$ where $A_{\alpha}\in \H_{2}(M,\Z)$ is determined by $\alpha$, see \S\ref{sec:indices}; here $N > 0$ is the minimal positive number so that the map $c$ is surjective. Consider the following covering of $LM$,
$$\t{L}M_{c_{1}}:= \t{L}M \times_{c} N\Z;$$
one can identify this covering with the space of pairs $(x,\xi)$ where $x$ is a free loop in $M$ and $\xi: \D \rightarrow M$ is a map with $\xi|_{\partial\bar\D} = x$, where $(x,\xi)$ is identified with $(y,\eta)$ if and only if $x = y$ and $c_{1}(\bar{\xi}\# \eta) = 0$ with $\bar{\xi}$ being $\xi$ with reverse orientation. Define the action functional $\mathcal{A}_{H}: \t{L}M_{c_{1}} \rightarrow \R$ by
$$\mathcal{A}_{H}(x,\xi) = \int_{S^{1}}H_{t}(x(t))dt - \int_{\D}\xi^{*}\omega.$$
The set critical points $\t{O}(H)$ of $\mathcal{A}_{H}$ is consist of elements $[x,\xi]$ where $\dot{x}(t) = X_{H}(x(t))$. Assume $H$ is non-degenerate, the graph $\phi_{H}$ intersects the diagonal $\Delta\subset M\times M$ transversely. The Floer complex $\CF(H)$ is a free $\Z/2\Z$-module generated by $\t{O}(H)$. It is graded by the Conley-Zehnder index \S\ref{sec:indices}. To define the Floer differential one picks a generic family of compatible almost complex structures $J = \{J_{t}\}_{t\in [0,1]}$ and defines $d[x,\xi] = \sum n([x,\xi], [y,\eta])[y,\eta]$ where the coefficient is the number of isolated solutions to the following asymptotic boundary value problem, which can be thought of as a formal negative gradient equation for $\mathcal{A}_{H}$,
\begin{enumerate}
\item $\partial_{s}u(s,t) + J_{t}(\partial_{t}u(s,t) - X_{H}(u(s,t))) = 0$,
\item $u(-\infty, t) = x(t), u(+\infty, t) = y(t),$
\item $\xi \# u = \eta$.
\end{enumerate}
The Floer complex is denoted by $\CF(H,J)$ and its homology is denoted by $\HF(H,J)$. One can take completion of $\CF(H,J)$ in the direction of decreasing actions to turn it into a free graded module of finite rank over the Novikov field $\K[q^{-1}, q]]$ where $\K = \Z/2\Z$ and $\deg q = -2N$. For any real number $t$ we define the filtered Floer homology $\HF^{t}(H,J)$ to be the homology of the Floer complex $\CF^{t}(H,J)$ that is consist of all capped periodic orbits $[x,\xi]$ whose action is smaller than $t$. Similarly we define $\CF^{[a,b]}(H,J)$ for any interval $[a,b]$ with $a,b\not\in \mathrm{Spec}(H)$ where the latter is the set of critical values of $\mathcal{A}_{H}$. It is well-known that the graded filtered Floer homology only depends on the homotopy class of the path $\{\phi_{H,t}\}_{t\in [0,1]} \subset \t{\Ham}(M,\omega)$, the latter is the universal cover of $\Ham(M,\omega)$, and so we will denote the filtered Floer homology for $(H,J)$ in the interval $[a,b]$ by $\HF^{[a,b]}([\phi_{H,t}])$. In some cases, for instance \cite{schwarz-00} when $\omega = c_{1} = 0$, the homology only depends on the time-one map $\phi_{H}$ in which case we denote it by $\HF^{[a,b]}(\phi_{H})$ where we additionally assume that $H$ is normalized to have zero mean.

One can extend the definition to all possibly degenerate Hamiltonians $H'$ by letting $\HF^{[a,b]}(H') =\lim_{n\to \infty} \HF^{[a,b]}(H_{n}, J)$ where the limit is over any sequence of non-degenerate Hamiltonians $H_{n}$ converging to $H$ in $C^{\infty}$-topology and $J$ is a generic family of compatible almost complex structures, and the limit is via continuation maps, 
$$c_{n}: \HF^{[a,b]}(H_{n}, J) \rightarrow \HF^{[a,b]}(H_{n+1}, J);$$ 
the latter is defined by counting isolated solutions to Floer's equation for generic $s$-dependent Hamiltonians $G_{n,s}$ which coincide with $H_{n}$ on $s \leq 0$ and with $H_{n+1}$ on $s \geq 1$.
\subsubsection{Pair-of-pants product and product of units}\phantomsection\label{sec:pair_of_pants_unit_times_unit} The Floer homology group has a product structure called the pair of pants product. In general, given Hamiltonian functions $H$ and $G$ and a family of almost complex structures $J = \{J_{z}\}_{z\in \Sigma_{3}}$, where $\Sigma_{3} = \C\backslash\{0,1\}$, there is chain map,
$$\p\p: \CF(H,J) \otimes \CF(G,J) \rightarrow \CF(H\# G, J),$$
that is defined by $\p\p([x,u], [y,v]) = \sum n([x,u], [y,v], [z,w])[z,w]$ where $[x,u]\in \t{O}(H), [y,v]\in \t{O}(G)$ and the sum is over $[z,w]\in \t{O}(H\#G)$ and the coefficient is the number of isolated solutions to the following asymptotic boundary value problem,
\begin{enumerate}
\item $f:\Sigma_{3} \rightarrow M$ such that $(df - X_{\mathfrak{a}}\circ f)^{(0,1)} = 0$.
\item $f\circ \eps_{1,-}(-\infty,t) = x(t), f\circ \eps_{2,-}(-\infty,t) = y(t), f\circ \eps_{+}(\infty,t) = z(t)$.
\item $w = u\#v\#f$.
\end{enumerate}
Here $\mathfrak{a}\in \Omega^{1}(\Sigma_{3}, C^{\infty}(M))$ is a Hamiltonian connection one form with zero curvature so that on the image of the coordinate chart $\eps_{1,-}, \eps_{2,-}, \eps_{+}$ it is of the form $H_{t}dt, G_{t}dt, (H\#G)_{t}dt$ respectively and $J$ is a generic family of compatible almost complex structures that are $s$-independent near the punctures, and finally,
\begin{enumerate}
\item $\eps_{1,-}, \eps_{2,-}: (-\infty, 0) \times S^{1} \rightarrow \Sigma_{3}$ are holomorphic cylindrical neighborhoods of the negative punctures.
\item $\eps_{+}: (0, +\infty) \times S^{1} \rightarrow \Sigma_{3}$ is a holomorphic cylindrical neighborhood of the positive puncture.
\end{enumerate}
Since the curvature of the connection one form $\mathfrak{a}$ vanishes one can show that if there exists a solution as above then,
$$E(f) = \mathcal{A}_{H}([x,u]) + \mathcal{A}_{G}([y,v]) - \mathcal{A}_{H\#G}([z,w]) \geq 0.$$
The moduli space of solutions $f: \Sigma_{3} \rightarrow M$ as above are smooth manifolds of finite dimension for generic data $(H,G,J)$ and the dimension equals,
$$\mathrm{CZ}([x,u]) + \mathrm{CZ}([y,v]) - \mathrm{CZ}([z,w]) - n.$$
Hence the pair-of-pants product defined as above induces the following product structure on the corresponding Floer homologies,
$$\p\p: \HF_{k}(H) \otimes \HF_{l}(G) \rightarrow \HF_{k+l-n}(H\#G).$$
Through standard analysis in Floer theory one can show that the product $\p\p$ corresponds to the quantum product on quantum homology via $\pss$-isomorphism \S\ref{sec:pss_isomorphisms} and hence we have the identity, 
$$\p\p(\pss_{H}([M]), \pss_{G}([M])) = \pss_{H\#G}([M]),$$
that will be crucially used in the proof of the main theorem.
\subsection{Lagrangian Floer homology}\phantomsection\label{sec:lag_floer_theory}
Fix a Hamiltonian $H$ and a monotone closed Lagrangian submanifold $L$. Let $PL$ be the space of free paths from $L$ to itself that are contractible relative to $L$. Let $\mu: \pi_{1}(PL) \rightarrow N_{L}\Z$ be defined by $\mu(\alpha) = \mu_{L}(D_{\alpha})$ where $D_{\alpha}\in \H_{2}(M,L,\Z)$ is the class determined by $\alpha$ and $\mu_{L}$ is the Maslov class \S\ref{sec:indices}; here $N_{L}$ is the smallest positive number for which $\mu$ is surjective. Let $\t{P}L$ be the universal cover of $PL$ and define
$$\t{P}L_{\mu} = \t{P}L \times_{\mu_{L}} N_{L}\Z.$$
One can identify $\t{P}L_{\mu}$ with the space of pairs $(x, \xi)$ where $x$ is a path from $L$ to itself and $\xi: \D \cap \mathbb{H} \rightarrow M$ so that $\xi(\partial\D \cap \mathbb{H})\subset L$ and $\xi|_{\D \cap \R} = x$, here $\mathbb{H} = \{z\in \C : \mathrm{Im}(z) \geq 0\}$, $\D = \{|z|\leq 1\}$, and $(x,\xi)$ is identified with $(y,\eta)$ if and only if $x = y$ and $\mu_{L}(\bar{\xi}\#\eta) = 0$ where $\bar{\xi}$ is $\xi$ with reverse orientations. The action functional $\mathcal{A}_{L,H}: \t{P}L_{\mu} \rightarrow \R$ is given by
$$\mathcal{A}_{L,H}([x,\xi]) = \int_{0}^{1}H_{t}(x(t)) - \int_{\D\cap \mathbb{H}} \xi^{*}\omega,$$
which is well-defined by monotonicity assumption. The critical points of $\mathcal{A}_{L,H}$ are the pairs $[x,\xi]$ where $x:[0,1] \rightarrow M$ so that $x(0),x(1)\in L$ and $\dot{x}(t) = X_{H}(x(t))$ and the set of critical points is denoted by $\t{O}(H,L)$. Assume that the pair $(H,L)$ is non-degenerate, i.e., $\phi_{H}(L)$ intersects $L$ transversely. Then, the Lagrangian Floer complex $\CF(L, H)$ is a free $\Z/2\Z$-module generated by critical points of $\mathcal{A}_{L,H}$. To define the differential one picks a generic family of compatible almost complex structures $J = J_{t}$ and defines $d[x,\xi] = \sum n([x,\xi], [y,\eta])[y,\eta]$ where the coefficient is the number of isolated solutions of the following asymptotic boundary value problem, 
\begin{enumerate}
\item $\partial_{s}u(s,t) + J_{t}(\partial_{t}u(s,t) - X_{H}(u(s,t))) = 0$,
\item $u(-\infty, t) = x(t), u(+\infty, t) = y(t),$
\item $u(s,0), u(s,1) \in L$ for all $s\in\R$,
\item $\xi \# u = \eta$;
\end{enumerate}
this can be thought of a formal negative gradient flow equation for $\mathcal{A}_{L,H}$. The Lagrangian Floer complex is denoted by $\CF(L,H; J)$ and its homology is denoted by $\HF(L, H; J)$. It is graded by the Maslov index \S\ref{sec:indices}. One can again consider the completion of the complex in the direction of decreasing actions and turn it into a finite rank over the Novikov field $\K[p^{-1}, p]]$ where $\deg p = -N_{L}$ and $\K = \Z/2\Z$. For every real number $t$ we define the filtered Lagrangian Floer homology $\HF^{t}(L,H; J)$ to be the homology of the filtered complex $\CF^{t}(L,H; J)$ that is consist of capped chords $[x,\xi]$ with action smaller than $t$. Similarly for an interval $[a,b]$ with $a,b\not\in \mathrm{Spec}(L, H)$ we define $\HF^{[a,b]}(L, H; J)$; here $\mathrm{Spec}(L,H)$ is the set of critical values of $\mathcal{A}_{H}$. It is well-known that the graded filtered Lagrangian Floer homology depends only on the homotopy class of the path $\{\phi_{H,t}\}_{t\in [0,1]} \in \t{\Ham}(M,\omega)$ and so we denote the filtered Lagrangian Floer homology by $\HF^{[a,b]}(L, [\phi_{H,t}])$. In some particular cases, for example when $\omega = c_{1} = 0$, the graded filtered Lagrangian Floer homologies will only depend on the time-one map $\phi_{H}$ as long as $H$ is mean normalized;
see \cite[\S3.1 and \S4.3]{schwarz-00}, \cite[Corollary 3.16]{hu-lalonde-10} and \cite[\S2.9]{kislev-shelukhin-21} for details. In the latter cases we denote the filtered homology by $\HF^{[a,b]}(L,\phi_{H})$.

It is possible to define the graded filtered Lagrangian Floer homology for degenerate pairs $(L,H)$. One picks a sequence of non-degenerate pairs $(L, H_{n})$ where $H_{n} \to H$ in $C^{\infty}$-topology and one defines, 
$$\HF^{[a,b]}(L,H; J) = \lim_{n\to \infty}\HF^{[a,b]}(L,H_{n}; J),$$ 
where $J$ is a generic family of compatible almost complex structures and the limit is via continuation maps 
$$c_{n}: \HF^{[a,b]}(L,H_{n}; J) \rightarrow \HF^{[a,b]}(L,H_{n+1}; J).$$ 
The latter are defined by counting isolated solutions to Floer's equation as above for a generic $s$-dependent Hamiltonians $G_{n,s}$ which coincide with $H_{n}$ on $s \leq 0$ and with $H_{n+1}$ on $s \geq 1$. The continuation map $c_{n}$ shift the action intervals by $|H_{n} - H_{n+1}|_{C^{0}}$ which for large enough $n$ will be small enough.

\subsubsection{Lagrangian intersection Floer homology}\phantomsection\label{sec:anchored_lag_floer_hom}
Fix a reference point $w\in M$ and a Lagrangian subspace $\Lambda_{w}\subset T_{w}M$. We follow \cite{MR3524783, MR4480149} and define an anchored Lagrangian brane $\underline{L} = (L, \alpha, \lambda)$ to be consist of a closed monotone Lagrangian submanifold $L$, a path $\alpha: [0,1] \rightarrow M$ with $\alpha(0)= w$ and $\alpha(1) = z\in L$ for some point $z\in L$, and $\Lambda$ is a section of the Lagrangian Grassmannian bundle $\alpha^{*}\mathrm{Lag}(M,\omega) \rightarrow [0,1]$ so that $\Lambda(0) = \Lambda_{w}$. Given two anchored Lagrangians $\u{L}$ and $\u{L}'$ the path,
$$\alpha_{\u{L},\u{L}'} = \overline{\alpha} \# \alpha' = \begin{cases}
\alpha(1 - 2t), t\in [0,1/2 ],\\
\alpha'(2t - 1), t\in [1/2,1],
\end{cases}$$
determines a homotopy class in the space of paths $P(L,L')$ from $L$ to $L'$; we denote by $P(\u{L}, \u{L}')$ the space of paths from $L$ to $L'$ that are homotopic to $\alpha_{\u{L}, \u{L}'}$. Assume that $N_{L} = N_{L'}$, $\lambda_{L} = \lambda_{L'}$, and the counts of $J$-holomorphic disks with boundary on $L$ and Maslov index $2$ and passing through a generic point in $L$ is the same as that of $L'$, for a generic almost complex structure $J$; this is the case when $L' = \phi_{H}(L)$ for some Hamiltonian $H$. Let $\t{P}(\u{L}, \u{L}')$ to be the universal cover of $\t{P}(\u{L}, \u{L}')$ and define
$$\t{P}(\u{L}, \u{L}')_{\mu} = \t{P}(\u{L}, \u{L}') \times_{\mu_{\u{L},\u{L}'}} N_{L,L'}\Z,$$
where $N_{L,L'} = N_{L} = N_{L'}$ and $\mu_{\u{L}, \u{L}'}: \pi_{1}(P(\u{L}, \u{L}')) \rightarrow N_{L,L'}\Z$ is defined by $\mu_{\u{L}, \u{L}'}(\alpha) = \mu_{L, L'}(C_{\alpha})$ where $C_{\alpha}\in \H_{2}(M, L \cup L', \Z)$ is represented by $\alpha$ and $\mu_{L,L'}$ is defined in \S\ref{sec:indices}. The space $\t{P}(\u{L}, \u{L}')_{\mu}$ can be identified with the space of pairs $(x, \xi)$ where $x$ is a path from $L$ to $L'$ and $\xi: [0,1]_{s} \times [0,1]_{t} \rightarrow M$ so that $\xi(\{t=0\})\subset L, \xi(\{t=1\})\subset L', \xi(0,t) =\alpha_{\u{L}, \u{L}'}(t)$ and $\xi(1,t)=x(t)$, see Figure \ref{fig:capped_int_point}, and $(x, \xi)$ is identified with $(y,\eta)$ if $x = y$ and $\mu_{L, L'}(\bar{\xi}\#\eta) = 0$.
\begin{figure}[ht]
  \centering
  \begin{tikzpicture}
   \draw (0,0) -- (2,0) -- (2,1) -- (0,1) -- cycle;
   \node at (-0.2, 0.5) {\tiny{$\alpha$}};
   \node at (2.2, 0.5) {\tiny{$x$}};
   \node at (1, 1.2) {\tiny{$L'$}};
   \node at (1, -0.2) {\tiny{$L$}};
   \node at (1, 0.5) {\tiny{$\xi$}};
  \end{tikzpicture}
  \caption{A capped path from $L$ to $L'$ where $\alpha = \alpha_{\u{L}, \u{L}'}.$}
  \phantomsection\label{fig:capped_int_point}
\end{figure}
The symplectic action functional $\mathcal{A}_{\u{L}, \u{L}'}: \t{P}(\u{L}, \u{L}')_{\mu} \rightarrow \R$ is defined by 
$$\mathcal{A}_{\u{L}, \u{L}'}([x,\xi]) = -\int_{[0,1]^{2}} \xi^{*}\omega.$$
The critical point of $\mathcal{A}_{\u{L}, \u{L}'}$ are the pairs $[x,\xi]$ where $x\equiv p\in L \cap L'$. Assume that $L$ and $L'$ intersect transversely. The Lagrangian intersection Floer complex is denoted by $\CF(\u{L},\u{L}')$ and is a free $\Z/2\Z$-module generated by the capped intersection points. For the differential one picks a generic family of compatible almost complex structures $J = \{J_{t}\}_{t\in [0,1]}$ and one defines $d[p, \xi] = \sum n([p,\xi], [q,\eta])[q,\eta]$ where the coefficient is the number of isolated solutions to the following asymptotic boundary value problem,
\begin{enumerate}
\item $\partial_{s}u(s,t) + J_{t}\partial_{t}u(s,t) = 0$,
\item $u(-\infty, t) = p, u(+\infty, t) =q,$
\item $u(s,0)\in L, u(s,1) \in L'$ for all $s\in\R$,
\item $\xi \# u = \eta$.
\end{enumerate}
The Lagrangian intersection Floer complex is denoted by $\CF(\u{L}, \u{L}'; J)$ and its homology is denoted by $\HF(\u{L}, \u{L}'; J)$. This homology is also graded by the Maslov indices \S\ref{sec:indices}. The completion of the complex with decreasing actions turns it into a finite rank module over the Novikov field $\K[q^{-1}, q]]$ with $\deg q = -N_{L,L'}$. Similarly one filters the complex as before and denotes the filtered homology by $\HF^{[a,b]}(\u{L}, \u{L}'; J)$. The graded filtered Floer homologies do not depend on the choice of $J$ and so $J$ will be omitted from the notation.

One can define the homology for non-transverse anchored Lagrangians $\u{L}, \u{L}'$ by picking a sequence of Hamiltonians $H_{n}$ so that $L$ and $\phi_{H_{n}}^{-1}(L')$ intersect transversely and $|H_{n}|_{C^{\infty}} \to 0$ as $n \to \infty$ and one defines $\HF^{[a,b]}(\u{L}, \u{L}'; J) = \lim_{n\to \infty}\HF^{[a,b]}(\u{L}, \u{L}'; H_{n}, J)$ where $J$ is some generic family of compatible almost complex structures and the limit is via continuation maps and the latter homology is defined as follows. The generators of $\CF(\u{L}, \u{L}'; H,J)$ are the pairs $(x, \xi)$ where $x$ is a Hamiltonian chord of $H$ from $L$ to $L'$ and $\xi: [0,1]_{s} \times [0,1]_{t} \rightarrow M$ is a capping of $x$, i.e., $\xi(0,t) = \alpha_{\u{L}, \u{L}'}(t), \xi(1,t) = x(t), \xi(s,0)\subset L$ and $\xi(s,1) \subset L'$; see Figure \ref{fig:capped_int_point}. The complex is filtered by the action functional $\mathcal{A}_{\u{L},\u{L}',H}: \t{P}(\u{L},\u{L}'; H) \rightarrow \R$ defined by
$$\mathcal{A}_{\u{L},\u{L}', H}([x,\xi]) = \int_{0}^{1}H_{t}(x(t)) - \int_{[0,1]^{2}} \xi^{*}\omega,$$
where the domain consist of pairs $(x,\xi)$ up to an equivalence relation similar to above. The continuation maps are defined similarly.

For simplicity, we denote $CF(\u L, H) = CF(\u L, \u L, H).$ It is easy to see by observing that $\alpha_{\u L, \u L}$ together with the anchor data over it is homotopic to the trivial path at $q\in L$ and $T_q L$ over it, that for an arbitrary choice of anchored Lagrangian $\u L$ with underlying Lagrangian $L,$ \[CF(\u L, H) = CF(L, H)\] in the sense of being naturally chain-isomorphic preserving action and index. To be more precise, pick $(x,u)\in \CF(\u L, H)$ where $u: [0,1]\times [0,1] \rightarrow M$, $u(1,t) = x(t), u(0,t) = \alpha_{\u L, \u L}(t)$ and $u(s,0),u(s,1)\subset L$. Define $v:[0,1]\times [0,1]\rightarrow M$ by $v(s,t) = \alpha_{\u L, \u L}(st)$. Consider the map
$$\mathrm{S}: \CF(\u L, H)\rightarrow \CF(L, H); (x,u)\mapsto (x, v \# u).$$
Then, it is obvious that the transformation $\mathrm{S}$ preserves both actions and indices; see \S\ref{sec:indices} for the definition of the index of a capped chord.
\subsubsection{The transform map}\phantomsection\label{sec:transform_map}
We now describe a transform map that induces an isomorphism between the two types of Lagrangian Floer homologies introduced above. Let $\u{L}$ be an anchored Lagrangian with $w = q\in L$ and constant anchor $\alpha \equiv q, \Lambda(t) \equiv T_{q}L$ for some point $q\in L$, and $H$ be a Hamiltonian with flow $\phi_{H,t}$. We define
$$H_{*}\u{L}:= (\phi_{H}(L), \phi_{H,t}(\alpha(t)), d\phi_{H,t}(\Lambda(t))).$$
The transform map is a chain map,
$$\Tr: \CF(\u{L},H) \rightarrow \CF(\u{L}, \bar{H}_{*}\u{L}),$$
that on capped chord $(x, \xi)$ is defined as follows,
$$\Tr(x,\xi) = (x(0), \eta);\ \eta(s,t) := \phi_{H,t}^{-1}(\xi(s,t)).$$
\begin{figure}[ht]
    \centering
    \begin{tikzpicture}
    \draw (0,0) -- (2,0) -- (2,1) -- (0,1) -- cycle;
    \node at (-0.2, 0.5) {\tiny{$q$}};
    \node at (2.6, 0.5) {\tiny{$\phi_{H,t}(p)$}};
    \node at (1, 1.2) {\tiny{$L$}};
    \node at (1, -0.2) {\tiny{$L$}};
    \node at (1, 0.5) {\tiny{$\xi$}};
    
    \draw[->] (3.5,0.5) -- (4.5,0.5); 
    \node at (4, 0.7) {\tiny{$\Tr$}};
    \end{tikzpicture}
    \hspace{0.3cm}
    \begin{tikzpicture}
    \centering
    \draw (0,0) -- (2,0) -- (2,1) -- (0,1) -- cycle;
    \node at (-0.6, 0.5) {\tiny{$\phi_{H,t}^{-1}(q)$}};
    \node at (2.2, 0.5) {\tiny{$p$}};
    \node at (1, 1.3) {\tiny{$\phi_{H}^{-1}(L)$}};
    \node at (1, -0.2) {\tiny{$L$}};
    \node at (1, 0.5) {\tiny{$\eta$}};
    \end{tikzpicture}
    \caption{The transform map acting on capped chords.}
  \phantomsection\label{fig:transform_map}
\end{figure}
The transform map $\Tr$ shifts the actions as follows, for brevity let $\varphi_{t} =\phi_{H,t}$,
\begin{equation}\phantomsection\label{eq:action_shift_int_chord}
\begin{aligned}
\mathcal{A}_{\u{L}, \bar{H}_{*}\u{L}}(x(0),\eta)=\mathcal{A}_{\u{L}, H}(x,\xi) - \int_{0}^{1}H_{t}(q)dt.
\end{aligned}
\end{equation}
Under the transformation $\Tr$ the Conley-Zehnder indices are preserved. To see this, for a given capped chord $(\varphi_{t}(p), u)$ and its corresponding capped intersection point $(p, v)$, it is enough to find some symplectic trivializations over $u$ and $v$ so that the indices coincide. We choose a trivialization $\Phi: \R^{2} \rightarrow T_{p}\Sigma$ and extend it over $\varphi_{t}(p)$ by $d\varphi_{t}(p)\circ \Phi$, and further extend it to the domain of $u$ along straight lines by parallel transport and interpolations as in \cite[\S2]{MR3674984} and denote the trivialization by $\Phi_{u}$. Then, for $v$ we consider the trivialization $d\varphi_{t}^{-1} \circ \Phi_{u}$. Then it is easy to show that the Conley-Zehnder index of $(\varphi_{t}(p), u)$ and $(p, v)$ are the same with respect to the particular trivializations described above. Then, the must have the same index with respect to any trivializations; see \S\ref{sec:indices}.

\subsection{Naturality transformation}\phantomsection\label{sec:naturality_transformation} Let $\u{L} = (L,\alpha,\Lambda)$ be an anchored Lagrangian with constant anchor $\alpha \equiv q$ and $H$ be a Hamiltonian function generating the Hamiltonian flow $\{\phi_{t}\}$. Suppose $G: M \times \R \rightarrow \R$ generates a loop of Hamiltonian diffeomorphisms $\{\psi_{t}\}$. Choose a fixed capping $w$ for the loop $\psi_{t}(q)$. There is a naturality transformation associated with $G$ and $w$ as follows,
$$\mathfrak{n}_{G,w}:\CF(\u{L}, H) \rightarrow \CF(\u{L}, G\#H),$$
that takes a capped Hamiltonian chord $[x, u]$ to the Hamiltonian chord $[y,v]$ where $y(t) = \psi_{t}(x(t))$ and $v = (G_{*}u) \# w$ where $(G_{*}u)(s,t) = \psi_{t}(u(s,t))$. One can readily check the following identity on actions,
$$\mathcal{A}_{G\#H}([y,v]) = \mathcal{A}_{H}([x,u]) + \mathcal{A}_{G}([\{\psi_{t}(q)\},w]).$$
If the Hamiltonian loop $\{\psi_{t}\}$ is contractible and $G$ is mean normalized then one can show that the latter term is zero \cite{schwarz-00} and so the naturality transformation will preserve the actions. It is also easy to check that in this case the Conley-Zehnder index of the Hamiltonian chords are also preserved.
\subsection{Maslov indices}\phantomsection\label{sec:indices}
For a detailed exposition of the indices see \cite{conley-zehnder-83, salamon-zehnder-92, robbin-salamon-maslov-for-paths-93, MR3674984}. For a path of unitary matrices $U: [0,1] \rightarrow U(n)$ so that $\det_{\C}U(0) = \det_{\C}U(1)$ we define the Maslov index $\mu(U)$ of $U$ to be the degree of the map $\det_{\C}(U): S^{1} \rightarrow S^{1}$.

For a symplectic manifold $(M,\omega)$ one can define the first Chern class $c_{1}\in \H^{2}(M,\Z)$ by defining its value on $[\Sigma]$ where $\Sigma$ is an embedded closed surface in $M$ to be the Maslov index of the loop of symplectic linear maps $\Phi: C \rightarrow \mathrm{Sp}(2n)$ where $C$ is an splitting of $\Sigma = \Sigma_{1}\cup_{C}\Sigma_{2}$ and $\Phi = \varphi_{2}|_{C} \circ \varphi_{1}^{-1}|_{C}$ where $\varphi_{i}: TM|_{\Sigma_{i}} \rightarrow \Sigma_{i} \times \R^{2n}$ are some symplectic trivializations; see \cite[\S2.7]{MR3674984}.

Let $L \subset M$ be a closed Lagrangian submanifold. The Maslov class $\mu_{L}\in \H^{2}_{D}(M,L,\Z)$ is defined as follows. For a disk map $w: \D \rightarrow M$ with boundary on $L$, the Maslov index of $w$ is defined to be the Maslov index of the loop of unitary matrices $U(t)^{2}$ where $U(t) = X(t) + iY(t)$ and the rows of $[X^{t}\ Y^{t}]$ form an orthonormal basis for $\Phi(T_{w(e^{2\pi it})}L)$; here $\Phi: TM|_{w} \rightarrow \D\times \R^{2n}$ is a symplectic trivialization. For a cylinder map $w: [0,1]_{s} \times S^{1}_{t} \rightarrow M$ with $w(0,t)\in L_{1}, w(1,t)\in L_{2}$ for all $t$ and some fixed Lagrangian submanifolds $L_{1}$ and $L_{2}$, to define the Maslov index $\mu_{L_{1},L_{2}}(w)$, one picks a symplectic trivialization $\Phi: TM|_{w} \rightarrow [0,1] \times S^{1} \times \R^{2n}$ and defines $\mu_{L_{1},L_{2}}(w) = \mu(U_{1}(t)^{2}) - \mu(U_{2}(t)^{2})$ where $U_{1}(t),U_{2}(t)$ are defined similarly for $L_{1}, L_{2}$ respectively.

Fix a Hamiltonian $H$ with Hamiltonian flow $\phi_{H,t}$. The Conley-Zehnder index $\mathrm{CZ}([x,u])$ of a non-degenerate capped Hamiltonian orbit $[x,u]$ is so that $-\mathrm{CZ}([x,u])$ is equal to the Robbin-Salamon index of the path of Lagrangians 
$$\{L_{t}:= \{\mathrm{gr}\big(\Phi(e^{2\pi it})^{-1}d\phi_{H,t}(x(0))\Phi(1)\big)\}_{t\in [0,1]}$$ 
inside $\bar{\R}^{2n} \times \R^{2n}$ where $\Phi: \D \times \R^{2n} \rightarrow u^{*}TM$ is a symplectic trivialization. The Robbin-Salamon index of a path of Lagrangians $\{L_{t}\}_{t\in [0,1]}$ in $\R^{2n}$, relative to a fixed Lagrangian subspace $V\subset \R^{2n}$ and a Lagrangian complement $W$ of it, is defined by
$$\mu_{RS}(\{L_{t}\}) := \frac{1}{2}\mathrm{sign}(Q_{0}) + \sum_{t_{*}\in\text{cross}} \mathrm{sign}(Q_{t_{*}}) + \frac{1}{2}\mathrm{sign}(Q_{1}),$$
where the sum is over all times $t_{*}\in [0,1]$ so that $L_{t_{*}} \cap V \neq 0$ and $Q_{t_{*}}: L_{t_{*}}\cap V \rightarrow \R$ is the crossing form $Q_{t_{*}}(v):=\partial_{t}|_{t = t_{*}}\omega(v, w_{t})$ with $w_{t}$ being the unique vector in $W$ so that $v + w_{t}\in L_{t}$, and $\mathrm{sign}(Q)$ is the signature of the quadratic form $Q$. The convention for the Conley-Zehnder index of Hamiltonian orbits is chosen so that the Conley-Zehnder index of a critical point $z$ of a $C^{2}$-small Morse function with the trivial capping is equal to $\mathrm{ind}(z)- n$ where $\mathrm{ind}(z)$ is the Morse index of $z$. 

Fix an anchored Lagrangian $\u{L} = (L, \alpha, \Lambda)$ where the anchor is constant $\alpha \equiv q$ and $\Lambda(t)\equiv T_{q}L$ for some point $q\in L$. Let $(x,u)$ be capped Hamiltonian chord from $L$ to itself. The Conley-Zehnder index of this capped Hamiltonian chord is denoted by $\mathrm{CZ}([x,u])$ and is defined by the Robbin-Salamon index \cite{robbin-salamon-maslov-for-paths-93} of the loop of Lagrangians $L_{t} \subset \R^{2n}$ obtained by concatenating the following paths of Lagrangian subspaces where $\Phi: u^{*}TM \rightarrow [0,1]_{s} \times [0,1]_{t} \times \R^{2n}$ is a symplectic trivialization,
\begin{enumerate}
\item $\Phi\big(T_{u(0,t)}L\big), \Phi\big(d\phi_{H,t}(T_{p}L)\big), L_{\mathrm{can},t}, \Phi\big(T_{u(1,1-t)}L\big),$
\end{enumerate}
where $p = u(0,1)\in L$ and $L_{\mathrm{can},t}$ is a canonical "short" path of Lagrangians from $\Phi\big(d\phi_{H}(T_{p}L)\big)$ to $\Phi\big(T_{\phi_{H}(p)}L\big)$; in dimension $2$ it is just an anti-clockwise rotation of the line, see \cite{MR3220941} for exact definition. As an example, which we shall use later, let $F:W\rightarrow \R$ be the lift of a $C^{2}$-small Morse function $f: L\rightarrow \R$ to a small Weinstein neighborhood $W$ of $L$. Then, for a critical pint $p\in L$ of $f$ the Conley-Zehnder index of $(p, u)$ where $u\equiv p$ is a constant capping is equal to $n-\mathrm{ind}(p)$ where the latter is the Morse index of $p$. To see this one observes that $d\phi_{F,t}(T_{p}L)$ followed by $L_{\mathrm{can},t}$ has two crossing times with crossing forms $Q_{0}, Q_{1}:T_{p}L \rightarrow \R$,
$$Q_{0}(\xi) = \xi^{T}\mathrm{Hess}(f;p)\xi, Q_{1}(\xi) = |\xi|^{2},$$
hence the Maslov index is equal to $(n - 2\mathrm{ind}(p))/2 + n/2 = n - \mathrm{ind}(p)$. 

Let $\u{L}, \u{L}'$ be two anchored Lagrangians with connecting anchor $\alpha_{\u{L}, \u{L}'} = \bar{\alpha} \# \alpha'$. The Conley-Zehnder index of the capped intersection point $(p,v)$ is denoted by $\mathrm{CZ}([p,v])$ and is defined to be the Robbin-Salamon index of the following paths of Lagrangian subspaces where $\Phi: v^{*}TM \rightarrow [0,1]_{s} \times [0,1]_{t} \times \R^{2n}$ is a symplectic trivialization,
\begin{enumerate}
\item $\Phi\big(T_{u(0,t)}L\big), \Phi(1,t)(T_{p}L), L_{\mathrm{can},t}, \Phi\big(T_{u(1,1-t)}L'\big), \Phi\big(\bar{\Lambda}_{\u{L}, \u{L}'}\big),$
\end{enumerate}
where $L_{\mathrm{can},t}$ is the canonical short path from $\Phi(1,1)\big(T_{p}L\big)$ to $\Phi(1,1)\big(T_{p}L'\big)$ and $\Lambda_{\u{L}, \u{L}'} =\bar{\Lambda} \#\Lambda'$ is the concatenation of $\bar{\Lambda}$ with $\Lambda'$. As an example, let $F:W \rightarrow \R$ be lift of a $C^{2}$-small Morse function $f: L \rightarrow \R$ and let $\u{L}$ be the anchored Lagrangian with constant $\alpha \equiv p\in \mathrm{Crit}(f)$ and constant $\Lambda \equiv T_{p}L$. Let also $\u{L}' = \bar{F}_{*}\u{L}$. Then, the index of $(p,v)$ where $v \equiv p$ is the constant capping is equal to $n-\mathrm{ind}(p)$. The proof is similar to the above analysis; note that the connecting anchor up to homotopy is $\Lambda_{\u{L}, \u{L}'}(t) = d\phi_{F,t}^{-1}(T_{p}L)$ and $\bar{\Lambda}_{\u{L},\u{L}'}$ contributes to the index.

\subsection{Fukaya's trick and the $\hbar$ invariant}\phantomsection\label{sec:minimal_enr_inv}
Let $L\subset M$ be a closed Lagrangian submanifold inside a closed symplectic manifold $(M,\omega)$. Fix a Riemannian metric $g$ on $M$. Let $J$ be any almost complex structure on $M$, not necessarily taming $\omega$. We define
$$\hbar(L,J):=\inf \left\{E(u):
     \begin{aligned}
       & \bar{\partial}_{J}u = 0 \text{ where,}\\
       &u:\C P^{1}\rightarrow M \text{ or }\\
       &u:(\D,\partial \D) \rightarrow (M,L)\\
       &\text{is non-constant}
     \end{aligned}
\right\},$$
where $E(u) = \int_{\D}\abs{du}_{g}^{2}d\mathrm{vol}_{\D}$. If $J$ tames $\omega$ and $g = g_{\omega, J}$ then $E(u) = \omega(u)$. It is well-known \cite[Proposition 4.1.4]{mcduff-salamon-12}, via mean-value inequality, that $\hbar(L,J)>0$; one can also show this using Gromov compactness analysis in the case where $J$ tames $\omega$.
We write $\hbar(L,J,\om)$ if we want to emphasize the dependence on $\om.$

The following lemma will be used in \S\ref{sec:comp_integ_ham_sys} and the proof of it uses a well-known trick of Fukaya; e.g., see \cite[\S2.1]{MR4822930}.


\begin{lemma}\phantomsection\label{lem:fukaya_trick}
Let $\{L_{t}\}_{t\in [0,1]}$ be a family of closed Lagrangian submanifolds inside a closed symplectic manifold $(M,\omega)$ that are all isotopic to $L_{0}$, i.e., $L_{t} = f_{t}(L_{0})$ where $f_{t}:M\rightarrow M$ is a smooth family of diffeomorphisms. For every $t_0 \in [0,1]$ there exists $\eps > 0$ such that there exists a family of almost complex structures $\{J_{t}\}_{t\in (t_0-\eps, t_0+\eps) \cap [0,1]}$ such that $J_{t}$ tames $\omega$ for all $t\in (t_0-\eps, t_0+\eps) \cap [0,1]$ and 
$$\inf_{t\in (t_0-\eps, t_0+\eps) \cap [0,1]}\hbar(L_{t}, J_{t})>0.$$ 
\end{lemma}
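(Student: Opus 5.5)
The idea is to pull everything back to the fixed Lagrangian $L_{t_0}$ by a diffeomorphism close to the identity, so that the problem reduces to a single Lagrangian with a continuously varying almost complex structure. First, set $g_t := f_t\circ f_{t_0}^{-1}$. This is a smooth family of diffeomorphisms with $g_{t_0}=\id$ and $g_t(L_{t_0}) = L_t$, and $g_t\to\id$ in $C^\infty$ as $t\to t_0$.

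Next fix an $\omega$-compatible almost complex structure $J_0$ on $M$ and the metric $g=g_{\omega,J_0}$, and define $J_t := (g_t)_* J_0 = dg_t\circ J_0\circ dg_t^{-1}$. A map $u$ is $J_t$-holomorphic with boundary on $L_t$ if and only if $g_t^{-1}\circ u$ is $J_0$-holomorphic with boundary on $L_{t_0}$. Moreover, since $g_t$ is $C^1$-close to the identity, there is a constant $C_t\to 1$ with
\[E_g(u)\ \geq\ C_t^{-1}E_g(g_t^{-1}\circ u).\]
The same holds for holomorphic spheres. Hence
\[\hbar(L_t,J_t)\ \geq\ C_t^{-1}\,\hbar(L_{t_0},J_0),\]
and the right-hand side is positive by the mean-value-inequality fact quoted above, \cite[Proposition 4.1.4]{mcduff-salamon-12}. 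Here $\hbar$ is measured with the fixed metric $g$, as in the definition.

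It remains to check tameness. Taming is an open condition: $\omega(v,J_0v)\geq |v|^2_g$, and $J_t\to J_0$ uniformly in $C^0$ on the compact manifold $M$. So for $|t-t_0|<\eps$ with $\eps$ small we get $\omega(v,J_tv)\geq \tfrac12|v|^2_g>0$, and $J_t$ tames $\omega$. Shrinking $\eps$ further if needed so that $C_t\leq 2$, we obtain the uniform bound
\[\inf_t\hbar(L_t,J_t)\ \geq\ \tfrac12\hbar(L_{t_0},J_0)\ >\ 0.\]

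The one step needing care is the energy comparison. The energy $\int|du|_g^2$ is not invariant under $g_t$, because $g_t$ is not an isometry. However, $|d(g_t^{-1})|_{op}\leq 1+o(1)$ uniformly on $M$, which gives the comparison constant $C_t=\|dg_t^{-1}\|^2_{C^0}$. Apart from this, everything follows from compactness of $M$ and the positivity of $\hbar$ for a single pair $(L_{t_0},J_0)$.
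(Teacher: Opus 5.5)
Your proof is correct. Its skeleton is the paper's: push forward a fixed tame $J_{t_0}$ by $h_t=f_tf_{t_0}^{-1}$, use that taming is an open condition, and reduce to holomorphic curves with boundary on the fixed Lagrangian $L_{t_0}$.

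The two routes differ only in the last step.
\begin{itemize}
\item The paper measures energy with the metric induced by $(\omega,J_t)$. Then $h_t$ is an isometry and $\hbar(L_t,J_t,\omega)=\hbar(L_{t_0},J_{t_0},\omega_t)$ holds exactly, with $\omega_t=h_t^*\omega$. Uniformity in $t$ comes from the continuous dependence on $\omega_t$ of the mean-value-inequality constants for $(L_{t_0},J_{t_0})$.
\item You keep the metric fixed and absorb the distortion into the Lipschitz factor $\|dg_t^{-1}\|_{C^0}^2$. You therefore only need positivity of $\hbar$ for the single pair $(L_{t_0},J_0)$.
\end{itemize}
Your version is more elementary. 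It even shows that the lower bound for $(g_t)_*J_0$ is uniform on all of $[0,1]$, so taming is the only reason to localize near $t_0$.

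If you want $\hbar$ in the area normalization, which is what governs bubbling in the module action, your taming estimate $\omega(v,J_tv)\geq\tfrac12|v|_g^2$ gives $|du|^2_{g_{\omega,J_t}}\geq\tfrac12|du|^2_g$ pointwise. So the area-normalized $\hbar(L_t,J_t)$ is at least half of yours.

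What the paper's route buys in addition is Remark~\ref{rem:C2 top}. Because the mean-value constants depend continuously on the almost complex structure in the $C^2$ topology, the lower bound survives small generic perturbations of $J_t$. This robustness is what is actually invoked in the proof of Theorem~\ref{thm:full_int_case_proj_space1}. Your pushforward comparison controls only the specific family $(g_t)_*J_0$, so for that downstream use you would still need a mean-value or Gromov-compactness argument.
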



\begin{proof}
Let $J_{t_0}$ be an almost complex structure which tames $\omega.$ Let $h_t = f_{t}f_{t_0}^{-1}.$ It is clear that $J_{t}:= (h_t)_{*}J_{t_0}$ tames $\omega$ for all $t\in [0,1] \cap (t_0-2\eps, t_0+2\eps)$ for $\eps>0$ small enough. It remains to show that 
$$\inf_{t\in [0,1]\cap (t_0-\eps,t_0+\eps)}\hbar(L_{t}, J_{t}) > 0.$$ 
Set $\om_t = h_t^*\om.$ One observes that $\hbar(L_{t}, J_{t}, \om) = \hbar(L_{t_0}, J_{t_0}, \om_t)$ for all $t\in [0,1] \cap (t_0-\eps, t_0+\eps).$ The result now follows from the mean value property of holomorphic disks with fixed Lagrangian boundary condition $L_{0}$, see \cite[Lemma 4.3.1]{mcduff-salamon-12}, and the fact the constants involved in the mean value inequality depend continuously on $\om_t$; see \cite[Remark 4.3.2]{mcduff-salamon-12}. Indeed, only the comparison constant between the Frauenfelder metric of $(L_{t_0}, J_{t_0})$ and the metric induced by $(\om_t, J_{t_0})$ plays a role.
\end{proof}

\begin{remark}\label{rem:C2 top}
In fact by the above proof we get that \[\hbar(L_t, J_t) \geq \delta(L_t,J_t) > 0\] for every $t \in(t_0-\eps, t_0+\eps) \cap [0,1],$ where for a Lagrangian $L$ and tame almost complex structure $J,$ $\delta(L, J) > 0$ is a constant which depends continuously in the $C^2$ topology on $J$ and moreover \[\delta(t_0,\eps) = \inf_{t\in (t_0-\eps, t_0+\eps) \cap [0,1]} \delta(L_t, J_t) > 0.\]    
\end{remark}

\subsection{Module action}\phantomsection\label{sec:module_action}
Here we describe the module action of the Hamiltonian Floer homology on the Lagrangian Floer homology. Let $L\subset M$ be a closed Lagrangian and let $H$ be a Hamiltonian. The module action is constructed using a map,
$$\m: \CF(L,G; J) \otimes \CF(H,J) \rightarrow \CF(L,H \# G; J),$$
that is defined by $\m([x,u],[y,v]) = \sum n([x,u], [y,v], [z,w])[z,w]$ where the coefficient is the number of isolated solutions to the following asymptotic boundary value problem,
\begin{enumerate}
\item $f: \Sigma_{2;1} \rightarrow M$ satisfying $(df - X_{\mathfrak{a}}\circ f)^{(0,1)} = 0$.
\item $f\circ \epsilon_{-}(-\infty, t) = x(t)$, $f\circ \epsilon_{+}(+\infty, t) = z(t)$ and $f\circ \epsilon_{0}(-\infty, t) = y(t)$.
\item $f(\partial \bar{\Sigma}_{2;1}\backslash\Gamma) \subset L$ where $\Gamma$ is the set of punctures.
\item $w = u \# v \# f.$
\end{enumerate}
Here $\Sigma_{2;1}$ is a disk, as a Riemann surface, with two boundary punctures and one interior puncture, $\mathfrak{a}\in \Omega^{1}(\Sigma_{2;1}, C^{\infty}(M))$ is a Hamiltonian connection one form with zero curvature so that on the image of $\epsilon_{-}, \epsilon_{0}, \epsilon_{+}$ it is of the form $G_{t}dt, H_{t}dt, (H\# G)_{t}dt$ respectively, $J = \{J_{z}\}_{z\in \Sigma_{2;1}}$ is a generic family of compatible almost complex structures that are $s$-independent near the punctures, and,
\begin{enumerate}
\item $\epsilon_{-}: (-\infty, 0) \times [0,1] \rightarrow \Sigma_{2;1}$ is a holomorphic strip-like neighborhood of one of the boundary punctures, the negative puncture.
\item $\epsilon_{+}: (0, + \infty) \times [0,1] \rightarrow \Sigma_{2;1}$ is a holomorphic strip-like neighborhood of the other boundary puncture, the positive puncture.
\item $\epsilon_{0}: (-\infty, 0) \times S^{1} \rightarrow \Sigma_{2;1}$ is a holomorphic cylindrical neighborhood of the interior puncture.
\end{enumerate}
The module action by a cycle $\alpha \in \CF(H,J)$ is defined to be $\m(-, \alpha)$, however, in order for it to be well-defined one has to restrict oneself to a small action window to prevent the disk-bubbling phenomenon. Let $\hbar = \hbar(L,J)$ be as in \S\ref{sec:minimal_enr_inv}. Let $[a,b]$ be an interval with $b - a < \hbar.$ Then, the module action, 
$$\m(- \otimes \alpha): \HF^{[a,b]}(L,G) \rightarrow \HF^{[a,b]+\mathcal{A}_{H}(\alpha)}(L, H\#G),$$
is well-defined. The moduli space of maps $f: \Sigma_{2;1} \rightarrow M$ satisfying the above is a smooth manifold for generic choices of $H,G,J$ and the dimension is computed by
$$\mathrm{CZ}([z,w]) - \mathrm{CZ}([x,u]) - \big(n - \mathrm{CZ}([y,v])\big);$$
See \cite{kislev-shelukhin-21, MR4879080} and references therein for a more detailed exposition. It follows that the module action by $\alpha\in \CF_{k}(H,J)$, 
$$\m(- \otimes \alpha): \HF_{*}^{I}(L,G) \rightarrow \HF_{*+n-k}^{J}(L,H\#G)$$ 
shifts the degree by $n-k$. By the energy identity,
$$E(f) = \mathcal{A}_{L,G}([x,u]) + \mathcal{A}_{H}([y,v]) - \mathcal{A}_{L,H\#G}([z,w]),$$
the following holds for all chains $\xi\in \CF(L,G; J), \alpha\in \CF(H,J)$,
$$\mathcal{A}_{L,H\#G}(\m(\xi \otimes \alpha)) \leq \mathcal{A}_{L,G}(\xi) + \mathcal{A}_{H}(\alpha);$$
so, the module action $\m(- \otimes \alpha)$ shifts the actions by at most $\mathcal{A}_{H}(\alpha)$.

\subsubsection{Double module action}\phantomsection\label{sec:twin_tower_map}
Let $\alpha \in \CF_{k}(H, J)$ and $\beta\in \CF_{l}(\bar{H}, J)$ be two cycles with $A = \mathcal{A}_{H}(\alpha)$ and $B= \mathcal{A}_{H}(\beta)$. Let $[a,b]$ be an interval so that $b - a < \hbar$ so that the module actions $\m(- \otimes \alpha)$ and $\m(- \otimes \beta)$ are well-defined. Then, the double module action is the composition of the latter two actions, 
$$\m_{2}: \HF_{*}^{[a,b]}(L,G) \rightarrow \HF_{*+2n-k-l}^{[a,b]+A+B}(L,G),$$
defined by $\m_{2}(\xi):= \m(\m(\xi \otimes \alpha) \otimes \beta),$ where $[a,b]+c:=[a+c, b+c]$. Standard gluing analysis in Floer theory, Floer-Gromov compactness analysis \cite{floer-morese-th-lag-int, floer-symp-fix-pt} and Deligne-Mumford compactification \cite[Chapter 9]{MR2441780} imply that,
\begin{equation}\phantomsection\label{eq:module_act_vs_pp}
\m_{2}(\xi) = \m(\xi \otimes \mathrm{pp}(\alpha \otimes \beta)),
\end{equation}
where $\mathrm{pp}: \HF_{k}(H) \otimes \HF_{l}(\bar{H}) \rightarrow \HF_{k+l-n}(0)$ is the pair of pants product; see \S\ref{sec:pair_of_pants_unit_times_unit}. Let $\zeta$ be a representative of the fundamental class $[M]\in \QH(M)$ and let $c = \mathcal{A}_{0}(\pss_{0}(\zeta))$ where $\pss_{0}$ is the PSS-isomorphism for zero Hamiltonian; see \S\ref{sec:pss_isomorphisms}. Consider the module action, 
$$\m(-\otimes \pss_{0}(\zeta)): \HF^{I}(L,0) \rightarrow \HF^{I + c}(L,0),$$ 
where $I$ is a small interval with $0\in I\cap (I +c)$ and $|I| < \hbar$. Then,
\begin{prop}\phantomsection\label{prop:pss_modul_incl_comm_diag}
The module action $\m(-\otimes \pss_{0}(\zeta))$ equals the map $\HF^{I}(L,0) \rightarrow \HF^{I+c}(L,0)$ induced by inclusion. Moreover, the following diagram commutes,
\[
\begin{tikzcd}[row sep=3em, column sep=5em]
\HF_{n-*}^{I}(L,0) \arrow[r, "\m(-\otimes \pss_{0}(\zeta))"] & \HF_{n-*}^{I+c}(L,0) \\
\H_{*}(L) \arrow[r, "id"] \arrow[u, "\pss_{L,0,I}"] & \H_{*}(L) \arrow[u, "\pss_{L,0,I+c}", swap]
\end{tikzcd}
\]
\end{prop}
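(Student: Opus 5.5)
Both assertions will follow from the fact that $\pss_{0}(\zeta)$ acts as a unit. We then control filtrations using the energy identity and the small-window hypothesis $|I|<\hbar$.

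\emph{Reduction to a Morse model.} I would replace the zero Hamiltonian by a $C^{2}$-small perturbation. On the Hamiltonian side, take $\epsilon F$ with $F$ a Morse function on $M$ having a unique minimum $z_{\min}$. For $\epsilon$ small, $\pss_{\epsilon F}([M])$ is represented by the constant orbit at $z_{\min}$ with trivial capping, and its action $\epsilon F(z_{\min})$ is close to $c$. On the Lagrangian side, take $\epsilon f$ with $f$ Morse on $L$, lifted to a Weinstein neighbourhood. Then $\CF^{I}(L,\epsilon f)$ in the window $I$ is the Morse complex of $f$, with constant cappings, with indices $n-\mathrm{ind}(p)$ as computed in \S\ref{sec:indices}. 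Since $|I|<\hbar$, no other capped chords enter the window, and $\pss_{L,0,I}$ is identified with the standard Morse-to-Floer isomorphism. All these identifications are compatible with continuation maps, which shift action by at most $O(\epsilon)$. Because the window endpoints avoid the spectrum, the limit as $\epsilon\to 0$ recovers the maps in the statement.

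\emph{The module action equals inclusion.} Because the moduli spaces are independent of auxiliary choices, I would degenerate the domain $\Sigma_{2;1}$ by letting the interior puncture run to the boundary. Gluing then gives the chain-homotopy identity
\[ \m(\xi\otimes\pss(\zeta)) \simeq \xi\ast_{L}\mathcal{CO}(\zeta), \]
where $\mathcal{CO}$ is the closed–open map. Since $\zeta$ is the fundamental class, $\mathcal{CO}(\zeta)$ is the unit of $\HF(L)$, which acts as the identity. A more hands-on route works in the Morse model: index and energy counts show that the relevant rigid solutions with input the minimum $z_{\min}$ are constant, so $\m(-\otimes z_{\min})$ is the identity on generators at chain level. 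In either version one must verify that every chain homotopy used shifts action only within windows of width less than $\hbar$. This is where the energy identity and the bound $\mathcal{A}(\m(\xi\otimes\alpha))\le\mathcal{A}(\xi)+\mathcal{A}(\alpha)$ are used. As a result the map $\HF^{I}\to\HF^{I+c}$ agrees with the map induced by the identity on the complexes. That map is precisely the inclusion/quotient map between windows.

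\emph{Commutativity of the square.} This now follows from the first part together with naturality of $\pss_{L}$ with respect to window change. Concretely, $\pss_{L,0,I+c}$ is $\pss_{L,0,I}$ followed by the map induced by inclusion. Both PSS maps are defined by spiked half-disks whose energy is bounded by the window, so changing the window commutes with PSS. The identity $\m(-\otimes\pss_{0}(\zeta))\circ\pss_{L,0,I}=\pss_{L,0,I+c}$ can also be checked directly, by gluing the PSS half-disk with the PSS plane for $\zeta$. This glued configuration is a PSS half-disk with an interior marked point constrained to the cycle $[M]$, which is no constraint at all.

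\emph{Main obstacle.} The difficult step is the filtered bubbling control. The gluing/degeneration argument must stay inside windows of length less than $\hbar$, uniformly for the perturbed data. Disk bubbles of energy at least $\hbar$ cannot appear, because every configuration considered has energy at most $|I|+O(\epsilon)<\hbar$. To make this uniform, I would fix $J$ taming $\omega$ and invoke $\hbar(L,J)>0$ as in \S\ref{sec:minimal_enr_inv}.
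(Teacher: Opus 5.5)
Your outline is the standard gluing-and-compactness argument that the paper invokes without further detail; its proof only cites \cite[\S5]{kislev-shelukhin-21}. The ingredients are the same: pass to a $C^{2}$-small Morse model, identify $\m(-\otimes\pss_{0}(\zeta))$ with a continuation-type map (equivalently, product with the unit $\mathcal{CO}([M])$), use that the windowed PSS maps commute with change of window, and keep every energy below $\hbar$ to rule out bubbling.

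One step is wrong as written. The paper's conventions are $\mathrm{CZ}=\mathrm{ind}-n$ for critical points of a $C^{2}$-small Morse function, $\pss$ lowering degree by $n$, and $[M]$ in Morse degree $2n$. With these, $\pss_{\epsilon F}([M])$ is represented by the \emph{maximum} of $F$, with action $\epsilon\max F$, not by the minimum. The minimum has $\mathrm{CZ}=-n$ and represents the point class. By the degree formula of \S\ref{sec:module_action}, its module action shifts degree by $n-(-n)=2n$. So your claim that the rigid solutions with interior input $z_{\min}$ are constant and give the identity on generators cannot hold. It also contradicts your own, correct, closing remark that the incidence condition is vacuous: that is true precisely because the unstable manifold of the maximum is open and dense. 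Replacing $z_{\min}$ by $z_{\max}$ fixes this, and the rest of your argument goes through.

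A smaller point: your assertion that only constant cappings lie in the window needs $|I|$ below the minimal positive value of $\omega$ on $\pi_{2}(M,L)$. The constant $\hbar(L,J)$ can exceed that value, so $|I|<\hbar$ alone does not suffice. This is harmless, since $I$ is assumed small, and in the surface application the two numbers coincide.
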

\begin{proof}
The proof is the standard gluing and compactness analysis, we refer to \cite[\S5]{kislev-shelukhin-21} for further details of the proof.
\end{proof}
\subsection{Lagrangian and Hamiltonian PSS isomorphisms}\phantomsection\label{sec:pss_isomorphisms}
For a closed monotone symplectic manifold $(M,\omega)$ there is an isomorphism between quantum homology $\QH(M)$, see \cite[Chapter 11]{mcduff-salamon-12}, and Floer homology $\HF(H)$ for any smooth Hamiltonian $H$, the isomorphism is called PSS-isomorphism \cite{MR1432464} and it shifts degrees by $-n$ with our conventions; Conley-Zehnder index of a critical point of $C^{2}$-small Morse function is equal to the Morse index minus $n$. The $\pss$ maps is roughly defined as follows. One picks a Morse function $f:M\rightarrow \R$ and for every critical point $z\in \mathrm{Crit}(f)$ one defines $\pss(z) = \sum n(z,[x,u])[x,u]$ where the coefficient is the number of isolated solutions to the following asymptotic boundary value problem,
\begin{enumerate}
\item $w:\R\times S^{1} \rightarrow M$ satisfying $(dw - X_{\mathfrak{a}}\circ w)^{(0,1)} = 0$ where $\mathfrak{a}\in \Omega^{1}(\R\times S^{1}, C^{\infty}(M))$ is a Hamiltonian connection one form that is of the form $H_{t}dt$ on $s \geq 1$ and vanishes on $s\leq 0$.
\item $w(-\infty,t) \in W^{u}(z)$ and $w(+\infty, t) =x(t)$ where $W^{u}(z)$ is the unstable manifold of $z$; note that by removable singularity theorem $w$ extends to $\C$ if we identify $\R\times S^{1}$ with $\C^{\times}$ via $(s,t)\mapsto e^{s + 2\pi it}$.
\item $w = u$ as capping of $x$.
\end{enumerate}
Then, the isomorphism will be a map,
$$\pss_{H}: (\mathrm{HM}(f)\otimes \K[q^{-1},q]])_{*} \rightarrow \HF_{* - n}(H),$$ 
an index $k$ critical point will be mapped to capped orbits with index $k - n$ since the module spaces of solutions $w$ as above will be a smooth manifold, for generic data $f,g,H,J$ where $g,J$ are a generic metric and a generic family of almost complex structures, and of dimension $\mathrm{ind}(z) - n - \mathrm{CZ}([x,u])$. 

Let $w: \C\rightarrow M$ be a $\pss$-plane as above with asymptotic $[x,u]$. Then, the energy of $w$ is computed by the following identity,
$$E(w) = -\mathcal{A}_{H}([x,u]) + \int_{w}\tau,$$
where $\tau = d\mathfrak{a} - \{\mathfrak{a}, \mathfrak{a}\}$ is the curvature term which is equal to $\partial_{s}H_{s,t} ds \wedge dt$ when $\mathfrak{a} = H_{s,t}dt$. Picking the connection form $\mathfrak{a}$ in the latter form we obtain,
$$0 \leq E(w) \leq -\mathcal{A}_{H}([x,u]) + \int_{0}^{1} \max H_{t} dt,$$
which implies $\mathcal{A}_{H}([x,u]) \leq |H|$ where the latter is the Hofer norm defined by the integral above. So, given a Hamiltonian $H$ with $|H|\leq \epsilon$ the "$\pss$ representatives" of the class $\pss_{H}([M])$ have actions smaller than $\epsilon$.

\subsubsection{Lagrangian quantum homology and Lagrangian $\pss$}\phantomsection\label{sec:lag_quant_hom_lag_pss}
For a detailed construction of the Lagrangian quantum homology we refer the reader to \cite{biran-cornea-09, biran-cornear-lag-quan-hom-09}. We quickly sketch the construction here. Given a monotone closed Lagrangian submanifold with minimal Maslov number at least $2$, one picks a Morse function $f:L\rightarrow \R$, a generic metric $\rho$ on $L$ and a generic compatible almost complex structure $J$ on $M$. One considers the complex $\mathrm{PC}(L,f,\rho,J)$ that is a free $\K[p^{-1},p]]$-module generated by the critical points of $f$ where $\deg p = -N_{L}$. The differential is $\K[p^{-1}, p]]$-linear and for a critical point $z\in \mathrm{Crit}(f)$ one defines $dz = \sum n(z,w)p^{-\mu_{L}(A)/N_{L}}w$ where the some is over $w\in \mathrm{Crit}(f)$ and $A\in \pi_{2}(M,L)$ and the coefficient is the number of isolated solutions, up to some natural identifications, to the following problem,
\begin{enumerate}
\item $(u_{1}, \dots, u_{k})$ where $u_{j}:\D \rightarrow M$ is a non-constant $J$-holomorphic disk with boundary on $L$.
\item For all $2\leq j\leq k$ one has $u_{j}(-1,0)\in W^{u}(u_{j-1}(1,0))$, $u_{1}(-1,0)\in W^{u}(z)$ and $w\in W^{u}(u_{k}(1,0))$.
\item $[u_{1}] + \dots + [u_{k}] = A.$
\end{enumerate}
The homology of the above complex is denoted by $\QH(L)$ and it coincides with $\H(L)$ whenever $L$ is weakly exact. There is a $\pss$ isomorphism from $\QH(L)$ to $\HF(L,H)$ for every Hamiltonian $H$; see \cite{MR2424172, biran-cornear-lag-quan-hom-09, MR2122744}. It is roughly defined as follows. For a critical point $z\in \mathrm{Crit}(f)$ one defines $\pss(z) = \sum n(z, [x,\xi])[x,\xi]p^{-\mu_{L}(A)/N_{L}}$ where the sum is over $[x,\xi]\in \mathrm{Crit}(\mathcal{A}_{L,H})$ and $A\in \pi_{2}(M,L)$ and the coefficient is the number of isolated solutions of the following problem, 
\begin{enumerate}
\item $(u_{1}, \dots, u_{k}, w)$ where $u_{j}:\D \rightarrow M$ is a non-constant $J$-holomorphic disk with boundary on $L$ and $w: \R \times [0,1] \rightarrow M$ satisfies $(dw - X_{\mathfrak{a}}\circ w)^{(0,1)} = 0$ and $w(s,0), w(s,1)\in L$ for all $s$ where $\mathfrak{a}\in \Omega^{1}(\R \times [0,1], C^{\infty}(M))$ is a Hamiltonian connection one form that vanishes on $s\leq 0$ and coincides with $H_{t}dt$ on $s\geq 1$.
\item For all $2\leq j\leq k$ one has $u_{j}(-1,0)\in W^{u}(u_{j-1}(1,0))$, $u_{1}(-1,0)\in W^{u}(z)$, $w(-\infty,t)\in W^{u}(u_{k}(1,0))$, and $w(+\infty, t) = x(t)$.
\item $[u_{1}] + \dots + [u_{k}] = A$ and $u_{1}\#\dots\#u_{k}\#w = \xi$.
\end{enumerate}
This will define a degree preserving isomorphism, 
$$\pss_{L,H}: \QH_{*}(L) \rightarrow \HF_{n-*}(L,H);$$ 
it preserves the degree since the virtual dimension of the above configurations is equal to $\mathrm{ind}(z) + \mu_{L}(A) + \mathrm{CZ}([x,\xi])-n$. 

One can show that if $[x, \xi]$ appears in the image $\pss_{L,H}(\zeta)$, where $[\zeta] = [L]\in \QH(L)$, and $(u_{1}, \dots, u_{k},w)$ is a (pearl) trajectory from $\zeta$ to $[x,\xi]$ then,
$$E(w) = -\mathcal{A}_{L,H}([x,\xi]) + \int_{w} \tau,$$
where $\tau = d\mathfrak{a} - \{\mathfrak{a}, \mathfrak{a}\}$ is the curvature form that is equal to $\partial_{s}H_{s,t}ds\wedge dt$ when $\mathfrak{a} = H_{s,t}dt$. In the latter case, one observes that $\mathcal{A}_{L,H}([x,\xi]) \leq |H|$. 

For small enough intervals $I =[a,b]$ so that $ab < 0$ and $b-a < \hbar(L)$ there is a $\pss$-type map $\pss_{L,0,I}: \H_{*}(L) \rightarrow \HF_{n-*}^{I}(L,0)$ that is injective. Indeed, one counts isolated solutions $w: \R \times [0,1] \rightarrow M$ of $(dw - X_{\mathfrak{a}}\circ w)^{(0,1)} = 0$ with correct asymptotics and Lagrangian boundaries as before, but since we have restricted ourselves to a small action window and the Hamiltonian is zero, by the energy identity above one can prevent disk-bubbling and so the map $\pss_{L,0,I}$ will be well-defined.

\subsection{Hamiltonian spectral invariants}\phantomsection\label{sec:spectral_invariants}
Given a class $a\in \HF(H,J)$ we define its spectral invariant,
$$c(a, H):= \inf_{t}\{a\in \mathrm{Im}\big(\HF^{(-\infty,t)}(H) \rightarrow \HF(H)\big)\},$$
where the map is induced by the inclusion of the sub-complex $\CF^{(-\infty,t)}(H,J)$. For a quantum homology class $a\in \QH(M)$ one simply defines $c(a, H):= c(\pss_{H}(a), H)$. These invariants satisfy properties listed below:
\begin{enumerate}
\item $c(\alpha;H)\in \mathrm{Spec}(H)$ for every $\alpha\in \QH(M)$ and Hamiltonian $H$.
\item $c(\alpha * \beta, H \# G) \leq c(\alpha, H) + c(\beta, G)$.
\item $|c(\alpha; H) - c(\alpha;K)| \leq |H - K|$, where $|G|$ is the Hofer norm of $G$.
\item $c(q \alpha; H) = c(x; H) - \rho$.
\end{enumerate}
Here $q$ is the quantum variable in $\K[q^{-1},q]]$, and $\omega(\pi_{2}(M)) = \rho\Z$; $\rho > 0$ is called the rationality constant. The spectral pseudo-norm of a Hamiltonian $H$ is defined by 
$$\gamma(H) := c([M], H) + c([M], \bar{H}).$$ 
When the ambient symplectic manifold $(M,\omega)$ is symplectically aspherical, for instance oriented closed surfaces of genus at least $1$, the spectral pseudo-norm $\gamma(H)$ only depends on the time-one map $\phi_{H}$ of the Hamiltonian flow generated by $H$, hence, for a Hamiltonian diffeomorphism $\phi$ one can define $\gamma(\phi) := \gamma(H)$ where $H$ is any Hamiltonian with $\phi_{H} = \phi$. We refer to \cite{MR1157321, schwarz-00, MR2103018, MR3850107} for more details. 

An important property of the spectral invariants that will be useful later is called the energy-capacity inequality stating that if $H$ and $G$ are two Hamiltonian diffeomorphisms of a closed symplectic manifold generating $\phi_{H}$ and $\phi_{G}$ so that $\phi_{H}(\mathrm{supp}(G)) \cap \mathrm{supp}(G) = \emptyset$ then one has,
$$c([M], G) \leq \gamma(H).$$
In particular, for a given open set $U \subset M$ one has, 
$$c(U):=\sup \{\gamma(G) :\mathrm{supp}(G)\Subset U\} \leq 2\gamma(H).$$
\begin{remark}\phantomsection\label{rem:spec_cap_of_disks_in_high_genus_surf}
In the particular case of $B\subset \Sigma$ being an open disk and $\Sigma$ a closed oriented surface of genus at least $1$, one can show that $c(B) = \mathrm{area}(B)$; see \cite{MR5018404} and \cite{MR4587314}. Hence, in this case if $\phi_{H}(B)\cap B = \emptyset$ then one has $\gamma(\phi_{H}) \geq \mathrm{area}(B)$.
\end{remark}
A Poincaré duality property holds for the Hamiltonian spectral invariants \cite{MR1979584}, see \cite{MR3850107} in the Lagrangian case, which works as follows: for a class $\alpha \in \QH(M)$,
$$c(\alpha, \bar{H}) = -\inf\{c(\beta, H) : \Pi(\alpha, \beta) \neq 0\},$$
where the infimum is over quantum homology classes $\beta$ that pair non-trivial with $\alpha$ where the pairing $\Pi(\alpha,\beta)$ is defined to be the coefficient of the point class in the quantum cup product $\alpha * \beta$; see \cite[\S2.6]{MR1979584} for details. In particular, whenever $\omega = 0$ on sphere classes, $\QH(M) = \H(M)$ and one proves the identity $c([M], \bar{H}) = -c([\mathrm{pt}], H)$. 

We will use the following elementary property of the $\gamma$-norm, which in fact applies to every conjugation-invariant norm on a group.

\begin{lemma}\label{lemma:biinv}
For all $f,g \in \Ham(M,\om),$ and $k \in \mathbb Z,$ \[\gamma((fg)^k) \leq k \gamma(f) + \gamma(g^k).\]      
\end{lemma}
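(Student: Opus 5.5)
The plan is to use only three properties of $\gamma$: the triangle inequality $\gamma(ab) \leq \gamma(a) + \gamma(b)$, conjugation invariance $\gamma(hah^{-1}) = \gamma(a)$, and inverse invariance $\gamma(a^{-1}) = \gamma(a)$. All three are standard consequences of the properties of spectral invariants listed in \S\ref{sec:spectral_invariants}. Subadditivity follows from $c(\alpha*\beta, H\#G)\leq c(\alpha,H)+c(\beta,G)$ with $\alpha=\beta=[M]$. Conjugation invariance follows from the symplectic invariance of spectral invariants. Inverse invariance is built into the definition $\gamma(H)=c([M],H)+c([M],\bar H)$. We may assume $k \geq 0$, which is the case used later; for $k<0$ the same argument applies to $(fg)^{-k}=(g^{-1}f^{-1})^{-k}$, using inverse invariance and reading $k\gamma(f)$ as $|k|\gamma(f)$.

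The key step is an algebraic rewriting of $(fg)^k$ as a product of $k$ conjugates of $f$ followed by $g^k$. Set $f_j := g^{-j} f g^{j}$ for $j\geq 0$. We claim by induction on $k$ that
\[(fg)^k = f\,(g f g^{-1})\,(g^2 f g^{-2})\cdots(g^{k-1} f g^{-(k-1)})\, g^k.\]
The case $k=1$ is immediate. For the inductive step, multiply on the right by $fg$ and write $g^k f g = (g^k f g^{-k})\, g^{k+1}$. Consequently $(fg)^k = \bigl(\prod_{j=0}^{k-1} g^j f g^{-j}\bigr) g^k$.

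To conclude, apply the triangle inequality $k$ times to this product, obtaining $\gamma((fg)^k) \leq \sum_{j=0}^{k-1}\gamma(g^jfg^{-j}) + \gamma(g^k)$. Each summand equals $\gamma(f)$ by conjugation invariance, so the sum is $k\gamma(f)+\gamma(g^k)$.

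There is no real obstacle. The only point needing care is to justify conjugation invariance of $\gamma$ on $\Ham(M,\omega)$. It holds because $c([M], h^*H) = c([M],H)$ for symplectic $h$, where $h^*H = H\circ h$ generates $h^{-1}\phi_H h$. This relies on the naturality of Floer homology and of the PSS map under symplectomorphisms, and it is standard in the references cited there.
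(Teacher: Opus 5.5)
Your proof is correct and is essentially the paper's argument: triangle inequality and conjugation invariance applied to a factorization of $(fg)^k$ into $k$ conjugates times a $k$-th power. Your identity $(fg)^k=\bigl(\prod_{j=0}^{k-1}g^jfg^{-j}\bigr)g^k$ in fact gives the stated bound directly, whereas the paper's displayed identity $(fg)^k=(fgf^{-1})\cdots(f^kgf^{-k})f^k$ literally yields $k\gamma(g)+\gamma(f^k)$ and needs the extra step $(gf)^k=g(fg)^kg^{-1}$.

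You are also right that the statement needs $k\geq 0$, or $|k|\gamma(f)$ in place of $k\gamma(f)$: with $k=-1$, $g=\id$ and $\gamma(f)>0$ it fails as written. For $k=-m<0$ use $(fg)^{-m}=\bigl(\prod_{j=1}^{m}g^{-j}f^{-1}g^{j}\bigr)g^{-m}$; applying your $k\geq0$ identity verbatim to $g^{-1}f^{-1}$ would give the bound with $f$ and $g$ swapped. Only $k\geq1$ is used in the paper.
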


\begin{proof}
This is immediate by the triangle inequality and conjugation invariance applied to the identity \[(fg)^k = (fgf^{-1}) \ldots (f^k g f^{-k}) f^k.\]   
\end{proof}

\subsection{Persistence modules and barcodes}\phantomsection\label{sec:pers_mod_and_barcodes}
For a detailed exposition of the material presented in this section we refer to \cite{polterovich-shelukhin-16, polterovich-shelukhin-Stojisavljevic-17, kislev-shelukhin-21}. Here we briefly recall some standard notions in the theory of persistence modules, in particular the Usher's boundary depth \cite{usher-bound-depth-13}. A persistence module over a field $\mathbb{K}$ is a pair $(V,\pi)$ where $V = (V_{t})_{t\in \mathbb{R}}$ is a family of finite dimensional vector spaces and $\pi_{st}: V_{s} \rightarrow V_{t}$ are linear maps for every $s \leq t$ that satisfy the following properties,
\begin{enumerate}
\item for $t \ll 0$, $V_{t} = 0$ and for $s,t \gg 0$ the map $\pi_{st}$ is isomorphism.
\item For all $s\leq t\leq r$, $\pi_{sr} = \pi_{tr} \circ \pi_{st}$ and $\pi_{ss} = id$.
\item $\forall r\in \mathbb{R}, \exists \epsilon > 0$ so that $\forall\ r - \epsilon < s \leq t \leq r$, $\pi_{st}$ is an isomorphism.
\item For almost all $r$ there is a neighborhood $U$ of $r$ so that $\forall\ s,t \in U$, $\pi_{st}$ is an isomorphism.
\end{enumerate}
The finite set of numbers $r$ that do not satisfy the last condition is called the spectrum of $V$ and is denoted by $\mathrm{Spec}(V)$. A family of morphisms $A = (A_{t})_{t\in \mathbb{R}}$ is called a persistence morphism $A: (V,\pi) \rightarrow (V',\pi')$ if for all $t$, $A_{t}: V_{t} \rightarrow V_{t}'$ is a linear morphism and they are compatible with the persistence morphisms of $V,V'$, i.e., $A_{s}\circ \pi_{st} = \pi_{st}'\circ A_{t}.$ An elementary persistence module is denoted by $V(I)$ where $I = (a,b]$ or $(a, +\infty)$ is an interval and $V(I)_{t} = \mathbb{K}$ for $t\in I$, $V(I)_{t} = 0$ for $t\notin I$ and $\pi_{st} = id$ for $s,t\in I$ and zero otherwise. The structure theorem for persistence modules \cite{Zomorodian-Carlsson-05} states that for any persistence module $(V,\pi)$ there is a unique collection of pairwise distinct intervals $I_{i}$ of the form $(a_{i}, b_{i}]$ or $(a_{i}, +\infty)$ for $a_{i}, b_{i}\in \mathrm{Spec}(V)$ with positive multiplicities $m_{i}$ so that 
$$V \cong \oplus_{i} V(I_{i})^{m_{i}}.$$
The multi-set containing $m_{i}$ copies of the intervals $I_{i}$ is called the barcode of $V$ and is denoted by $\mathcal{B}(V)$. The length of the biggest bar is denoted by $\beta(V)$ and is called the boundary depth of $V$; see \cite{usher-bound-depth-13}. The latter will be heavily used in the subsequent section.

Given a Hamiltonian $H$ on a symplectic manifold one can construct a persistence module by defining $V_{t} = \HF^{(-\infty, t)}(H,J)$ and $\pi_{st}$ are the inclusion induced maps. We denote the boundary depth of $H$ by $\beta(H)$. It only depends on the time-one map of $H$, hence, for a Hamiltonian diffeomorphism $\phi$ we define its boundary depth to be the boundary depth of a Hamiltonian $H$ with $\phi_{H} = \phi$ and we denote it by $\beta(\phi)$. A key inequality \cite{kislev-shelukhin-21} that will be used often in the course of the proof of the main theorem is that 
\begin{equation}\phantomsection\label{eq:beta_smaller_gamma}
\beta(\phi) \leq \gamma(\phi),
\end{equation}
for all Hamiltonian diffeomorphisms $\phi$, where $\gamma(\phi)$ is the spectral norm introduced in \S\ref{sec:spectral_invariants}.

\section{A theorem of Franks and Handel on zero entropy diffeomorphisms}\phantomsection\label{sec:zero_entropy_franks_handel}
In this section we recollect a few well-known results about the dynamics of area-preserving homeomorphisms of surfaces including the celebrated result of Franks and Handel \cite{franks-handel-zero-ent} on zero entropy area-preserving diffeomorphisms of $S^{2}$ which we shall crucially use in the proof of our main theorem.

\subsection{Birkhoff's ergodic theorem}\phantomsection\label{sec:birkhoff_erg_thm} Let $(X, \mathcal{B}, \mu)$ be a finite measure space and let $T: X \rightarrow X$ be a measure preserving transformation, i.e., $\mu(T^{-1}(B)) = \mu(B)$ for every measurable subset $B\in \mathcal{B}$. Let $f: X \rightarrow \R$ be an integrable function, $f\in L^{1}(X, \mu)$. The following is known as the (pointwise) Birkhoff's ergodic theorem. The limit below exists for almost every $x\in X$,
$$f^{\infty}(x):= \lim_{n \to \infty}\frac{f(x) + f(T(x)) + \dots + f(T^{n-1}(x))}{n}.$$
Moreover, the function $f^{\infty}$ is measurable and $T$-invariant and,
$$\int_{X}f^{\infty}d\mu = \int_{X}fd\mu.$$
We refer to \cite{MR648108, mirzakhani-ergodic-theory-14} and \cite{MR682312, MR832433, MR1567747} for further details.
\subsection{Action, rotation and winding numbers}\phantomsection\label{sec:asymp_action_def}
Let $\phi: \D \rightarrow \D$ be a compactly supported area-preserving diffeomorphism. Let $\omega = d\lambda$ where $\lambda$ is some smooth one form which we later fix it to be $\lambda = r^{2}d\theta/2$ where $(r,\theta)$ is the polar coordinate on the disk. There is a smooth function $a_{\phi, \lambda}: \D \rightarrow \R$ so that,
$$\phi^{*}\lambda - \lambda = da_{\phi,\lambda} \text{ and } a_{\phi, \lambda}(x) = 0, x\in \partial \D.$$
It is called the action function and enjoys the following properties:  
\begin{enumerate}
\item $a_{\phi, \lambda + df} = a_{\phi, \lambda} + f\circ \phi - f.$
\item $a_{\phi\psi, \lambda} = a_{\phi, \lambda} \circ \psi + a_{\psi, \lambda}.$
\item $a_{\phi, \psi^{*}\lambda} = a_{\psi\phi, \lambda} - a_{\psi, \lambda}.$
\item $-a_{\phi, (\phi^{-1})^{*}\lambda} = a_{\phi^{-1}, \lambda} = - a_{\phi, \lambda} \circ \phi^{-1}.$
\end{enumerate}
Using the convention $\omega(X, -) = -dH$ for Hamilton's equation and letting $H_{t}$, which is normalized by being zero on the boundary $\partial \D$, generate $\{\phi_{t}\}$ with $\phi_{1} = \phi$ we have
\begin{equation}\phantomsection\label{eq:two_acts_compared}
a_{\phi, \lambda}(x) = -\int_{0}^{1}H_{t}\circ \phi_{t}(x)\d t + \int_{\{\phi_{t}(x)\}}\lambda;
\end{equation}
see \cite[Proposition 2.1.4]{bechara2023asymptotic}. If $\phi$ is not compactly supported, then, one first picks a symplectic isotopy $\{\phi_{t}\}$ with time-one map $\phi$ and defines $a_{\{\phi_{t}\}, \lambda}$ similarly by
$$\phi^{*}\lambda - \lambda = da_{\{\phi_{t}\}, \lambda} \text{ and } a_{\{\phi_{t}\}, \lambda}(x) = \int_{\{\phi_{t}(x)\}}\lambda, x\in \partial \D.$$
The existence of $a_{\{\phi_{t}\}, \lambda}$ is not immediate: one has to prove that if the second equality, the normalization condition, holds for a point $x\in \partial \D$, then it holds for all other points $x'\in \partial \D$. Similar properties as above hold plus:
\begin{enumerate}[resume]
\item The value $a_{\{\phi_{t}\}, \lambda}(x)$ is independent of $\lambda$ for all $x\in \fix(\phi_{1})$.
\item $a_{\{\phi_{t}\}, \lambda}$ depends only on the homotopy class of the isotopy $\{\phi_{t}\}$ with fixed end points.
\end{enumerate}
The asymptotic mean action of a point $x\in \D$ is defined by
$$a^{\infty}_{\{\phi_{t}\},\lambda}(x) := \lim_{n\to \infty} \frac{a_{\{\phi_{t}^{n}\}, \lambda}(x)}{n} = \lim_{n\to \infty} \frac{1}{n}\sum_{j=0}^{n-1} a_{\{\phi_{t}\},\lambda}(\phi_{1}^{j}(x)),$$
if it exists. Using the above properties $(\mathrm{i})$ and $(\mathrm{ii})$ of the action functions one observes that this limit does not depend on the primitive $\lambda$ and so we are free to denote it by $a^{\infty}_{\{\phi_{t}\}}(x)$. By Birkhoff's ergodic theorem \S\ref{sec:birkhoff_erg_thm} one can prove that the limit exists for almost every point $x\in \D$ and moreover by the same theorem one shows,
$$\int_{\D}a^{\infty}_{\{\phi_{t}\}, \lambda} \omega= \int_{\D}a_{\{\phi_{t}\}, \lambda} \omega.$$

\subsubsection{Poincaré rotation number}\phantomsection\label{sec:rotation_numb_def}
Given a diffeomorphisms $\phi: \R/\Z \rightarrow \R/\Z$ and its lift $\t{\phi}: \R \rightarrow \R$ we define the rotation number of $x\in \R/\Z$ by
$$\rho_{\t{\phi}}(x) = \lim_{n\to \infty} \frac{\t{\phi}^{n}(\t{x}) - \t{x}}{n},$$
where $\t{x}\in \R$ is some lift of $x$. The above limit exists and is independent of $x$, see \cite{poincare-85}, hence we define the rotation number of $\t{\phi}$ to be the limit above and denote it by $\rho(\t{\phi})$. One can also define $\rho(\phi) \in \R/\Z$ by the limit modulo $\Z$. One way to pick a lift $\t{\phi}$ for $\phi$ is to choose an isotopy $\{\phi_{t}\}$, if it exists, with $\phi_{0} = id$ and $\phi_{1} = \phi$. Then, one can define similarly the rotation number $\rho(\{\phi_{t}\})$.
\subsubsection{Rotation number on an annulus}\phantomsection\label{sec:rot_numb_ann}
Let $\phi: A\rightarrow A$ be a symplectic map on the annulus $A = \R/\Z \times [0,1]$ that is isotopic to the identity via $\{\phi_{t}\}$. The isotopy gives an isotopy of symplectic maps $\{\t{\phi}_{t}\}$ on $\R \times [0,1]$ lifting $\{\phi_{t}\}$. We denote $\t{\phi}_{1}$ by $\t{\phi}$. Alternatively, if $\phi$ is not isotopic to the identity one would just pick a lift $\t{\phi}$ of $\phi$. For a point $x\in A$ we say that $\rho_{\t{\phi}}(x)$ is a rotation number of $x$ with respect to $\t{\phi}$ if for every sequence of numbers $n_{k} \to +\infty$ the following limit exists and equals $\rho_{\t{\phi}}(x)$:
$$\lim_{k\to \infty}\frac{\pi_{1}(\t{\phi}^{n_{k}}(\tilde{x})) - \pi_{1}(\tilde{x})}{n_{k}} = \rho_{\t{\phi}}(x),$$
for every lift $\t{x}$ of $x$ where $\pi_{1}: \R \times [0,1] \rightarrow \R$ is the projection to the first factor. See \cite[Corollary 1.6]{franks-handel-zero-ent} and the discussion in \S\ref{sec:franks_handel_thm_subsec} for well-definedness and continuity of the rotation numbers on closed annuli where the map $\phi$ is area-preserving and has zero topological entropy in some particular cases.

The work of Le Calvez-Tal \cite[Theorem A]{lecalvez-tal-22} ensures the existence of rotation numbers of points with non-empty omega limit sets for zero entropy homeomorphisms of open or closed annuli. A point $x\in A$ in an open annulus $A$ is called an escaping point if $\{\phi^{n}(x)\}_{n \geq 0}$ is not included in any compact subset of $A$. We denote by $\mathrm{ne}_{+}(\phi)$ the complement of the set of escaping points of $\phi$ and by $\mathrm{ne}_{-}(\phi)$ the complement of the set of escaping points of $\phi^{-1}$ and finally we set $\mathrm{ne}(\phi) = \mathrm{ne}_{+}(\phi)\cup \mathrm{ne}_{-}(\phi)$. For a point $x\in \mathrm{ne}_{+}(\phi)$ we say that $\rho_{\t{\phi}}(x)$ is a rotation number of $x$ with respect to $\t{\phi}$ if for every sequence $n_{k} \to +\infty$ for which $\phi^{n_{k}}(x)\in K\subset A$ for some compact set $K$ the following limit exists and equals $\rho_{\t{\phi}}(x)$:
$$\lim_{k \to \infty}\frac{\pi_{1}(\t{\phi}^{n_{k}}(\tilde{x})) - \pi_{1}(\tilde{x})}{n_{k}} = \rho_{\t{\phi}}(x);$$
for every lift $\t{x}\in \R \times (0,1)$ of $x$. A point $x\in A$ is called non-wandering if for every neighborhood $U$ of $x$ there is an iteration $n$ so that $\phi^{n}(U) \cap U \neq \emptyset$. The set of such points is denoted by $\Omega(\phi)$. For a point $x\in A$ we denote by $\omega_{\phi}(x)$ the set of all accumulation points of the sequence $\{\phi^{n}(x)\}_{n \geq 0}$ and we set $\alpha_{\phi}(x) = \omega_{\phi^{-1}}(x)$. The following was shown in \cite[Theorem A]{lecalvez-tal-22}.

\begin{theorem}\phantomsection\label{thm:rot_numb_ann_defined}
Suppose $\phi:A \rightarrow A$ be a symplectic map on an open annuls $A$ isotopic to the identity and $\t{\phi}: \R\times \R \rightarrow \R\times \R$ is a choice of lift for $\phi$. Assume $\phi$ has no topological horseshoe. Then the following hold.
\begin{enumerate}
\item For every $x\in \mathrm{ne}_{+}(\phi)$ the rotation number $\rho_{\t{\phi}}(x)$ is well-defined.
\item For $x\in \mathrm{ne}_{+}(\phi)$ and $y\in \omega_{\phi}(x)$ we have $\rho_{\t{\phi}}(x) = \rho_{\t{\phi}}(y)$.
\item If $x\in \mathrm{ne}_{+}(\phi) \cap \mathrm{ne}_{-}(\phi)\cap \Omega(\phi)$ then $\rho_{\t{\phi}^{-1}}(x) = -\rho_{\t{\phi}}(x)$.
\item The map $\rho_{\t{\phi}^{\pm1}}:\Omega(\phi) \cap \mathrm{ne}(\phi) \rightarrow \R$ defined below is continuous,
$$\rho_{\t{\phi}^{\pm1}}(x) =
\begin{cases}
\rho_{\t{\phi}}(x) & \text{if } x \in \Omega(\phi) \cap \mathrm{ne}_{+}(\phi),\\[2mm]
-\rho_{\t{\phi}^{-1}}(x) & \text{if } x \in \Omega(\phi) \cap \mathrm{ne}_{-}(\phi).
\end{cases}$$
\end{enumerate}
\end{theorem}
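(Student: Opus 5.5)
This is \cite[Theorem A]{lecalvez-tal-22}, and we would not reprove it from scratch; the plan is to follow the forcing-theory approach of Le Calvez and Tal. Everything is reduced to one criterion: a topological horseshoe appears as soon as some transverse trajectory has an $\mathcal{F}$-transverse self-intersection.

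First, we would fix an identity isotopy $I$ from $\id$ to $\phi$ that lifts to the chosen $\t{\phi}$. We would make it maximal (Béguin--Crovisier--Le Roux) and take a Brouwer--Le Calvez transverse foliation $\mathcal{F}$ on the complement of its fixed-point set. Every orbit segment then has an $\mathcal{F}$-transverse trajectory, well defined up to equivalence, and we lift these to the universal cover $\R\times\R$. The key input is the forcing lemma of \cite{LeCalvez-Tal-forcing-theory-18}: if a transverse trajectory meets itself, or a deck-translate of itself, in an $\mathcal{F}$-transverse way, then $\phi$ has a topological horseshoe. Under our hypothesis, therefore, all transverse trajectories are free of such self-intersections. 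The aim is to show this forces a strongly ordered, almost monotone behaviour of lifted trajectories relative to the leaves of $\mathcal{F}$, including leaves that wrap around the annulus.

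For (i), let $x\in\mathrm{ne}_+(\phi)$ and let $n_k,m_k$ be two sequences along which $\phi^{n}(x)$ stays in a compact set $K$. Suppose the two displacement quotients converged to different limits $\rho_1<\rho_2$. Then, after compactness in $K$ and a pigeonhole argument on returns close to a point of $K$, the lifted trajectory would contain two long subpaths with different asymptotic slopes starting near the same leaf. A trajectory whose slope is $\rho_1$ on one stretch and $\rho_2$ on another must cross a deck translate $T^p$ of itself (with $p$ chosen via the rational numbers between $\rho_1$ and $\rho_2$) transversally, which yields a horseshoe. For (ii), take $y\in\omega_\phi(x)$. The transverse trajectory of $y$ over a long time is shadowed, in the equivalence sense of forcing theory, by pieces of the trajectory of $x$. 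If $\rho(y)\neq\rho(x)$, the same crossing argument between the trajectory of $x$ and a translate of the trajectory of $y$ produces a transverse intersection, hence a horseshoe. For (iii), the non-wandering assumption lets us close up: approximate $x$ by points whose forward and backward orbits return near $x$, and apply (ii) in both time directions. A mismatch between the forward and backward rotation numbers would again yield a transverse intersection.

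For (iv), take $x_n\to x$ in $\Omega(\phi)\cap\mathrm{ne}(\phi)$ with $\rho(x_n)\to\rho'\neq\rho(x)$. For $n$ large, the trajectory of $x_n$ follows that of $x$ for a long time, since transverse trajectories depend lower-semicontinuously on the point. By (i)--(iii), however, its asymptotic slope is $\rho(x_n)$. So the trajectory of $x_n$ first tracks the slope $\rho(x)$ and must eventually separate onto slope near $\rho'$, and recurrence of $x_n$ (non-wandering) makes it come back. This forces a transverse self-intersection with a translate, and thus a horseshoe.

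The main obstacle is the geometric step common to all four parts: turning \emph{two different slopes along the same, or nearby, trajectories} into an honest $\mathcal{F}$-transverse intersection in the annular cover. This needs care with leaves that are non-compact in the annulus, and with escaping behaviour near the ends, which is why the hypothesis $\mathrm{ne}_\pm$ is imposed. Our first attempt would be to use the realization results of Le Calvez--Tal: closed transverse loops with positive or negative drift are realized by periodic orbits. If two distinct rotation numbers produced transverse loops of different drift that intersect, the horseshoe would follow directly.
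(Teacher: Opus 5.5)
Your proposal matches the paper, which also does not prove this statement and simply quotes it from \cite[Theorem A]{lecalvez-tal-22}; that theorem is stated for homeomorphisms isotopic to the identity with no topological horseshoe, so it covers the area-preserving case here and the citation is the entire argument. The forcing-theory outline you add goes beyond what the paper supplies and is only heuristic (for instance, in (iv) you treat non-wandering as if it meant recurrent), so present it as motivation rather than as part of the proof.
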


\begin{remark}\phantomsection\label{rem:rot_numb_of_zero_ent_on_closed_ann}
It is a well-known result of Katok \cite{katok-89} that positive topological entropy forces the existence of a topological horseshoe \cite{smale-68}. The converse does not necessarily hold. The relation of the topological entropy and existence of topological horseshoes is not clear \cite{MR2339286, MR616561}. 

However, it was shown in \cite[Theorem L]{LeCalvez-Tal-forcing-theory-18} that if $\phi: A \rightarrow A$ is a \emph{homeomorphism} of an open annulus $A$, so that its induced map on the sphere obtained by collapsing the ends to two points has zero topological entropy, then the rotation number of all bi-recurrent points in $A$ are well-defined and the rotation function $\rho_{\t{\phi}}:A \rightarrow \R$ is continuous on the set of bi-recurrent points; this result was previously proved for diffeomorphisms by Handel in an unpublished work. 

This in particular implies that if $\phi: \bar{A} \rightarrow \bar{A}$ is an area-preserving homeomorphism of a closed annulus $\bar{A}$ with zero topological entropy the rotation function is defined everywhere and it is continuous. See a similar result of Franks and Handel on smooth maps of closed annuli in Remark \ref{rem:rot_def_for_zero_ent_closed_ann}.

\end{remark}
The following lemma will be important in \S\ref{sec:proof_main_theorem}.
\begin{lemma}\phantomsection\label{lem:rotation_number_conjugation}
Let $A_{1},A_{2}$ be a two closed annuli and $\Gamma: A_{1} \rightarrow A_{2}$ be a homeomorphism. Let $\phi: A_{2} \rightarrow A_{2}$ be a homeomorphism and define $\phi_{1}:= \Gamma^{-1}\phi_{2}\Gamma$. Then, for every point $p_{1}\in A_{1}$ and $p_{2} = \Gamma(p_{1})$ we have $\rho_{\t{\phi}_{1}}(p_{1}) = \rho_{\t{\phi}_{2}}(p_{2}),$ subject to both sides being well-defined, where $\t{\phi}_{2}$ is a lift of $\phi_{2}$ to $\t{A}_{2}$, $\t{\phi}_{1} = \t{\Gamma}^{-1}\t{\phi}_{2}\t{\Gamma}$ and $\t{\Gamma}$ is a lift of $\Gamma$.
\end{lemma}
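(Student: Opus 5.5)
The plan is to compute directly with lifts and use that $\t{\Gamma}$ moves points only a bounded distance in the $\R$-direction. Fix identifications $\t{A}_i \cong \R \times [0,1]$, with deck translation $(s,r)\mapsto (s+1,r)$ and first-coordinate projection $\pi_1$. First I would record that $\t{\phi}_1 = \t{\Gamma}^{-1}\t{\phi}_2\t{\Gamma}$ is indeed a lift of $\phi_1$. Iterating the conjugation formula telescopes, giving $\t{\phi}_1^{n} = \t{\Gamma}^{-1}\t{\phi}_2^{n}\t{\Gamma}$ for all $n \geq 1$.

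The key estimate is that both $\t{\Gamma}$ and $\t{\Gamma}^{-1}$ have bounded displacement in the first coordinate. Since $\Gamma$ is a homeomorphism of annuli, it induces an automorphism of $\pi_1 \cong \Z$, so $\t{\Gamma}$ commutes with the deck translation $T$ up to a sign: $\t{\Gamma}\circ T = T^{\pm 1}\circ \t{\Gamma}$. I will assume the orientation-compatible case $\t{\Gamma} T = T\t{\Gamma}$, which is the case relevant to later applications; in the reversed case the rotation number changes sign, and I would flag this as an implicit convention in the statement. With this convention, $x \mapsto \pi_1(\t{\Gamma}(x)) - \pi_1(x)$ is $T$-invariant and continuous, so it descends to the compact annulus $A_1$. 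Hence there is a constant $C$ with
$$\abs{\pi_1(\t{\Gamma}(x)) - \pi_1(x)} \leq C \quad \text{for all } x,$$
and the same argument applied to $\t{\Gamma}^{-1}$ gives the same kind of bound.

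Now take a lift $\t{p}_1$ of $p_1$ and set $\t{p}_2 = \t{\Gamma}(\t{p}_1)$, which is a lift of $p_2$. Then
$$\pi_1(\t{\phi}_1^{n}\t{p}_1) - \pi_1(\t{p}_1) = \big[\pi_1(\t{\Gamma}^{-1}\t{\phi}_2^{n}\t{p}_2) - \pi_1(\t{\phi}_2^{n}\t{p}_2)\big] + \big[\pi_1(\t{\phi}_2^{n}\t{p}_2) - \pi_1(\t{p}_2)\big] + \big[\pi_1(\t{p}_2) - \pi_1(\t{p}_1)\big].$$
The first and last brackets are bounded by $C$ in absolute value, uniformly in $n$. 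Dividing by $n_k$ along any sequence $n_k \to \infty$ and letting $k\to\infty$, the two bounded brackets vanish in the limit. So the limits defining $\rho_{\t{\phi}_1}(p_1)$ and $\rho_{\t{\phi}_2}(p_2)$ agree whenever either exists. Since both are assumed well-defined, they are equal.

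This identity holds for every sequence $n_k$, so it is compatible with the definition in \S\ref{sec:rot_numb_ann}, which requires the limit along all sequences. The only real obstacle is the bounded-displacement step, which reduces to the equivariance $\t{\Gamma} T = T \t{\Gamma}$ together with compactness of $A_1$.
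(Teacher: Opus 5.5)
Your argument is correct and is essentially the paper's proof. The paper likewise writes $\t{\phi}_1^n=\t{\Gamma}^{-1}\t{\phi}_2^n\t{\Gamma}$ and uses that $\t{\Gamma}^{\pm 1}$ shift the $\R$-coordinate by a uniformly bounded amount, so the two difference quotients differ by $O(1/n)$; you additionally justify that bound via deck-equivariance and compactness of $A_1$, and you correctly flag that the statement (and the paper's proof) tacitly assumes $\t{\Gamma}\circ T=T\circ\t{\Gamma}$, since if $\Gamma$ reverses the core circle the rotation number changes sign.
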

\begin{proof}
Pick a lift $\t{p}_{1}$ of $p_{1}$ and let $\t{p}_{2} = \t{\Gamma}(\t{p}_{1})$, then,
$$\begin{aligned}
\rho_{\t{\phi}_{1}}(p_{1}) &= \lim_{n\to \infty} \frac{y(\t{\phi}_{1}^{n}(\t{p}_{1})) - y(\t{p}_{1})}{n}\\
&= \lim_{n\to \infty} \frac{y(\t{\Gamma}^{-1}\t{\phi}_{2}^{n}(\t{p}_{2})) - y(\t{p}_{1})}{n}\\
&=\lim_{n\to \infty} \frac{y(\t{\phi}_{2}^{n}(\t{p}_{2})) - y(\t{p}_{2})}{n} = \rho_{\t{\phi}_{2}}(p_{2}),\\
\end{aligned}$$
where the second last equality is due to the fact that $y(\t{\Gamma}^{-1}\t{\phi}_{2}^{n}(\t{p}_{2})) - y(\t{p}_{1})$ stays in a finite distance from $y(\t{\phi}_{2}^{n}(\t{p}_{2})) - y(\t{p}_{2})$ independent of $n$.
\end{proof}
\subsubsection{Winding number}
Let $\{\phi_{t}\}$ be a symplectic isotopy of $\D$, not necessarily compactly supported. The winding number for a pair of distinct points $x,y\in \D$ is, 
$$W_{\{\phi_{t}\}}(x,y) := (\theta(1) - \theta(0))/2\pi,$$
where $\theta(t)$ is the angular coordinate of the point $\phi_{t}(y) - \phi_{t}(x) \in \D\backslash\{0\}.$ It is not hard to show that the winding number depends only on the homotopy class of the isotopy $\{\phi_{t}\}$ with fixed ends. In the case of compactly supported isotopies, there is only one homotopy class of isotopies for a fixed time-one map and it would be safe to write $W_{\phi}(x,y)$.

The asymptotic winding number of a pair of points $x,y\in \D$ with respect to a symplectic isotopy $\{\phi_{t}\}$ is defined by
$$W^{\infty}_{\{\phi_{t}\}}(x,y) = \lim_{n\to \infty} \frac{W_{\{\phi_{t}^{n}\}}(x,y)}{n}.$$
Birkhoff's ergodic theorem \S\ref{sec:birkhoff_erg_thm} implies that the limit exists for almost every point $x\in \D$. Indeed, one applies the ergodic theorem to the function $W_{\{\phi_{t}\}}(-,-): \D \times \D \rightarrow \R$, which is known to be integrable, see for instance \cite{shelukhin-integ-wind-15}, and the measure preserving map $\phi_{1} \times \phi_{1}$, and uses,
$$\frac{W_{\{\phi_{t}^{n}\}}(x,y)}{n} = \frac{1}{n}\sum_{k=0}^{n-1} W_{\{\phi_{t}\}}(\phi_{1}^{k}(x),\phi_{1}^{k}(y)).$$
Therefore, the function $W^{\infty}_{\{\phi_{t}\}}(-, -)$ is integrable which implies $W^{\infty}_{\{\phi_{t}\}}(x, -)$ is an integrable function for almost every point $x\in \D$; see \cite[\S2.3.2]{bechara2023asymptotic}.

\subsubsection{Action-winding-rotation identities}\phantomsection\label{sec:awr_identities}
Let $\{\phi_{t}\}$ be a symplectic isotopy on $\D$. Let $b\in \partial \D$ and $x\in \mathrm{int}(\D)$ be any points. Then, 
\begin{equation}\phantomsection\label{eq:act_wind_rot_bound}
a^{\infty}_{\{\phi_{t}\}}(b) = \pi W^{\infty}_{\{\phi_{t}\}}(x,b)= \rho(\{\phi_{t}\})/2.
\end{equation}
Here is how to prove it. Write $\phi_{t}(b) = (1, \theta(t))$. If $X_{t}$ is the generating vector field of the isotopy $\{\phi_{t}\}$ then on the boundary it is of the form $X_{t}(\phi_{t}(b)) = \dot{\theta}(t)\partial_{\theta}$. So,
$$a_{\{\phi_{t}\}, \lambda}(b) = \int_{\{\phi_{t}(b)\}}\lambda = \int_{\{\phi_{t}(b)\}}r^{2}d\theta/2 = 1/2\int_{0}^{1}d\theta(\dot{\theta}(t)\partial_{\theta})dt = \frac{\t{\theta}(1) - \t{\theta}(0)}{2},$$
where $\t{\theta}(t)$ is some lift of $\theta(t)$. The latter, denoted by $w_{\{\phi_{t}\}}(b)$, satisfies,
$$\abs{\pi W_{\{\phi_{t}\}}(x,b) - w_{\{\phi_{t}\}}(b)} \leq \pi/{2}.$$
Applying this estimate to the isotopy $\phi_{t}^{n}$ and dividing by $n$ and taking $n \to \infty$ one obtains $a^{\infty}_{\{\phi_{t}\}}(b) = \pi W^{\infty}_{\{\phi_{t}\}}(x,b)$. For the other equality in \eqref{eq:act_wind_rot_bound}, one observes that $\lim w_{\{\phi_{t}^{n}\}}(b)/n$ computes the rotation number $\rho_{\{\phi_{t}\}}(b)/2$. For a point $x\in \mathrm{int}(\D)$ one can show \cite[Theorem 4.3.2]{bechara2023asymptotic},
$$a^{\infty}_{\{\phi_{t}\}}(x) = \int_{\D} W^{\infty}_{\{\phi_{t}\}}(x,\xi)\omega(\xi);$$
this is consistent with \eqref{eq:act_wind_rot_bound} as the term on the right becomes $\pi W^{\infty}_{\{\phi_{t}\}}(x,0)$ for any point $x\in \partial\bar\D$; see also \cite[Theorem 4.1]{MR5026815}.

\subsection{Flux and mean rotation vector}\phantomsection\label{sec:flux_average_rot_vec}
In this section, following \cite[\S2]{franks-handle-pp-ham-03} and \cite{franks-handle-rot-vector-96}, we prove a key identity that relates the flux of a symplectic isotopy on a closed oriented surface to the mean rotation vector. Let $\Sigma$ be a closed oriented surface or a closed annulus in which case we denote it by $A$. First, recall that the flux homomorphism $\Fl: \t{\Symp}_{0}(\Sigma) \rightarrow \H^{1}(\Sigma, \R)$ is defined by 
$$\Fl([\{\psi_{t}\}])(\gamma) = \int u_{\gamma}^{*}\omega,$$
where $u_{\gamma}(s,t) = \psi_{t}(\gamma(s)).$ In the case of the annulus $A$ the flux is valued in $\H^{1}(A, \R)$ and on the generator of $\H_{1}(A, \partial A, \R)$ it is equal to,
$$\Fl([\{\psi_{t}\}])(\mathrm{gen}) = \int_{C(\gamma_{0})} \t{\omega},$$
where $C(\gamma_{0})$ is the region in $\t{A} = [0,1] \times \R$ enclosed by $\gamma_{0} = [0,1] \times \{0\}$ and $\t{\psi}_{1}(\gamma_{0})$ where $\t{\psi}_{1}$ is the lift of $\psi_{1}$ given by the isotopy $\{\psi_{t}\}$. We now define the rotation vector associated to $[\{\psi_{t}\}]$. Let $x\in \Sigma$ be a point and consider the path $\{\psi_{t}(x)\}_{t\in [0,1]}$. Turn this path into a loop by connecting $\psi_{1}(x)$ to $x$ by a "short" path, for instance, in a fundamental domain of the universal cover of $\Sigma$, connect both ends of $\{\t{\psi}_{t}(x)\}$ to the origin; see \cite[\S2]{franks-handle-pp-ham-03} for a more precise definition. Denote the obtained closed loop by $\{\psi^{c}_{t}(x)\}$. Define $\delta(x, \{\psi_{t}\})\in \H_{1}(\Sigma, \R)$ to be the homology class $[\{\psi^{c}_{t}(x)\}]$. We define the rotation vector of $x$ with respect to the isotopy $\{\psi_{t}\}$ by
$$\rho_{\{\psi_{t}\}}(x): = \lim_{n\to \infty} \frac{1}{n}\sum_{k=1}^{n}\delta(\psi_{1}^{k}(x), \{\psi_t\}).$$ 
To be precise this is defined by how it pairs with the elements $\alpha \in \H^{1}(\Sigma, \R)$,
\begin{equation}\phantomsection\label{eq:def_rot_vec_via_int_alpha}
\langle \rho_{\{\psi_{t}\}}(x), \alpha\rangle = \lim_{n\to \infty} \frac{1}{n}\sum_{k=1}^{n}\int_{\{\psi_{t}(\psi_{1}^{k}(x))\}}\alpha.
\end{equation}
In the case of the annulus $A$, the rotation vector, or the rotation number, is defined similarly and is valued in $\H_{1}(A, \partial A, \R) = \R$. It coincides with the definition in \S\ref{sec:rotation_numb_def}. The mean rotation vector of $\{\psi_{t}\}$ is defined by
$$\overline{\rho}(\{\psi_{t}\}):= \int_{\Sigma}\rho_{\{\psi_{t}\}}(x)\omega.$$
The following proposition is proved in \cite[Proposition 2.11]{franks-handle-pp-ham-03}
\begin{prop}
Let $[\{\psi_{t}\}]\in \t{\Symp}_{0}(\Sigma, \omega)$. Then for every $\gamma\in \H_{1}(\Sigma, \R)$,
$$\overline{\rho}([\{\psi_{t}\}]) \cap \gamma = \frac{1}{\text{area}(\Sigma)}\Fl([\{\psi_{t}\}])(\gamma).$$
\end{prop}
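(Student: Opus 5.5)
The plan is to pair both sides with an arbitrary closed $1$-form $\alpha$ representing a class in $\H^{1}(\Sigma,\R)$, apply Birkhoff's ergodic theorem (\S\ref{sec:birkhoff_erg_thm}), and then use the standard flux identity. By \eqref{eq:def_rot_vec_via_int_alpha}, $\langle\rho_{\{\psi_t\}}(x),\alpha\rangle$ is the Birkhoff average of
\[
f_\alpha(x):=\int_{\{\psi_t(x)\}}\alpha = \int_0^1 \alpha_{\psi_t(x)}(X_t)\,dt
\]
along the orbit of the area-preserving map $\psi_1$. Two remarks on this formula:
\begin{itemize}
\item the closing short path only changes each summand by a uniformly bounded amount, which disappears in the average;
\item since $\Sigma$ is compact, $f_\alpha$ is bounded and hence integrable.
\end{itemize}
Birkhoff's theorem then shows that the limit exists almost everywhere and that
\[
\int_\Sigma \langle \rho_{\{\psi_t\}}(x),\alpha\rangle\,\omega=\int_\Sigma f_\alpha\,\omega .
\]
So the mean rotation vector is well defined, and its pairing with $[\alpha]$ equals $\int_\Sigma f_\alpha\,\omega$ (normalizing by $\mathrm{area}(\Sigma)$ as in the statement).

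Next I compute $\int_\Sigma f_\alpha\,\omega$. Change variables $y=\psi_t(x)$ for each fixed $t$; this is legitimate since $\psi_t$ preserves $\omega$. We get
\[
\int_\Sigma f_\alpha\,\omega=\int_0^1\!\!\int_\Sigma \alpha(X_t)\,\omega\,dt .
\]
On a surface, $\alpha(X_t)\,\omega=\alpha\wedge\iota_{X_t}\omega$. This is the identity $\iota_{X}(\alpha\wedge\omega)=0$, which holds because $\alpha\wedge\omega$ is a $3$-form on a $2$-manifold and therefore vanishes. Write $\beta_t:=\iota_{X_t}\omega$, which is closed. Then the quantity above is $\int_0^1\int_\Sigma \alpha\wedge\beta_t\,dt$. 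The flux satisfies $\Fl([\{\psi_t\}])=\int_0^1[\beta_t]\,dt$ (up to the sign convention fixed by the definition via $u_\gamma^*\omega$). Hence
\[
\int_\Sigma f_\alpha\,\omega=\big\langle [\alpha]\cup \Fl([\{\psi_t\}]),[\Sigma]\big\rangle .
\]

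Finally, I translate this into the claimed form. Let $\gamma$ be Poincaré dual to $[\alpha]$, with the convention $\int_\gamma\eta=\int_\Sigma\alpha\wedge\eta$. Then the right-hand side becomes $\Fl([\{\psi_t\}])(\gamma)$, possibly up to a sign that has to be tracked. On the left-hand side, $\langle\overline\rho,[\alpha]\rangle$ is exactly $\overline{\rho}\cap\gamma$, the intersection pairing, after the area normalization.

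The main obstacle is the bookkeeping, not any analytic estimate:
\begin{itemize}
\item fixing the sign and orientation conventions in the flux (via $u_\gamma$) and in Poincaré duality/intersection so that they match;
\item checking that $\Fl([\{\psi_t\}])(\gamma)=\int_0^1\int_\gamma \iota_{X_t}\omega\,dt$, which follows by differentiating $u_\gamma$ in $t$.
\end{itemize}

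For the annulus case, the same argument applies with $\alpha=d\theta$ (which pulls back to $dy$ on $\t A$). There one must check that the lift-based flux description (the area of $C(\gamma_0)$) agrees with $\int_0^1\int_{\gamma_0}\iota_{X_t}\omega\,dt$. This follows from Stokes' theorem on the swept region in $\t A$.
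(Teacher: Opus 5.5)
Your argument is correct, but it is not the route the paper takes. For closed surfaces the paper gives no proof at all and simply cites \cite[Proposition 2.11]{franks-handle-pp-ham-03}. What it does prove is the annulus analogue (the lemma that follows), by a global computation with the time-one lift. After the same Birkhoff step, it uses that $\t\psi_1$ preserves $\t\omega$ to write $\int_{\FD}(y\circ\t\psi_1-y)\,\t\omega=\int_{\t\psi_1(\FD)}y\,\t\omega-\int_{\FD}y\,\t\omega$. It then applies Stokes with the primitive $-\tfrac12 y^2\,dx$ and the deck relation $\t\psi_1(\gamma_1)=\t\psi_1(\gamma_0)+(0,1)$, and arrives at $\int_{\t\psi_1(\gamma_0)}y\,dx$, which is the flux.

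Your proof is infinitesimal instead. Invariance of $\omega$ under every $\psi_t$ moves the integral to $\int_0^1\!\int_\Sigma\alpha(X_t)\,\omega\,dt$. The pointwise identity $\alpha(X)\,\omega=\alpha\wedge\iota_X\omega$ turns this into the cup product of $[\alpha]$ with $\int_0^1[\iota_{X_t}\omega]\,dt$, and duality finishes. This is coordinate-free and treats all genera and the annulus uniformly, without fundamental domains or explicit primitives. It also extends to dimension $2n$ via $\alpha(X)\,\omega^n=n\,\alpha\wedge\iota_X\omega\wedge\omega^{n-1}$. The paper's version, in exchange, produces the explicit expression $\int_{\t\psi_1(\gamma_0)}y\,dx$, which is exactly what gets matched against action differences in Theorem \ref{mainthm:diff_act_chords_vs_ave_rot_num}.

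For the annulus, note that the duality step is Lefschetz duality $\H^1(A)\times\H^1(A,\partial A)\to\R$. It uses that $\iota_{X_t}\omega$ restricts to zero on $\partial A$, which holds because $X_t$ is tangent to $\partial A$. Without this, $\int_A dy\wedge\beta=-\int_{\gamma_0}\beta$ fails for general closed $\beta$.

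Two small points. First, the closing paths do not drop out because each perturbs a summand by a bounded amount; bounded errors need not average to zero. They drop out because, with the basepoint construction of $\delta$, the correction is a coboundary $g-g\circ\psi_1$ with $g$ bounded, so the sum telescopes. In any case, \eqref{eq:def_rot_vec_via_int_alpha} is already stated in terms of your $f_\alpha$.

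Second, the sign you left open can be resolved. Take $u_\gamma(s,t)=\psi_t(\gamma(s))$ integrated over $[0,1]^2$ oriented by $ds\wedge dt$, take $\gamma$ defined by $\int_\gamma\eta=\int_\Sigma\alpha\wedge\eta$, and use the usual intersection sign. Then your chain gives $\overline\rho\cdot\gamma=\int_\gamma\int_0^1\iota_{X_t}\omega\,dt=-\Fl(\gamma)$. You can check this with $\psi_t(x,y)=(x+tc,y)$ on $\R^2/\Z^2$ and $\gamma$ the vertical circle: $\overline\rho\cdot\gamma=c$ while $\Fl(\gamma)=-c$. So the display holds with $\overline\rho\cap\gamma$ read as $\gamma\cdot\overline\rho$, which is also the reading consistent with the annulus lemma. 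You are also right that the factor $1/\mathrm{area}(\Sigma)$ requires normalizing the measure: with the paper's definition $\overline\rho=\int_\Sigma\rho\,\omega$, the identity comes out without that factor.
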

Similar equation holds in the case of the annulus \cite[Lemma 3.2]{qu-xia-24}. In the theorem below, we have identified $\H^{1}(A,\R)$ and $\H_{1}(A,\partial A, \R)$ with $\R$ and $\rho_{\{\psi_{t}\}}(x)$ is understood as evaluated on the unique one-form $dy$, i.e.,
$$\rho_{\{\psi_{t}\}}(x) = \lim_{n\to \infty} \frac{1}{n}\sum_{k=1}^{n}\int_{\{\psi_{t}(\psi_{1}^{k}(x))\}} dy = \lim_{n\to \infty} \frac{\pi_{1}(\t{\psi}_{1}^{n}(\t{x})) - \pi_{1}(\t{x})}{n},$$
which coincides with the classical rotation number defined in \S\ref{sec:rot_numb_ann}. 
\begin{lemma}
Consider the closed one-form $dy$ on $\t{A} = [0,1]_{x} \times \R_{y}$. For any area-preserving isotopy $\psi_{t}: A \rightarrow A$, where the symplectic form on $A$ is induced by $\t{\omega} = dx\wedge dy$, one has,
\begin{equation}\phantomsection\label{eq:flux_vs_mean_rot}
\Fl([\{\psi_{t}\}]) = \int_{A}\rho_{\{\psi_{t}\}}(x)\omega.
\end{equation}
\end{lemma}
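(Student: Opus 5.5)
The identity should follow from writing the flux as an integral over the annulus of a displacement function and then applying Birkhoff's ergodic theorem (\S\ref{sec:birkhoff_erg_thm}) to that function.

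\textbf{Step 1: the displacement function.} Lift the isotopy $\{\psi_{t}\}$ to $\{\t{\psi}_{t}\}$ on $\t{A} = [0,1]\times\R$. Define $f: A\to\R$ by
\[
f(x) = \int_{\{\psi_{t}(x)\}} dy = \pi_{1}(\t{\psi}_{1}(\t{x})) - \pi_{1}(\t{x}),
\]
where $\pi_{1}$ denotes the $y$-coordinate. This is independent of the lift $\t{x}$ because $\t{\psi}_{1}$ commutes with deck translations. It is continuous on the compact annulus $A$, hence bounded and integrable.

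\textbf{Step 2: rotation number as a Birkhoff average.} For $n\ge 1$ the displacement along $n$ iterates telescopes:
\[
\pi_{1}(\t{\psi}_{1}^{n}(\t{x})) - \pi_{1}(\t{x}) = \sum_{k=0}^{n-1} f(\psi_{1}^{k}(x)).
\]
So $\rho_{\{\psi_{t}\}}(x)$ is exactly the Birkhoff average $f^{\infty}(x)$ for the area-preserving map $\psi_{1}$. The indexing $k=1,\dots,n$ used in \eqref{eq:def_rot_vec_via_int_alpha} changes the sum by a bounded amount and does not affect the limit. Birkhoff's theorem then says the limit exists almost everywhere and
\[
\int_{A}\rho_{\{\psi_{t}\}}\,\omega = \int_{A} f\,\omega .
\]

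\textbf{Step 3: flux as the integral of $f$.} It remains to show $\Fl([\{\psi_{t}\}]) = \int_{A} f\,\omega$. This is the main point: converting the area of the region $C(\gamma_{0})$ into an integral of vertical displacement. I would do it with a Fubini/area computation.

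Consider the strip $S = [0,1]\times[0,1)\subset \t{A}$, a fundamental domain. Its image $\t{\psi}_{1}(S)$ is another fundamental domain, bounded below by $\t{\psi}_{1}(\gamma_{0})$ and above by $\t{\psi}_{1}(\gamma_{0})+(0,1)$. Integrating $y$ against $\t{\omega}$ and using area preservation gives
\[
\int_{A} f\,\omega = \int_{\t{\psi}_{1}(S)} y\,\t{\omega} - \int_{S} y\,\t{\omega}.
\]

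\emph{Claim.} For any fundamental domain $D$ of the form $\{(x,y): g(x)\le y < g(x)+1\}$, bounded by a curve from $\{0\}\times\R$ to $\{1\}\times\R$ and its unit translate, one has $\int_{D} y\,\t{\omega} = \int_0^1 g\,dx + \tfrac12$.

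\emph{Computation.} For each fixed $x$ the vertical fiber integral is $\int_{g(x)}^{g(x)+1} y\,dy = g(x)+\tfrac12$; integrating over $x\in[0,1]$ gives the claim. The signed area between $\gamma_{0}$ and $\t{\psi}_{1}(\gamma_{0})$ is $\int_0^1 g\,dx$, which is exactly the flux $\int_{C(\gamma_0)}\t{\omega}$ with the orientation conventions of \S\ref{sec:flux_average_rot_vec}.

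\emph{General boundary curves.} The curve $\t{\psi}_{1}(\gamma_{0})$ need not be a graph. I would handle this in one of two ways:
\begin{enumerate}
\item replace "the region between two curves" by the signed count $y\mapsto \#\{\text{crossings}\}$, so that $\int_{D} y\,\t{\omega}$ is computed via the $1$-form $\tfrac{y^{2}}{2}dx$ and Stokes' theorem on $D$; the side boundary contributions on $\{0\}\times\R$ and $\{1\}\times\R$ cancel or vanish since $dx=0$ there; or
\item use invariance under isotopy: differentiate both sides in $t$ along $\{\psi_{t}\}$ and check that the derivatives agree.
\end{enumerate}
The derivative check in (ii) is the cleanest. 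With $X_{t}$ the generating vector field,
\[
\frac{d}{dt}\Fl = \int_{\psi_{t}(\gamma_{0})} \iota_{X_{t}}\omega ,
\qquad
\frac{d}{dt}\int_{A} f_{t}\,\omega = \int_{A} dy(X_{t})\,\omega .
\]
These agree because $\iota_{X_t}\omega$ is closed and
\[
\int_{A} dy(X_{t})\,\omega = \int_{A} dy\wedge \iota_{X_{t}}\omega ,
\]
which equals the period of $\iota_{X_{t}}\omega$ over the generator of $\H_{1}(A,\partial A)$, once boundary tangency of $X_{t}$ is used. Both sides vanish at $t=0$, so integrating in $t$ gives the identity.

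I expect the main obstacle to be this last step: keeping track of signs and orientation conventions, and of the boundary terms on $\partial A$.
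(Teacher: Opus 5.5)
Your Steps 1--2 and the reduction to $\int_{\widetilde\psi_1(S)}y\,\widetilde\omega-\int_S y\,\widetilde\omega$ are exactly what the paper does. Your option (i) is also precisely how the paper finishes. It applies Stokes on $\widetilde\psi_1(\mathrm{FD})$ with the primitive $-\tfrac{y^2}{2}\,dx$ of $y\,dx\wedge dy$. The vertical sides contribute nothing since $dx=0$ there, and the top edge is the unit translate of $\widetilde\psi_1(\gamma_0)$. This gives $\int_{\widetilde\psi_1(\mathrm{FD})}y\,\widetilde\omega=\int_{\widetilde\psi_1(\gamma_0)}y\,dx+\tfrac12$, and a second Stokes computation gives $\Fl=\int_{\widetilde\psi_1(\gamma_0)}y\,dx$. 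This works for arbitrary boundary curves, so your graph Claim is not needed.

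Your preferred option (ii) is a genuinely different and valid route. It bypasses fundamental domains and proves $\Fl_t=\int_A f_t\,\omega$ for all $t$ by differentiating, using $dy(X_t)\,\omega=dy\wedge\iota_{X_t}\omega$. Boundary tangency forces $\beta_t=\iota_{X_t}\omega$ to vanish on $\partial A$, hence $\beta_t=dh_t$ with $h_t$ constant on each boundary circle. Then $\int_A dy\wedge\beta_t$ and $\int_{\psi_t(\gamma_0)}\beta_t=\int_{\gamma_0}\beta_t$ are both determined by $h_t|_{\partial A}$.

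The paper's argument is a one-shot computation that uses only the time-one lift. Yours uses the whole isotopy, but it is essentially coordinate-free. It also extends with little change to arbitrary closed $1$-forms on surfaces, i.e.\ to the Franks--Handel identity stated just before the lemma.

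Watch the signs, though. With $u_\gamma(s,t)=\psi_t(\gamma(s))$ one has $\frac{d}{dt}\Fl_t=\int_{\psi_t(\gamma_0)}\omega(\cdot,X_t)=-\int_{\psi_t(\gamma_0)}\iota_{X_t}\omega$. Stokes gives $\int_A dy\wedge dh_t=-\int_{\partial A}h_t\,dy=-\int_{\gamma_0}dh_t$ for $\gamma_0$ oriented from $x=0$ to $x=1$. So both of your displayed sign claims are off by a minus sign, and your conclusion survives only because the two slips cancel. A quick check is the rigid rotation $X_t=c\,\partial_y$: there $\beta_t=-c\,dx$, and both sides of the lemma equal $c$.
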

\begin{proof}
The following identity follows from Birkhoff's ergodic theorem \S\ref{sec:birkhoff_erg_thm} applied to the measure preserving map $T = \psi$ and the function $f(x) = \int_{\{\psi_{t}(x)\}}dy$, see \eqref{eq:def_rot_vec_via_int_alpha},
$$\int_{A}\rho_{\{\psi_{t}\}}(x)\omega = \int_{A}\int_{\{\psi_{t}(x)\}}dy \wedge \omega(x).$$
To prove \eqref{eq:flux_vs_mean_rot} we proceed as follows,
$$\begin{aligned}
\int_{A}\int_{\{\psi_{t}(x)\}}dy \wedge \omega(x) &= \int_{\mathrm{FD}} \big( y(\t{\psi}_{1}(\t{x})) - y(\t{x})\big) \wedge \t{\omega}(x)\\
&=\int_{\t{\psi}_{1}(\mathrm{FD})}ydx\wedge dy - \int_{\mathrm{FD}}ydx\wedge dy\\
&= \int_{\t{\psi}_{1}(\mathrm{FD})}ydx\wedge dy - 1/2,
\end{aligned}$$
where $\mathrm{FD} \subset \t{A}$ is the fundamental domain $[0,1] \times [0,1]$; note that one gets $-1/2$ because the orientation on the boundary of $\mathrm{FD}$. The first term is computed as follows, let $\gamma_{j} \subset \mathrm{FD}$ be the curve $[0,1]\times \{j\}$ for $j= 0,1$,
$$\begin{aligned}
\int_{\t{\psi}_{1}(\mathrm{FD})}ydx\wedge dy &=-\int_{\partial\t{\psi}_{1}(\mathrm{FD})}y^{2} dx/2 = \int_{\t{\psi}_{1}(\gamma_{1})}y^{2}dx/2 - \int_{\t{\psi}_{1}(\gamma_{0})}y^{2}dx/2\\
&=\int_{\t{\psi}_{1}(\gamma_{0})}(y+1)^{2}dx/2 - \int_{\t{\psi}_{1}(\gamma_{0})}y^{2}dx/2\\
&=\int_{\t{\psi}_{1}(\gamma_{0})}ydx + 1/2.
\end{aligned}$$
Therefore, we have so far the following identity, 
$$\int_{A}\rho_{\{\psi_{t}\}}(x)\omega = \int_{\t{\psi}_{1}(\gamma_{0})}ydx.$$
We now compute the flux of $\psi_{t}$,
$$\Fl([\{\psi_{t}\}]) = \int_{C(\gamma_{0})}\t{\omega} = \int_{\t{\psi}_{1}(\gamma_{0})}ydx -\int_{\gamma_{0}}ydx = \int_{\t{\psi}_{1}(\gamma_{0})}ydx,$$
which finishes the proof.
\end{proof}

\subsection{A theorem of Franks and Handel}\phantomsection\label{sec:franks_handel_thm_subsec}
In this section, we recall the Franks-Handel \cite{franks-handel-zero-ent} structure theorem for zero-entropy area-preserving diffeomorphisms of $S^{2}$ with infinite order and at least $3$ periodic points. Later, this was upgraded by Le Calvez and Tal \cite[Theorem M]{lecalvez-tal-22} for orientation-preserving homeomorphisms of $S^{2}$ with zero entropy and no wandering points; see also \cite[Theorem G]{lecalvez-tal-22} for a version of a structure theorem for homeomorphisms with no topological horseshoes, which is a weaker assumption than zero entropy \cite{katok-89}, but possibly with wandering points. 

We denote by $h_{top}(\phi)$ the topological entropy of $\phi: S^{2} \rightarrow S^{2}$ for which there are more than one standard definitions \cite{MR1043272}. We define it as follows. For an $\eps >0$ and $n\in \Z^{+}$ a subset $E \subset X$ is called $(n,\eps)$-separated if for every distinct points $x,y\in E$ there exists $0\leq j<n$ so that $d(\phi^{j}(x), \phi^{j}(y)) >\eps.$ Defining $r(\eps, n, \phi)$ to be the maximal cardinality of an $(\eps, n)$-separated subset we define
$$h_{top}(\phi):= \lim_{\eps\to 0}\limsup_{n\geq 1}\frac{\log(r(\eps,n,\phi))}{n}.$$

The $\omega$-limit set of a point $x\in \D$ is the set of accumulation points of the sequence $\{\phi^{k}(x)\}_{k\geq 1}$ and is denoted by $\omega(x)$, and finally $\alpha$-limit set of $x$ is defined to be the set of accumulation points of the sequence $\{\phi^{-k}(x)\}_{k\geq 1}$ and is denoted by $\alpha(x)$. The word ``\emph{frontier}" of an open annulus $A_{\alpha}$ in the following theorem will be referring to $\bar{A}_{\alpha}\backslash A_{\alpha}$. The following is proved in \cite[Theorem 1.2, Proposition 5.1]{franks-handel-zero-ent}.

\begin{theorem}\phantomsection\label{thm:franks_handel_theorem}
Let $\phi : S^{2} \to S^{2}$ be an area-preserving diffeomorphism with zero topological entropy, infinite order and at least $3$ periodic points. Then, there exists a family of pairwise disjoint invariant open sets $\mathcal{A}(\phi) = \{A_{\alpha}\}_{\alpha}$ whose union is dense in the complement of $\mathrm{Fix}(\phi)$, and moreover,
\begin{enumerate}
    \item the sets $A_{\alpha}$ are the maximal fixed-point-free invariant open annuli.
    \item the $\alpha$-limit set of a point $z \notin \cup A_{\alpha}$ is contained in a single connected component of $\mathrm{Fix}(\phi)$; the same holds for the $\omega$-limit set of $z$.
    \item Let $C$ be a connected component of the frontier of $A_{\alpha}$. Then, the connected components of $\mathrm{Fix}(\phi)$ that contain $\alpha(z)$ and $\omega(z)$ are independent of $z \in C$.
\end{enumerate}
\end{theorem}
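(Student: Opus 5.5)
This statement is a structure theorem quoted from Franks and Handel \cite[Theorem 1.2, Proposition 5.1]{franks-handel-zero-ent}. The paper does not reprove it, so my plan is to reconstruct the strategy of their argument rather than produce a self-contained proof.

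\emph{Reduction and Thurston--Nielsen theory.} First I would reduce to a setting where Thurston--Nielsen theory applies. Pick three periodic points and pass to an iterate $\phi^m$ fixing them; zero entropy and infinite order are preserved. Consider the isotopy class of $\phi$ relative to any finite invariant set of periodic points. Zero entropy rules out pseudo-Anosov components, since a pseudo-Anosov piece would force positive entropy by Handel's global shadowing. Hence the Thurston canonical form consists of finite-order pieces separated by reducing curves. Each reducing curve determines a Dehn-twist region, which in $\phi$ corresponds to an invariant annulus carrying a well-defined rotation-number interval.

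\emph{Construction of the annuli.} Next I would use the Franks--Handel rotation-number machinery to control points. For an area-preserving, zero-entropy map, the rotation number relative to pairs of fixed points exists for every point and varies continuously, via a lifting to the annular compactification of $S^2\setminus\{p,q\}$. I would then define the $A_\alpha$ as the maximal connected open fixed-point-free invariant annuli, obtained as components of the sets where the relevant rotation numbers are non-integral or constant. Maximality gives disjointness. Density of the union in $S^2\setminus \mathrm{Fix}(\phi)$ comes from area preservation: almost every point is recurrent, and a recurrent non-fixed point lies in some fixed-point-free invariant annulus. This last step uses Brouwer plane translation theory and the Le Calvez equivariant foliation to build the annulus around the recurrent orbit.

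\emph{Limit sets.} Finally I would prove (ii) and (iii) on limit sets. For $z$ outside $\bigcup A_\alpha$, I would argue that its $\omega$-limit set consists of fixed points. Otherwise $\omega(z)$ would contain a non-fixed non-wandering point, which lies in some $A_\alpha$; invariance and openness would then force $z$ itself into that annulus. Connectedness of $\omega(z)$, as a limit set of a single orbit in a compact space, then places it inside one component of $\mathrm{Fix}(\phi)$. For (iii), I would use the continuity of rotation numbers on the frontier component $C$ together with the connectedness of $C$ to show that the fixed-point component receiving $\omega(z)$ is locally constant in $z\in C$, and hence constant.

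The main obstacle is the density and maximality step: producing, around each recurrent non-fixed point, an invariant fixed-point-free open annulus from only zero-entropy information. This is where the forcing theory of Le Calvez--Tal, or Franks--Handel's detailed analysis, is genuinely needed. In this paper I would simply cite those references rather than reprove them.
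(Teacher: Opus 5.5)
The paper gives no argument for this statement; it quotes it from \cite[Theorem 1.2, Proposition 5.1]{franks-handel-zero-ent}. Your final decision to cite is therefore exactly what the paper does. The reconstruction you sketch, however, contains steps that would fail, so it should not be offered as the strategy of that proof.

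\emph{Construction of the annuli.} ``Maximality gives disjointness'' is not a valid deduction. Take fixed points $p,q,r,s$ and two open annuli whose cores separate $\{p,q\}$ from $\{r,s\}$ and $\{p,r\}$ from $\{q,s\}$. These annuli must intersect, and their union is not an annulus, so maximality among annuli gives no constraint. Disjointness is one of the genuinely dynamical conclusions of Franks--Handel, as is the passage from reducing curves of the Thurston--Nielsen class to actual $\phi$-invariant annuli.

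Your construction also relies on everywhere-defined continuous rotation numbers. In \cite{franks-handel-zero-ent} these are obtained \emph{from} the annular structure; the version quoted here as Theorem \ref{thm:franks_handel_rotation_num_is_def} is formulated on the annuli $U\in\mathcal{A}(\phi)$. Using them is circular unless you substitute an independent input such as Le Calvez--Tal's forcing theory, which only controls bi-recurrent points. Moreover, no rotation-number criterion of the kind you propose can single out the $A_\alpha$. Relative to two fixed points lying in the same complementary component of $A_\alpha$, every recurrent point of $A_\alpha$ has integer rotation number. Franks--Handel instead define the $A_\alpha$ as the components of the set of weakly free disk recurrent points, as the paper notes right after the statement.

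Finally, passing to an iterate $\phi^m$ changes the fixed point set, and the annuli of $\phi^m$ are in general smaller than those of $\phi$ (compare Theorem \ref{thm:regopen}(iii)). You have to work relative to finite subsets of $\Fix(\phi)$ itself. If $\#\Fix(\phi)=2$, then $S^2\setminus\Fix(\phi)$ is already the unique annulus.

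\emph{Limit sets.} In (ii) you assert that a non-fixed non-wandering point of $\omega(z)$ lies in some $A_\alpha$. For an area-preserving map every point is non-wandering, so this would give $\bigcup_\alpha A_\alpha=S^2\setminus\Fix(\phi)$, which is false. Frontier points of $A_\alpha$ lie in no $A_\beta$, since the $A_\beta$ are disjoint and open, and they need not be fixed. An example is the non-fixed points of the figure-eight separatrix for the time-one map of a small autonomous double-well Hamiltonian.

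The mechanism that works is free disk recurrence. If $y\in\omega(z)\setminus\Fix(\phi)$, a small free disk around $y$ is visited at least twice by the orbit of $z$. Some point of that orbit is then weakly free disk recurrent, hence lies in $\bigcup_\alpha A_\alpha$, and by invariance so does $z$. The substance is the identification of the weakly free disk recurrent set with $\bigcup_\alpha A_\alpha$.

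Also, $\omega$-limit sets of maps need not be connected; think of a point on the stable manifold of a hyperbolic $2$-periodic orbit. What you can use is that $\omega(z)\subset\Fix(\phi)$ forces $d(\phi^{n+1}(z),\phi^{n}(z))\to 0$. A sequence in a compact metric space with steps tending to zero has a connected set of accumulation points.

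For (iii), continuity of rotation numbers along the connected set $C$ at best shows that the integer rotation data of the target component of $\Fix(\phi)$ is constant along $C$. This does not identify the component, because distinct components can carry the same data relative to every pair of fixed points. For instance, for the time-one map of a small autonomous Hamiltonian with the lift induced by the flow, every fixed point has rotation number $0$. A different argument is needed there, and this is part of what \cite{franks-handel-zero-ent} supplies.
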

The elements of $\mathcal{A}(\phi)$ are the components of the set of weakly free disk recurrent points of $f$, see \cite[Definition 1.1]{franks-handel-zero-ent} which is an open dense subset of $S^{2}\backslash \Fix(\phi)$ containing all the bi-recurrent points.

\begin{remark}\phantomsection\label{rem:franks_handel_for_homeos}
The above theorem holds also for area-preserving homeomorphisms of $S^{2}$ that are smooth in the complement of finitely many fixed points; see \cite[Theorem 1.2]{franks-handel-zero-ent}. One can apply this result to certain area-preserving diffeomorphism of the disk or an annulus because by collapsing the boundaries to points, one obtains an area-preserving homeomorphism of $S^{2}$ that is smooth outside of finitely many fixed points.
\end{remark}
We know that if $\phi$ has zero topological entropy then so do all of its iterations. Hence, for every $q$ we can apply Theorem \ref{thm:franks_handel_theorem} to $\phi^{q}$ and find a collection of disjoint invariant open regular annuli that are dense in the complement of $\mathrm{Fix}(\phi^{q})$ and satisfy the extra properties of Theorem \ref{thm:franks_handel_theorem}; we denote this collection by $\mathcal{A}(\phi,q)$. Franks and Handel \cite{franks-handel-zero-ent} showed that these annuli satisfy the following additional properties.
\begin{theorem}\phantomsection\label{thm:regopen}
The annuli $\mathcal{A}(\phi,q)$ given by Theorem \ref{thm:franks_handel_theorem} for $\phi^{q}$ satisfy
\begin{enumerate}
\item each $A \in \mathcal{A}(\phi,q)$ is the interior of its closure inside $\Sigma\backslash \Fix(\phi^{q})$.
\item The map $\phi$ permutes the elements of $\mathcal{A}(\phi,q)$.
\item Each $A\in \mathcal{A}(\phi,q)$ is a subset of some $A' \in \mathcal{A}(\phi,1)$.
\item $\phi_{c}$ has a fixed point on components of $\partial U_{c}$ that correspond to frontiers with more than one points of $U\in \mathcal{A}(\phi,1)$.
\end{enumerate}
\end{theorem}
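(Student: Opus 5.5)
The plan is to derive all four items from the canonical description of $\mathcal{A}(\phi,q)$. By Theorem \ref{thm:franks_handel_theorem} applied to $\phi^{q}$, its elements are the connected components of the set $W_{q}$ of weakly free disk recurrent points of $\phi^{q}$; equivalently, they are the maximal $\phi^{q}$-invariant fixed-point-free open annuli. We take this characterization as given and argue in the order (ii), (iii), (i), (iv).

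\emph{Item (ii).} Since $\phi$ commutes with $\phi^{q}$, it maps $\Fix(\phi^{q})$ to itself. It also maps free disks for $\phi^{q}$ to free disks, and weakly free disk recurrent points to weakly free disk recurrent points. Hence $\phi(W_{q}) = W_{q}$, and $\phi$ permutes the connected components. Equivalently: $\phi(A)$ is again a maximal $\phi^{q}$-invariant fixed-point-free open annulus, and uniqueness of the maximal family gives $\phi(A)\in\mathcal{A}(\phi,q)$.

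\emph{Item (iii).} Take $A\in\mathcal{A}(\phi,q)$. By (ii), the orbit $\phi^{j}(A)$, $0\le j<q$, is a finite union $U$ of annuli, and $U$ is $\phi$-invariant. Since $\Fix(\phi)\subset\Fix(\phi^{q})$, the set $A$ avoids $\Fix(\phi)$. I would show that $A\subset W_{1}$ as follows.
\begin{enumerate}
\item Bi-recurrent points of $\phi^{q}$ are bi-recurrent for $\phi$, and they are dense in $A$ by area preservation.
\item For a point $z\in A$, I would produce a free disk for $\phi$ through which $z$ recurs. The first thing to try is to show that if $z\in A$ were not in $\bigcup\mathcal{A}(\phi,1)$, then by Theorem \ref{thm:franks_handel_theorem}(ii) its $\alpha$- and $\omega$-limit sets would lie in single components of $\Fix(\phi)\subset\Fix(\phi^{q})$. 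This contradicts the fact that points of $A$ are $\phi^{q}$-recurrent inside the annulus $A$, away from $\Fix(\phi^{q})$ on a positive-measure set.
\item Having $A\subset\bigcup\mathcal{A}(\phi,1)$ on a dense set, use connectedness of $A$, together with the fact that frontier points of elements of $\mathcal{A}(\phi,1)$ are not in $W_{1}$, to put all of $A$ in a single $A'$.
\end{enumerate}

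\emph{Item (i).} Let $V$ be the interior of $\bar A$ inside $\Sigma\setminus\Fix(\phi^{q})$. Then $V$ is open, $\phi^{q}$-invariant and fixed-point-free, and $V\supset A$. I would show that $V$ is still an annulus: the complement of $A$ has two components, and taking interior of closure cannot fill in an essential loop without meeting $\Fix(\phi^{q})$, by Theorem \ref{thm:franks_handel_theorem}(iii) applied to frontier components. Maximality in Theorem \ref{thm:franks_handel_theorem}(i) then forces $V=A$.

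\emph{Item (iv).} Let $U_{c}$ be the prime-end compactification of $U\in\mathcal{A}(\phi,1)$ and $\phi_{c}$ the induced homeomorphism. Consider a boundary circle of $U_{c}$ coming from a frontier component $C$ with more than one point. By Theorem \ref{thm:franks_handel_theorem}(iii), the $\alpha$- and $\omega$-limits of points of $C$ lie in fixed components of $\Fix(\phi)$. Consequently $C$ accumulates on fixed points, and the prime-end rotation number on that circle must vanish. This is a Cartwright–Littlewood/Mather type argument: a nonzero rotation number would force points near $C$ to circulate, which is incompatible with limit sets inside $\Fix(\phi)$. A circle homeomorphism with rotation number $0$ has a fixed point.

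I expect (iv) and the annulus claim in (i) to be the main obstacles, since both require the delicate prime-end analysis of \cite[\S5]{franks-handel-zero-ent}. There I would follow Franks–Handel directly rather than reprove it.
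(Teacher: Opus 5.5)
The genuine gap is step (3) of your argument for item (iii). Step (2) only handles points of $A$ that are recurrent under $\phi$: for such $z$ one has $z\in\omega_\phi(z)$, so Theorem~\ref{thm:franks_handel_theorem}(ii) would give $z\in\Fix(\phi)\subset\Fix(\phi^{q})$, which is impossible in $A$. For a non-recurrent $z\in A\setminus W_1$, the same theorem only says that $\alpha_\phi(z)$ and $\omega_\phi(z)$ lie in $\Fix(\phi)$. That is, the $\phi^{q}$-orbit of $z$ leaves every compact subset of $A$ in both time directions, and nothing you have established forbids this.

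So after step (2) you know only that $A\cap W_1$ is open and dense in $A$. The remainder $A\setminus W_1$ is a closed, $\phi^{q}$-invariant, measure-zero subset of $A$. A priori it could disconnect $A$, so that different pieces of $A\cap W_1$ lie in different elements of $\mathcal{A}(\phi,1)$. Connectedness of $A$ does not exclude this. The observation that frontier points of elements of $\mathcal{A}(\phi,1)$ lie outside $W_1$ runs in the wrong direction: it shows that if $A$ met two different $A'$ then $A\not\subset W_1$, which is exactly the situation you need to rule out.

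The missing ingredient is a dynamical reason why no point of $A$ can have both $\phi$-limit sets inside $\Fix(\phi)$; equivalently, why $A$ cannot meet the frontier of any element of $\mathcal{A}(\phi,1)$. This is the content of \cite[Lemma 15.3]{franks-handel-zero-ent}, which is what the paper cites. If you did know that $A\cap W_1$ lies in a single $A'$, item (i) for $q=1$ would finish the job, since $A\subset\overline{A'}$ and $A$ is open in $\Sigma\setminus\Fix(\phi)$. But proving that is the same difficulty.

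Your item (ii) is fine: since $\phi$ commutes with $\phi^{q}$, it carries the canonically defined family $\mathcal{A}(\phi,q)$ to itself. For (i) and (iv) you end up deferring to Franks--Handel, which is what the paper does for all four items (\cite[Lemmas 14.5, 15.1, 15.3, 13.12(4)]{franks-handel-zero-ent}). Be aware, though, that the reductions you sketch there are not proofs.

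In (i), the interior $V$ of $\bar A$ in $\Sigma\setminus\Fix(\phi^{q})$ is not an annulus for any formal reason. Suppose a complementary component of $A$ had empty interior (an arc, say) and met $\Fix(\phi^{q})$ in exactly two points. Then $V$ would be a twice-punctured disk, about which maximality among annuli says nothing. Your appeal to Theorem~\ref{thm:franks_handel_theorem}(iii) does not explain why this cannot happen; excluding it is the real content of \cite[Lemma 14.5]{franks-handel-zero-ent}.

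Likewise in (iv), knowing that $C$ contains fixed points does not by itself force the prime-end rotation number to vanish. For instance, a frontier made of arcs emanating from a fixed point and permuted cyclically has nonzero rotation number. Excluding such behaviour needs Theorem~\ref{thm:franks_handel_theorem}(ii) combined with genuine prime-end theory, which is exactly the input you propose to import.
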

\begin{proof}
See \cite[Lemma 14.5]{franks-handel-zero-ent}, \cite[Lemma 15.1]{franks-handel-zero-ent}, \cite[Lemma 15.3]{franks-handel-zero-ent} and \cite[Lemma 13.12 (4)]{franks-handel-zero-ent} for items $(i),(ii),(iii), (iv)$ respectively.
\end{proof}
The following theorem \cite[Theorem 1.4 (1)]{franks-handel-zero-ent} is at the heart of the proof of our main theorem.

\begin{theorem}\phantomsection\label{thm:franks_handel_rotation_num_is_def}
Let $\phi: S^{2} \rightarrow S^{2}$ be an area-preserving diffeomorphism with zero entropy, infinite order and at least $3$ periodic points. Let $U\in \mathcal{A}(\phi)$ be an invariant open annulus from Theorem \ref{thm:franks_handel_theorem} and let $U_{c}$ be the prime end compactification of $U$ as in \S\ref{sec:theory_prime_ends}. Letting $\phi_{c}: U_{c} \rightarrow U_{c}$ be the extension of $\phi$ to the compactification, then, the rotation number of every point $x\in U_{c}$ with respect to any lift $\t{\phi}_{c}$ is defined and the function $\rho_{\t{\phi}_{c}}(-): U_{c} \rightarrow \R$ is continuous.
\end{theorem}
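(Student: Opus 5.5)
This statement is quoted from \cite[Theorem 1.4 (1)]{franks-handel-zero-ent}, so the plan is to reconstruct the Franks--Handel argument with the tools recalled above. We work in a single invariant open annulus $U\in\mathcal{A}(\phi)$ and its prime end compactification $U_c$, a closed annulus on which $\phi$ extends to a homeomorphism $\phi_c$. Fix a lift $\t{\phi}_c$ to the universal cover $\t{U}_c\cong[0,1]\times\R$. The proof has two parts, interior points and boundary circles, followed by a gluing step for continuity.

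\textbf{Interior points.} The restriction $\phi|_U$ is an area-preserving homeomorphism of an open annulus. Collapsing the two ends of $U$ to points gives a sphere homeomorphism. This map has zero entropy: it is a factor-type restriction of $\phi$ (an invariant set plus two points), so its entropy is at most $h_{top}(\phi)=0$. Hence Remark \ref{rem:rot_numb_of_zero_ent_on_closed_ann}, i.e.\ \cite[Theorem L]{LeCalvez-Tal-forcing-theory-18}, applies. Since $\phi$ preserves area, Poincar\'e recurrence makes almost every point bi-recurrent, and the rotation number is defined and continuous on bi-recurrent points. To reach \emph{every} point of $U$, I would use Theorem \ref{thm:rot_numb_ann_defined} (no horseshoe, by Katok, since the entropy is zero). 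A non-escaping point $x$ has $\rho(x)=\rho(y)$ for $y\in\omega(x)$, and area preservation makes $U$ consist of non-wandering points. The remaining task is to rule out escaping points in $U$. I would argue by contradiction: the $\omega$-limit of an escaping point lies in the frontier, and by Theorem \ref{thm:franks_handel_theorem}(ii)--(iii) that frontier is tied to a single component of $\mathrm{Fix}(\phi)$. Area preservation then forces the escaping set to have measure zero and be non-open. I expect this to need the finer Franks--Handel structure ($U$ consists of weakly free disk recurrent points).

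\textbf{Boundary circles of $U_c$.} On each boundary circle, $\phi_c$ restricts to a circle homeomorphism, so Poincar\'e's rotation number from \S\ref{sec:rotation_numb_def} exists and is uniform on that circle. If the corresponding frontier component has more than one point, Theorem \ref{thm:regopen}(iv) gives $\phi_c$ a fixed point there, so the rotation number is an integer determined by the lift.

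\textbf{Continuity up to the boundary.} This is the main obstacle. We must show that $\rho(x_n)\to\rho_{\partial}$ as $x_n\to\partial U_c$. The plan is to argue by contradiction with forcing theory. Suppose points $x_n$ accumulating on a boundary circle have rotation numbers bounded away from $\rho_\partial$. Choose a rational $p/q$ strictly between the two values. A Poincar\'e--Birkhoff type argument, in the version for area-preserving maps of closed annuli applied to $\phi_c^q$ with the lift shifted by $-p$, then produces periodic orbits of rotation number $p/q$ arbitrarily close to the frontier. These orbits are fixed points of $\phi^q$ in $U$ accumulating on a frontier component. Using Theorem \ref{thm:regopen}(iii), $U$ contains annuli of $\mathcal{A}(\phi,q)$ whose frontiers contain these points, and nested annuli with distinct rotation numbers approaching the frontier force a horseshoe; compare the Le Calvez--Tal forcing theory \cite{LeCalvez-Tal-forcing-theory-18}. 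Such a horseshoe contradicts zero entropy. Interior continuity comes from Theorem \ref{thm:rot_numb_ann_defined}(iv), together with the preceding step showing that all points are non-escaping. Finally, Lemma \ref{lem:rotation_number_conjugation} shows the result does not depend on the chosen homeomorphism between $U_c$ and the standard closed annulus.
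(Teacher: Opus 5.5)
The paper gives no argument for this statement; it is imported verbatim from \cite[Theorem 1.4 (1)]{franks-handel-zero-ent}. Judged as a self-contained reconstruction, your outline has a genuine gap, and it lies exactly where the content of that theorem lies: the behaviour of orbits near $\partial U_c$.

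On the interior side, you reduce everything to the claim that $U$ contains no escaping points, but you do not prove it. The facts you offer (the escaping set is null and not open) do not make it empty. Nothing in Theorems \ref{thm:franks_handel_theorem} or \ref{thm:regopen} excludes points of $U$ whose forward orbit accumulates only on the frontier. Even granting the claim, Theorem \ref{thm:rot_numb_ann_defined} only controls the displacement along the times $n_k$ at which the orbit lies in a fixed compact subset of $U$. The statement, however, asks for the full limit in $U_c$, as in \S\ref{sec:rot_numb_ann}. Upgrading one to the other needs uniform control of the displacement along long orbit segments spent near $\partial U_c$. That is the same boundary control required for continuity, so your ``interior'' step is not independent of the hard part.

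The continuity step fails at its first move. Franks' theorem \cite{franks1988generalizations}, applied to $\phi_c^{q}$ with the shifted lift, gives a periodic point of rotation number $p/q$ somewhere in $U_c$, with no information about where it is. To push it into a small neighbourhood of $\partial U_c$, you would need a $\phi$-invariant essential sub-annulus adjacent to that boundary component and carrying both rotation values. You do not have one: the orbit of $x_n$ bounds nothing invariant. Theorem \ref{thm:regopen}(iii) does not help either, since it only says that annuli of $\mathcal{A}(\phi,q)$ lie inside annuli of $\mathcal{A}(\phi,1)$. So ``periodic orbits of rotation number $p/q$ arbitrarily close to the frontier'' is unproven.

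The subsequent horseshoe step is unsupported, but it is also unnecessary. Suppose you really had $q$-periodic points $x_n$ with $\t{\phi}_c^{\,q}(\t{x}_n)=T^{p}\t{x}_n$, where $T$ is the deck generator, accumulating at $b\in\partial U_c$. Continuity of $\t{\phi}_c^{\,q}$ would give $\t{\phi}_c^{\,q}(\t{b})=T^{p}\t{b}$. The boundary circle would then have rotation number $p/q$, contradicting $p/q\neq\rho_\partial$ directly. Thus the whole difficulty is concentrated in localizing such periodic orbits, or equivalently in uniform control of rotation near $\partial U_c$. This is precisely the input the paper takes as a black box from Franks--Handel.
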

\begin{remark}\phantomsection\label{rem:rot_def_for_zero_ent_closed_ann}
If $\phi$ is an area-preserving homeomorphism of a closed annulus $A$ that is smooth in the complement of finitely many interior fixed points and has zero entropy then the rotation number $\rho_{\t{\phi}}(x)$ is defined for every point $x\in A$ and $\rho_{\t{\phi}}:A \rightarrow \R$ is continuous; see \cite[Theorem 1.5]{franks-handel-zero-ent}. See also \S\ref{sec:rot_numb_ann}.
\end{remark}
The following lemmas will also be useful in the subsequent sections.
\begin{lemma}\phantomsection\label{lem:zero_ave_rot_num_fixed_point}
Let $\phi: A \rightarrow A$ be an area-preserving homeomorphism of the closed annulus $A$ isotopic to the identity. If the mean rotation number,
$$\frac{1}{\mathrm{area}(A)}\int_{A}\rho_{\t{\phi}}(x)\omega,$$
is an integer for some, and hence for all, lifts $\t{\phi}$ of $\phi$ to the universal cover $\t{A}$ of $A$, then $\phi$ must have fixed point in the interior of $A$.
\end{lemma}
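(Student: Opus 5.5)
We reduce the statement to the Poincaré–Birkhoff theorem, in the form due to Franks for area-preserving homeomorphisms of the closed annulus. Fix a lift $\t{\phi}$ and suppose the mean rotation number equals an integer $m$. Replace $\t{\phi}$ by $\t{\phi}\circ T^{-m}$, where $T$ is the deck translation $(x,y)\mapsto(x,y+1)$. This lift also covers $\phi$, and it shifts every rotation number by $-m$. We may therefore assume the mean rotation number is $0$. A fixed point of $\phi$ in $\mathrm{int}(A)$ is then what we must find; it suffices to produce a fixed point of some lift $\t{\phi}\circ T^{k}$ in the interior.

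We argue by contradiction and assume $\phi$ has no fixed point in $\mathrm{int}(A)$. The first step is Franks' generalization of Poincaré–Birkhoff: if an area-preserving homeomorphism of the closed annulus isotopic to the identity has a lift $\t{\phi}$ and points $p,q$ (possibly on the boundary) with $\rho_{\t{\phi}}(p)<0<\rho_{\t{\phi}}(q)$, in the sense of the $\liminf/\limsup$ of displacement, then $\t{\phi}$ has a fixed point in $\mathrm{int}(A)$. Hence, under our assumption, the normalized lift satisfies one of two alternatives: either every point has all its limiting displacements $\geq 0$, or every point has all of them $\leq 0$. By symmetry take $\geq 0$.

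The second step uses the Birkhoff ergodic theorem from \S\ref{sec:birkhoff_erg_thm}. Apply it to the displacement function $f(x)=\pi_1(\t{\phi}(\t{x}))-\pi_1(\t{x})$, as in the proof of \eqref{eq:flux_vs_mean_rot}. The rotation number $\rho_{\t{\phi}}(x)$ exists almost everywhere, and $\int_A \rho_{\t{\phi}}\,\omega=\int_A f\,\omega=0$. Since $\rho_{\t{\phi}}\ge 0$ almost everywhere, we get $\rho_{\t{\phi}}=0$ almost everywhere.

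The third step, and the main obstacle, is to extract an interior fixed point from this degenerate situation; here the zero mean is used in the non-strict form. The plan is to perturb. Compose $\t{\phi}$ with a small area-preserving twist $\t{\tau}_\epsilon(x,y)=(x,y+\epsilon g(x))$, where $g$ changes sign and $\int_0^1 g=0$. The perturbed map still has zero mean. Passing to the limit $\epsilon\to 0$ then shows that interior fixed points, which persist by compactness, accumulate on a fixed point of $\phi$ in $A$.

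Sign conditions alone cannot rule out that this limit point lies on $\partial A$; that limit point is the delicate issue. To handle it we instead use Franks' lemma on free disk chains. Every point is non-wandering by area preservation, so a fixed-point-free lift with recurrence that has displacement $0$ on average can be shown to admit a periodic free disk chain, which forces a fixed point via Brouwer's lemma. Concretely: if $\t{\phi}$ had no fixed points, Brouwer theory on the strip gives a Brouwer line $\ell$, and $\t{\phi}$ would push the domain bounded by $\ell$ strictly into itself. Integrating displacement across a fundamental domain then makes the flux, and hence $\int_A\rho_{\t{\phi}}\,\omega$ by \eqref{eq:flux_vs_mean_rot}, strictly nonzero, a contradiction.

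Finally, fixed points of $\t{\phi}$ on $\partial A$ alone would give rotation number $0$ there, but boundary fixed points do not obstruct this flux argument. We expect this Brouwer-line/flux step, and the control of the boundary case, to be the technical core.
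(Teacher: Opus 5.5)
The paper gives no argument of its own here: it cites Franks--Handel \cite[Proposition 2.6]{franks-handel-zero-ent}, which rests on Franks' generalization of the Poincar\'e--Birkhoff theorem. Your normalization and Steps 1--2 are sound; you only need the dichotomy for the a.e.-defined limit $\rho_{\widetilde{\phi}}$. They reduce the lemma to the degenerate case $\rho_{\widetilde{\phi}}=0$ almost everywhere, which is exactly what Franks' theorem does not cover, and none of your three sketches handles it.

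\begin{itemize}
\item \emph{The twist perturbation is circular.} $\widetilde{\tau}_\epsilon\circ\widetilde{\phi}$ again has zero mean, so the existence of its interior fixed points is the same unproved statement. You also concede that the limit may land on $\partial A$.
\item \emph{The periodic free disk chain is asserted, not derived.} Non-wandering of $\phi$ only gives, for a free disk $U\subset\mathrm{int}(\widetilde{A})$, returns $\widetilde{\phi}^{n}(U)\cap T^{k}U\neq\emptyset$. Zero rotation number ($k/n\to 0$ along returns) does not force $k=0$.
\item \emph{The Brouwer-line argument fails as written.} Your contradiction hypothesis excludes only interior fixed points, so the plane translation theorem applies only on the open strip. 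The Brouwer line it produces is neither $T$-invariant nor required to join the two boundary lines; it can, for instance, run along the strip from one end to the other. For such a non-equivariant line, the area between $\ell$ and $\widetilde{\phi}(\ell)$ is not the flux in general, so the domain bounded by $\ell$ says nothing about $\int_A\rho_{\widetilde{\phi}}\,\omega$.
\item \emph{The closing remark is unjustified.} A free compact crossing arc cannot end at a fixed point of $\widetilde{\phi}$. So when $\widetilde{\phi}$ fixes points of $\partial\widetilde{A}$, you would have to use lines escaping to infinity along the boundary, and comparing them with the flux needs an equivariant Brouwer-type argument that you do not supply.
\end{itemize}

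The missing idea is a recurrence statement for the lift itself rather than for $\phi$. Let $d$ be the displacement of $\widetilde{\phi}$ in the $\mathbb{R}$-direction, a bounded continuous function on $A$. By Birkhoff, $\rho_{\widetilde{\phi}}$ is the conditional expectation of $d$ on the $\phi$-invariant $\sigma$-algebra, so $\rho_{\widetilde{\phi}}=0$ a.e.\ means $d$ has zero mean on almost every ergodic component. Atkinson's recurrence theorem for integrable zero-mean cocycles then gives, for any small disk $B\subset\mathrm{int}(A)$ and $\epsilon>0$, a point $x\in B$ and $n\geq 1$ with $\phi^{n}(x)\in B$ and $\bigl|\sum_{j=0}^{n-1}d(\phi^{j}(x))\bigr|<\epsilon$. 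Lift $B$ to a disk $U\subset\mathrm{int}(\widetilde{A})$ of diameter less than $1/4$ and take $\epsilon<1/4$. Then the return must land in $U$ itself rather than in a translate, i.e.\ $\widetilde{\phi}^{n}(U)\cap U\neq\emptyset$. If $\phi$ had no interior fixed point, you could choose $U$ free for $\widetilde{\phi}$. Brouwer's lemma for the fixed-point-free orientation-preserving homeomorphism $\widetilde{\phi}|_{\mathrm{int}(\widetilde{A})}$ of $\mathrm{int}(\widetilde{A})\cong\mathbb{R}^{2}$ forbids a free disk from returning, which is a contradiction; boundary fixed points play no role. With this argument in place of your Step 3, your outline becomes a complete proof.
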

\begin{proof}
See \cite[Proposition 2.6]{franks-handel-zero-ent} for a detailed proof.
\end{proof}
In the following lemma the rotation number $\rho_{\phi,V}$ is defined via the extension $\phi_{c}: V_{c} \rightarrow V_{c}$ to the prime end compactification of $V$ and $\rho_{\phi,U}$ is similarly defined using the extension $\phi_{c}: U_{c} \rightarrow U_{c}$ of $\phi$ to the compactification $U_{c}$. A bi-recurrent point $x$ is a point so that for every neighborhood $U_{x}$ of $x$ there exist $n,-m \geq 1$ so that $\phi^{n}(x), \phi^{m}(x) \in U_{x}$, i.e., $x\in \omega(x) \cap \alpha(x)$.
\begin{lemma}\phantomsection\label{lem:rot_num_int_ann_vs_ext_ann}
Suppose $V\in \mathcal{A}(\phi,q)$ is a proper and essential subset of $U\in \mathcal{A}(\phi)$. Then, there is a full measure subset $W \subset V$ containing $V \cap \mathrm{Per}(\phi)$ so that for every $x\in W$ one has, 
$$[\rho_{\phi,V}(x)] = [\rho_{\phi, U}(x)]\in \R/\Z,$$ 
where both sides are well-defined.
\end{lemma}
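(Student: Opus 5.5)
We want to compare rotation numbers of a point $x \in V$ measured in two different annuli, $V \subset U$, both of whose ends are compactified by prime ends. The plan is to reduce this to a homological comparison on the level of lifts. Because $V$ is essential in $U$, the inclusion $\iota: V \hookrightarrow U$ induces an isomorphism $\pi_1(V) \cong \pi_1(U) \cong \Z$. Hence the inclusion lifts to a map $\t\iota: \t V \to \t U$ of universal covers that commutes with the deck translations by $1$. We use the following choices of lifts.

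\begin{enumerate}
\item The annulus $V$ is $\phi^q$-invariant, and $\phi$ permutes $\mathcal{A}(\phi,q)$ by Theorem \ref{thm:regopen}(ii). Since $V\subset U$ is essential, we expect $\phi(V)$ to be an essential annulus of $\mathcal{A}(\phi,q)$ in $U$. For the statement to make sense we take $\rho_{\phi,V}$ to mean the rotation number of $\phi^q$ on $V$ divided by $q$, or else assume $\phi(V)=V$; we handle the invariant case and reduce the general case by passing to $\phi^q$.
\item Fix a lift $\t\phi_U$ of $\phi|_U$. Its restriction along $\t\iota$ determines a lift $\t\phi_V$ of $\phi|_V$ with $\t\iota\circ\t\phi_V=\t\phi_U\circ\t\iota$. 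This holds because $\phi$ preserves $V$, and the lifts are compatible since $\pi_1$ is preserved.
\item Any other choice of lift of $\phi$ to $\t V$ differs from $\t\phi_V$ by an integer translation. This changes rotation numbers only by an integer, which explains why the lemma is stated in $\R/\Z$.
\end{enumerate}

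The key step is to show that for recurrent points $x\in V$, the displacements agree up to bounded error:
$$\pi_1(\t\phi_U^n(\t\iota\t x))-\pi_1(\t\iota\t x)=\pi_1(\t\phi_V^n(\t x))-\pi_1(\t x)+O(1)\quad\text{along } n_k .$$
We take $n_k$ to be the times with $\phi^{n_k}(x)$ in a fixed compact set $K\subset V$. The map $\t\iota$ is equivariant under the deck group, so on the preimage of $K$ it changes the first coordinate by a bounded amount, exactly as in the proof of Lemma \ref{lem:rotation_number_conjugation}. Dividing by $n_k$ shows that the two limits coincide, provided both exist along such a subsequence.

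We therefore take $W$ to be the set of points of $V$ that are bi-recurrent for $\phi$ (or $\phi^q$). By Poincaré recurrence, $W$ has full measure, since the map is area preserving. It also contains every periodic point in $V$. For $x\in W$, the returns to a compact neighbourhood of $x$ give a sequence $n_k$ as above. The rotation numbers on the prime-end compactifications $V_c$ and $U_c$ exist by Theorem \ref{thm:franks_handel_rotation_num_is_def}, applied to $\phi^q$ and to $\phi$ respectively. Since these rotation numbers exist, they equal the limits along $n_k$, and the bounded-error estimate shows the two limits are the same.

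The main obstacle is the bookkeeping between $\phi$ and $\phi^q$, together with the ambiguity in the lifts. We must check two things. First, the rotation number of $\phi^q$ on $U$ equals $q$ times that of $\phi$ on $U$, which holds by the definition of the limit. Second, the compatible lift $\t\phi_V$ must be chosen coherently; when $\phi(V)\neq V$, the map $\phi$ permutes the $q$ annuli, and for $\phi^q$ we only obtain equality modulo $\tfrac1q\Z$. This is the step that requires care, and we expect it to be where the $\R/\Z$ ambiguity arises. If the permutation is nontrivial, we would instead argue directly with $\phi^q$ and use that the lift of $\phi^q$ to $U$ is the $q$-th power of the chosen lift of $\phi$.
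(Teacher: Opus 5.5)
\textbf{Verdict: there is a gap, but it is small and fixable.}

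\textbf{What is correct.} In the case $\phi(V)=V$ your argument is correct and is essentially the paper's. You take $W$ to be the bi-recurrent points of $V$, use return times to a small neighbourhood of $x$, and compare displacements in $V$ and in $U$ through $\pi_1(V)\cong\pi_1(U)$. The paper phrases this with homology classes in $\H_1(V)\cong\H_1(U)$ of orbit arcs closed up by uniformly short arcs. Your compatible lifts $\t{\iota}\circ\t{\phi}_V=\t{\phi}_U\circ\t{\iota}$, with bounded coordinate discrepancy over a compact set, are the same computation made slightly more explicit.

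\textbf{The gap.} You never establish $\phi(V)=V$, and both the statement and your construction need it. The statement needs it because $\rho_{\phi,V}$ is defined through the extension $\phi_c\colon V_c\to V_c$. Your construction needs it because $\t{\phi}_V:=\t{\iota}^{-1}\circ\t{\phi}_U\circ\t{\iota}$ makes sense only if $\t{\phi}_U$ preserves the preimage of $V$ in $\t{U}$. Instead you leave a putative non-invariant case, for which you only expect equality modulo $\tfrac{1}{q}\Z$. That would not prove the lemma. It also misplaces the source of the $\R/\Z$ ambiguity, which comes solely from the choice of lift, as your own item (iii) says.

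\textbf{The missing step.} It is short, and the paper's proof records it at the outset.
\begin{enumerate}
\item The elements of $\mathcal{A}(\phi,q)$ are pairwise disjoint and permuted by $\phi$ (item (ii) of Theorem~\ref{thm:regopen}). Also $\phi(V)\subset\phi(U)=U$ is again essential in $U$.
\item Suppose $\phi(V)\neq V$. Then $\phi(V)\cap V=\emptyset$, so $\phi(V)$ lies strictly on one side of a core circle $c$ of $V$, say in the part $U^-$ of $U$ below $c$.
\item $\phi$ preserves each end of $U$. Otherwise the induced orientation-preserving homeomorphism of the sphere, obtained by collapsing the two complementary components of $U$ to points, would exchange these two points. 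Since every orientation-preserving homeomorphism of $S^2$ has a fixed point, it would then have a fixed point in $U$, contradicting that $U$ is fixed-point free.
\item Hence $\phi(U^-)$ is the part of $U$ below $\phi(c)$, which is contained in $U^-\setminus V$. Since $V\cap U^-$ has positive area, this contradicts area preservation.
\item If instead $\phi(V)$ lies above $c$, the argument is symmetric: then $\phi(U^-)\supset U^-\cup V$, again contradicting area preservation.
\end{enumerate}

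So $\phi(V)=V$. The discussion of permutations and of $\tfrac{1}{q}\Z$ can be dropped, and your argument then proves the lemma as stated.
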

\begin{proof}
The set $W \subset V$ consisting of all the bi-recurrent points inside $V$ has full measure. Since $V$ is essential in $U$ and $\phi$ is area-preserving it must preserve $V$. For a point $x\in W\subset V$, we fix a small open disk $B_{x}$ with $x\in B_{x}$ and $n_{i}\to \infty$ a sequence of numbers so that $\phi^{n_{i}}(x)\in B_{x}$ for all $i\geq 1$. Hence, for any given point $x\in W$ and its lift $\t{x}\in \t{V}$ to the universal cover $\t{V}$ of $V$ we can join $\t{x}$ to $\t{\phi}^{n_{i}}(\t{x})$ by an arc $\t{\beta_{i}} \subset \t{V}$ where $\t{\phi}$ is a lift of $\phi$ to $\t{V}$. Let $\beta_{i} = \pi \circ \t{\beta}_{i}$. One can connect the two ends of $\beta_{i}$, which by the choice of $n_{i}$ both lie inside $B_{x}$ by an arc $\alpha_{i}$ who's length is bounded uniformly in $i$ and obtain a loop $\gamma_{i}$; this is where we use bi-recurrence since otherwise one may not be able to connect the ends by a curve with uniformly bounded length. Then, the rotation number is the limit of the homology class of $\gamma_{i}$ divided by $n_{i}$; this does not depend on the lift $\t{\phi}$ modulo $\Z$. One can use the same arcs to define the rotation number inside $U$ as the latter limit since they define the same homology classes inside $\mathrm{H}_{1}(U,\Z)$. This finishes the proof. See also \cite[Lemma 15.7]{franks-handel-zero-ent}.
\end{proof}
\begin{remark}\phantomsection\label{rem:equality_of_rot_num_nested_ann}
The proof of \cite[Lemma 15.7]{franks-handel-zero-ent} that was presented above works for any pair of annuli $A_{1} \Subset A_{2}$ and an area-preserving diffeomorphism $\phi$ on $A_{2}$ such that $A_{1}$ is essential in $A_{2}$ and is preserved by $\phi$.
\end{remark}

\begin{remark}\label{rem:rest_are_isotop_to_id}
Throughout the subsequent sections we often work with the restriction of a Hamiltonian diffeomorphism $\phi$ of a closed oriented surface $\Sigma$ of genus at least one to an invariant \emph{inessential} open topological annulus $A$. To freely apply results of \cite{franks1988generalizations, franks-handel-zero-ent} to $\phi|_{A}$ we need to confirm that the prime extension $(\phi|_{A})_{c}$ of $\phi|_{A}$ to the prime end compactification $A_{c}$ of $A$, see Lemma \ref{lem:carath_ext_of_maps} and Remark \ref{rem:prime_end_on_annulus}, is isotopic to the identity. By an immediate topological reason we know that $\phi$ preserves the boundary components of $A$ and so $(\phi|_{A})_{c}$ preserves the boundary components of $A_{c}$. Moreover, $(\phi|_{A})_{c}$ is an orientation preserving homeomorphism of the closed annulus $A_{c}$. So $(\phi|_{A})_{c}$ induces identity on the homology group $\H(A_{c}, \partial A_{c}, \Z)$ and it is \emph{homotopic} to the identity, which is immediate by linear interpolation in the universal cover. Now it follows from a theorem of Baer \cite{MR1581099,MR1581156} and Epstein \cite{MR214087}, see also \cite[Theorem 1.12]{MR2850125}, that $(\phi|_{A})_{c}$ is isotopic to the identity via homeomorphisms. 
\end{remark}

\subsection{Invariant annulus for a diffeomorphism of a surface of genus $\geq 1$}

For the purpose of proving Theorem \ref{thm:main_theorem_introduction}, the structure theorem of Franks and Handel \cite{franks-handel-zero-ent} from \S\ref{sec:franks_handel_thm_subsec} will be enough for us provided that the following holds:

\begin{theorem}\phantomsection\label{thm:up_coming_work_of_LCT}
Let $\Sigma$ be a closed oriented surface of genus at least $1$. For every given non-identity Hamiltonian diffeomorphism $\phi\in \Ham(\Sigma)$ with zero topological entropy there exists an open annulus $A\subset \Sigma$ that is $\phi$-invariant and the rotation interval $\rho_{\t{\phi}}(A)$ is non-trivial, where $\t{\phi}$ is a lift of $\phi|_{A}$ to the universal cover $\t{A}$ of $A$.
\end{theorem}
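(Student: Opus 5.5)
The plan is to reduce to the Franks--Handel/Le Calvez--Tal machinery on a planar surface obtained from $\Sigma$ by removing finitely many fixed points. Since $\phi\neq\id$ is Hamiltonian with zero entropy, we may lift $\phi$ along its Hamiltonian isotopy $\{\phi_t\}$ to the universal cover $\t\Sigma$ (a plane, as genus $\geq 1$). By Conley--Zehnder/Franks, $\phi$ has at least one contractible fixed point $p$, and the lift $\t\phi$ fixes its lifts. We then use the equivariant Brouwer/Le Calvez foliation: take a maximal unlinked set $X\subset\Fix(\phi)$ of contractible fixed points and a transverse foliation $\mathcal F$ on $\Sigma\setminus X$ (Le Calvez's equivariant foliated Brouwer theorem). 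Since $h_{top}(\phi)=0$, the forcing theory of \cite{LeCalvez-Tal-forcing-theory-18} rules out transverse trajectories with $\mathcal F$-transverse self-intersections, which is the mechanism that produces invariant annuli.

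First, if some leaf structure gives a closed leaf, or if $\phi$ admits a non-contractible periodic orbit or a point with nonzero rotation vector, I would produce an invariant annulus. The steps are as follows.

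\textbf{(a)} Pick a non-fixed recurrent point $z$; it exists since $\phi\neq\id$ and Poincaré recurrence applies. By \cite[Theorem G/M]{lecalvez-tal-22} in the no-horseshoe setting, $z$ lies in an open invariant annulus $A$ in $\Sigma\setminus X$ (the analogue of the Franks--Handel annuli of Theorem \ref{thm:franks_handel_theorem}), which consists of points whose transverse trajectories wind around a closed transverse loop $\Gamma$.

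\textbf{(b)} Show that the rotation interval on $A$ is non-trivial. The fixed points in $X$ have rotation number $0$ relative to the lift induced by the isotopy. Meanwhile the recurrent point $z$, whose transverse trajectory is drawn on $\Gamma$, turns positively around $\Gamma$ and hence has positive rotation number, using the continuity statement Theorem \ref{thm:rot_numb_ann_defined}(iv) together with the fact that $z$ is in $\Omega\cap\mathrm{ne}$.

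\textbf{(c)} If instead both ends of $A$ are accumulated only by points of equal rotation number $r\in\Z$, adjust the lift so that $r=0$. Then Lemma \ref{lem:zero_ave_rot_num_fixed_point} and the flux identity \eqref{eq:flux_vs_mean_rot} force an interior fixed point unlinked with $X$. This contradicts the maximality of $X$.

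When $A$ is inessential, the rotation interval is compared against the boundary via prime ends, using Theorem \ref{thm:franks_handel_rotation_num_is_def} after collapsing to a sphere homeomorphism (Remark \ref{rem:franks_handel_for_homeos}). When $A$ is essential, we can use the genus-$\geq1$ rotation vector and the zero flux of Hamiltonian maps (Proposition in \S\ref{sec:flux_average_rot_vec}) to guarantee nonconstant rotation.

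The main obstacle is step (b)/(c): ensuring that the annulus produced by forcing theory is open in $\Sigma$ with rotation set not reduced to a single point. The concern is the degenerate situation where $\phi$ restricted to $A$ is a "rigid rotation" with constant rotation number. I would first try to exclude this case by comparing with fixed points on the frontier, namely Theorem \ref{thm:regopen}(iv) applied to an iterate. Those fixed points have rotation number $0$ for the Hamiltonian lift, which yields a nontrivial interval unless $\phi|_A$ has constant rotation number $0$; in that case Lemma \ref{lem:zero_ave_rot_num_fixed_point} gives interior fixed points, contradicting the fixed-point-freeness of $A$.
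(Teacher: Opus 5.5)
There is a genuine gap, and it sits exactly where the content of the statement lies. Note first that the paper does not prove this theorem either. It attributes it to the forthcoming work \cite{GLCPT-25} and only sketches the torus case, under the extra assumption that no $\Fix(\phi^{k})$ is fully essential.

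Your step (a) assumes what that work supplies. The structure theorems you invoke, \cite[Theorems G, M]{lecalvez-tal-22} and Theorem \ref{thm:franks_handel_theorem}, are statements about homeomorphisms of $S^{2}$ (see \S\ref{sec:franks_handel_thm_subsec}). They give nothing on $\Sigma\setminus X$ when $\Sigma$ has genus $\geq 1$. Grant from forcing theory that the transverse trajectory of $z$ is drawn on a simple transverse loop $\Gamma$. You still need an open $\phi$-invariant annulus $A\ni z$ in $\Sigma$ with controlled frontier (fixed prime ends) and a rotation function extending continuously to $A_{c}$, and nothing in your argument produces it. This is why the paper's torus sketch moves to $\R^{2}$. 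The annular cases are handled by refining \cite{Guelman-Koropecki-Tal-14}, and zero entropy excludes a rotation set with interior. In the remaining bounded-deviation case, filling orbits of small disks gives bounded $\t{\phi}$-invariant topological disks. Franks--Handel theory is applied to these (cf.\ Remark \ref{rem:franks_handel_for_homeos}), followed by passing to a finite cover.

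Steps (b)--(c) also do not establish non-triviality.
\begin{itemize}
\item Since $A\subset\Sigma\setminus X$, the points of $X$ are not points of $A$, so their rotation number does not enter $\rho_{\t{\phi}}(A)$. A point of $X$ lying in the frontier of $A$ need not determine a fixed prime end of $A_{c}$ either.
\item The isotopy does not preserve $A$, so the ``lift induced by the isotopy'' would have to be built through the annular cover of $\Sigma\setminus X$.
\item Turning infinitely often around $\Gamma$ does not by itself give $\rho(z)>0$, since the number of turns may grow sublinearly. In any case, a single value never makes the interval non-trivial.
\item Case (c) only treats a common \emph{integer} value at the ends. The case of a constant non-integer rotation number, where Lemma \ref{lem:zero_ave_rot_num_fixed_point} says nothing, is left open. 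It genuinely occurs for annuli that are not chosen carefully: take a round annulus in a Darboux disk on which $H=c\,r^{2}$, with $c$ chosen so that $\phi$ is an irrational rotation there.
\end{itemize}

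Your last paragraph has the right mechanism. A fixed prime end forces an integer value on one boundary circle of $A_{c}$. Continuity of $\rho$ on $A_{c}$ together with Lemma \ref{lem:zero_ave_rot_num_fixed_point} then rules out a constant rotation function on a fixed-point-free annulus (compare Corollary \ref{cor:scaling_prop_diff_act_ind_ones}). The value obtained there is an integer depending on the lift, not ``$0$ for the Hamiltonian lift''.

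However, Theorem \ref{thm:regopen}(iv) concerns the annuli $\mathcal{A}(\phi,q)$ of a zero-entropy diffeomorphism of $S^{2}$. You have not shown that your $A$ is of this type, nor even that it is fixed-point free: $X$ is only a maximal unlinked set, so $\Sigma\setminus X$ may contain further fixed points. Producing such an annulus in genus $\geq 1$ is precisely the missing ingredient.
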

The authors were informed by Fabio Armando Tal that the above theorem was proved by Guih\'eneuf-Le Calvez-Passeggi-Tal in their upcoming work \cite{GLCPT-25}, as a special case of more general results. 

To make the paper a bit more self-contained we briefly explain how to find such an invariant annulus in the case of the torus $\Sigma =T^{2}$ under a mild assumption. For definitions of the notions used in the following discussion we refer to \cite{MR3193751, MR3820002}. Suppose $\phi\in \Ham(T^{2})$ with zero entropy. Moreover, assume $\Fix(\phi^{k})$ is not fully essential for all $k\geq1$; this is of course not a necessary assumption but it simplifies the following. Up to passing to an iteration of $\phi$, it follows from \cite[Theorem 2.1]{Guelman-Koropecki-Tal-14} and \cite[Proposition 1.4(5)]{MR3193751} that either $\Fix(\phi)$ is inessential or $\phi$ is annular; the latter means there is a direction $v\in \Z^{2}$ and a lift $\t{\phi}$ of $\phi$ to $\R^{2}$ that has bounded deviation in the direction of $v$. If $\phi$ is annular, with a bit of refining the arguments in the proof of \cite[Theorem A]{Guelman-Koropecki-Tal-14}, in particular using \cite[Proposition 2.17]{MR4650154} and 
\cite[Corollary 2.7]{Guelman-Koropecki-Tal-14}, one can find an essential invariant annulus $A$ with non-trivial rotation interval. Assume $\Fix(\phi)$ is inessential. By \cite[Theorem 70]{lecalvez-tal-forcing-theory} one of the following holds,
\begin{enumerate}
\item The rotation set $\rho(\t{\phi})$ is the origin and there exists $C>0$ so that $\abs{\t{\phi}^{n}(z) - z} \leq C$ for all $z\in \R^{2}$ and $n\in \Z$.
\item The rotation set $\rho(\t{\phi})$ is a segment with rational slope, the origin is in $\mathrm{int}(\rho(\t{\phi}))$, and there exists an essential $\phi$-invariant annulus in $T$.
\item The rotation set $\rho(\t{\phi})$ has non-empty interior, the origin is in $\mathrm{int}(\rho(\t{\phi}))$.
\end{enumerate}
The third item is excluded by the zero entropy assumption on $\phi$; see \cite{MR1101087} and \cite[Theorem G]{lecalvez-tal-forcing-theory}. The second item is similar to the annular case as shown in \cite[Theorem A]{Guelman-Koropecki-Tal-14}. Assume the first item and let $\t{\phi}$ be the lift of $\phi$ with bounded deviation. Clearly $\t{\phi}$ is the Hamiltonian lift of $\phi$. Take a bounded disk $D\subset \R^{2}$ containing a non-periodic point $x$ of $\t{\phi}$; such a point exists by \cite{MR1507223}. Let $D_{\infty}$ be the connected component of the orbit of $D$ under $\t{\phi}$ containing $x$. After ``filling" $D_{\infty}$, see \cite[\S1.4]{MR3193751}, one has an $\t{\phi}$-invariant open topological disk containing $x$. Find one other such disk $D'_{\infty}$ disjoint from $D_{\infty}$, not necessarily containing a non-periodic point, and another one $D''_{\infty}$ that contains both $D_{\infty}$ and $D'_{\infty}$. Now one can apply the following theorem of Franks and Handel to $\t{\phi}|_{D''_{\infty}}$ to find the desired invariant annulus for $\phi$, after possibly lifting $\phi$ to a finite cover of $T^{2}$; this is no loss of generality for the proof of Theorem \ref{thm:main_theorem_introduction}; see Corollary \ref{cor:spec_norm_covering_spc}. This finishes the poof of the existence of an invariant annulus with non-trivial rotation interval.

\section{Theory of prime ends}\phantomsection\label{sec:theory_prime_ends}
This section mostly follows Milnor's book \cite{MR2193309} and partially \cite{mather-top-proof-82, MR614728} and, for the sake of self-contained-ness, explains all the necessary background on the theory of prime ends first introduced by Carathéodory \cite{caratheodory-original-paper}. First we will give a precise definition of the \emph{prime end compactification} for which we require a few lemmas. This is a certain compactification of open simply connected domains in $\R^{2}$ equipped with a topology, the prime end topology, with respect to which, the prime end compactification will be homeomorphic to a closed disk; see Figure \ref{fig:prime_end_comp}. 
\begin{figure}[ht]
\centering
\begin{tikzpicture}[scale=0.3]
    \draw
        (-5, 5)
        -- (3, 5)
        -- (5, 2)
        -- (5, -5)
        -- (2, -5)
        \foreach \n in {1, 2, ..., 10} {
            -- ({-2 + 4/\n}, -5)
            -- ({-2 + 4/\n}, -2)
            -- ({-2 + 4/\n}, -5)
        }
        -- (-2, -5)
        -- (-5, -5)
        -- (-5, -4)
        \foreach \n in {1, 2, ..., 60} {
            -- ({-5 - cos(\n*180) * 2 / \n}, {3 - 7/\n})
        }
        -- (-5, 3)
        -- cycle;
    
    \foreach \x in {2.75, 2.5, 2} {
        \draw (\x, 5) -- (4.7, {-2.05 + 1.5*\x});
    }    
    \foreach \y in {1.85, 1.5,1, 0} {
        \draw (5, \y) -- ({1.867 + 2*\y/3}, 4.7);
    }
\end{tikzpicture}
\caption{An open simply connected domain.}
\label{fig:prime_end_comp}
\end{figure}
\begin{lemma}\phantomsection\label{lem:almost_all_lin_fin_len}
Let $f: \D \rightarrow U$ be a conformal diffeomorphism onto an open bounded subset $U \subset \C$. Then, for almost every point $\theta\in [0,2\pi)$ the length of the curve $f\circ \gamma_{\theta}$ is finite where $\gamma_{\theta}(r) = (r, \theta)$ and $r\in [0,1)$.
\end{lemma}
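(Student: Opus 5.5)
The plan is to integrate $|f'|$ along radii and bound this integral in $L^{2}$ (hence in $L^{1}$) using the finite area of $U$. The length of $f\circ\gamma_\theta$ is
$$\ell(\theta)=\int_0^1 |f'(re^{i\theta})|\,dr \in [0,\infty].$$
It suffices to show $\int_0^{2\pi}\ell(\theta)\,d\theta<\infty$; then $\ell(\theta)<\infty$ for almost every $\theta$. The starting observation is that $f$ is conformal and injective, so its Jacobian is $|f'|^2$ and
$$\int_{\D}|f'(z)|^{2}\,dA(z)=\mathrm{area}(U)<\infty,$$
because $U$ is bounded.

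The next step is to handle the part of each radius near the origin. On $\{|z|\le 1/2\}$ the function $|f'|$ is continuous, hence bounded, so $\int_0^{1/2}|f'(re^{i\theta})|\,dr$ is bounded uniformly in $\theta$. It therefore remains to control
$$\ell_{1}(\theta):=\int_{1/2}^{1}|f'(re^{i\theta})|\,dr .$$

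On $[1/2,1)$ we have $1\le 2r$, and Cauchy--Schwarz gives
$$\ell_1(\theta)^2\le \Big(\int_{1/2}^1 \frac{dr}{r}\Big)\Big(\int_{1/2}^1|f'(re^{i\theta})|^2\,r\,dr\Big)=\log 2\int_{1/2}^1|f'(re^{i\theta})|^2\,r\,dr.$$
Integrating in $\theta$ and using polar coordinates ($dA=r\,dr\,d\theta$) gives
$$\int_0^{2\pi}\ell_1(\theta)^2\,d\theta\le \log 2\cdot \mathrm{area}(U)<\infty .$$
Hence $\ell_1\in L^2([0,2\pi))\subset L^1([0,2\pi))$, so $\ell_1(\theta)<\infty$ almost everywhere. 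Combined with the uniform bound near the origin, this gives $\ell(\theta)<\infty$ for almost every $\theta$, which is the claim.

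There is no serious obstacle. The only points needing care are the measurability of $\theta\mapsto\ell(\theta)$, which follows from Tonelli's theorem since $|f'|$ is continuous and nonnegative, and the identity $\int_{\D}|f'|^2\,dA=\mathrm{area}(U)$, which uses injectivity of $f$ together with the change of variables formula.
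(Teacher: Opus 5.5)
Your proof is correct and follows the paper's argument: split each radius at $r=1/2$, apply Cauchy--Schwarz with the weights $r$ and $1/r$ on $[1/2,1)$, and integrate in $\theta$ to bound the result by the area of the image. Your uniform bound for $|f'|$ on $\{|z|\le 1/2\}$ handles the inner segment a little more cleanly than the paper's pointwise case split.
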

\begin{proof}
This follows from the fact that,
\begin{equation}\phantomsection\label{eq:where_conformal_is_used}
\begin{aligned}
\int_{0}^{2\pi}\mathrm{length}(f\circ \gamma_{\theta})^{2}d\theta &= \int_{0}^{2\pi} \big(\int_{0}^{1/2}|\partial_{r}f|dr + \int_{1/2}^{1}|\partial_{r}f|dr\big)^{2}d\theta\\
&\leq \int_{0}^{2\pi}4\big(\int_{1/2}^{1}\abs{\partial_{r}f}dr\big)^{2}d\theta=:X,
\end{aligned}
\end{equation}
where in the second line we have assumed that $\int_{0}^{1/2}\abs{\partial_{r}f}dr$ is smaller than $\int_{1/2}^{1}\abs{\partial_{r}f}dr$ since other wise we would be done because the former integral is finite. We now apply Cauchy-Schwartz inequality to get,
\begin{equation}
\begin{aligned}
X&\leq 4\int_{0}^{2\pi}\big(\int_{1/2}^{1}\abs{\partial_{r}f}^{2}rdr \int_{1/2}^{1}\frac{1}{r}dr\big)d\theta\\
&= 4\log 2\int_{0}^{2\pi}\int_{1/2}^{1}\abs{\partial_{r}f}^{2}rdrd\theta\\
&=2\log(2) \mathrm{area}(f(\D(1)\backslash\D(1/2))) < \infty,
\end{aligned}
\end{equation}
where the last line is the energy identity for the holomorphic function $f$.
\end{proof}
\begin{lemma}\phantomsection\label{thm:fatou_reisz}
Let $U$ be a simply connected bounded open subset of $\C$ and let $f: \D \rightarrow U$ be the Riemann map. For almost every point $x\in \partial\overline{\D}$ the length of the curve $f\circ \gamma_{x}\subset U$ is finite where $\gamma_{x}: (0,1) \rightarrow \D$ is defined by $\gamma_{x}(r) = rx$. In particular, the limit $\lim f(\gamma_{x}(r))$ exists as $r \to 1$ for almost every $x\in \partial \overline{\D}$. Moreover, fixing $x\in \partial\overline{\D}$, for almost every $y\in \partial\overline{\D}$ one has,
$$\lim_{r \to 1} f(\gamma_{y}(r)) \neq \lim_{r \to 1} f(\gamma_{x}(r)).$$
\end{lemma}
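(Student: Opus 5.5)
The first two claims follow from Lemma \ref{lem:almost_all_lin_fin_len}, and the third is the Riesz uniqueness theorem, which I would prove by a conformal-modulus argument.

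\textbf{Finite length and existence of radial limits.} Since $U$ is bounded and $f$ is the Riemann map, Lemma \ref{lem:almost_all_lin_fin_len} applies directly. It shows that $\int_0^{2\pi}\mathrm{length}(f\circ\gamma_\theta)^2\,d\theta<\infty$, so $\mathrm{length}(f\circ\gamma_x)<\infty$ for almost every $x\in\partial\overline{\D}$. For such $x$ the curve $r\mapsto f(rx)$ has finite length. Hence it is Cauchy as $r\to1$, and the radial limit $f^*(x):=\lim_{r\to1}f(\gamma_x(r))$ exists because $\C$ is complete.

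\textbf{Radial limits differ from a given value almost everywhere.} Fix $x$ and suppose the limit $p:=f^*(x)$ exists; otherwise there is nothing to prove. Assume for contradiction that the set $E=\{y: f^*(y)=p\}$ has positive measure. I would take a density point of $E$ and a small arc $I$ around it with $|E\cap I|\ge \tfrac12|I|$. For $\epsilon>0$ choose $r_0<1$ so large that the set $E_\epsilon\subset E\cap I$ of $y$ with $|f(ry)-p|<\epsilon$ for all $r\in[r_0,1)$ has measure at least $\tfrac14|I|$. This is possible by Egorov's theorem.

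The key estimate is a length–area argument in polar coordinates centred at $p$. For $y\in E_\epsilon$, the radial segment from $r_0y$ to $y$ is mapped into the disk $D(p,\epsilon)$. Its image starts at the fixed compact set $f(\{|z|=r_0\})$, which lies at distance at least $d>0$ from $p$ once $\epsilon<d$, and it ends at $p$. So the image crosses every circle $|w-p|=s$ with $\epsilon'<s<\epsilon$, for any $\epsilon'>0$.

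I would then integrate over $s$, using the change of variables and Cauchy--Schwarz exactly as in \eqref{eq:where_conformal_is_used}. Pulling back the family of circle arcs and using conformality, one obtains a lower bound
$$\mathrm{area}\bigl(f(\text{sector})\bigr)\ \gtrsim\ |E_\epsilon|^2\Big/\!\int_{\epsilon'}^{\epsilon}\frac{ds}{s}.$$
The same bound can be phrased via the extremal length of the curve family. The factor $\int_{\epsilon'}^{\epsilon} ds/s$ arises from the Cauchy--Schwarz inequality with weight $1/s$. The bound must hold for every $\epsilon'$, and $\log(\epsilon/\epsilon')\to\infty$ as $\epsilon'\to 0$.

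The cleaner route to a contradiction uses the modulus of the family of radial segments from $|z|=r_0$ to $E_\epsilon$. This modulus is bounded below by $c\,|E_\epsilon|/\log(1/r_0)>0$, and it is a conformal invariant. The image family consists of curves joining a set at distance $\ge d$ from $p$ to the single point $p$, all inside the bounded set $U$. Such a family has modulus at most $2\pi/\log(d/\epsilon')$ for every $\epsilon'$, hence modulus zero. This contradicts the positive lower bound, so $|E|=0$.

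\textbf{Main obstacle.} The main obstacle is making this modulus comparison rigorous without invoking extremal-length theory wholesale. I would do it by hand. Take the admissible metric $\rho(w)=1/(|w-p|\log(d/\epsilon'))$ on the annulus $\epsilon'<|w-p|<d$ and pull it back by $f$. Every radial curve in the family has pulled-back length at least $1$. Integrating over $y\in E_\epsilon$ and applying Cauchy--Schwarz gives $|E_\epsilon|^2\lesssim \log(1/r_0)\cdot\int\rho^2\,dA=O\bigl(1/\log(d/\epsilon')\bigr)$. Letting $\epsilon'\to0$ then forces $|E_\epsilon|=0$, which is the desired contradiction.
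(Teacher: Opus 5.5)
Your treatment of the first two claims matches the paper (Lemma \ref{lem:almost_all_lin_fin_len} plus completeness of $\C$). For the third claim you take a genuinely different route.

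The paper translates so that $p=0$, rescales so that $|f|<1$, and uses subharmonicity of $\log|f|$. The circle means $\int_{\partial\overline{\D}}\log|f(ry)|\,dy$ are nondecreasing in $r$. On the other hand, for $r$ close to $1$ they are bounded above by $\log(\epsilon)\,|E(\epsilon,\delta)|$, which tends to $-\infty$ as $\epsilon\to 0$ if the set of $y$ with radial limit $p$ has positive measure. This is the classical Riesz uniqueness argument. It uses boundedness of $f$, so that $\log|f|\le 0$ and the complement of $E(\epsilon,\delta)$ can be discarded, but it does not use injectivity; it therefore works for every bounded holomorphic function.

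Your extremal-length argument uses univalence instead, through the change of variables $\int_{\D}(\rho\circ f)^2|f'|^2\,dA=\int_U\rho^2\,dA$, and it does not need $U$ bounded. It produces an explicit bound on $|E|$. It is also the same Cauchy--Schwarz computation with weight $1/r$ as in the proof of Lemma \ref{lem:almost_all_lin_fin_len}.

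Two repairs are needed, both minor.

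First, the radii are inconsistent as written. You require $|f(ry)-p|<\epsilon$ for all $r\ge r_0$, and at the same time claim that $f(r_0y)$ lies at distance at least $d>\epsilon$ from $p$. Decouple them as follows.
\begin{enumerate}
\item Fix $r_1\in(0,1)$ and put $d=\mathrm{dist}\bigl(p,f(\{|z|=r_1\})\bigr)$. This is positive because $f(\{|z|=r_1\})$ is a compact subset of $U$ and $p\in\partial U$; a radial limit of the homeomorphism $f\colon\D\to U$ cannot lie in $U$.
\item For every $y\in E$, the image of $[r_1y,y)$ runs from $\{|w-p|\ge d\}$ into $\{|w-p|<\epsilon'\}$, so its $\rho$-length is at least $1$.
\item Cauchy--Schwarz in $r$ and integration over $y\in E$ give $|E|\le\log(1/r_1)\int_U\rho^2\,dA\le 2\pi\log(1/r_1)/\log(d/\epsilon')$. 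Letting $\epsilon'\to0$ yields $|E|=0$.
\end{enumerate}
In particular, the Egorov and density-point steps are unnecessary.

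Second, the displayed lower bound for $\mathrm{area}(f(\text{sector}))$ gives no contradiction, since it only weakens as $\epsilon'\to 0$. The area that matters is the $\rho$-area $\int\rho^2\,dA=2\pi/\log(d/\epsilon')$, and it enters as an \emph{upper} bound. The proof should therefore rest entirely on the computation in your last paragraph.
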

\begin{proof}
The first statement immediately follows from Lemma \ref{lem:almost_all_lin_fin_len}. For the second statement, it is enough to show that the set of all boundary points $y\in \partial \overline{\D}$ for which $\lim_{r\to 1} f(ry) = 0$ has Lebesgue measure zero. Assume the contrary, so the latter set has some non-zero Lebesgue measure $e > 0$. Define
$$E(\epsilon, \delta):= \{y\in \partial \overline{\D} : |f(ry)| < \epsilon,\ \forall r\in (1-\delta, 1)\}.$$ 
Hence, for all small enough $\epsilon, \delta$ we have Lebesgue measure of $E(\epsilon, \delta) > e/2$. Up to multiplying $f$ by a constant we may assume that $|f| < 1$ and so $\log(|f|) < 0$ everywhere on $\D$. For all small enough $\epsilon,\delta$ and any $r$ close enough to $1$ we have
$$\int_{\partial \overline{\D}} \log|f(ry)|dy \leq \log(\epsilon)E(\epsilon, \delta) \leq \log(\epsilon)\ e/2,$$
which by taking $\epsilon \to 0$ implies that the left hand side diverges to $-\infty$ as $r \to 1$ which is a contradiction unless $f \equiv 0$ which is not the case here; the latter contradiction is with Jensen's inequality which asserts that the average on the left hand side is monotonically increasing as $r$ increases.
\end{proof}

\begin{lemma}\phantomsection\label{lem:shrinking_nested_neighbs}
Let $f: \D \rightarrow U$ be the Riemann map as before. Every point $x\in \partial \overline{\D}$ possesses a nested sequence of neighborhoods $N_{1}\supseteq N_{2} \supseteq N_{3} \supseteq \dots$ in $\overline{\D}$ so that,
\begin{enumerate}
\item The boundaries $\overline{N}_{k} \cap \overline{\D \backslash N_{k}}$ consists of an open curve $\gamma_{k} \subset \D$ with $\overline{\gamma}_{k}$ having exactly two end points lying on $\partial \overline{\D}$.
\item The boundary pieces $\overline{\gamma}_{k}$ are pair-wise disjoint.
\item Both $\gamma_{k}$ and $f\circ \gamma_{k}$ have bounded lengths and their length converge to zero as $k \to \infty$. 
\item Each curve $\overline{f\circ \gamma_{k}}$ has exactly two distinct points on the boundary $\partial \overline{U}$ and the curves $\overline{f \circ \gamma}_{k}$ are pair-wise disjoint.
\end{enumerate}
\end{lemma}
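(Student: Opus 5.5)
The plan is to build the neighborhoods $N_k$ from circular arcs centered at $x$, using the length–area (Wolff-type) estimate in the spirit of Lemma \ref{lem:almost_all_lin_fin_len}. For $0<r<1$, let $C_r=\{z\in\D : |z-x|=r\}$. This is an open arc in $\D$ whose closure has two endpoints on $\partial\overline{\D}$. Write $C_r$ in polar coordinates $z=x+re^{i\theta}$, $\theta\in I_r$, and let $\ell(r)$ be the length of $f\circ C_r$, so that
$$\ell(r)=\int_{I_r}|f'(x+re^{i\theta})|\,r\,d\theta .$$
By Cauchy--Schwarz, $\ell(r)^2\le \pi r\int_{I_r}|f'|^2 r\,d\theta$, since $|I_r|\le\pi$. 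Dividing by $r$ and integrating over $r\in(\rho^2,\rho)$ gives
$$\int_{\rho^2}^{\rho}\frac{\ell(r)^2}{r}\,dr\le \pi\,\mathrm{area}\big(f(\D\cap\{\rho^2<|z-x|<\rho\})\big)\le\pi\,\mathrm{area}(U)<\infty,$$
using that $f$ is conformal, so $|f'|^2$ is the Jacobian. The left side is at least $\log(1/\rho)\cdot\min_r\ell(r)^2$. It follows that for every $\rho$ there is $r\in(\rho^2,\rho)$ with
$$\ell(r)^2\le\frac{\pi\,\mathrm{area}(f(\text{annular region}))}{\log(1/\rho)}.$$
In fact such $r$ form a set of positive measure, since the area of the annular region tends to $0$ as $\rho\to 0$. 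Choosing $\rho_1>\rho_2>\dots$ with $\rho_{k+1}<r_k^2$, we obtain disjoint radii $r_k\to 0$ with $\ell(r_k)\to 0$. Set $\gamma_k=C_{r_k}$ and $N_k=\overline{\D}\cap\{|z-x|<r_k\}$, or its closure. Items (i), (ii) and the $\gamma_k$ half of (iii) are then immediate, since the $\gamma_k$ are circular arcs of length at most $\pi r_k$ on distinct circles.

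For (iv), finite length of $f\circ\gamma_k$ implies that $f$ has limits along $\gamma_k$ at both ends. These limits lie in $\partial U$ because $f$ is proper. The curves $f\circ\gamma_k$ lie in $U$ and are pairwise disjoint because $f$ is injective. The closures therefore meet only possibly at the endpoints on $\partial U$.

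The delicate point is to make the two endpoints of each $\overline{f\circ\gamma_k}$ distinct, and different from the endpoints of the other curves. Here I would use the freedom in choosing $r_k$ from a positive-measure set together with Lemma \ref{thm:fatou_reisz}. Fix one endpoint $p$ of $\gamma_k$, say. A set of radii of positive measure yields a positive-measure family of endpoints on $\partial\overline{\D}$, and Lemma \ref{thm:fatou_reisz} says that for almost every boundary point the radial limit differs from any given boundary value. I would argue that the limit of $f$ along $C_r$ at its endpoint equals the radial limit there, for example via a Lindelöf-type argument or by rerunning the length–area estimate on a small sector. Granting that, excluding countably many null sets lets us choose $r_k$ so that all endpoint values are distinct.

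I expect the main obstacle to be this identification of the limit along the circular arc with the radial limit, so that Lemma \ref{thm:fatou_reisz} applies. The first thing I would try is a Lindelöf-type argument. Failing that, I would replace radial curves by these arcs throughout and reprove the Jensen-type argument of Lemma \ref{thm:fatou_reisz} directly for the endpoint limits.
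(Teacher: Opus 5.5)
Your length--area choice of the arcs $C_{r_k}$ gives (i)--(iii). Properness and injectivity show that the endpoints of $f\circ\gamma_k$ lie on $\partial U$ and that the open image curves are disjoint. The identification you flag as the main obstacle is in fact fine: $f$ is bounded, so by Lindel\"of's theorem the limit of $f$ along $C_r$ at an endpoint $\zeta$ is the radial limit at $\zeta$. Hence excluding null sets via Lemma \ref{thm:fatou_reisz} does make the endpoints of $\overline{f\circ\gamma_k}$ differ from the finitely many values fixed at earlier stages.

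The genuine gap is the requirement that the two endpoints of the \emph{same} curve $\overline{f\circ\gamma_k}$ be distinct. Lemma \ref{thm:fatou_reisz} controls a single boundary point against a \emph{fixed} value. Here both endpoints $\zeta_\pm(r)$ of $C_r$ move with the single parameter $r$. So the bad set $\{r : g(\zeta_+(r))=g(\zeta_-(r))\}$, with $g$ the radial-limit function, is not a preimage of a null level set, and nothing forces it to be null.

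In fact it can be everything. Let $U=\D\setminus[0,1)$ and normalize $f(0)=-1/2$, $f'(0)>0$. Uniqueness of the Riemann map gives $f(\bar z)=\overline{f(z)}$. So $f$ maps $(-1,1)$ increasingly onto $(-1,0)$, and $x=1$ is sent to the tip $0$ of the slit. For small $r$ the endpoints of $C_r$ are $e^{\pm i\theta(r)}$ with $\cos\theta(r)=1-r^2/2$. They land on the slit, hence on real points, so $f(e^{-i\theta})=\overline{f(e^{i\theta})}=f(e^{i\theta})$. Thus for every small $r$ the closure of $f\circ C_r$ meets $\partial U$ in a single point, and no choice of radii centered at this $x$ satisfies (iv).

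What is missing is a way to choose the two endpoints independently. The paper does this by bounding $N_k$ with two generic radial segments near $x$, closed off by an arc of a circle $|z|=c_k$. Fix the first angle; Lemma \ref{thm:fatou_reisz} then shows that almost every second angle has a radial limit different from the first and from all earlier ones. Lemma \ref{lem:almost_all_lin_fin_len} gives finite length of the radial images. Your length--area estimate is still the right tool there: applied to a small polar box with comparable sides, it makes the images of the circular side and of suitable radial sides short, a point the paper's one-line proof leaves implicit.

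Alternatively, you can keep circular arcs but vary the center, taking circles centered at $xe^{is}$ with $|s|$ small compared to $r$. Then the endpoint map $(s,r)\mapsto(\zeta_-,\zeta_+)$ is a local diffeomorphism. By Fubini and Lemma \ref{thm:fatou_reisz}, the set $\{(\alpha,\beta): g(\alpha)=g(\beta)\}$ is null in $(\partial\overline{\D})^{2}$. Averaging your estimate over $s$ then yields a positive-measure set of admissible $(s,r)$.
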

\begin{proof}
This readily follows from Lemma \ref{lem:almost_all_lin_fin_len} and Lemma \ref{thm:fatou_reisz} above; one can pick $N_{k} \subset \overline{\D}$ to be the region enclosed by two generic small radial lines near the boundary $\partial \D$ and a segment on some circle $|z| = c_{k}$ where $c_{k} \to 1$ as $k \to \infty$.
\end{proof}

\begin{lemma}\phantomsection\label{lem:transverse_cut_corresp}
Suppose $\gamma: [0,1) \rightarrow U$ be an embedded curve with $\lim_{r \to 1}\gamma(r)$ exists and is equal to a point in $\partial \overline{U}$. Then, the curve $f^{-1}\circ \gamma$ converges to a point $\partial\overline{\D}$ as $r\to 1$. Moreover, if two such curves $\gamma_{1}, \gamma_{2}$ converge to distinct points on $\partial\overline{U}$, then, the curves $f^{-1}\circ \gamma_{1}$ and $f^{-1}\circ \gamma_{2}$ converge to distinct points on $\partial\overline{\D}$.
\end{lemma}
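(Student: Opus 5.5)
Let $\beta = f^{-1}\circ\gamma:[0,1)\to\D$. The plan has two steps. First show that $\beta(r)$ has a single limit point on $\partial\overline{\D}$. Then show that curves landing at distinct points of $\partial\overline U$ have distinct limits in $\partial\overline\D$. Since $f$ is a homeomorphism $\D\to U$ and $\gamma(r)$ leaves every compact subset of $U$ (its limit lies on $\partial\overline{U}$), $\beta(r)$ leaves every compact subset of $\D$. Hence $|\beta(r)|\to 1$, and the accumulation set $K\subset\partial\overline\D$ of $\beta$ as $r\to 1$ is a nonempty, compact, connected set (the intersection of the nested connected sets $\overline{\beta([r,1))}$). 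So $K$ is either a point or a nondegenerate closed arc.

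To exclude an arc, suppose $K$ contains an open arc $I\subset\partial\overline\D$. By Lemma \ref{thm:fatou_reisz}, for almost every $x\in I$ the radial image $f\circ\gamma_x$ has finite length and lands at a point $f^*(x)$. By the second part of the same lemma applied with the landing point $p=\lim\gamma(r)$, we may also assume $f^*(x)\neq p$ for almost every $x\in I$. Fix such an $x$ in the interior of $I$ and let $q=f^*(x)\neq p$. Since $\beta$ accumulates on all of $I$ while $|\beta|\to1$, $\beta$ must cross the radius $\gamma_x$ at points $\beta(r_j)$ with $r_j\to1$ and $|\beta(r_j)|\to 1$. Concretely, near $x$ the curve oscillates between points of $K$ on either side of $x$. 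At those crossings, $\gamma(r_j)=f(\beta(r_j))$ lies on $f\circ\gamma_x$ at radial parameter tending to $1$, so $\gamma(r_j)\to q$. But $\gamma(r_j)\to p$ as well, contradicting $q\neq p$. Hence $K$ is a single point and $\beta$ converges.

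For the second statement, suppose $\gamma_1,\gamma_2$ land at distinct points $p_1\neq p_2$, but $f^{-1}\circ\gamma_1$ and $f^{-1}\circ\gamma_2$ both converge to the same $x_0\in\partial\overline\D$. Use Lemma \ref{lem:shrinking_nested_neighbs} at $x_0$ to get neighborhoods $N_k$ cut off by crosscuts $\gamma_k$ whose images $f\circ\gamma_k$ have lengths tending to $0$. For large $k$, both curves $f^{-1}\circ\gamma_i$ eventually lie in $N_k$. To move from $\gamma_1$ to $\gamma_2$ inside $N_k$ near the boundary, join $\beta_1(s)$ to $\beta_2(s')$ by a path in $N_k$ close to $x_0$. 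The cleaner way is to use the crosscut itself. The set $f(N_k\cap\D)$ is the part of $U$ cut off by the short crosscut $f\circ\gamma_k$. I would show its diameter tends to $0$ near the boundary; if that fails, I would instead argue via an $\epsilon$-separated configuration as follows. Take a small circle around $p_1$ of radius less than $|p_1-p_2|/3$. The tail of $\gamma_2$ stays outside it, while the tail of $\gamma_1$ stays inside. Both tails lie in $f(N_k)$, and $f(N_k)$ is connected. Then the crosscut $f\circ\gamma_k$, of length tending to $0$, together with $\partial U$, separates. The key step is to apply the length–area (Lemma \ref{lem:almost_all_lin_fin_len}) estimate along circular arcs $|z-x_0|=\rho$, $\rho\in(\rho_1,\rho_2)$. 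Each such arc meets both $\beta_1$ and $\beta_2$, so its image has length at least $|p_1-p_2|/3$. Cauchy–Schwarz then gives
\[
\int_{\rho_1}^{\rho_2}\mathrm{length}(f(\text{arc}_\rho))^2\frac{d\rho}{\rho}\lesssim \mathrm{area}(U),
\]
which forces $\log(\rho_2/\rho_1)\cdot|p_1-p_2|^2$ to be bounded. Letting $\rho_1\to0$ gives a contradiction.

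I expect this extremal-length (Koebe–Wolff type) estimate, adapting the computation in \eqref{eq:where_conformal_is_used} to circles centered at a boundary point, to be the main step. It can equally replace the radial argument in the first part.
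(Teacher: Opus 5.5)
Your proof is correct. The first half is the paper's argument, and the second half reaches the same contradiction by a different and more self-contained estimate.

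\textbf{First half.} The paper argues that if $\beta=f^{-1}\circ\gamma$ accumulates on an arc, then radial limits equal $p$ on a set of positive measure, contradicting the second half of Lemma \ref{thm:fatou_reisz}. You run the same crossing argument at a single well-chosen radius.

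\textbf{Second half.} The paper's sketch uses the crosscuts $\gamma_k$ of Lemma \ref{lem:shrinking_nested_neighbs}. For large $k$, each $f^{-1}\circ\gamma_i$ starts outside $N_k$ and ends inside it, so it meets $\gamma_k$ on its tail. Then $f\circ\gamma_k$ joins a point near $p_1$ to a point near $p_2$, so its length is at least $|p_1-p_2|/3$, contradicting property (iii). You had exactly this set-up in hand and then detoured. What you finally use is the length--area bound over the arcs $C_\rho=\{|z-x_0|=\rho\}\cap\D$ with the measure $d\rho/\rho$; this is the classical Wolff/Courant--Lebesgue argument, and it is correct. It produces the short crosscuts itself rather than relying on property (iii), which the paper only sketches. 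The price is a short Cauchy--Schwarz computation.

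Points to tidy when writing it up:
\begin{enumerate}
\item \emph{The crossing step in the first half.} ``Oscillates on either side of $x$'' does not by itself force crossings of a given radius: if $K=\partial\overline{\D}$, the curve can avoid one radius forever. Take a continuous lift $\widetilde{\theta}(r)$ of $\arg\beta(r)$. Every angle strictly between $\liminf_{r\to1}\widetilde{\theta}$ and $\limsup_{r\to1}\widetilde{\theta}$ is attained for $r$ arbitrarily close to $1$. These angles cover $K$ up to at most two points, so your almost-every choice of $x$ is still available.
\item \emph{The diameter remark.} Drop the claim that $\mathrm{diam}\, f(N_k)\to0$; it is false in general. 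Impressions of prime ends can be nondegenerate continua, cf.\ Lemma \ref{lem:imp_is_subset_boundary} and the accumulating teeth of the comb in Figure \ref{fig:prime_end_comp}. The connectedness and separation remarks are never used either.
\item \emph{The length lower bound.} Choose $r_i$ so that $|\gamma_i(r)-p_i|<|p_1-p_2|/3$ for $r>r_i$, and take $\rho_2$ smaller than the distance from $x_0$ to $f^{-1}(\gamma_i([0,r_i]))$. Then for every $\rho<\rho_2$, $C_\rho$ meets both curves only on these tails, which gives the bound $|p_1-p_2|/3$. Merely keeping the tail of $\gamma_2$ outside a disc around $p_1$ gives no lower bound on the length of $f(C_\rho)$.
\item \emph{The closing remark.} Your suggestion that the circle estimate could replace the radial argument in the first half is not justified. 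It only yields short crosscuts, and excluding accumulation on an arc still needs a uniqueness input such as the second half of Lemma \ref{thm:fatou_reisz}.
\end{enumerate}
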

\begin{proof}
Suppose $\gamma(r) \to p\in \partial \overline{U}$ as $r \to 1$. If $f^{-1}\circ \gamma$ does not converge to one single point then one can find a subset $E$ of $\partial \overline{\D}$ with non-zero Lebesgue measure so that for each $y\in E$ the curves $r\mapsto f(ry)$ converge to $p$ which is in contradiction with Lemma \ref{thm:fatou_reisz}. For the second part, if $\gamma_{i}(r) \to p_{i}$ for some $p_{1} \neq p_{2}\in \partial \overline{U}$ and $f^{-1}\circ \gamma_{1}(r), f^{-1}\circ \gamma_{2}(r) \to p\in \partial \overline{\D}$ for some single point $p\in \partial \overline{\D}$ then one arrives at a contradiction by constructing "shrinking" neighborhoods around $p$ satisfying properties of Lemma \ref{lem:shrinking_nested_neighbs}.
\end{proof}
A subset $\Gamma \subset \overline{U}$ is called a \emph{transverse cut} if it is the image of an injective continuous map $\gamma: [0,1] \rightarrow \overline{U}$ so that either it is a simple closed curve that does not intersect $\partial \overline{U}$ or it intersects $\partial \overline{U}$ only at $\gamma(0), \gamma(1)$. Any such curve decomposes $U$ into two connected components; this can be seen by collapsing the boundary of $U$ into a point and observe that $\Gamma$ becomes a Jordan curve in $S^{2}$. Fix a point $q\in U$ and for any transverse cut $\Gamma$ define $N_{\Gamma}$ to be the connected component of $U \backslash \Gamma$ that does not contain $q$.
\begin{definition}\phantomsection\label{def:prime_ends}
A chain in $U$ is a sequence of pair-wise disjoint transverse cuts $\Gamma_{1}, \Gamma_{2}, \dots, $ so that $N_{\Gamma_{1}} \supseteq N_{\Gamma_{2}} \supseteq \dots,$ and the diameter of $\Gamma_{i} \to 0$ as $i \to \infty$. For two chains $\{\Gamma_{i}\}$ and $\{\Gamma_{i}'\}$ we say $\{\Gamma_{i}'\}$ is smaller than $\{\Gamma_{i}\}$ and write $\{\Gamma_{i}'\}\leq \{\Gamma_{i}\}$ if for every $i$ there exists $j$ so that $N_{\Gamma_{j}'} \subset N_{\Gamma_{i}}$. We say that $\{\Gamma_{i}\}$ and $\{\Gamma_{i}'\}$ are equivalent if $\{\Gamma_{i}'\} \leq \{\Gamma_{i}\} \leq \{\Gamma_{i}'\}$. An equivalence class of chains $[\{\Gamma_{i}\}]$ is called a \emph{prime end} if for every other chain $\{\Gamma_{i}'\}\leq \{\Gamma_{i}\}$ one has $[\{\Gamma_{i}'\}] = [\{\Gamma_{i}\}]$. See Figure \ref{fig:non_prime_trans_cut} for a non-prime end.
\end{definition}
\begin{figure}
\centering
\begin{tikzpicture}[scale=0.6, xscale = 1.2, line width=0.8pt]

    \draw[line width=1pt] (0,0) rectangle (6,6);

    \draw[red] (4.55, 0) arc[start angle=180, end angle=0, radius=0.25];
    \draw[red] (4.30, 0) arc[start angle=180, end angle=0, radius=0.50];
    \draw[red] (4.05, 0) arc[start angle=180, end angle=0, radius=0.75];

    \draw (0, 3.4) -- (2.4, 2.6);               
    \draw (3.6, 2.6) -- (6, 3.4);               
    \draw[blue] (1.8, 2.8) arc[start angle=180, end angle=0, radius=1.2]; 
    \node at (3.0, 4.3) {\tiny{$\Gamma_1$}};

    \draw (0, 2.7) -- (2.5, 1.87);              
    \draw (3.5, 1.87) -- (6, 2.7);              
    \draw[blue] (2.3, 1.95) arc[start angle=180, end angle=0, radius=0.7]; 
    \node at (3.0, 2.9) {\tiny{$\Gamma_2$}};

    \draw (0, 2.0) -- (2.7, 1.1);               
    \draw (3.3, 1.1) -- (6, 2.0);               
    \draw[blue] (2.6, 1.15) arc[start angle=180, end angle=0, radius=0.4]; 
    \node at (3.0, 1.8) {\tiny{$\Gamma_3$}};

    \draw (0, 1.4) -- (2.9, 0.43);              
    \draw (3.1, 0.43) -- (6, 1.4);              
    \draw[blue] (2.8, 0.45) arc[start angle=180, end angle=0, radius=0.2]; 
    \node at (3.0, 0.85) {\tiny{$\Gamma_4$}};
\end{tikzpicture}
\caption{The blue transverse cuts $\{\Gamma_{i}\}$ define a \emph{non-prime end} since the red transverse cuts are smaller but not equivalent to $\{\Gamma_{i}\}$; one should imagine infinitely many diagonal segments whose ends inside $U$ converge to a point on $\partial \bar{U}$.}
\label{fig:non_prime_trans_cut}
\end{figure}
Two prime ends $[\{\Gamma_{i}\}]$ and $[\{\Gamma_{i}'\}]$ are disjoint if there exist $i,j$ so that $N_{\Gamma_{i}} \cap N_{\Gamma_{j}'} = \emptyset.$ The \emph{impression} of a prime end $p = [\{\Gamma_{i}\}]$ is $\mathrm{I}(p):= \cap_{i\geq 1} N_{\Gamma_{i}}.$
\begin{lemma}\phantomsection\label{lem:imp_is_subset_boundary}
The impression $\mathrm{I}(p)$ of any prime end $p$ is either a point in $U$ or a connected compact subset of $\partial\overline{U}$.
\end{lemma}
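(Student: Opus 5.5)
The plan is to read the impression as $\mathrm{I}(p)=\bigcap_{i\geq 1}\overline{N_{\Gamma_i}}$, with closures taken in $\overline{U}$ (otherwise the statement fails for open $N_{\Gamma_i}$), and to fix a chain $\{\Gamma_i\}$ representing $p$.

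\textbf{Step 1: nonempty, compact, connected.} Each $N_{\Gamma_i}$ is a connected component of $U\setminus\Gamma_i$, so it is connected, and its closure $\overline{N_{\Gamma_i}}\subset\overline{U}$ is compact and connected. These closures are nested because the $N_{\Gamma_i}$ are. A decreasing intersection of nonempty compact connected sets is nonempty, compact and connected. It remains to locate $\mathrm{I}(p)$.

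\textbf{Step 2: reduce to one type of cut.} Each transverse cut is either a simple closed curve in $U$ or a crosscut with both endpoints on $\partial\overline{U}$. Passing to a subchain does not change $\bigcap\overline{N_{\Gamma_i}}$, because the closures are nested. So either infinitely many cuts are closed curves, or all but finitely many are crosscuts, and we may assume all cuts have the same type.

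\textbf{Case A: all $\Gamma_i$ are crosscuts.} We show $\mathrm{I}(p)\cap U=\emptyset$. Fix $x\in U$ and a path $\sigma$ in $U$ from $q$ to $x$. Let $B$ be a small closed disk around $x$ in $U$, and put $\varepsilon=\mathrm{dist}(\sigma\cup B,\partial\overline{U})>0$. Each crosscut $\Gamma_i$ has an endpoint on $\partial\overline{U}$. Once $\mathrm{diam}(\Gamma_i)<\varepsilon$, the cut $\Gamma_i$ therefore lies in the $\varepsilon$-neighbourhood of $\partial\overline{U}$ and misses $\sigma\cup B$. Then $\sigma\cup B$ lies in the component of $U\setminus\Gamma_i$ containing $q$, which is disjoint from $N_{\Gamma_i}$. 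Since $\mathrm{int}\,B$ is an open neighbourhood of $x$ missing $N_{\Gamma_i}$, we get $x\notin\overline{N_{\Gamma_i}}$. Hence $\mathrm{I}(p)$ is a nonempty compact connected subset of $\partial\overline{U}$.

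\textbf{Case B: all $\Gamma_i$ are simple closed curves in $U$.} By the Jordan curve theorem in the simply connected domain $U$, each $\Gamma_i$ bounds a disk $D_i$ with $\overline{D_i}=D_i\cup\Gamma_i\subset U$. The other component of $U\setminus\Gamma_i$ accumulates on $\partial\overline{U}$.
\begin{itemize}
\item First we argue that $N_{\Gamma_i}=D_i$ for all large $i$. Suppose instead that $N_{\Gamma_i}$ is the outer component for infinitely many $i$. Then $q\in D_i$ for those $i$, so $\Gamma_i$ lies within $\mathrm{diam}(\Gamma_i)\to 0$ of $q$. The outer components contain a fixed neighbourhood of $\partial\overline{U}$, and disjoint closed curves nested around $q$ with shrinking diameter cannot have decreasing outer components, which contradicts the nesting.
\item Hence $\overline{N_{\Gamma_i}}=\overline{D_i}$ for large $i$. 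Since $\mathrm{diam}\,\overline{D_i}=\mathrm{diam}\,\Gamma_i\to 0$, the intersection of these nested compact sets is a single point. That point lies in $\overline{D_i}\subset U$, so it is in $U$.
\end{itemize}

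\textbf{Main obstacle.} The delicate points are the bookkeeping in Step 2 and the first bullet of Case B. For the mixed-type reduction in Step 2, I would check that nestedness forces all cuts past the first closed-curve cut to be closed curves lying inside its disk. For the bullet, I would first try a nested-annuli argument around $q$.
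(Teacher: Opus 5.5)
Your proof is correct, and it takes a genuinely different route from the paper.

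\textbf{The paper's route.} The paper argues by contradiction using primality. If some $z\in U$ lies in $\mathrm{I}(p)$, then $z\in N_{\Gamma_k}$ for every $k$. It chooses shrinking closed disks $D_k\subset N_{\Gamma_k}$ around $z$, so that the chain $\{\partial D_k\}$ is smaller than $\{\Gamma_i\}$. Primality then makes the two chains equivalent, so $\mathrm{I}(p)=\{z\}$.

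\textbf{What your route buys.} Your case analysis on the type of the cuts never uses primality, so it proves the dichotomy for the impression of \emph{every} chain. It also genuinely proves compactness and connectedness, which the paper leaves implicit. These properties need your closure reading: with the paper's literal definition $\bigcap_i N_{\Gamma_i}$, your Case A would give the empty set. On $U$ the two readings agree, since the cuts are pairwise disjoint. The price is the extra planar topology (the Jordan curve theorem and the components of $U\setminus\Gamma_i$), whereas the paper needs only a few lines once prime ends are defined.

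\textbf{Your flagged obstacles.} Both are easier than you fear.
\begin{itemize}
\item Mixed cut types in Step 2 need no analysis of how the cuts nest. Passing to a subsequence, which you already do, disposes of them.
\item The first bullet of Case B is one line. Suppose $N_{\Gamma_i}$ and $N_{\Gamma_j}$, with $i<j$, are both outer components. Then $U\setminus\overline{D_j}\subseteq U\setminus\overline{D_i}$ forces $\overline{D_i}\subseteq\overline{D_j}$, so $\mathrm{diam}\,\Gamma_i\le\mathrm{diam}\,\Gamma_j$. This is impossible as $j\to\infty$ along that subsequence. No neighbourhood of $\partial\overline{U}$ or nested-annuli argument is needed.
\end{itemize}
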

\begin{proof}
Let $p = [\{\Gamma_{i}\}]$ be a prime end and suppose that $\mathrm{I}(p)\not\subset \partial \bar{U}$. Let $z\in U \cap \mathrm{I}(p)$, so $z\in N_{\Gamma_{k}}$ for all $k\geq 1$. Let $D_{k}\subset N_{\Gamma_{k}}$ a closed topological disk containing $z$ and let $\Gamma_{k}' := \partial D_{k}$ so that the diameter of $D_{k}$ goes to zero as $k\to \infty$. Then it is clear that $\{\Gamma_{i}'\}\leq \{\Gamma_{i}\}$ and since $\{\Gamma_{i}\}$ is a prime end we conclude that $p =\{\Gamma_{i}\} =\{\Gamma_{i}'\}$ and hence $\mathrm{I}(\{\Gamma_{i}\}) =\mathrm{I}(\{\Gamma_{i}'\}) = \{z\}$. This finishes the proof.
\end{proof}
Terminology. From now on we call an end $\{\Gamma_{i}\}$ prime if $\mathrm{I}(\{\Gamma_{i}\}) \subset \partial \bar{U}$.

Let $U \subset \C$ a simply connected bounded open set. The \emph{Carathéodory} completion of $U$ is denoted by $U_{c} = U \sqcup \{\text{prime ends of $U$}\}$ with the topology generated by the following basis,
\begin{enumerate}
\item open subsets of $U$.
\item For every transverse cut $\Gamma$ we add the basis element,
\begin{equation}\phantomsection\label{eq:prime_end_neighborhoods}
\mathcal{N}_{\Gamma} := N_{\Gamma} \sqcup \{[\{\Gamma_{i}\}] : \exists j \text{ s.t. } N_{\Gamma_{j}} = N_{\Gamma}\}.
\end{equation}
\end{enumerate}
\begin{remark}
The identity map $id: \D \rightarrow \D$ extends uniquely to a homeomorphism $id_{c}: \overline{\D} \rightarrow \D_{c}.$
\end{remark}
The following is a standard result in prime end theory and is the main result of this section which is due to Carathéodory \cite{caratheodory-original-paper},
\begin{theorem}\phantomsection\label{thm:ext_to_closure_vs_carath_compl}
Let $f: U\rightarrow \D$ be a Riemann map from a simply connected bounded open set $U\subset \C$ to the standard disk $\D \subset \C$. Then, $f$ extends uniquely to a homeomorphism,
$$f_{c}: U_{c} \rightarrow \D_{c}=\bar{\D}.$$
\end{theorem}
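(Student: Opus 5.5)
The plan is to build $f_c$ on $U_c$ by sending each prime end to a boundary point of $\D$, and then check that this map is bijective, continuous and has a continuous inverse; uniqueness is automatic since $U$ is dense in $U_c$.

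\textbf{Step 1: correspondence of transverse cuts.} Let $\Gamma$ be a transverse cut of $U$ meeting $\partial\overline{U}$ at its two endpoints. By Lemma \ref{lem:transverse_cut_corresp}, each end of $f(\Gamma)$ converges to a point of $\partial\overline{\D}$, so $f(\Gamma)$ is a cross-cut of $\D$ with two well-defined landing points. We must make sure these two landing points are distinct. If they coincide, $f(\Gamma)$ together with that point bounds a disk in $\D$. Its preimage would then be a component of $U\setminus\Gamma$ whose closure meets $\partial\overline{U}$ only inside $\overline{\Gamma}$, and one argues this is impossible.

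\textbf{Step 2: chains in $\D$ and in $U$ correspond.} Conversely, Lemma \ref{lem:shrinking_nested_neighbs} gives, for each $x\in\partial\overline{\D}$, nested cuts $\gamma_k$ whose images $f\circ\gamma_k$ have lengths tending to zero and land at two distinct boundary points. So $\{f\circ\gamma_k\}$ is a chain in $U$. We show it is prime. If $\{\Gamma'_i\}\le\{f\circ\gamma_k\}$, push it forward to $\D$ via Step 1. The nested regions $f(N_{\Gamma'_i})$ are bounded by cross-cuts landing near $x$. For this we need the key estimate that the Euclidean diameters of $f(\Gamma'_i)$ tend to zero. We prove it by a length–area argument in the style of Lemma \ref{lem:almost_all_lin_fin_len}: if $f(\Gamma'_i)$ stayed large, a family of circular arcs around a boundary point would all have to cross the small curves $\Gamma'_i$. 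Cauchy–Schwarz together with finiteness of area then forces their $f^{-1}$-images to have lengths bounded below, giving a contradiction. From this, $\{\Gamma'_i\}$ and $\{f\circ\gamma_k\}$ are mutually cofinal, hence equivalent.

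\textbf{Step 3: the map is a bijection.} Define $f_c$ to be $f$ on $U$, and on a prime end $p$ take the unique point $x\in\partial\overline{\D}$ to which the cuts $f(\Gamma_i)$ shrink. Uniqueness of $x$ again uses the diameter estimate of Step 2. Injectivity holds because distinct $x$ admit disjoint neighborhoods $N_k$, so the corresponding prime ends are disjoint. Surjectivity is given by Step 2.

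\textbf{Step 4: continuity.} Under $f_c$, the basic open sets $\mathcal{N}_\Gamma$ of \eqref{eq:prime_end_neighborhoods} correspond to the regions of $\overline{\D}$ cut off by $f(\Gamma)$. The regions $N_k$ form a neighborhood basis of boundary points of $\overline{\D}$, so $f_c$ is continuous and open. Since $U_c$ maps bijectively onto the compact Hausdorff space $\overline{\D}$, it follows that $f_c$ is a homeomorphism.

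The main obstacle is the diameter/length–area estimate in Step 2, namely that cuts of small diameter in $U$ map to cuts of small diameter in $\D$. It is the only genuinely analytic input beyond Lemmas \ref{lem:almost_all_lin_fin_len}–\ref{lem:transverse_cut_corresp}.
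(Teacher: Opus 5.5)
Your outline follows the same route as the paper's proof. You push chains forward with Lemma \ref{lem:transverse_cut_corresp}, get surjectivity from the chains of Lemma \ref{lem:shrinking_nested_neighbs}, and check continuity on basic neighbourhoods.

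You are right that the one genuinely analytic input is this: small cuts in $U$ cut off regions whose images in $\D$ are small. The paper uses it without comment when it says a chain in $U$ ``uniquely corresponds'' to a chain in $\D$ and that the induced map is well defined and injective. Lemma \ref{lem:transverse_cut_corresp} only controls landing points, so it does not give this.

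Your version is also more careful in two places. You check that the pulled-back chains $\{f\circ\gamma_k\}$ are prime, which the paper's surjectivity step skips. In Step 4 you pull back only the cuts $\gamma_k$ of Lemma \ref{lem:shrinking_nested_neighbs}, whose preimages are known to be transverse cuts. The paper pulls back an arbitrary transverse cut of $\D$, whose preimage need not land: Lemma \ref{thm:fatou_reisz} only gives landing almost everywhere. Step 1 needs no separate argument, since a transverse cut has distinct endpoints and the second half of Lemma \ref{lem:transverse_cut_corresp} then gives distinct landing points.

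Two points need repair.

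\textbf{The length--area step.} As written, the sketch in Step 2 does not close. You also switch between $f$ and $f^{-1}$; fix $f\colon U\to\D$ as in the statement. If the circular arcs live in $\D$ and you use finiteness of $\mathrm{area}(U)$, as in Lemma \ref{lem:almost_all_lin_fin_len}, Cauchy--Schwarz only produces some \emph{short} curves in $U$. Short curves in $U$ give no control, since a narrow neck can cut off a large part of $U$, and there is no reason for their lengths to be bounded below.

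Run the argument in the other direction:
\begin{itemize}
\item Fix $w_0\in\Gamma$ with $\mathrm{diam}\,\Gamma<\varepsilon$. For $\varepsilon<s<\sqrt{\varepsilon}$, let $\alpha_s$ be an arc of $U\cap\{|w-w_0|=s\}$ separating $N_\Gamma$ from the base point $q$. It exists by simple connectivity once $\sqrt{\varepsilon}$ is small compared to $\mathrm{dist}(q,\partial U)$.
\item Cauchy--Schwarz gives $\int_\varepsilon^{\sqrt{\varepsilon}}\ell(f(\alpha_s))^2\,ds/s\le 2\pi\,\mathrm{area}(\D)$.
\item Hence some $f(\alpha_s)$ is a cross-cut of $\D$ of length $O((\log 1/\varepsilon)^{-1/2})$ separating $f(N_\Gamma)$ from $f(q)$. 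In the disk this forces $\mathrm{diam}\, f(N_\Gamma)=O((\log 1/\varepsilon)^{-1/2})$.
\end{itemize}
In your contradiction form: if $\mathrm{diam}\,f(N_{\Gamma'_i})\ge\eta$, then every $f(\alpha_s)$ has length at least some $c(\eta)>0$, which contradicts the integral bound. The estimate holds for arbitrary transverse cuts, so it gives well-definedness of $f_c$ on every chain, not only on chains below $\{f\circ\gamma_k\}$.

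\textbf{Injectivity.} Your argument proves the wrong implication. Disjointness of the $N_k$ for distinct $x$ shows that $x\mapsto p_x$ is injective, not that two prime ends of $U$ over the same $x$ coincide. What you need is the following. If $p=[\{\Gamma_i\}]$ has $f_c(p)=x$, the estimate above gives $f(N_{\Gamma_i})\subset N_k$ for $i$ large, so $p\le p_x$. Primality of $p_x$ (your Step 2) then gives $p=p_x$.

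Finally, the compact-to-Hausdorff lemma would need compactness of the source $U_c$, which is not known a priori. A continuous, open bijection is already a homeomorphism, so that sentence can simply be dropped.
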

\begin{proof}
It follows from Lemma \ref{lem:transverse_cut_corresp} that any chain $\{\Gamma_{i}\}$ in $U$ uniquely corresponds to a chain in $\D$. It is clear that this descends to a well-defined injective map, denoted by $f_{c}$, on the prime ends of $U$, i.e., one defines $f_{c}([\{\Gamma_{j}\}]):= [\{\overline{f\circ \Gamma_{j}|_{(0,1)}}\}]$. Given a prime end of $\D_{c} = \overline{\D}$ it will have a representative of the form constructed in Lemma \ref{lem:shrinking_nested_neighbs}, hence, the correspondence $f_{c}$ is surjective. Let $\mathcal{N}_{\Gamma}$ be a basis element corresponding to a transverse cut $\Gamma \subset \overline{\D}$. We need to show that $f_{c}^{-1}(\mathcal{N}_{\Gamma})$ is an open set. Pick a point $[\{\Gamma_{i}\}]\in f_{c}^{-1}(\mathcal{N}_{\Gamma})$ which means $[\{\overline{f\circ \Gamma_{i}|_{(0,1)}}\}] \in \mathcal{N}_{\Gamma}$ and hence there exists $k$ so that $\overline{f \circ \Gamma_{k}|_{(0,1)}} = \Gamma$. The latter implies that $\mathcal{N}_{\Gamma_{k}} \subset f_{c}^{-1}(\mathcal{N}_{\Gamma})$ which is an open neighborhood of $[\{\Gamma_{i}\}]$.
\end{proof}
\begin{lemma}\phantomsection\label{lem:carath_ext_of_maps}
Given two bounded simply connected open sets $U_{1}$ and $U_{2}$ and a smooth diffeomorphism $\phi: U_{1} \rightarrow U_{2}$ with bounded derivative, there is a unique extension $\phi_{c}: U_{1,c} \rightarrow U_{2,c}$ of $\phi$ to a homeomorphism to the prime end compactification.
\end{lemma}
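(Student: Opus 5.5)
The plan is to reduce to Carathéodory's Theorem \ref{thm:ext_to_closure_vs_carath_compl} and show that $\phi$ induces a well-defined map on prime ends. Fix Riemann maps $f_{1}: U_{1}\rightarrow \D$ and $f_{2}: U_{2}\rightarrow \D$, with extensions $f_{1,c}, f_{2,c}$ to homeomorphisms $U_{i,c}\rightarrow \bar\D$. Rather than working through $\bar\D$, I will define $\phi_{c}$ directly on chains and only afterwards use the Carathéodory picture to check continuity.

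\textbf{Definition of $\phi_{c}$ on chains.} The key point is that the bounded derivative makes $\phi$ Lipschitz on each convex piece. Since transverse cuts can be taken rectifiable and short, this suffices: I expect to show $\mathrm{diam}(\phi(\Gamma))\leq C\,\mathrm{length}(\Gamma)$. By Lemma \ref{lem:shrinking_nested_neighbs}, read through $f_{1}$, every prime end of $U_{1}$ is represented by a chain $\{\Gamma_{i}\}$ of curves whose lengths tend to zero. Then $\phi\circ\Gamma_{i}|_{(0,1)}$ has length at most $\sup|d\phi|\cdot \mathrm{length}(\Gamma_{i})\to 0$, so its closure in $\bar U_{2}$ is a transverse cut. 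This uses that a finite-length curve tending to $\partial \bar U_{1}$ is mapped to a curve with a limit point on $\partial\bar U_{2}$, because $\phi$ is proper between the open sets. The two endpoints might coincide, giving a closed cut, which the definition allows. Since $\phi$ is a homeomorphism $U_{1}\to U_{2}$, it preserves the nesting of the complementary components $N_{\Gamma}$, once the base point $q_{2}=\phi(q_{1})$ is fixed. Hence $\{\phi(\Gamma_{i})\}$ is a chain, and $\phi$ preserves the order $\leq$ between chains of this short type.

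\textbf{Primeness and well-definedness.} This is the main obstacle, because $\phi^{-1}$ need not have bounded derivative. The argument above therefore only shows that $\phi$ maps short chains to chains, not that it preserves primeness. To handle this I will use the characterization via $f_{2}$: the image chain corresponds, through $f_{2}$ and Lemma \ref{lem:transverse_cut_corresp}, to cuts in $\D$ whose two endpoints on $\partial\bar\D$ converge. Two cases arise:
\begin{enumerate}
\item the endpoints converge to a single point, in which case the chain is equivalent to a prime end of $\D$;
\item they do not, in which case the $f_{2}$-images contain a fixed nondegenerate arc of $\partial\bar\D$ in every $N$.
\end{enumerate}
I would try to rule out the second case by a length--area argument, as in Lemma \ref{lem:almost_all_lin_fin_len}. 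The composition $g=f_{2}\circ\phi\circ f_{1}^{-1}$ has finite Dirichlet energy near $\partial\D$, since $\phi$ has bounded derivative and the $f_{i}$ are conformal. Hence, for a.e.\ small circle arc around a boundary point of $\D$, the $g$-image has length tending to zero along a sequence of radii. This forces nested images to shrink to a single prime end. Well-definedness under equivalence of chains then follows from the monotonicity of $\leq$ established above.

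\textbf{Continuity and uniqueness.} Continuity at prime ends follows because each basic neighbourhood $\mathcal{N}_{\Gamma'}$ of $\phi_{c}(p)$ contains the image of some $\mathcal{N}_{\Gamma_{j}}$. The same energy argument applied to $g$ gives continuity of the boundary extension $g|_{\partial\D}$. This yields a continuous extension $\bar g:\bar\D\to\bar\D$, equivalently $\phi_{c}$. Bijectivity of $\phi_{c}$ is where I would have to be careful, since $\phi^{-1}$ may be non-Lipschitz. I would try the following topological argument. The map $\bar g$ restricts to a homeomorphism on the interior and to a proper degree-one map near the boundary. A continuous extension to $\bar\D$ of an interior homeomorphism that maps $\partial\D$ to $\partial\D$ is monotone on the circle. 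Injectivity on $\partial\D$ would then follow by applying the same energy argument to $g^{-1}=f_{1}\circ\phi^{-1}\circ f_{2}^{-1}$, whose Dirichlet energy is finite because $\phi^{-1}$ is also area-bounded in the relevant sense. If that fails, I would restrict to area-preserving $\phi$, which is the only case used later, where $|d\phi^{-1}|$ is bounded whenever $|d\phi|$ is. A compact-to-Hausdorff bijection is then a homeomorphism. Uniqueness is immediate, because $U_{1}$ is dense in $U_{1,c}$.
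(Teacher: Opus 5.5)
There is a genuine gap at bijectivity, and it cannot be closed under the stated hypothesis: with only $\sup|d\phi|<\infty$ the statement is false. Take $U_1=(-1,1)\times(0,1)$ and $\phi(x,y)=(xy,y)$. This is a diffeomorphism onto the open triangle $U_2=\{(u,v):|u|<v<1\}$, with inverse $(u,v)\mapsto(u/v,v)$, and its partial derivatives $y,x,0,1$ are bounded. Both domains are Jordan domains, so by Carath\'eodory's theorem the identity of $U_i$ extends to a homeomorphism $U_{i,c}\cong\bar U_i$. Since $\phi$ extends continuously to $\bar U_1$ and collapses the edge $[-1,1]\times\{0\}$ to the vertex $(0,0)$, no homeomorphic extension exists.

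Your argument breaks exactly where this example predicts.
\begin{enumerate}
\item For $|x_0|<1$, the images of a short chain at $(x_0,0)$ have closures that all pass through $(0,0)$. They are therefore not transverse cuts, since the definition admits closed curves only if they avoid $\partial\bar U$, contrary to your reading. They are also not pairwise disjoint, so $\{\phi(\Gamma_i)\}$ is not a chain.
\item Finiteness of $E(g)$ does not follow from conformality. By conformal invariance, $E(g)=\int_{U_1}|f_2'\circ\phi|^2|d\phi|^2$, whereas area controls only $\int_{U_1}|f_2'\circ\phi|^2J_\phi=\pi$. You would need bounded distortion $|d\phi|^2\lesssim J_\phi$.
\item Most importantly, your injectivity argument rests on finiteness of $E(g^{-1})$, and that fails here. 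By Schwarz reflection $|f_1'|\geq c>0$ on $[-\tfrac12,\tfrac12]\times(0,\tfrac12]$, and $|d\phi^{-1}|^2\geq v^{-2}$. Hence $E(g^{-1})\geq c^2\int_0^{1/2}\int_{-v/2}^{v/2}v^{-2}\,du\,dv=\infty$.
\end{enumerate}
In this example $g$ does extend continuously to $\bar\D$, but the extension collapses the arc $\bar f_1([-1,1]\times\{0\})$ to the point $\bar f_2(0,0)$. Monotonicity on the circle does not give injectivity.

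The missing ingredient is a bound on $d\phi^{-1}$ as well. Your fallback supplies it: if $\phi$ preserves area in dimension two, then $|d\phi^{-1}|=|d\phi|$. A two-sided bound also holds in the paper's applications, where $\phi$ is the restriction of a diffeomorphism of the closed surface, so the lemma should be read with this hypothesis.

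The paper's own proof relies on the same bound tacitly. It pushes chains forward by $[\{\eta_i\}]\mapsto[\{\phi(\eta_i)\}]$ and asserts bijectivity ``since the derivative of $\phi$ is bounded.'' In the example, however, the cuts $\{|w|=\epsilon\}\cap U_2$ at the vertex pull back to cuts of diameter close to $2$, so chains of $U_2$ do not pull back to chains.

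With the two-sided hypothesis, your route through $g$ is a correct and genuinely different proof. Conformal factors scale both singular values equally, so the distortion of $dg$ equals that of $d\phi$, which is at most $C=\sup|d\phi|\cdot\sup|d\phi^{-1}|$. Hence $|dg|^2\leq 2C|J_g|$ and $|d(g^{-1})|^2\leq 2C|J_{g^{-1}}|$, so $E(g),E(g^{-1})\leq 2C\pi$. The Courant--Lebesgue length--area argument then extends $g$ and $g^{-1}$ continuously to $\bar\D$. The extensions are mutually inverse by density, and $\phi_c=f_{2,c}^{-1}\circ\bar g\circ f_{1,c}$.

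Compared with the paper's chain-level argument, this avoids all bookkeeping with transverse cuts: coinciding endpoints, disjointness of closures, and primeness. Your case analysis (i)/(ii) becomes unnecessary. The cost is importing the standard extension theorem for finite-energy (quasiconformal) homeomorphisms instead of arguing directly from the definitions.
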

\begin{proof}
The proof is similar to that of Theorem \ref{thm:ext_to_closure_vs_carath_compl} except that it is easier under the bounded derivative assumption. To define the extension for a prime end $[\{\eta_{i}\}]$ we define $\phi_{c}([\{\eta_{i}\}]) = [\{\phi(\eta_{i})\}]$ which is easily checked to be well-defined and bijective since the derivative of $\phi$ is bounded. To check that it is a homeomorphism let $\mathcal{N}_{\eta}$ be a basis of the prime end topology on $\widehat{U}_{2}$ and let $[\{\xi_{i}\}]\in \phi^{-1}(\mathcal{N}_{\eta})$. We claim that $\phi^{-1}(\mathcal{N}_{\eta}) = \mathcal{N}_{\phi^{-1}(\eta)}$. If $[\{\xi_{i}\}] \in \phi^{-1}(\mathcal{N}_{\eta})$ then there is some $J$ so that $\phi(\xi_{j}) = \eta$ and so $[\{\xi_{i}\}] \in \mathcal{N}_{\xi_{j}} = \mathcal{N}_{\phi^{-1}(\eta)}$, and if $[\{\xi_{j}\}]\in \mathcal{N}_{\phi^{-1}(\eta)}$ then there is some $J$ with $\xi_{j} = \phi^{-1}(\eta)$ and so $\phi([\{\xi_{i}\}]) \in \mathcal{N}_{\eta}$ which implies $[\{\xi_{i}\}] \in \phi^{-1}(\mathcal{N}_{\eta})$. So the map $
\phi_{c}$ defined above is a bijective continuous map with continuous inverse and hence it is a homeomorphism that obviously extends $\phi$.
\end{proof}

\begin{remark}\label{rem:prime_end_on_annulus}
One can define the notion of prime ends and the prime end topology for all open subsets $U$ of the plane $\C$ similar to Definition \ref{def:prime_ends}. In particular, if $U$ is an open \emph{bounded} and \emph{regular} topological annulus in $\C$ then the prime end compactification $U_{c}$ of $U$ defined similarly as above will be homeomorphic to a closed annulus. Moreover, if $f: \A\rightarrow U$ is a Riemann map where $\A =\{z\in \C: a <\abs{z}<b\}$ for some numbers $b > a> 0$ then it uniquely extends to a homeomorphism $f_{c}: \bar{\A} = \A_{c} \rightarrow U_{c}$. A similar statement holds for all diffeomorphisms $\phi: \A \rightarrow U$ with bounded derivatives. The proofs are similar to the above. See for instance the Corollary in Page $401$ of \cite{MR543339}, see also \cite{mather-top-proof-82}.
\end{remark}


\section{Approximately invariant Lagrangians}\phantomsection\label{sec:fragmentation_c0_approximation}
In this section we prove that one can perturb a symplectic map of an open disk, which extends to a homeomorphism of the closed disk, by a $C^{0}$-small symplectic diffeomorphism so that the perturbed map preserves a circle $\Lambda_{\eps}$, defined below, near the boundary of the disk and the closer the circle is to the boundary one can pick the $C^{0}$-norm of the perturbation smaller. The construction of this section is a main step in the proof of the main theorem of the paper in the non-smooth cases; see \S\ref{sec:non_smooth_non_ess_inv_ann}. Fix a symplectic form $\Omega$ on the open disk $\D$ so that the area of $\D$ with respect to $\Omega$ is finite; note that $\Omega$ may not even be extendable to the boundary of the disk. Let $\phi: \D \rightarrow \D$ be a smooth map that preserves $\Omega$ and it extends to a homeomorphism $\phi_{c}: \bar{\D} \rightarrow \bar{\D}$. For every $0 < \epsilon < 1$ let $\Lambda_{\epsilon}:= \{|z| = 1 - \epsilon\}.$ 
\begin{remark}\phantomsection\label{rem:adjustment_from_disk_to_annulus}
The results presented in this section for the open disk $\D$ will also be valid for the open annulus $\A$; see Remark \ref{rem:prime_end_on_annulus}. The only difference is that the perturbation maps are apriori only symplectic diffeomorphisms which in the case of $\D$ are automatically Hamiltonian diffeomorphisms. In general, not every symplectic diffeomorphism of the annulus $\A$ is Hamiltonian. However, we will use the analysis of this section only in \S\ref{sec:non_smooth_inv_ann} where the open annulus $A \subset \Sigma$ is a contractible annulus and the perturbation maps will be Hamiltonian as diffeomorphisms of the surface $\Sigma$.
\end{remark}

\subsection{Khanevsky's smooth perturbation with an invariant Lagrangian}\phantomsection\label{sec:smooth_pert_inv_lag}
The goal of this section is to construct smooth diffeomorphisms $\phi_{\mathrm{kh},n}: \D \rightarrow \D$ compactly supported in the disk $\D$ so that $\phi_{\mathrm{kh}, n}(\Lambda_{\epsilon_{n}}) = \phi(\Lambda_{\epsilon_{n}})$ for some sequence $\eps_{n}\to 0$ so that, 
$$|\phi_{\mathrm{kh}, n}|_{C^{0}} \to 0 \text{ as } n \to \infty.$$
In addition, the map $\phi_{\mathrm{kh}, n}$ will be supported in a small neighborhood, 
$$V_{\epsilon_{n}}' = \{\frac{(1-R_{\epsilon_{n}})^{2}}{1 - r_{\eps_{n}}} < |z| < 1 - r_{\epsilon_{n}}\},$$ 
of $\Lambda_{\epsilon_{n}}$ where $r_{\epsilon_{n}}, R_{\epsilon_{n}}$ are some numbers converging to zero as $n \to \infty$. The Khanevsky map $\phi_{\kh,n}$ will be defined to be the composition of three shift maps, 
$$\phi_{\kh, n} := (sh_{n,3} \circ sh_{n, 2} \circ sh_{n,1})^{-1},$$ 
that are constructed in the subsequent sections; see also \S\ref{sec:khanevsky_map}.

\subsubsection{The first shift map}\phantomsection\label{sec:shift_map_first}
Since $\phi$ extends to a homeomorphism of $\bar{\D}$ it follows that it is uniformly continuous and so for every $\eps$ there are numbers $0 < r_{\eps} <\eps< R_{\eps} < 1$ so that $r_{\eps},R_{\eps} \to 0$ as $\eps \to 0$ and moreover
$$\phi(\Lambda_{\eps})\subset V_{\eps} = \{1- R_{\eps} < |z| < 1 - r_{\eps}\}.$$
We construct a $C^{0}$-small diffeomorphism $sh_{\eps,1}: \D \rightarrow \D,$ compactly supported near $\Lambda_{\epsilon}$ so that $sh_{\eps,1}(\phi(\Lambda_{\epsilon})) \cap \Lambda_{\epsilon} = \emptyset$. We define $sh_{\eps,1}$ to be the time-one map of the flow of the following vector field,
$$X_{sh,1}(\theta,r) = \alpha_{\eps}(r)\log(\frac{1 - R_{\epsilon}}{1 - r_{\epsilon}})r\partial_{r},$$
where $\alpha_{\eps}: \R \rightarrow \R$ is a smooth monotone cut off function that equals $1$ on the entire interval $[\frac{(1 - R_{\epsilon})^{2}}{1 - r_{\epsilon}}, 1 - r_{\epsilon}]$ except near a small enough neighborhood of its boundary where $\alpha_{\eps}$ decays and eventually vanishes on an even smaller neighborhood of its boundary. We require in particular that $\alpha_{\eps}(r)$ is equal to $1$ on $\frac{1 - R_{\epsilon}}{1 - r_{\epsilon}}\phi(\Lambda_{\epsilon})$. Then, on the curve $\phi(\Lambda_{\epsilon})$ one has
$$sh_{\eps,1}(\theta,r) = (\theta, \frac{1 - R_{\epsilon}}{1 - r_{\epsilon}}r).$$
It is clear that the map $sh_{\eps,1}$ is supported in
\begin{equation}\phantomsection\label{eq:the_bigger_neigh}
V_{\epsilon}' = \{(\theta,r) : \frac{(1 - R_{\epsilon})^{2}}{1 - r_{\epsilon}} < r < 1 - r_{\epsilon}\},
\end{equation}
and since it preserves the radial lines one has $|sh_{\eps,1}|_{C^{0}} \leq  1 - \frac{(1 - R_{\epsilon})^{2}}{1 - r_{\epsilon}}$ which converges to zero as $\epsilon \to 0$. It is now clear that $sh_{\eps,1}(\phi(\Lambda_{\epsilon})) \subset \D(1 - R_{\epsilon})$ and so it does not intersect $\Lambda_{\epsilon}$. See Figure \ref{fig:shift_map_one} for an illustration of the first shift map.
\begin{figure}[ht]
\centering
\begin{tikzpicture}[line cap=round, scale = 0.3]
  \draw (0,0) circle[radius=4];

  \draw[blue] (0,0) circle[radius=3];

  \draw[red!70!black]
    plot[variable=\t, domain=0:360, samples=361, smooth]
    (\t : {3 + 0.55*sin(4*\t)});

  \foreach \a in {0,45,...,315}
    \fill[red!70!black] (\a:3) circle[radius=1.6pt];
\end{tikzpicture}
\begin{tikzpicture}[line cap=round, scale = 0.3]
  \draw[->] (-9,0) -- (-5,0);
  \node at (-7,1) {$sh_{\eps,1}$};
  \draw (0,0) circle[radius=4];
  \draw[blue] (0,0) circle[radius=3];

  \draw[red!70!black]
    plot[variable=\t, domain=0:360, samples=361, smooth]
    (\t : {0.73*(3 + 0.55*sin(4*\t))});
\end{tikzpicture}
\caption{The first shift map $sh_{\eps,1}$. The blue circle is $\Lambda_{\eps}$.}
\label{fig:shift_map_one}
\end{figure}
\subsubsection{The second shift map}\phantomsection\label{sec:shift_map_second}
Fix a small $\eps > 0$ and $\Lambda_{\eps}$, and let $\phi, sh_{\eps,1}$ be as in the previous section. Denote $\Lambda_{\eps}' = sh_{\eps,1}(\phi(\Lambda_{\eps}))$. So we have $\Lambda_{\eps}' \cap \Lambda_{\eps} = \emptyset$ and $\Lambda_{\eps}' \subset V_{\eps}'$. Let $N > 0$ be an integer and $\bar{x} = (x_{1}, \dots, x_{N})$ be some distinct points clockwise ordered on $\Lambda_{\eps}$. Suppose that they are in generic positions so that the radial lines $\ell_{i} = \{rx_{i}: r\leq 1\}$ intersect $\Lambda_{\eps}'$ transversely for all $i = 1,\dots, N$. Let $x_{i}'\in \Lambda_{\eps}'$ be the intersection point of $\ell_{i} \cap \Lambda_{\eps}'$ closest to $x_{i}$. Pick arbitrary small neighborhoods $\xi_{i}$ of $x_{i}$ inside $\Lambda_{\epsilon}$ for $i=1,\dots,N$ so that they are pairwise disjoint and $\Lambda_{\epsilon}' \cap P_{\eps,i}$ is a disjoint union of small arcs $c_{i1} \sqcup c_{i2} \sqcup \dots \sqcup c_{ik_{i}}$ where
$$P_{\eps,i} := \{(\theta,r) : (\theta,1-\epsilon)\in \xi_{i}, \theta\in [0,2\pi), r\leq 1 - r_{\epsilon}\}.$$ 
Denote by $c_{i}$ the connected component $c_{ij_{i}}$ that is the closest to $\Lambda_{\epsilon}$. It is the component that contains $x_{i}'$. Then, $c_{i}$ is the graph of a smooth $C^{0}$-small function on $\xi_{i}$. Fix some strictly smaller connected arcs $c_{i}' \subset c_{i}$ containing $x_{i}'$. One can find smooth diffeomorphisms $g_{i}$ compactly supported in $P_{\eps,i} \backslash \D(\frac{(1-R_{\eps})^{2}}{1-r_{\eps}})$ so that $g_{i}(c'_{i}) \subset \Lambda_{\epsilon}$, moreover, if one lets $G = g_{1}g_{2}\dots g_{N}$ then $G(\Lambda_{\epsilon}') \subset \overline{\D}(1 - \epsilon)$. We now explicitly construct the diffeomorphisms $g_{i}$. Let $\rho_{i}: \xi_{i} \rightarrow [\frac{(1 - R_{\epsilon})^{2}}{1 -r_{\eps}}, 1-r_{\eps}]$ be a smooth function so that,
$$c_{i} = \{\big(\theta, \rho_{i}(\theta, 1-\epsilon)\big) : (\theta, 1-\epsilon)\in \xi_{i}\}.$$
Then, consider the following vector field, 
$$X_{sh,2}^{i}(\theta,r) = \big((1 - \epsilon) - \rho_{i}(\theta,1-\epsilon)\big)\partial_{r}(\theta,r).$$
Let $U$ be a small tubular neighborhood $U$ of the boundary of $P_{\eps,i} \backslash \D(\frac{(1-R_{\eps})^{2}}{1-r_{\eps}})$ and $U \subset U'$ be a slightly large neighborhood. Let $\beta_{\eps,i}$ be a smooth monotone cut-off function that vanishes $U$ and equals $1$ on $P_{\eps, i}\backslash U'$. Let $g_{i,t}$ be the flow of $\beta_{\eps,i} X^{i}_{sh,2}$. It is then clear that the time-one maps $g_{i}:= g_{i,1}$ for $i = 1,\dots, N$ satisfy the required properties. We denote the map $G$ above by $sh_{\bar{x},\eps,2}$. Since the supports of $g_{i}$'s are disjoint and have diameter converging to zero with $\epsilon \to 0$ we conclude that $|sh_{\bar{x}, \eps,2}|_{C^{0}} \to 0$ as $\eps \to 0$. Hence, letting $\Lambda_{\bar{x},\eps}'' := sh_{\bar{x}, \eps, 2}(\Lambda_{\eps}')$ we have $\Lambda_{\bar{x},\eps}'' \subset \bar{\D}(1 - \eps)$ and it intersects $\Lambda_{\eps}$ on some small neighborhoods of the points $x_{i}$ for $i = 1,\dots,N$. Removing $\{x_{i}\}_{i=1}^{N}$ from $\Lambda_{\bar{x},\eps}''$ splits it into $N$ disjoint open arcs which we denote by $\eta_{1} \sqcup \dots \sqcup \eta_{N}$ where $\eta_{i}$ is the arc whose closure connects $x_{i}$ to $x_{i+1}$ where $x_{N+1}:= x_{1}$; we sometimes denote $\eta_{i} = [x_{i},x_{i}]_{\Lambda_{\bar{x},\eps}''}$, the oriented arc in $\Lambda_{\bar{x},\eps}''$ connecting $x_{i}$ to $x_{i+1}$. We also denote by $D_{\bar{x},\eps,i}$ the topological disk bounded by $\eta_{i} =[x_{i}, x_{i+1}]_{\Lambda_{\bar{x},\eps}''}$ and $[x_{i}, x_{i+1}]_{\Lambda_{\eps}}$ where the latter is the oriented arc in $\Lambda_{\eps}$ connecting $x_{i}$ to $x_{i+1}$. See Figure \ref{fig:shift_map_two} for an illustration of the second shift map.
\begin{figure}[ht]
\centering
\begin{tikzpicture}[line cap=round, scale = 0.3]
  \draw (0,0) circle[radius=4];
  \draw[blue] (0,0) circle[radius=3];

  \draw[red!70!black]
    plot[variable=\t, domain=0:360, samples=361, smooth]
    (\t : {0.73*(3 + 0.55*sin(4*\t))});
\end{tikzpicture}
\begin{tikzpicture}[line cap=round, scale = 0.3]
  \draw[->] (-9,0) -- (-5,0);
  \node at (-7,1) {$sh_{\eps,2}$};
  \draw (0,0) circle[radius=4];
  \draw[blue] (0,0) circle[radius=3];

  \draw[red!70!black]
    plot[variable=\t, domain=0:360, samples=361, smooth]
    (\t : {0.85*(3 + 0.55*sin(4*\t))});
\end{tikzpicture}
\caption{The second shift map $sh_{\eps,2}$. The blue circle is $\Lambda_{\eps}$.}
\label{fig:shift_map_two}
\end{figure}
\subsubsection{The third shift map}\phantomsection\label{sec:shift_map_third}
For every $\eps > 0$ the construction of $sh_{\eps,1}$ from \S\ref{sec:shift_map_first} depends on the data $(r_{\eps},R_{\eps}, \alpha_{\eps})$. The construction of $sh_{\bar{x},\eps,2}$ depends on the choices of $(r_{\eps}, R_{\eps}, \xi_{i},c_{i}',\beta_{\eps,i})$. We pick a sequence of numbers $\{\eps_{k}\}_{k\geq 1}$ so that $\eps_{k} \to 0$ as $k \to \infty$ and we let $\Lambda_{k}:= \Lambda_{\eps_{k}}$. For every $k$ we pick the numbers $r_{k} := r_{\eps_{k}}$ and $R_{k} := R_{\eps_{k}}$ as in \S\ref{sec:shift_map_first} which converge to zero as $k\to \infty$. Then we fix a choice of cut off functions $\alpha_{k} := \alpha_{\eps_{k}}$ as in \S\ref{sec:shift_map_first}. So far these data determine a sequence of diffeomorphisms $sh_{k,1}: = sh_{\eps_{k},1}$, whose $C^{0}$-norm converge to zero with $k$, and $\Lambda_{k}' := sh_{k,1}(\Lambda_{k})$. We now pick a sequence of tuples $\bar{x}_{k} = (x_{k,1}, x_{k,2}, \dots, x_{k,k})$ consisting of points in $\Lambda_{k}$ that are generic and equidistributed, i.e., the radial lines $l_{k,j} =\{rx_{k,j}: r \leq 1\}$ intersect $\Lambda_{k}'$ transversely and $\{x_{k,j}\}_{j=1}^{k}$ become dense in $\Lambda_{k}$ as $k \to \infty$. We now pick the data $(\xi_{k,j}, c_{k,j}', \beta_{k,j})_{j=1}^{k}$ where $\xi_{k,i}$ are arbitrary small in particular $\ell(\xi_{i}) \to 0$ as $ k\to \infty$, and we construct $sh_{k,2} = sh_{\bar{x}_{k}, \eps_{k},2}$ as in \S\ref{sec:shift_map_second} using the above data. Hence, we have a fixed sequence of maps $sh_{k,2},sh_{k,1}$ and curves $\Lambda_{k}, \Lambda_{k}'$ and $\Lambda_{k}'' := sh_{k,2}(\Lambda_{k}')$ so that, 
$$|sh_{k,1}|_{C^{0}}, |sh_{k,2}|_{C^{0}} \to 0 \text{ as } k \to \infty,$$ 
and for every $k$ one has $\Lambda_{k}, \Lambda_{k}', \Lambda_{k}'' \subset V_{k}'$ where $V_{k}' = V_{\eps_{k}}'$ is as in \eqref{eq:the_bigger_neigh} and finally $\Lambda_{k}'' \subset \bar{\D}(1- \eps_{k})$ and it intersects $\Lambda_{k}$ at small neighborhoods of the points $\{x_{k,i}\}_{i=1}^{k}$. Removing the points $\{x_{k,j}\}_{j=1}^{k}$ from $\Lambda_{k}''$ one obtains $k$ disjoint arcs $\eta_{k,1} \sqcup \eta_{k,2} \sqcup \dots \sqcup \eta_{k,k}$ and we denote by $D_{k,j}$ the topological disk enclosed by $[x_{k,j}, x_{k,j+1}]_{\Lambda_{k}} \cup [x_{k,j}, x_{k,j+1}]_{\Lambda_{k}''}$ where $x_{k,k+1} := x_{k,1}$. In the following lemma,
$$x_{k,j} = (\theta_{k,j}, r_{k,j}),$$
is the coordinate representation of $x_{k,j}$ for every $k \geq 1$ and $1\leq j\leq k$.
\begin{lemma}\phantomsection\label{lem:diam_of_remaining_arcs}
For any $\delta > 0$ there exists an integer $N> 0$ such that for every $k \geq N$ and $1\leq j\leq k$, and for every point $p =(\theta,r)\in \eta_{k,j}$, we have $$\theta \in [\theta_{k,j} -\delta, \theta_{k,j+1} + \delta].$$
\end{lemma}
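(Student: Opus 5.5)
We argue via uniform continuity of $\phi_{c}$ on the closed disk together with the fact that all three maps preserve radial lines or have small angular support. The plan is to track the angular coordinate of each point of $\Lambda_{k}''$ back to a point of $\Lambda_{k}$, and to show that it differs from the angular coordinate of that preimage by at most $o(1)$ as $k\to\infty$. Writing $\Lambda_{k}'' = sh_{k,2}\circ sh_{k,1}\circ\phi(\Lambda_{k})$, note first that $sh_{k,1}$ preserves $\theta$ exactly, since it is the flow of a radial vector field. Second, $sh_{k,2}$ is also generated by radial vector fields $\beta_{k,i}X^{i}_{sh,2}$, so it too preserves $\theta$. Hence the only angular distortion comes from $\phi$ restricted to $\Lambda_{k}$.

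Next I would show that $\phi|_{\Lambda_{k}}$ moves angles in a nearly rigid way. Since $\phi_{c}$ is a homeomorphism of $\bar\D$ preserving $\partial\D$, it is uniformly continuous. As $\eps_{k}\to0$, the curve $\phi(\Lambda_{k})$ converges in $C^{0}$ (as a parametrized curve) to $\phi_{c}|_{\partial\D}$ composed with radial projection. Thus, for $k$ large, the map $\theta\mapsto\Theta_{k}(\theta) := \arg\phi((1-\eps_k)e^{i\theta})$ is uniformly within $\delta/3$ of the circle homeomorphism $h=\phi_{c}|_{\partial\D}$. The issue is that the arcs $\eta_{k,j}$ are cut at $x_{k,j}$ on $\Lambda_{k}''$, not at images of $x_{k,j}$. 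By construction, however, $x_{k,j}$ lies on $\Lambda_{k}''$ at the angle $\theta_{k,j}$: it arises from $x_{k,j}'$ on the radial line $\ell_{k,j}$, and it is pushed radially onto $\Lambda_{k}$. The arc $\eta_{k,j}$ is the piece of the closed curve $\Lambda_{k}''$ between two consecutive such points, which sit at angles $\theta_{k,j}$ and $\theta_{k,j+1}$.

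The key step is to show that the curve $\Lambda_{k}''$, parametrized by $\theta\mapsto sh\circ\phi((1-\eps_k)e^{i\theta})$, has angular coordinate of the form $\Theta_k$, which is uniformly close to the monotone homeomorphism $h$. Consequently, between two parameter values mapping to angles $\theta_{k,j}$ and $\theta_{k,j+1}$, the angle can only overshoot the interval $[\theta_{k,j},\theta_{k,j+1}]$ by about $2\sup|\Theta_k-h|$. The one subtlety is to pick the correct preimage parameters $s_{j}$, using the choice of $x_{k,j}'$ as the intersection closest to $\Lambda_k$, and to check that consecutive $j$ correspond to consecutive $s_j$ with $h(s_j)\approx\theta_{k,j}$. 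The near-monotonicity of $\Theta_k$, together with the density of $\{x_{k,j}\}$ (so that gaps $\theta_{k,j+1}-\theta_{k,j}\to0$), makes this ordering consistent.

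I expect this identification of preimage parameters and their cyclic order to be the main obstacle. The curve $\phi(\Lambda_k)$ can wiggle and cross $\ell_{k,j}$ many times, and the chosen crossing must correspond to a parameter $s_j$ whose $h$-value is near $\theta_{k,j}$. This holds automatically because every crossing has $\Theta_k(s)=\theta_{k,j}$, hence $|h(s)-\theta_{k,j}|<\delta/3$. Since $h$ is a homeomorphism with uniformly continuous inverse, all such $s$ lie in a small interval around $h^{-1}(\theta_{k,j})$. These intervals are disjoint and cyclically ordered for large $k$, which completes the argument.
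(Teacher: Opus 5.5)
Your reduction step is correct and useful. Both $sh_{k,1}$ and $sh_{k,2}$ are time-one maps of vector fields of the form $f\,\partial_r$. So with $f_k := sh_{k,2}\circ sh_{k,1}\circ\phi$, the angular coordinate of $f_k((1-\eps_k)e^{is})$ is exactly $\Theta_k(s)$, and $\tau_k := \sup_s d_{S^1}(\Theta_k(s),h(s))\to 0$ by uniform continuity of $\phi_c$.

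The gap is in the last step, exactly at the crux you flagged. There is no choice to make for the parameters: $s_i := f_k^{-1}(x_{k,i})$ is uniquely determined, and all you know is $d_{S^1}(h(s_i),\theta_{k,i})\le\tau_k$. The real question is which of the two arcs of $\Lambda_k$ between $s_j$ and $s_{j+1}$ equals $f_k^{-1}(\eta_{k,j})$; on the wrong one, $\Theta_k$ sweeps almost the whole circle. Your answer is that the intervals $h^{-1}\bigl((\theta_{k,i}-\delta/3,\theta_{k,i}+\delta/3)\bigr)$ are disjoint and cyclically ordered for large $k$, and this is false. Their widths are bounded below independently of $k$, while consecutive centres $h^{-1}(\theta_{k,i})$ and $h^{-1}(\theta_{k,i+1})$ get arbitrarily close because the $k$ cut points become dense. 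Shrinking the radius to $\tau_k$ does not help. Disjointness would then require $\tau_k$ to be small compared with the spacing $\approx 2\pi/k$, and nothing in the construction ties $\eps_k$ (hence $\tau_k$) to the number $k$ of cut points. So density, used as you use it, works against you, and the ordering is not established.

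The missing idea is the one in the paper's proof.
Put $\xi := f_k^{-1}(\eta_{k,j})$. It is an open arc of $\Lambda_k$ with endpoints $s_j,s_{j+1}$ containing no other $s_i$, so the complementary closed arc $\xi^c$ contains \emph{all} the $s_i$.
Let $g_k\to 0$ be the largest gap between cyclically consecutive $\theta_{k,i}$. Both $h(s_j)$ and $h(s_{j+1})$ lie in $J := [\theta_{k,j}-\tau_k,\theta_{k,j+1}+\tau_k]$, an arc of length at most $g_k+2\tau_k$. Hence exactly one of the two arcs $h(\xi)$, $h(\xi^c)$ is contained in $J$.
Suppose it were $h(\xi^c)$. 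Then every $\theta_{k,i}$ would lie in an arc of length at most $g_k+4\tau_k$, forcing some gap to be at least $2\pi-g_k-4\tau_k$. This contradicts $g_k\to 0$ for large $k$.
Therefore $h(\xi)\subset J$ and $\Theta_k(\xi)\subset[\theta_{k,j}-2\tau_k,\theta_{k,j+1}+2\tau_k]$, which proves the lemma once $2\tau_k<\delta$.

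With this step inserted, your argument becomes a valid direct proof with an explicit overshoot bound $2\tau_k$. The paper instead argues by contradiction along a subsequence, using only $C^0$-smallness of the shifts and uniform continuity of $\phi_c$, with no radial structure needed. In both, the heart of the matter is the same density argument: the long arc would have to avoid all other cut points.
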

\begin{proof}
Suppose the contrary and assume that there is a positive number $\delta >0$ and a sequence of numbers $k_{n} \to \infty$ as $n \to \infty$ so that the conclusion does not hold for $k_{n}$, i.e., for every $n$ there exists $1\leq j_{n} \leq k_{n}$ and a point $p_{n} =(\theta_{n},r_{n})\in \eta_{k_{n},j_{n}}$ so that,
$$\theta_{n}\not\in [\theta_{k_{n}, j_{n}} - \delta, \theta_{k_{k}, j_{n}+1} + \delta].$$
Up to passing to a subsequence we can assume that $p_{n} \to p\in \partial\bar{\D}$ for some point $p$ on the boundary. Given that $\{x_{k_{n},j}\}_{j=1}^{k_{n}}$ are becoming dense in $\Lambda_{k_{n}}$ we conclude that $d(x_{k_{n},j_{n}}, x_{k_{n}, j_{n}+1}) \to 0$ and so up to passing to a subsequence we assume that $x_{k_{n}, j_{n}}, x_{k_{n}, j_{n}+1} \to x\in \partial{\bar{\D}}$ for some point $x\in \partial\bar{\D}$. Writing $x = (\theta_{x}, 1)$ and $p = (\theta_{p},1)$ it is clear by construction that $|\theta_{p}-\theta_{x}| \geq \delta > 0$, in particular $x\neq p$. Let,
$$f_{n} = sh_{k_{n},2}\circ sh_{k_{n},1} \circ \phi,$$ 
and $\xi_{n} = f_{n}^{-1}(\eta_{k_{n},j_{n}})$, $y_{n} = f_{n}^{-1}(x_{k_{n},j_{n}}), y_{n}' = f_{n}^{-1}(x_{k_{n},j_{n}+1})$ and $q_{n} = f_{n}^{-1}(p_{n})$. Then $\xi_{n}$ will be a connected arc with ends $y_{n},y_{n}'$ that contains $q_{n}$ and the ends $y_{n},y_{n}'$ converge to some point $y = \phi^{-1}(x)\in \partial\bar{\D}$ while $q_{n}$ converge to some point $q = \phi^{-1}(p)\in \partial \bar{\D}$ and $y \neq q$. So up to passing to subsequences either $\ell(\xi_{n}) \to 0$ or $\ell(\xi_{n}) \to 2\pi$ as $n \to \infty$, the first case is not possible as the limit arc $\xi_{\infty}:= \lim \xi_{n}$ contains two distinct points $y$ and $q$. So we must have $\ell(\xi_{n}) \to 2\pi$. This implies $\ell(\xi_{n}^{c}) \to 0$ as $n \to \infty$ where $\xi_{n}^{c} := \Lambda_{k_{n}}\backslash \xi_{n}$. This is a contradiction because $\xi_{n}^{c}$ contains all the points $f_{n}^{-1}(x_{k_{n},j})$ where $j \neq j_{n}$ and by uniform continuity of $f_{n}$ one would conclude that $d(x_{k_{n}, j}, x_{k_{n}, j'}) \to 0$ for all $j,j' \neq j_{n}$ and this is in contradiction with the fact that $\{x_{k_{n},j}\}_{j=1}^{k_{n}}$ are becoming dense in $\Lambda_{k_{n}}$ by construction.
\end{proof}
Hence, by passing to a subsequence and relabeling the sequence, we can assume that for every $n\geq 1$ and $1 \leq j \leq n$ we have
$$D_{n,j} \subset \{(\theta,r): \theta\in [\theta_{n,j}-\delta_{n}, \theta_{n,j+1} + \delta_{n}], r\in [\frac{(1 -R_{n})^{2}}{1 - r_{n}}, 1-r_{n}]\},$$
where $\delta_{n} >0$ are some numbers that converge to zero as $n \to\infty$. So, in particular we conclude that $\mathrm{diam}(D_{n,i}) \to 0$ as $n \to \infty$. The disks $\{D_{n,i}\}_{i=1}^{n}$ are pairwise disjoint. One can now construct smooth diffeomorphisms $h_{n,i}$ supported in a slightly larger neighborhood of $D_{n,i}$, still pairwise disjoint, so that $h_{n,i}(\eta_{n,i}) = [x_{n,i}, x_{n,i+1}]$ for all $i = 1, \dots, n$. We define
$$sh_{n,3}:= sh_{\bar{x}_{n}, \eps_{n}, 3} := h_{n,1} \circ h_{n,2} \circ \dots \circ h_{n,n},$$
which satisfies $|sh_{n,3}|_{C^{0}} \to 0$ as $n \to \infty$ and $sh_{n,3}(\Lambda_{n}'') = \Lambda_{n}$. See Figure \ref{fig:shift_map_three} for an illustration of the third shift map.
\begin{figure}[ht]
\centering
\begin{tikzpicture}[line cap=round, scale = 0.3]
  \draw (0,0) circle[radius=4];
  \draw[blue] (0,0) circle[radius=3];

  \draw[red!70!black]
    plot[variable=\t, domain=0:360, samples=361, smooth]
    (\t : {0.85*(3 + 0.55*sin(4*\t))});
\end{tikzpicture}
\begin{tikzpicture}[line cap=round, scale = 0.3]
  \draw[->] (-9,0) -- (-5,0);
  \node at (-7,1) {$sh_{\eps,3}$};
  \draw (0,0) circle[radius=4];
  \draw[red!70!black] (0,0) circle[radius=3];
\end{tikzpicture}
\caption{The third shift map $sh_{\eps,3}$. The blue circle is $\Lambda_{\eps}$.}
\label{fig:shift_map_three}
\end{figure}
\subsubsection{The Khanevsky map}\phantomsection\label{sec:khanevsky_map}
Finally for every $n \geq 1$ for the data $\mathrm{d}_{n} = (\eps_{n}, \bar{x}_{n})$ introduced in \S\ref{sec:shift_map_third} we define
$$\phi_{\kh,n}^{-1}:= sh_{n, 3} \circ sh_{n,2} \circ sh_{n,1}.$$
Since the norms $|sh_{n,3}|_{C^{0}}, |sh_{n,2}|_{C^{0}}, |sh_{n, 1}|_{C^{0}} \to 0$ as $n\to \infty$, see \S\ref{sec:shift_map_first}, \S\ref{sec:shift_map_second} and \S\ref{sec:shift_map_third}, up to passing to a subsequence and relabeling the sequence we can assume that for every $n\geq 1$,
$$\begin{aligned}
|\phi_{\kh,n}|_{C^{0}} &\leq |sh_{n,1}^{-1}|_{C^{0}} + |sh_{n,2}^{-1}|_{C^{0}} + |sh_{n, 3}^{-1}|_{C^{0}} \leq 3/n.
\end{aligned}$$
Moreover, for every $n \geq 1$ we have $\phi_{\kh,n}(\Lambda_{n}) = \phi(\Lambda_{n})$.

\subsection{A symplectic perturbation with an invariant Lagrangian}\phantomsection\label{sec:smooth_pert_inv_lag_symp} 
In this section we use the techniques of Lefeuvre-Sikorav, see the work of Entov-Polterovich-Py \cite{MR2884036} and Leclaire \cite{ian_thesis}, to show that one can perturb $\phi_{n}:= \phi_{\mathrm{kh}, n}$, from the previous section, with ($C^{0}$-small) diffeomorphisms so that the resulting map will preserve $\Omega$ and still map $\Lambda_{n}$ to $\phi(\Lambda_{n})$. We will construct a sequence of smooth compactly supported diffeomorphisms $\psi_{p}$, by perturbing the maps $\phi_{n}$, so that for some large enough $n_{p}$ depending on $p$,
$$\psi_{p}^{*}\Omega = \Omega,\ |\psi_{p}|_{C^{0}(D_{p} \cup \ell_{0})}\leq 1/p,\ \psi_{p} (\Lambda_{{n_{p}}}) = \phi(\Lambda_{{n_{p}}}),$$
where $D_{p}$ is the disk with boundary $\Lambda_{n_{p}}$ and $\ell_{0} = \bar\D\cap \R^{+}$, moreover, $\mathrm{supp}(\psi_{p})\Subset U_{p} = \{1- 1/p < |z| < 1\}$. 
\begin{figure}[ht]
\begin{tikzpicture}[scale = 0.5]
    \def\Rad{5}      
    \def\m{4}      
    
    \pgfmathsetmacro{\closeRadius}{\Rad*0.96}
    
    \pgfmathsetmacro{\innerRadius}{\Rad*(\m-1)/\m}
    
    \pgfmathsetmacro{\sectorAngle}{360/\m}
    
    \fill[red!20, even odd rule] (0,0) circle (\Rad) (0,0) circle (\closeRadius -0.05);
    
    \fill[green!20, even odd rule] (0,0) circle (\closeRadius -0.05) (0,0) circle (\innerRadius);
    
    \foreach \k in {1,...,\numexpr\m-1} {
        \pgfmathsetmacro{\radius}{\Rad*\k/\m}
        \draw[thick] (0,0) circle (\radius);
    }
    
    \draw[thick] (0,0) circle (\Rad);
    
    \draw[blue, thick] (0,0) circle (\closeRadius -0.05);
    
    \foreach \k in {0,...,\numexpr\m-1} {
        \pgfmathsetmacro{\angle}{360*\k/\m}
        \draw[thick] (0,0) -- (\angle:\Rad);
    }
    
    \draw[green, thick] (0:\innerRadius) -- (0:\Rad);
    
    \pgfmathsetmacro{\smallCircleRadius}{\innerRadius + 0.3}
    \foreach \k in {1,...,\m} {
        \pgfmathsetmacro{\midAngle}{360*(\k-0.5)/\m}
        \draw[thick] (\midAngle:\smallCircleRadius) circle (0.15);
    }
    
    \node at (45:\Rad+1) {$\sigma_{1,n}''$};
    \node[rotate=215] at (45:{\Rad+0.2}) {$\rightsquigarrow$};
    \node at (20:\Rad+1.2) {$\sigma_{1,n}'$};
    \node[rotate=200, scale=1.7] at (20:{\Rad}) {$\rightsquigarrow$};
    
    \pgfmathsetmacro{\firstMidAngle}{360*0.5/\m}
    \node at (61:\Rad-0.6) {\tiny{$B_1$}};
    \node[rotate=-70] at (53:{\Rad-0.8}) {$\rightsquigarrow$};
\end{tikzpicture}
\caption{The partition $K$. The blue line is $\Lambda_{n_{p}}$. The map $\psi_{p}$ is supported in the green and the red regions. The $C^{0}$-norm of $\psi_{p}$ converges to zero as $p\to \infty$ in the green region and on the green horizontal segment; the norm is not controlled in the red region except near the boundary. The small circles are $B_{i}$'s and they contain the supports of $\eta_{i}$ from \S\ref{sec:the_functions_eta_n}.}
\label{fig:the_partition}
\end{figure}
In the following we outline the proof up to some technicalities that will be explained in details in the subsequent sections. Fix a partition $K$ of $\bar{\D}$ into many small cells using concentric circles and radial lines. Let $\sigma_{1}, \sigma_{2}, \dots, \sigma_{m}$ be the cells adjacent to $\partial \bar{\D}$. Fix one small disk  $B_{i} \subset \mathrm{int}(\sigma_{i})$ for $i = 1,\dots, m$ and denote $B = B_{1} \sqcup B_{2}\sqcup \dots \sqcup B_{m}$; see Figure \ref{fig:the_partition}.  Then observe that for each $\sigma \in K$, $$|\mathrm{area}_{\Omega}(\phi_{n}(\sigma)) - \mathrm{area}_{\Omega}(\sigma)| \to 0 \text{ as } n \to \infty;$$
this is because $|\phi_{n}|_{C^{0}} \to 0$ as $n \to \infty$ and area of the set of one dimensional cells $K^{1}$ of $K$ equals zero. Let $N = N(K,B) > 0$ large enough which will be determined throughout the section and restrict the following discussion to all $n \geq N$. In particular, $N$ should be large enough so that, 
$$|\phi_{n}|_{C^{0}} \leq \max_{\sigma \in K}\mathrm{diam}(\sigma), \forall n \geq N.$$
Let $\Omega_{n}:= \phi_{n}^{*}\Omega$. Picking $N$ large enough we can assume that $V_{n}' \cap B = \emptyset,$ where $V_{n}'$ is defined by \eqref{eq:the_bigger_neigh} for $\eps = \eps_{n}$, see \S\ref{sec:shift_map_first}, and it contains the support of $\phi_{n}$. We construct a smooth diffeomorphism $\phi_{n,1}$, see \S\ref{sec:const_of_phi_one}, that is supported in an arbitrarily small neighborhood of $K^{1} \cup \Lambda_{n}$, it is identity on $K^{1} \cup \Lambda_{n}$, $|\phi_{n,1}|_{C^{0}}$ is arbitrarily small and $\Omega_{n,1} :=\phi_{n,1}^{*}\Omega_{n} = \Omega$ on $K^{1} \cup \Lambda_{n}$. We pick $N$ large enough so that $B_{i} \subset \mathrm{int}(\phi_{n}\phi_{n,1}(\sigma_{i}))$ for all $i= 1,\dots,m$. Adding $\Lambda_{n}$ to the partition $K$ we obtain a new partition $\t{K}$ where each $\sigma_{i}$ is split into two cells $\sigma_{i,n}'$ and $\sigma_{i,n}''$ where the latter shares an edge with boundary $\partial \bar\D$; see Figure \ref{fig:the_partition}. There is a smooth function $\bar\eta_{n}: \D \rightarrow \R^{>0}$, see \S\ref{sec:the_functions_eta_n}, so that $|\bar\eta_{n} - 1|_{C^{0}} \to 0$ as $n \to \infty$ and,
$$\int_{\sigma_{i,n}'}\Omega_{n,1} = \int_{\sigma_{i,n}'}\bar\eta_{n}\Omega,\ \forall i \in \{1,\dots, m\}.$$
Moreover, $\bar\eta_{n}-1$ will be supported in $B_{1} \sqcup B_{2} \sqcup \dots \sqcup B_{m}$ which is away from the boundary $\partial\bar{\D}$ independent of $n$; note that the support of $\phi_{n}\phi_{n,1}$ only intersects $\sigma_{1} \cup\dots \cup \sigma_{m}$ for large enough $n$ so for every other $\sigma\in K$ that is not from $\{\sigma_{i}\}_{i=1}^{m}$ one has $\int_{\sigma}\Omega_{n,1} = \int_{\sigma}\bar\eta_{n}\Omega$. We find a primitive $\beta$, see \S\ref{sec:const_of_phi_two}, for the two form $\Omega_{n,1} - \bar\eta_{n}\Omega$ so that it vanishes on $\t{K}^{1}\cap \bar\D(1 - \eps_{n})$, on $\ell_{0}$, and near the boundary of the disk. Note that this is possible in particular because $\Omega_{n,1}$ and $\bar\eta_{n}\Omega$ have the same area on each $\sigma \in \t{K}\cap \bar\D(1-\eps_{n})$ and they coincide on $\t{K}^{1} = K^{1} \cup \Lambda_{n}$. Then, because, 
$$\begin{aligned}
\int_{\D} \Omega_{n,1} &= \int_{\bar\D(1-\eps_{n})} \Omega_{n,1}+ \int_{\D\backslash\bar\D(1-\eps_{n})}
\Omega_{n,1}\\
&= \int_{\bar\D(1-\eps_{n})} \bar\eta_{n}\Omega+ \int_{\phi_{n}\phi_{n,1}(\D\backslash\bar\D(1-\eps_{n}))}\Omega\\
&= \int_{\bar\D(1-\eps_{n})} \bar\eta_{n}\Omega+ \int_{\phi(\D\backslash\bar\D(1-\eps_{n}))}\Omega =\int_{\D}\bar\eta_{n}\Omega,
\end{aligned}$$
by Moser's argument there is a diffeomorphism $\phi_{n,2}$, see \S\ref{sec:const_of_phi_two}, of the disk so that $\Omega_{n,2}:= \phi_{n,2}^{*}\Omega_{n,1} = \bar\eta_{n}\Omega$ and $\phi_{n,2} = id$ on $\t{K}^{1}\cap \bar\D(1 - \eps_{n})$, on $\ell_{0}$, and near the boundary of the disk, hence, 
$$|\phi_{n,2}|_{C^{0}(\bar\D(1-\eps_{n}))} \leq \max_{\sigma\in K}\mathrm{diam}(\sigma), \phi_{n,2}|_{\ell_{0}} = id.$$
Moreover, one can find a smooth isotopy $\phi_{n,2}^{t}$ of the disk with time-one map $\phi_{n,2}$ and preserving $\Lambda_{n}$ for all time, so that it is $C^{0}$-small inside $\bar\D(1-\eps_{n})$, i.e., $|\phi_{n,2}^{t}|_{C^{0}(\bar\D(1-\eps_{n}))}$ is smaller than the diameter of cells of $K$ for all $t\in [0,1]$; see \S\ref{sec:the_functions_eta_n}. So we have
$$|\phi_{n}\phi_{n,1}\phi_{n,2}|_{C^{0}} \leq 3 \max_{\sigma \in K}\mathrm{diam}(\sigma),$$
and $(\phi_{n}\phi_{n,1}\phi_{n,2})^{*}\Omega = \bar\eta_{n}\Omega$ and $|\eta_{n}-1|_{C^{0}} \to 0$ as $n \to \infty$.

The two forms $\bar\eta_{n}\Omega$ and $\Omega$ coincide on a neighborhood of the boundary $\partial\bar\D$ that is independent of $n$. Let $\alpha_{n} := \bar\eta_{n} \Omega - \Omega$. Since integral of $\alpha$ over $\D$ is zero and $\alpha_{n}$ is compactly supported in $\D$, by the Poincaré lemma for compactly supported De Rham cohomology there is a primitive one form $\beta_{n,3}$ for $\alpha_{n}$ that is compactly supported in the disk $\D$ and its support can be chosen independent of $n$. Hence, by Moser's argument applied to $\Omega_{n,t} = \Omega + t(\bar\eta_{n}\Omega - \Omega)$ and solving $\Omega_{n,t}(X_{n,t}, -) = -\beta_{n,3}$ for $X_{n,t}$ one constructs a diffeomorphism $\phi_{n,3}$ so that $\phi_{n,3}^{*}(\bar\eta_{n}\Omega) =\Omega$, $\phi_{n,3} = id$ on a neighborhood of the boundary $\partial \D$ that is independent of $n$, and $|\phi_{n,3}|_{C^{0}} \to 0$ as $n \to \infty$; see \S\ref{sec:const_of_phi_three} for more details. Letting $\phi_{K,B,n}:= \phi_{n}\phi_{n,1}\phi_{n,2}\phi_{n,3}$ for some $n \geq N$ and $N$ large enough depending on $K,B$ as above, we have shown that for any partitioning $K$ and choices of disks $B_{1}, \dots, B_{m}$ as above there exists a smooth diffeomorphism $\phi_{K,B,n}$ of the disk $\D$ so that $\phi_{K,B,n}(\Lambda_{n}) = \phi(\Lambda_{n})$, $\phi_{K,B,n}^{*}\Omega = \Omega$ and,
$$
\begin{aligned}
|\phi_{K,B,n}|_{C^{0}(\bar\D(1-\eps_{n})\cup \ell_{0})} &= |\phi_{n}\phi_{n,1}\phi_{n,2}\phi_{n,3}|_{C^{0}(\bar\D(1-\eps_{n})\cup \ell_{0})}\\
&\leq |\phi_{n}|_{C^{0}} + |\phi_{n,1}|_{C^{0}} +|\phi_{n,2}|_{C^{0}(\bar\D(1-\eps_{n})\cup \ell_{0})} + |\phi_{n,3}|_{C^{0}}\\
&\leq  4\max_{\sigma\in K}\mathrm{diam}(\sigma).
\end{aligned}
$$
Let now $K_{p}, B_{p} = B_{p,1}\sqcup B_{p,2} \sqcup \dots \sqcup B_{p,m_{p}}$ be a sequence of partitions of the disk $\bar{\D}$ by concentric circles and radial lines and $B_{p,i} \subset \mathrm{int}(\sigma_{p,i})$ where $\{\sigma_{p,i}\}_{i=1}^{m_{p}}$ are the cells adjacent to $\partial \bar{\D}$ and so that,
$$\max_{\sigma\in K_{n}} \mathrm{diam}(\sigma) \leq 1/4p.$$
Then, letting $\psi_{p} = \phi_{K_{p}, B_{p}, n_{p}}$ for some $n_{p} \geq N_{p}$ where $N_{p}$ is large enough integer depending on $K_{p},B_{p}$, we have
$$\psi_{p}^{*}\Omega = \Omega,\ |\psi_{p}|_{C^{0}(\bar\D(1-\eps_{n_{p}}) \cup \ell_{0})}\leq 1/p,\ \psi_{p} (\Lambda_{n_{p}}) = \phi(\Lambda_{n_{p}}).$$
\subsubsection{Construction of \texorpdfstring{$\phi_{n,1}$}{}}\phantomsection\label{sec:const_of_phi_one}
Fix a partition $K$ of $\bar{\D}$ and a collection of small balls $B = B_{1}\sqcup \dots \sqcup B_{N}$ as in \S\ref{sec:smooth_pert_inv_lag_symp}. Let $\t{K}$ be the refined partitioning obtained by adding $\Lambda_{n}$ to $K$ and remove the vertices and edges on the boundary of the disk. First we pick small disjoint disks $D_{v}$ around each vertex $v\in \t{K}^{0}$. By Darboux's theorem we find diffeomorphisms $g_{v}$ compactly supported in $D_{v}$ so that $g_{v}^{*}\Omega_{n} = \Omega$ on some possibly smaller disks $D_{v}' \subset D_{v}$ for all $v\in \t{K}^{0}$. Let $G = \Pi_{v\in \t{K}^{0}}g_{v}$, it is clear that $|G|_{C^{0}}$ is smaller than the diameter of the disks $D_{v}$ which one can choose arbitrary small. For every $e\in \t{K}^{1}$ we pick a small tubular neighborhood $Q_{e}$ of $e \backslash \sqcup_{v}\frac{1}{2}D'_{v}$ so that they are pairwise disjoint. We then construct diffeomorphisms $h_{e}$ compactly supported in $Q_{e}$ with arbitrary small $C^{0}$-norm so that $h_{e}^{*}G^{*}\Omega_{n} = \Omega$ on $e$. We explain the construction of $h_{e}$ momentarily. Let $H = \Pi_{e\in \t{K}^{1}}h_{e}$. Then, $H^{*}G^{*}\Omega_{n} = \Omega$ on $\t{K}^{1}$. Defining $\phi_{n,1} := GH$ it is obvious that it satisfies the required properties above.

We now explain the construction of $h_{e}$. Let $\Omega_{n}' = G^{*}\Omega_{n}.$ Let $e\in \t{K}^{1}$ and $e' = e \backslash (\sqcup_{v}\frac{1}{2}D'_{v})$. We would like to construct a smooth diffeomorphism $h_{e}$ that is compactly supported in a small tubular neighborhood of $e$, it is identity on $e$, $|h_{e}|_{C^{0}}$ is arbitrarily small, and $h_{e}^{*}(\Omega_{n}') = \Omega$ on $e$. Let $r > 0$ be a small number and let $\chi: T_{r} \rightarrow \D$ be a smooth map from $T_{r} = [0,1] \times (-r,r)$ into the disk $\D$ whose image is a small neighborhood of $e$ satisfying $\chi([0,1]\times \{0\}) = e$ and so that a small neighborhood of $\partial T_{r} = \{0,1\}\times (-r,r)$ is mapped into $\cup_{v\in \t{K}^{0}}D'_{v}$ where $\Omega_{n}' = \Omega$. Let $\Omega_{n,\mathrm{loc}}' = \chi^{*}\Omega_{n}'$ and $\Omega_{\mathrm{loc}} = \chi^{*}\Omega$. Let $\Omega_{n,\mathrm{loc}}' = \rho\Omega_{\mathrm{loc}}$ for some smooth function $\rho: [0,1] \times (-r,r) \rightarrow \R$. To construct the map $h_{e}$ first we construct it in the local chart $T_{r}$. Consider the following autonomous vector field on $T_{r}$,
$$X = -\beta(t)\log(\rho(s,0))t\partial_{t},$$
where $\beta(t) = 1$ near $|t| <  r''$ for some $r'' \ll r$ and $\beta(t) = 0$ near $|t| > r'$ where $r'' \ll r' \ll r$. The time-one map of the flow of $X$ near $|t| < r''$ is,
\begin{equation}\phantomsection\label{eq:flow_aut_vf_rho}
h_{e,\mathrm{loc}}(s,t) = (s, e^{-\beta(t)\log(\rho(s,0))}t),
\end{equation}
which is equal to $(s,t)\mapsto (s,t/\rho(s,0))$. Since $h_{e,\mathrm{loc}}$ preserves the vertical lines $\{s= \mathrm{const}\}$ and is identity on $|t| > r'$ the $C^{0}$-norm of $h_{e,\mathrm{loc}}$ is smaller than $2r' \ll r$ which can be chosen arbitrarily small; see the proof of \cite[Lemma 6.5]{MR2884036} for a similar argument. It is easy to check that $h_{e,\mathrm{loc}}^{*}(\Omega_{n,\mathrm{loc}}) = \Omega_{\mathrm{loc}}$ on $[0,1] \times \{0\}$. Define $h_{e} = \chi h_{e,\mathrm{loc}}\chi^{-1}$. Then, we have the following on $e$,
$$\begin{aligned}
h_{e}^{*}(\Omega_{n}') &= (\chi^{-1})^{*}h_{e,\mathrm{loc}}^{*}\chi^{*}(\Omega_{n}')\\
&= (\chi^{-1})^{*}h_{e,\mathrm{loc}}^{*}(\Omega_{n,\mathrm{loc}}') \\
&= (\chi^{-1})^{*}\Omega_{\mathrm{loc}}= \Omega.
\end{aligned}$$
Moreover, $|h_{e}|_{C^{0}}$ can be made arbitrary small by picking $r'$ sufficiently small compare to $C^{1}$-norm of $\chi$. Note that near the boundary of $[0,1] \times (-r,r)$ we have $\rho = 1$ which implies that the map $h_{e,\mathrm{loc}} = id$ near the boundary, hence $h_{e}$ is compactly supported in the tubular neighborhood $Q_{e}:=\mathrm{Im}(T_{r})$.

\subsubsection{Construction of \texorpdfstring{$\bar\eta_{n}$}{}}\phantomsection\label{sec:the_functions_eta_n}
Fix the partition $K$ and the balls $B = B_{1} \sqcup \dots \sqcup B_{m}$ where $B_{i} \subset \mathrm{int}(\sigma_{i})$; see \S\ref{sec:smooth_pert_inv_lag_symp} for the notations. We define $\bar\eta_{n} = 1 + \sum_{i=1}^{m} \delta_{n,i}\eta_{i}$ where,
$$\delta_{n,i} = \int_{\sigma_{i,n}'}\Omega_{n,1} - \int_{\sigma_{i,n}'}\Omega,$$
and $\eta_{i}: \D \rightarrow \R$ is a fixed, independent of $n$, smooth function compactly supported in $B_{i}$ so that,
$$\int_{B_{i}}\eta_{i} \Omega = 1, \forall i\in \{1,2,\dots,m\}.$$
Since $|\phi_{n}\phi_{n,1}|_{C^{0}} \to 0$ as $n \to \infty$ it follows that $\delta_{n,i} \to 0$ hence it is clear that for large enough $n$ one has $\bar\eta_{n} > 0$ and $|\bar\eta_{n} - 1|_{C^{0}} \to 0$ as $n \to \infty$. Moreover, we have
$$\begin{aligned}
\int_{\sigma_{i,n}'}\bar\eta_{n}\Omega &= \int_{\sigma_{i,n}'}(1 + a_{n,i}\eta_{i})\Omega\\
&= \int_{\sigma_{i,n}'}\Omega + \bigg(\int_{\sigma_{i,n}'}\Omega_{n,1} -\int_{\sigma_{i,n}'}\Omega\bigg)\int_{B_{i}}\eta_{i}\Omega=\int_{\sigma_{i,n}'}\Omega_{n,1}.
\end{aligned}$$
For the rest of the cells $\sigma \in K\cap \bar\D(1 - \eps_{n})$ it is because $\eta_{n} = 1$ and $\phi_{n}\phi_{n,1} = id$.

\subsubsection{Construction of the primitive \texorpdfstring{$\beta_{n,2}$}{} and \texorpdfstring{$\phi_{n,2}$}{}}\phantomsection\label{sec:const_of_phi_two}
To construct the primitive $\beta_{n,2}$ we adjust the construction of Banyaga \cite{banyaga-moser-comp-mnflds} in his proof of Moser's theorem for compact manifolds as suggested in \cite[\S6]{MR2884036}. Let $\Delta$ be a smooth two form on the disk that is zero near the boundary and on $K^{1}$ for some partition $K$ given by radial lines $\{\ell_{i}\}_{i=0}^{m-1}$ with angles $\{\theta_{i}\}_{i=0}^{m-1}$ so that $\theta_{0} = 0$, and concentric circles $\{c_{i}\}_{i=1}^{m}$ with radii $r_{1}<r_{2}<\dots<r_{m}=1$, let $r_{0}=0$, see Figure \ref{fig:the_partition}, and the area of each cell $\sigma \in K$ is zero with respect to $\Delta$ except possibly the cells $\sigma_{1},\dots, \sigma_{m}$ that share an edge with the boundary $\partial\bar\D$. We show that there is a primitive $\beta$ for $\Delta$ that vanishes on $(K^{1}\cap\bar\D(r_{m-1})) \cup\ell_{0} \cup \mathcal{N}_{\partial\bar\D}$ where $\mathcal{N}_{\partial\bar\D}$ is a neighborhood of the boundary $\partial\bar\D$. Let $f(r,\theta): \D \rightarrow \R$ be a smooth function so that $\Delta(r,\theta) = f(r,\theta) dr\wedge d\theta.$ Define
$$\bar{f}_{i}(\theta):= \int_{r_{i-1}}^{r_{i}}f(s,\theta)ds; i=1,2,\dots,m.$$
Let $a_{i}(r):\R \rightarrow \R$ be a smooth function that is compactly supported in the interval $(r_{i-1}, r_{i})$ and $\int_{\R}a_{i}(s)ds = 1$ for all $i = 1,\dots,m$. Consider the following one form,
$$\beta(r,\theta):= \bigg[\int_{0}^{r}\big(f(s,\theta) - \sum_{i=1}^{m}a_{i}(s)\bar{f}_{i}(\theta)\big)ds\bigg]d\theta - \bigg[\sum_{i=1}^{m}a_{i}(r)\int_{0}^{\theta}\bar{f}_{i}(t)dt\bigg]dr.$$
We claim that $\beta$ is the desired primitive of $\Delta.$ The coefficient of $d\theta$ in $\beta$ vanishing on the radial lines $\ell_{i}$ immediately follows from the assumption that $\Delta$ and hence $f$ vanishes on $K^{1}$. The coefficient of $dr$ vanishes on radial lines inside $\bar\D(r_{m-1})$ because the terms $\int_{0}^{\theta_{j}}\bar{f}_{i}(t)dt$ for $1\leq i\leq m-1$ and $0\leq j\leq m-1$ are equal to the sum of the $\Delta$-areas of some cells from $K\cap \bar\D(r_{m-1})$ which are assumed to be zero; here we used the assumption that $\ell_{0} = \{\theta = 0\}$. To see that it vanishes on $\ell_{0}$ observe that for the fixed $\theta = \theta_{0} = 0$ not only the coefficient of $d\theta$ vanishes since $f(s,0)=0$ for all $s$, but also the coefficient of $dr$ obviously vanishes; note that for $\theta= \theta_{j}$ for $j \neq 0$ we do not know whether the coefficient of $dr$ vanishes because the areas of cells near the boundary are not necessarily zero. Next we show that $\beta$ vanishes on the concentric circles, i.e., $\beta(r_{i},\theta) = 0$ for every $\theta$ and all $i=1,\dots,m$. Fix some $j\in \{1,\dots, m\}$. Then,
$$\begin{aligned}
\beta(r_{j}, \theta) &= \bigg[\int_{0}^{r_{j}}f(s,\theta)ds - \sum_{i=1}^{j}\int_{r_{i-1}}^{r_{i}}a_{i}(s)\bar{f}_{i}(\theta)ds\bigg]d\theta-\bigg[\sum_{i=1}^{j}a_{i}(r_{j})\int_{0}^{\theta}\bar{f}_{i}(t)dt\bigg]dr\\
&=\bigg[\int_{0}^{r_{j}}f(s,\theta)ds - \sum_{i=1}^{j}\bar{f}_{i}(\theta)ds\bigg]d\theta = 0,
\end{aligned}$$
where the first equality is because $a_{i}(r) = 0$ for $r\leq r_{j}$ and $i\geq j+1$ and the second equality is by $\int_{\R}a_{i}(s)ds = 1$ and the fact that $a_{i}(r_{j}) = 0$ for all $i,j$. To see that $\beta$ vanishes near the boundary one observes that for $r$ close enough to $1$ one has $a_{i}(r)=0$ and $\int_{0}^{r}f(s,\theta)ds = \int_{0}^{1}f(s,\theta)ds$. It remains to show that $\beta$ is actually a primitive of $\Delta = f(r,\theta)dr\wedge d\theta$, this is a straightforward calculation.

To construct the desired $\beta_{n,2}$ let $\Delta_{n}:= \Omega_{n,1} - \bar\eta_{n}\Omega$ and let $K$ be a given partition by concentric circles and radial lines. Refine the partition $K$ to $\t{K}$ by adding the circle $\Lambda_{n}$ to the set of concentric circles defining $K$. By construction in \S\ref{sec:smooth_pert_inv_lag_symp} above we know that $\Delta_{n}$ vanishes on the one cells $\t{K}^{1}$ of $\t{K}$ and the areas of the cells of $\t{K}$ are zero with respect to $\Delta_{n}$ except possibly the cells $\sigma_{1,n}'', \dots, \sigma_{m,n}''$ that share an edge with the boundary $\partial\bar\D$; see Figure \ref{fig:the_partition}. Applying the above construction to $\Delta_{n}$ and $\t{K}$ we obtain the primitives $\beta_{n,2}$ that vanish on $(\t{K}^{1}\cap \bar\D(1 - \eps_{n})) \cup \ell_{0} \cup \mathcal{N}_{n,\partial\bar\D}$ where $\mathcal{N}_{n,\partial\bar\D}$ is some neighborhood of the boundary $\partial\bar\D$; recall that $\Lambda_{n}=\{|z| = 1-\eps_{n}\}$. Therefore, following Moser's trick, solve the equation $\Omega_{n,t}(X_{n,t}, -) = -\beta_{n,2}$ for $X_{n,t}$ and let $\phi_{n,2}$ be the time-one map of the flow $\phi_{n,2}^{t}$ of $X_{n,t}$, where $\Omega_{n,t} = \bar\eta_{n}\Omega + t\Delta_{n} = (1-t)\bar\eta_{n}\Omega + t\Omega_{n,1}$ which is a path of symplectic forms since for large enough $n$ we have $|\bar\eta_{n} - 1|_{C^{0}} \sim 0$ hence $\bar\eta_{n}\Omega$ is a symplectic form and so the convex linear combinations of $\bar\eta_{n}\Omega$ and $\Omega_{n,1}$ are symplectic forms. It is now clear that $\phi_{n,2}$ is the identity on $(K^{1}\cap\D(1-\eps_{n})) \cup \Lambda_{n} \cup \ell_{0}$ which implies that, 
$$|\phi_{n,2}|_{C^{0}(\bar\D(1-\eps_{n}) \cup\ell_{0})} \leq \max_{\sigma \in K}\mathrm{diam}(\sigma).$$
We point out two more properties of $\phi_{n,2}$ that will be important. First, there exist a smooth isotopy $\{\phi_{n,2}^{t}\}_{t\in[0,1]}$, the flow of $X_{n,t}$, starting from the identity and ending with $\phi_{n,2}$ so that,
$$|\phi_{n,2}^{t}|_{C^{0}(\bar\D(1-\eps_{n})\cup \ell_{0})} \leq \max_{\sigma \in K}\mathrm{diam}(\sigma), \forall t\in[0,1],$$
moreover, $\phi_{n,2}^{t} =id$ on $\Lambda_{n} = \partial\bar\D(1-\eps_{n})$ for all $t\in [0,1]$. Second, $\phi_{n,2}$ is compactly supported in $\D(r_{m})\backslash\bar\D(r_{m-1}) = \sigma_{1} \cup \dots \cup \sigma_{m}$, the cells of $K$ that share an edge with the boundary $\partial\bar\D$, this is because writing $\Delta_{n} = f_{n}(r,\theta)dr \wedge d\theta$ one has $f_{n} = 0$ on $\bar\D(r_{m-1})$; see the formula for $\beta =\beta_{n,2}$.

\subsubsection{Construction of \texorpdfstring{$\phi_{n,3}$}{}}\phantomsection\label{sec:const_of_phi_three}
Let $\alpha_{n} =\bar\eta_{n}\Omega - \Omega$. By construction $\alpha_{n} = 0$ outside of $B$ and it is closed in $\H^{2}_{c}(\D, \R) = \R$. Then, since the integral of $\alpha_{n}$ over $\D$ is zero by Poincaré lemma for compactly supported De Rham cohomology one finds $\beta_{n,3}\in \Omega^{1}_{c}(\D, \R)$ so that $\alpha_{n} = d\beta_{n,3}$ and $\beta_{n,3} = 0$ on a neighborhood $\mathcal{N}$ of the boundary of $\partial \D.$ The latter neighborhood $\mathcal{N}$ can be chosen to be independent of $n$ as one can restrict $\alpha_{n}$ to a smaller disk, independent of $n$, where it is still compactly supported and apply Poincaré lemma to the restrictions. We run Moser's argument for the path of symplectic forms $\Omega_{n,t} = \Omega + t(\bar\eta_{n}\Omega - \Omega)$ and solve the equations $\Omega_{n,t}(X_{n,t}, -) = -\beta_{n,3}$ for $X_{n,t}$. We define $\phi_{n,3}$ to be the time-one map of the flow of $X_{n,t}$. Clearly, $\phi_{n,3}^{*}(\bar\eta_{n}\Omega) = \Omega$. It remains to show that $|\phi_{n,3}|_{C^{0}} \to 0$ as $n \to \infty$. Let $\alpha_{n} = f_{n}(r,\theta)dr \wedge d\theta$ and observe that $|f_{n}|_{C^{0}}\to 0$ as $n \to \infty$ and $f_{n}$ vanishes in a neighborhood $\mathcal{N}$ of the boundary $\partial \D$ that is independent of $n$. We construct $\beta_{n,3}$ using Banyaga's construction explained in \S\ref{sec:const_of_phi_two} where the defining terms of $\beta_{n,3}$ only involve integrals of $f_{n}$ and some auxiliary cut-off functions $a_{i}(r)$ that are independent of $n$. It is then easy to see that $|\beta_{n,3}|_{C^{0}(\D)} \to 0$ as $n \to \infty$; see also \cite[Lemma 3.0.4]{ian_thesis}. To bound the $C^{0}$-norm of $\phi_{n,3}$ it is enough to bound the norm of $X_{n,t}$. To that end,
\begin{equation}\phantomsection\label{eq:bound_norm_Xnt}|X_{n,t}|_{C^{0}(\D)} = |X_{n,t}|_{C^{0}(\D\backslash \mathcal{N})} \leq |T_{\Omega_{n,t}}^{-1}|_{\mathrm{op}(\D \backslash\mathcal{N})}|\beta_{n,3}|_{C^{0}(\D \backslash\mathcal{N})},
\end{equation}
where $T_{\Omega_{n,t}}:T\D \rightarrow T^{*}\D$ is defined by $X\mapsto \Omega_{n,t}(X,-)$ and $|T_{\Omega_{n,t}}^{-1}|_{\mathrm{op}(\D \backslash \mathcal{N})}$ is the operator norm of $T_{\Omega_{n,t}}^{-1}: T^{*}\D \rightarrow T\D$ restricted to $T^{*}\D|_{\D \backslash\mathcal{N}}$. The quantity on the right side of \eqref{eq:bound_norm_Xnt} converges to zero because $$\abs{T^{-1}_{\Omega_{n,t}}}_{\mathrm{op}(\D\backslash \mathcal{N})}\leq C\abs{T^{-1}_{\Omega}}_{\mathrm{op}(\D \backslash \mathcal{N})} < \infty,$$
for some constant $C>0$ and for all large enough $n$, as $|\overline{\eta}_n-1|_{C^0} \to 0,$ and $\abs{\beta_{n,3}}_{\mathrm{op}(\D \backslash \mathcal{N})} \to 0$ as $n \to \infty$. Since the set $\D \backslash \mathcal{N}$ is compact we have finished the proof. 

\section{Proof of Theorem \ref{thm:main_theorem_introduction}}\phantomsection\label{sec:proof_main_theorem} 
Here we present the proof of the main theorem of the paper, see Theorem \ref{thm:main_theorem_introduction}. Given a Hamiltonian diffeomorphism $\phi\in \Ham(\Sigma, \omega)$ where $\Sigma$ is a closed oriented surface of genus $g \geq 1$, Theorem \ref{thm:main_theorem_introduction} studies the behavior of the sequence $\{\gamma(\phi^{k})\}_{k \geq 1}$, in particular, whether it converges to zero for some subsequence. The proof of the theorem is split into four main steps. First we assume that the topological entropy is positive. This case is well studied and we will shortly explain it in \S\ref{sec:pos_top_ent_case}. For the following discussion we assume that the topological entropy is zero. In \S\ref{sec:pseudo_rot_torsion} we briefly comment on the case of pseudo-rotation and torsion Hamiltonian diffeomorphisms, which are known not to exist in $\Ham(\Sigma_{g})$ with genus $g\geq 1$. In step three, we assume that there is a \emph{smooth closed contractible} annulus $A$ that is invariant under $\phi$ with some technical properties which will be discussed in details in \S\ref{sec:smooth_non_ess_inv_ann}. In step four, we will consider one of the Franks-Handel $\phi$-invariant annuli given by Theorem \ref{thm:franks_handel_theorem}, and depending on whether it is \emph{essential} or \emph{inessential}, \emph{regular} or \emph{irregular}, we will divide the proof into few steps; this will be accomplished in \S\ref{sec:non_smooth_inv_ann} and \S\ref{sec:smooth_ess_inv_ann}.

\subsection{Positive topological entropy}\phantomsection\label{sec:pos_top_ent_case}
Let $\{\phi_{t}\}$ be a fixed Hamiltonian isotopy with $\phi_{1} = \phi$. If $h_{top}(\phi)>0$ then $\phi$ has infinitely many hyperbolic periodic points, up to replacing $\phi$ with its iteration we might assume that $\phi$ has many hyperbolic fixed points; see \cite{katok-89}. Suppose $\mathcal{P}$ is a finite set of hyperbolic fixed points of $\phi$ with $\abs{\mathcal{P}}>\dim \H_{*}(\Sigma,\K)$. It follows from \cite[Proposition 3.8]{cgg-barcode-entropy} that for every $k\geq 1$ there is at least one finite bar $J_{k}\in \mathcal{B}(\phi^{k})$ in the barcode of $\phi^{k}$ one of whose ends corresponds to a generator of the complex $\CF(\phi^{k})$ whose leading term is a hyperbolic fixed point $p\in \mathcal{P}\subset \Fix(\phi^{k})$ and the length of the bar $\ell(J_{k})\geq \eps(\mathcal{P})>0$; this means that the bar corresponds to the equation,
$$dx = y, \text{ for some } x,y\in \CF(\phi),$$ 
and $\{\phi_{t}(p)\}$ has the highest action among the orbits appearing in either $x$ or $y$. Indeed, the length of $J$ is bounded from below by the energy of a Floer trajectory $u$ that is asymptotic to the Hamiltonian orbit $\{\phi^{k}_{t}(p)\}$ at one of its ends. It was shown in \cite[Theorem 3.1]{ginzburg-gurel-hyperbolic} that any Floer cylinder asymptotic to the hyperbolic periodic orbit $\{\phi^{k}_{t}(p)\}$ has energy at least $c = c(\{\phi_{t}(p)\}) > 0$ that depends only on the orbit $\{\phi_{t}(p)\}$ and not $k$; see also \cite[Theorem 6.1]{gg-pseudo-rotations}. The idea is that, first, one reduces to the case where $\{\phi^{k}_{t}(p)\}$ is a constant orbit $\{\phi^{k}_{t}(p)\} \equiv p$, see \cite[\S9]{salamon-zehnder-92}. If there is a sequence of Floer cylinders, for possibly different iterations of the Hamiltonian, asymptotic to $p$ with energy converging to zero, then, via a Gromov-type convergence \cite{fish-11-gromov-comp}, one can find an integral curve $\Gamma:[\tau-L, \tau +L]\rightarrow M$, for arbitrary large enough number $L$ and $\tau\in[0,1]$, of the Hamiltonian flow $\{\phi_{t}\}$ that stay inside a fixed small neighborhood $B$ of $p$ and $\Gamma(\tau)\in \partial B$, this is in contradiction with hyperbolicity of $p$. This is because for a small enough fixed neighborhood $B$ of $p$ there is $L_{0}>0$ depending only on $p$ and $B$ so that for every $x\in M$ and $\tau\in [0,1]$ with $\phi_{\tau}(x)\in \partial B$ there exists $t\in [\tau - L_{0}, \tau + L_{0}]$ so that $\phi_{t}(x)\not\in B$. Therefore $\gamma(\phi^{k})\geq \beta(\phi^{k})\geq \ell(J_{k})\geq \eps(\mathcal{P}) >0$ for all $k\geq 1$.

\subsection{Torsions and pseudo-rotations}\phantomsection\label{sec:pseudo_rot_torsion}
Let $\phi \in \Ham(\Sigma)$ be a Hamiltonian diffeomorphism of a closed oriented surface. When $\Sigma$ is a surface of genus $g \geq 1$ it is known that there is no Hamiltonian diffeomorphism $\phi\neq id$ that is torsion, i.e., there is no $n >1$ so that $\phi^{n} = id$; see \cite{Polterovich-02, Atallah-Shelukhin} and references therein. On the case of $S^{2}$, there are torsion Hamiltonian diffeomorphisms, e.g., rational rotations. However, in this case the sequence $\{\gamma(\phi^{k})\}_{k \geq 1}$ is not interesting to study, it is just a finite set of numbers including zero. A Hamiltonian diffeomorphism $\phi$ is called pseudo-rotation if $\#\mathrm{Per}(\phi) < \infty$. It is known that there are no Hamiltonian pseudo-rotations on surfaces of genus $g\geq 1$; this is known as the Conley conjecture that is proved in many cases; see \cite{ginzburg-10-conley-conj} and references therein. Pseudo-rotations exist on the sphere, e.g., irrational rotations. Let $\phi$ be a pseudo-rotation on the sphere $S^{2}$. It has been proved in \cite{gg-pseudo-rotations} that $\liminf_{k \geq 1}\gamma(\phi^{k}) = 0.$ In \S\ref{sec:complex_proj_spc} we provide a complementary result to the latter; see Theorem \ref{thm:full_int_case_sphere}.

\subsection{Inessential smooth invariant annuli}\phantomsection\label{sec:smooth_non_ess_inv_ann}
In this section we prove Theorem \ref{thm:main_theorem_introduction} in the presence of a certain invariant smooth annulus. This section contains the input ideas and methods from quantitative Lagrangian Floer theory. The non-smooth cases studied in the subsequent sections will eventually be reduced to the smooth setting of the current section. The reduction process uses ideas from $C^{0}$-symplectic geometry \cite{MR2884036,sikorav-01} techniques of low dimensional conservative dynamics \cite{franks-handel-zero-ent, lecalvez-tal-22}.

Let us fix the setting of this section. Here $\Sigma$ will be a closed oriented symplectic surface of genus $g \geq 1$ and $\phi \in \Ham(\Sigma)$ will be a Hamiltonian diffeomorphism. Let $A \subset \Sigma$ be a smooth closed contractible annulus with boundaries $L, L'$ that is $\phi$-invariant, i.e., $\phi(A) = A$. Denote by $D$ the disk bounded by $L$ and $D'$ the disk bounded by $L'$. Let $\hbar'>\hbar>0$ be the areas of the $D'$ and $D$ respectively.

\begin{theorem}\phantomsection\label{mainthm:smth_cont_ann_case}
Suppose $\phi$ possesses contractible fixed points on $L$ and $L'$, and it has no fixed points in the interior of $A$. Then,
$$\liminf_{k\geq 1}\gamma(\phi^{k})\geq \hbar/2 > 0.$$
\end{theorem}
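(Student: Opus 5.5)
We argue by contradiction. Suppose there is $k$ (in fact infinitely many) with $\gamma(\phi^{k}) < \hbar/2$. Fix a mean-normalized Hamiltonian $H$ generating $\phi^k$ with $|H|$ close to $\gamma(\phi^k)$ up to the spectral data; since $\Sigma$ is aspherical, $\gamma$ depends only on $\phi^k$.

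\textbf{Step 1: chords of equal index with close actions.} Following the strategy of \cite[Theorem E]{kislev-shelukhin-21}, we apply the module action of \S\ref{sec:module_action} and the double module action of \S\ref{sec:twin_tower_map} to the weakly exact circles $L$ and $L'$. Let $\alpha,\beta$ be PSS representatives of $[M]$ for $H$ and $\bar H$, with $\mathcal{A}(\alpha)+\mathcal{A}(\beta)$ close to $\gamma(\phi^k)$. Acting on the class $\pss_{L,0,I}([L])$ in an action window of width less than $\hbar$ gives the map $\m_2$. By \eqref{eq:module_act_vs_pp} and Proposition \ref{prop:pss_modul_incl_comm_diag}, $\m_2$ is the inclusion map. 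Since $\gamma<\hbar/2$, the window stays below $\hbar(L)$, which is at least the area $\hbar$ of $D$. Hence $\m(\cdot\otimes\alpha)$ is nonzero on $\pss([L])$, and this produces a capped chord of $\phi^k_H$ from $L$ to itself of index $0$, with action in $[-\gamma,\gamma]$ up to $\epsilon$. The same holds for $L'$, with the window controlled by $\hbar'>\hbar$.

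We take the cappings to be the natural ones inside the disks. Because $\phi$ preserves $A$ and has no interior fixed points, these chords should correspond to points of $L$ and $L'$ respectively. So we obtain index-zero chords $x\subset L$ and $x'\subset L'$ with $|\mathcal{A}(x)-\mathcal{A}(x')|\le \gamma(\phi^k)+\epsilon$, measured relative to a common reference.

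\textbf{Step 2: computing the action difference.} Using the contractible fixed points $p\in L$ and $p'\in L'$ of $\phi$, we normalize the cappings and express the actions through the action function $a_{\{\phi_t\},\lambda}$ of \S\ref{sec:asymp_action_def}. The transform map and equation \eqref{eq:action_shift_int_chord} convert chords into this language.

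The index-zero condition pins down the capping and hence the winding of the chord. A Robbin--Salamon computation in dimension two shows that the index of a chord on a circle changes by $\pm1$ each time the lift rotates past a multiple of $\tfrac12$. This fixes the branch of the lift of $\phi^k|_L$ used to measure the chord.

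We then apply Stokes on $A$, using the flux identity \eqref{eq:flux_vs_mean_rot} and Birkhoff's theorem. The expected outcome is that the action difference equals
$$k\int_A \rho_{\t\phi}\,\omega + O(1),$$
where the lift $\t\phi$ is the one determined by the fixed points $p$ and $p'$, corrected by integer shifts coming from the index normalization. Concretely, $\mathcal{A}(x')-\mathcal{A}(x)$ should be $\int_A(k\rho-m)\,\omega$ for an integer $m$ chosen so that the chords have index zero, up to a bounded error.

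\textbf{Step 3: non-vanishing and growth.} By Lemma \ref{lem:zero_ave_rot_num_fixed_point}, the absence of interior fixed points means the mean rotation number $\bar\rho$ of $\phi|_A$ is not an integer. This holds for every lift, and it persists for all iterates: $\phi^k$ may acquire interior fixed points, but the rotation of the fixed boundary points anchors the lift.

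If the index normalization lets us choose $m$ near $k\bar\rho$, the difference $|k\bar\rho-m|\cdot\mathrm{area}(A)$ could be small. The inequality is rescued because the target is $\hbar/2$, not an arbitrary constant. The normalization at $p\in L$ forces the lift on $L$ to have rotation number $0$. The fixed point $p'$ on $L'$ likewise fixes the lift there. With these constraints the expression involves $\int_A\rho$ together with rotation data on the disk $D$ of area $\hbar$.

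I expect that the correct bookkeeping yields
$$\gamma(\phi^k)\ \ge\ \min\bigl(\hbar/2,\ k\,\mathrm{dist}(\bar\rho,\Z)\,\mathrm{area}(A)\bigr)-\epsilon .$$
Since $\bar\rho\notin\Z$, the second term tends to infinity, which gives the $\liminf$.

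\textbf{Main obstacle.} The hardest step is Step 2 combined with the index bookkeeping in Step 3. One has to identify exactly which capped chord carries index zero on each circle, and then show that the resulting action difference is governed by $k$ times a non-integer mean rotation rather than by its fractional part. I would first work this out for the model of an integrable twist map on an annulus, with the boundary circles fixed, to calibrate the signs. The general case would then follow by the flux and ergodic identities.
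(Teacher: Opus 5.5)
Your Step 1 is sound and is essentially the paper's Theorem~\ref{mainthm:non_van_floer_hom}; acting on $[L]$ rather than the point class only changes the relevant index from $1$ to $0$. The gap is in Steps 2 and 3. They carry the actual content of the theorem, you state them as expectations rather than prove them, and the concern you raise there is exactly the point that must be settled. If the integer fixed by the index normalization for $\phi^k$ could be chosen near $k\bar\rho$, the action difference of the index-normalized chords of $\phi^k$ could stay small. A small $\gamma(\phi^k)$ would then give no contradiction at all, and the fact that the target constant is $\hbar/2$ does nothing to prevent this. Two of your auxiliary assertions are also wrong. Non-integrality of the mean rotation number does not persist under iteration: mean rotation $1/2$ becomes integral for $\phi^2$. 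And the relevant normalization is neither ``rotation number $0$ on $L$'' nor something fixed jointly by $p$ and $p'$; once one boundary is normalized, the other imposes nothing new.

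What is missing is an exact, lift-independent formula whose integer correction is multiplicative under iteration. Use the paper's notation: fixed points $q\in L$, $q'\in L'$, and $\mu=\mu_{RS}(\{d\phi_t(T_qL)\})$ computed over the capping disk of $\{\phi_t(q)\}$. The needed facts are as follows.

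(i) The index-normalized capped chord on $L$ has action $\mu\hbar/2+\mathcal{A}_H(q)+o(1)$, and similarly on $L'$ (Theorem~\ref{thm:lag_act_of_ind_one_chords_eq_ham_act}). The paper proves this by passing to an isotopy fixing $q$ via a contractible loop and the naturality transformation, then doing an explicit Robbin--Salamon count.

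(ii) Write $\mathcal{A}_H(q')-\mathcal{A}_H(q)=a_{\phi,\lambda}(q)-a_{\phi,\lambda}(q')$ for $\lambda$ induced by $x\,dy$ on $\widetilde A$. Stokes and Birkhoff, together with $\mu'-\mu=2(\rho_{\widetilde\phi}(L')-\rho_{\widetilde\phi}(L))$ (Lemma~\ref{lem:diff_index_diff_rot}), then give, for every lift, $\mathrm{D}(\phi)=\int_A\rho_{\widetilde\phi}\,\omega+\mathrm{area}(A)\bigl(\mu/2-\rho_{\widetilde\phi}(L)\bigr)$.

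(iii) Since $\phi(q)=q$ and $d\phi(T_qL)=T_qL$, the index for the isotopy $\{\phi_{kt}\}$ is exactly $k\mu$, while $\rho_{\widetilde\phi^k}=k\rho_{\widetilde\phi}$. Hence $\mathrm{D}(\phi^k)=k\,\mathrm{D}(\phi)$ exactly, with no free integer and no $O(1)$ error.

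(iv) $\mathrm{D}(\phi)\neq0$. Otherwise some lift of $\phi$ itself has vanishing mean rotation, and Lemma~\ref{lem:zero_ave_rot_num_fixed_point} produces an interior fixed point. Only $\phi$, not its iterates, needs to be fixed-point free.

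With (i)--(iv), your Step 1 bounds $k|\mathrm{D}(\phi)|$ by the width of the action window for infinitely many $k$, which is impossible. Your conjectured bound uses $k\,\mathrm{dist}(\bar\rho,\Z)$ rather than $\mathrm{dist}(k\bar\rho,\Z)$, and that choice silently presupposes (iii).
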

\begin{proof}
We will argue by contradiction and assume that for some $\delta >0$,
$$\liminf_{k\geq 1}\gamma(\phi^{k}) \leq \hbar/2 - 3\delta.$$
Fix some $k$ so that $\gamma(\phi^{k})\leq \hbar/2 - 2\delta$ and let $\psi := \phi^{k}$. In \S\ref{sec:non_van_lfh_double_mod_act} we prove that for the interval $I = [-\hbar/2+\delta, \delta]$ and $c := c([M], H^{(k)})$ the spectral invariant of the unit $[M]$ with respect to $H$, see \S\ref{sec:spectral_invariants}, one has,
\begin{equation}\phantomsection\label{eq:non_van_floer_hom_l_lprime}
\HF^{I+c}_{1}(L,\psi) \neq 0 \text{ and } \HF^{I+c}_{1}(L', \psi)\neq 0,
\end{equation}
where the latter are the Lagrangian Floer homologies of the pairs $(L,\psi)$ and $(L',\psi)$ in the action window $I + c$ and degree $1$ and $H$ is mean normalized and generates an isotopy $\{\phi_{t}\}$ with $\phi_{1} = \phi$; see \S\ref{sec:lag_floer_theory}. Given that $L$ and $L'$ are invariant under $\phi$, all points on $L$ and $L'$ define a Hamiltonian chord and one can show that all chords with index one have the same actions; indeed there is a unique index one chord on $L$ and $L'$ after non-degenerate perturbations, see the computations in proof of Theorem \ref{thm:lag_act_of_ind_one_chords_eq_ham_act}. Let $a(\phi,1), a'(\phi,1)$ be the action of index $1$ chords of $\{\phi_{t}\}$ from $L$ to itself and $L'$ to itself respectively, and define 
$$\mathrm{D}(\phi):= a'(\phi,1) - a(\phi,1).$$
In \S\ref{sec:diff_act_index_one_chords} we prove the following key identity whose proof uses the assumption on the existence of contractible fixed points on $L$ and $L'$,
\begin{equation}\phantomsection\label{eq:diff_acts_mean_rot_iden}
\mathrm{D}(\phi) = \int_{A}\rho_{\t{\phi}}(x)\omega(x) + (\hbar' - \hbar)( \frac{\mu_{\{\phi_{t}\}}(L)}{2} - \rho_{\t{\phi}}(L)),
\end{equation}
for every lift $\t{\phi}$ of $\phi$ to the universal cover $\t{A}$ of $A$ where $\rho_{\t{\phi}}$ is the rotation number \S\ref{sec:rot_numb_ann} and $\mu_{\{\phi_{t}\}}(L) := \mu_{RS}(\{d\phi_{t}(T_{q}L)\})$ with $\mu_{RS}$ being the Robbin-Salamon index; see Theorem \ref{mainthm:diff_act_chords_vs_ave_rot_num}. It is shown in Corollary \ref{cor:scaling_prop_diff_act_ind_ones} that,
\begin{equation}\phantomsection\label{eq:lin_growth_diff_acts}
\mathrm{D}(\phi^{k}) = k\mathrm{D}(\phi),
\end{equation}
and since $\phi$ has no fixed point in the interior of $A$, and it is isotopic to identity by Remark \ref{rem:rest_are_isotop_to_id}, the result \cite[Proposition 2.6]{franks-handel-zero-ent}, which follows from a celebrated theorem of Franks \cite{franks1988generalizations} (see also \cite{franks-recurrence-88}), together with \eqref{eq:diff_acts_mean_rot_iden} implies that 
\begin{equation}\phantomsection\label{eq:diff_act_non_zero}
\mathrm{D}(\phi)\neq 0,
\end{equation}
see Corollary \ref{cor:scaling_prop_diff_act_ind_ones}; here $\mathrm{D}(\phi^{k})$ is the difference of the action of index $1$ chords of $\{\phi_{t}^{k}\}$ on $L$ and $L'$. By assumption there are infinitely many iterations $k$ for which $\gamma(\phi^{k})\leq \hbar/2 - 2\delta$ and the corresponding filtered Floer homology invariants in \eqref{eq:non_van_floer_hom_l_lprime} are non-zero in degree $1$ and in an interval of length equal to $\hbar/2$. The latter implies that $D(\phi^{k}) \leq \hbar/2$ for infinitely many $k$ while it follows from \eqref{eq:lin_growth_diff_acts} and \eqref{eq:diff_act_non_zero} that $D(\phi^{k})$ grows linearly as $k \to \infty$ and this is a contradiction.
\end{proof}

\begin{remark}\phantomsection\label{rem:theorem_B_with_mean_rot_ass}
The assumption on fixed point freeness of $\phi|_{A}$ in the statement of Theorem \ref{mainthm:smth_cont_ann_case} was only used to show that the mean rotation vector is never zero with respect to any lift $\t{\phi}$ of $\phi|_{A}$ to the universal cover $\t{A}$ of $A$. Indeed, Theorem \ref{mainthm:smth_cont_ann_case} holds replacing the fixed point freeness of $\phi|_{A}$ with the following assumption:
$$\int_{A}\rho_{\t{\phi}}(x)\omega(x)\neq 0,$$
for all lifts $\t{\phi}$ of $\phi|_{A}$ to $\t{A}$.
\end{remark}

\subsubsection{Non-vanishing of the Lagrangian Floer homologies}\phantomsection\label{sec:non_van_lfh_double_mod_act}
This section explains that the filtered Lagrangian Floer homology of a pair $(L,\psi)$ where $\gamma(\psi) < \hbar/2$ does not vanish in some small action windows; $\hbar =\hbar(L,J)$ where $J$ is some generic almost complex structure and $\hbar(L,J)$ is defined in \S\ref{sec:minimal_enr_inv}. In the following theorem $H$ is a mean normalized Hamiltonian generating $\psi\in \Ham(\Sigma)$ and $c := c([\Sigma], H)$.
\begin{theorem}\phantomsection\label{mainthm:non_van_floer_hom}
Let $L\subset \Sigma$ be an embedded contractible closed curve bounding a disk $D$ of area $\hbar > 0$. Assume that $\gamma(\psi) \leq \hbar/2 - 2\delta$ for some small $0 < \delta \ll \hbar$. Then, for $I = [-\hbar/2+\delta, \delta]$ we have
$$\HF^{I + c}_{1}(L, \psi) \neq 0.$$
\end{theorem}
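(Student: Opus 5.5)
The plan is to factor the identity-type map on small-window Lagrangian Floer homology of $L$ through $\HF^{I+c}(L,\psi)$ using the double module action of \S\ref{sec:twin_tower_map}. This is in the spirit of \cite[Theorem E]{kislev-shelukhin-21}.

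Throughout, $L$ is a contractible circle bounding a disk of area $\hbar$. Hence $L$ is monotone with minimal Maslov number $2$, and every non-constant holomorphic disk on $L$ has energy at least $\hbar$. So the filtered groups $\HF^{J}(L,-)$ and the module actions are well defined in any window of length $<\hbar$. Windows of length up to $\hbar/2+\delta$ suffice, since $\delta\ll\hbar$.

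The steps, in order, are as follows.

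\textbf{Step 1.} Start from the class $x=\pss_{L,0,I_0}([\mathrm{pt}])\in \HF^{I_0}_1(L,0)$, where $I_0$ is a short window around $0$. Here $[\mathrm{pt}]\in \H_0(L)$ maps to degree $n-0=1$, and $\pss_{L,0,I_0}$ is injective, so $x\neq 0$.

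\textbf{Step 2.} Choose PSS representatives. Let $\alpha$ represent $\pss_H([\Sigma])$ with $\mathcal{A}_H(\alpha)\le c+\epsilon$. Let $\beta$ represent $\pss_{\bar H}([\Sigma])$ with $\mathcal{A}_{\bar H}(\beta)\le c([\Sigma],\bar H)+\epsilon$. Both have CZ degree $0$ in our convention, so each module action preserves degree (shift $n-k=1-0$ relative to the grading $\HF_{n-*}$, i.e. degree $1$ goes to degree $1$; I would double check the bookkeeping here).

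\textbf{Step 3.} Apply $\m(-\otimes\alpha)$ to land in $\HF^{I_0+c+\epsilon}_1(L,\psi)$, with $\psi=\phi_H$ (using $H\#0=H$). Then apply $\m(-\otimes\beta)$ to land in $\HF^{I_0+c+c([\Sigma],\bar H)+2\epsilon}_1(L,0)$. Note that $c+c([\Sigma],\bar H)=\gamma(\psi)\le \hbar/2-2\delta$.

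\textbf{Step 4.} Identify the composite. By \eqref{eq:module_act_vs_pp} it equals $\m(-\otimes \mathrm{pp}(\alpha\otimes\beta))$. Since $\mathrm{pp}(\pss_H[\Sigma],\pss_{\bar H}[\Sigma])=\pss_0([\Sigma])$, Proposition \ref{prop:pss_modul_incl_comm_diag} says this is the inclusion-induced map $\HF^{I_0}(L,0)\to\HF^{I_0+\gamma'}(L,0)$, where $\gamma'=\gamma(\psi)+2\epsilon$. It commutes with the Lagrangian PSS maps, so $x$ maps to $\pss_{L,0,I_0+\gamma'}([\mathrm{pt}])\neq0$.

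\textbf{Step 5.} Since the composite of the two module actions is nonzero on $x$, the intermediate group $\HF^{I_0+c+\epsilon}_1(L,\psi)$ is nonzero. Choose $I_0$ so that $I_0+\epsilon\subset I=[-\hbar/2+\delta,\delta]$, for instance $I_0=[-\hbar/2+2\delta,\delta/2]$. This window still contains $0$, and its shift by $\gamma'$ contains $0$ because $\gamma'<\hbar/2-\delta$. It has length $<\hbar$. Finally, the window inclusion $I_0+c+\epsilon\subset I+c$ is compatible: the class remains nonzero in $\HF^{I+c}$ because the next module action extends to the larger window (its length is $\hbar/2$, still $<\hbar$) and commutes with inclusions. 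Equivalently, rerun Steps 3 and 4 with window $I$ directly.

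The main obstacle is Step 4. There one must justify that the gluing identity \eqref{eq:module_act_vs_pp} and Proposition \ref{prop:pss_modul_incl_comm_diag} hold for these small windows. In particular, no disk bubbling may occur in the degenerations, and the identification with inclusion must be compatible with PSS and degree $1$. This requires every window involved to have length below $\hbar$, which is exactly where the hypothesis $\gamma(\psi)\le\hbar/2-2\delta$ enters: it keeps $I_0\cup(I_0+\gamma')$ of total length $<\hbar$. I would first try to verify this by an energy estimate on broken configurations, each of which has energy bounded by the window length plus $\gamma'$.
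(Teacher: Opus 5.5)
Your proposal is the paper's proof. The class $\pss_{L,0,I}([\mathrm{pt}])\in\HF^{I}_1(L,0)$ (the minimum) is pushed through $\m(-\otimes\alpha)$ and then $\m(-\otimes\beta)$. The composite is identified with $\m(-\otimes\mathrm{pp}(\alpha\otimes\beta))$ by \eqref{eq:module_act_vs_pp}, and then with the inclusion by Proposition~\ref{prop:pss_modul_incl_comm_diag}. Nonvanishing of the composite forces $\HF^{I+c}_1(L,\psi)\neq0$.

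Two bookkeeping fixes are needed.
\begin{itemize}
\item Degrees: in the paper's convention $\pss_H$ lowers degree by $n=1$, so $\alpha,\beta\in\CF_1$, not $\CF_0$. It is the resulting shift $n-k=0$ that preserves degree one.
\item Windows: nonvanishing does not transfer from $I_0+c+\epsilon$ to the larger window $I+c$. A natural map $\HF^{[a,b]}\to\HF^{[a',b']}$ exists only when $a\le a'$ and $b\le b'$, and here the lower endpoint would decrease. So use your fallback and run the argument with $I$ itself, as the paper does. The residual $\epsilon$ disappears if you take $\alpha$ with $\mathcal{A}_H(\alpha)=c$, which exists for non-degenerate $H$.
\end{itemize}

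In Step 4, before invoking the Proposition, the paper also writes $\mathrm{pp}(\alpha\otimes\beta)-\pss_0(\zeta)=d\eta$ with $\mathcal{A}_0(\eta)$ controlled by boundary depth. On a surface this step is vacuous: a small perturbation of the zero Hamiltonian has no generators in degree two, so $\eta=0$. Hence your direct appeal to the Proposition is legitimate.
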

\begin{proof}
This follows from the discussion in \S\ref{sec:module_action}, Proposition \ref{prop:pss_modul_incl_comm_diag} and the fact that the double module action $\m_{2}$ introduced in \S\ref{sec:twin_tower_map} does not vanish when restricted to the action window $I$. We choose a representative $\alpha\in \CF_{1}(H)$ of $\pss_{H}([\Sigma])$ and a representative $\beta\in \CF_{1}(\bar{H})$ of $\pss_{\bar{H}}([\Sigma])$ so that $c(\alpha):=c(\alpha, H) \leq c + \eps$ and $c(\beta):=c(\beta, H) \leq \bar{c} + \eps$ for some arbitrary small number $\eps> 0$, where $\bar{c} :=c([M], \bar{H})$; here we omit the choice of the almost complex structures in the Floer complex as they have no effect on the argument. Then, it follows from the discussion in \S\ref{sec:twin_tower_map} and \eqref{eq:module_act_vs_pp} that for the double module action $\m_{2}(\xi):=\m(\m(\xi\otimes \alpha)\otimes \beta)$ we have
$$\m_{2}(\xi) = \m(\xi\otimes \p\p(\alpha \otimes \beta))\in \HF^{I+c+\bar{c}}(L,0), \forall \xi\in \CF^{I}(L,0).$$
Notice that $\mathcal{A}_{0}(\p\p(\alpha\otimes \beta)) \leq c+\bar{c} + 2\eps$; see \S\ref{sec:pair_of_pants_unit_times_unit}. Let $\zeta\in \mathrm{CM}(f)$ be a representative of the fundamental class $[M]$ in the Morse complex of some $C^{2}$-small Morse function $f$ on $\Sigma$. It is well-known that $\beta(\psi)\leq \gamma(\psi)$ where $\beta(\psi)$ is the boundary depth of $\psi$; see \cite{kislev-shelukhin-21}. So, given that $\p\p(\alpha \otimes\beta)$ represents $\pss_{0}([M])$, see \S\ref{sec:pair_of_pants_unit_times_unit}, we have
$$\p\p(\alpha\otimes \beta) - \pss_{0}(\zeta) = d\eta,$$
for some $\eta\in \CF_{2}(0)$ with action,
$$\begin{aligned}
\mathcal{A}_{0}(\eta)&\leq \max\{\mathcal{A}_{0}(\p\p(\alpha \otimes \beta)), \mathcal{A}_{0}(\pss_{0}(\zeta))\} + \beta(\psi)\\
&\leq \max\{\gamma(\psi) + 2\eps, \eps\} + \gamma(\psi)\leq \hbar-4\delta+2\eps.
\end{aligned}$$
The above action estimate for $\eta$ together with Gromov-Floer compactness and gluing analysis proves that,
$$\m_{2}(-) = \m(-\otimes \p\p(\alpha\otimes \beta)) = \m(- \otimes \pss_{0}(\zeta)),$$
as maps $\HF^{I}_{*}(L,0) \rightarrow \HF_{*}^{I + \gamma(\psi)}(L,0)$. Since $|I|< \hbar$ and $0\in I \cap (I + \gamma(\psi))$, conditions of Proposition \ref{prop:pss_modul_incl_comm_diag} are met, so the following diagram commutes,
\[
\begin{tikzcd}[column sep=3em]
\HF_{1-*}^{I}(L,0) \arrow[r, "\m_{2}"] & \HF_{1-*}^{I+\gamma(\psi)}(L,0) \\
\H_{*}(L) \arrow[r, "id"] \arrow[u, "\pss_{L,0,I}"] & \H_{*}(L). \arrow[u, "\pss_{L,0,I+\gamma(\psi)}", swap]
\end{tikzcd}
\]
Note that the $\pss_{L,0,I}(\mathrm{min})$ represents a non-zero class in $\HF_{1}^{I}(L,0)$ where $\mathrm{min}$ is the class of the unique minimum of a $C^{2}$-small Morse function on $L$, and so does $\pss_{L,0,I+\gamma(\psi)}(\mathrm{min})$ in the Floer homology $\HF_{1}^{I+\gamma(\psi)}(L,0)$. Hence, $\m_{2}\neq 0$ consequently $\m(-\otimes \alpha)\neq 0$ and $\HF_{1}^{I + c}(L,\psi)\neq 0$.
\end{proof}

\subsubsection{Difference of actions of index one chords}\phantomsection\label{sec:diff_act_index_one_chords}
This section proves a key identity that relates the actions of Hamiltonian chords of $\{\phi_{t}\}$, generated by a mean normalized Hamiltonian $H$, on the smooth Lagrangians $L$ and $L'$ that form the boundary of $A$ to the mean rotation number over $A$ with respect to lifts $\t{\phi}$ of $\phi$ to the universal cover $\t{A}$ of $A$. Suppose $q\in L$ and $q'\in L'$ are contractible fixed points of $\phi = \phi_{1}$ and define
$$\mu_{\{\phi_{t}\}}(L) := \mu_{RS}(\{d\phi_{t}(T_{q}L)\}) \text{ and } \mu_{\{\phi_{t}\}}(L') = \mu_{RS}(\{d\phi_{t}(T_{q'}L')\}),$$
which are even numbers if $\phi$ preserves $q,q',L,L'$, and are independent of the choices of $\{\phi_{t}\}$, where we choose an arbitrary symplectic trivialization of the unique disks that bound $\{\phi_{t}(q)\}$ and $\{\phi_{t}(q')\}$ to define the Robbin-Salamon indices \S\ref{sec:indices}. Let $\rho_{\t{\phi}}(L),\rho_{\t{\phi}}(L')$ be the rotation numbers of points on $L,L'$ with respect to $\t{\phi}$ and define
$$\mathrm{D}(\phi):= a'(\phi,1) - a(\phi,1),$$ 
to be the difference of actions of index $1$ chords on $L$ and $L'$; there are unique capped chords with index one on $L$ and $L'$ after non-degenerate perturbations, see \S\ref{sec:comp_act_index_one_chords}. Before stating Theorem \ref{mainthm:diff_act_chords_vs_ave_rot_num}, the main theorem of this section, we prove a lemma that plays a key role in the proof of Theorem \ref{mainthm:diff_act_chords_vs_ave_rot_num}.
\begin{lemma}\phantomsection\label{lem:diff_index_diff_rot}
$\mu_{\{\phi_{t}\}}(L') - \mu_{\{\phi_{t}\}}(L) = 2(\rho_{\t{\phi}}(L') - \rho_{\t{\phi}}(L))$.
\end{lemma}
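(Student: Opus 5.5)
The plan is to compute both Robbin--Salamon indices in one symplectic trivialization adapted to the annulus $A$, and then express each index as twice the asymptotic angular displacement of points on the corresponding boundary circle. The capping disks of the contractible orbits $\{\phi_t(q)\}$ and $\{\phi_t(q')\}$ are unique up to homotopy, since $\Sigma$ is aspherical. So $\mu_{\{\phi_t\}}(L)$ and $\mu_{\{\phi_t\}}(L')$ do not depend on the trivialization chosen over these disks. I will work over the disk $D'$, which contains $A$ and both boundary circles. I fix a trivialization $\Phi$ of $T\Sigma|_{D'}$ together with polar-type coordinates $(\theta,r)$ on $A \cong \R/\Z \times [0,1]$. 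The isotopy may leave $D'$ for intermediate times, so I first homotope the orbit paths, rel endpoints and inside the capping disks, into $D'$. Both sides are homotopy invariant in this sense, so the comparison can be made inside $D'$.

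First I check that the right-hand side is independent of the lift $\t{\phi}$: changing the lift by a deck transformation shifts $\rho_{\t{\phi}}(L)$ and $\rho_{\t{\phi}}(L')$ by the same integer. I then fix the lift induced by the isotopy. Next I replace $\phi$ by $\phi^k$. Since $q$ is fixed and $d\phi(T_qL)=T_qL$, the path $t\mapsto d\phi^k_t(T_qL)$ is a concatenation of $k$ loops of lines in the trivialization. Hence $\mu_{\{\phi^k_t\}}(L)=k\,\mu_{\{\phi_t\}}(L)$ exactly, and similarly for $L'$; also $\rho_{\t{\phi}^k}=k\rho_{\t{\phi}}$. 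It therefore suffices to prove the identity up to an error that is $O(1)$ uniformly in $k$, divide by $k$, and let $k\to\infty$.

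The key step is to deform the base point along $L$. For $p\in L$, consider the path of lines $d\phi_t(T_pL)$, closed up by the canonical short path from $d\phi_1(T_pL)$ to $T_{\phi(p)}L$. Because $\phi(L)=L$, its $\Phi$-Maslov number varies continuously in $p$ and is integer-valued up to a bounded boundary correction, so it agrees with the value at $q$ up to $O(1)$. Averaging over $p\in L$ turns the computation into the total turning of the tangent lines of the moving curves $\phi_t(L)$. Each of these is an embedded circle, with Whitney turning number $\pm1$ relative to $\Phi$, which is a bounded quantity. The remaining contribution is the winding of the line field $\partial_\theta$ along the path $\phi^k_t(p)$ relative to $\Phi$. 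The line $\partial_\theta$ turns once, in the $\Phi$-frame, as $\theta$ goes once around $L$. The Maslov index of lines counts half-turns, so this contributes $2\big(\pi_1(\t{\phi}^k(\t p))-\pi_1(\t p)\big)+O(1)$, which is $2k\rho_{\t{\phi}}(L)+O(1)$. The same argument applies to $L'$, with a possibly different constant offset coming from the winding of the polar frame against $\Phi$. That offset is the same for $L$ and $L'$, because both circles are essential in $A$ and isotopic there, so it cancels in the difference. Dividing by $k$ then gives $\mu_{\{\phi_t\}}(L')-\mu_{\{\phi_t\}}(L)=2(\rho_{\t{\phi}}(L')-\rho_{\t{\phi}}(L))$.

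I expect the main obstacle to be the averaging step: showing rigorously that the Maslov count of $d\phi^k_t(T_pL)$ equals twice the angular displacement of $\phi^k_t(p)$ in the annulus coordinate, with an error bounded independently of $k$. The intermediate curves $\phi^k_t(L)$ need not stay inside $A$, and the $t$-dependence is not controlled. What I would try first is to use the homotopy invariance of both quantities to replace $\{\phi_t\}$, restricted near $L$, by an isotopy that preserves $L$ for all $t$. Such an isotopy exists because $\phi|_L$ is an orientation-preserving circle diffeomorphism isotopic to a rotation, and one can then adjust the normal direction. Along an isotopy preserving $L$, the tangent line $d\phi_t(T_pL)$ is exactly $T_{\phi_t(p)}L$, so its winding in the polar frame vanishes and the identity becomes a direct count.
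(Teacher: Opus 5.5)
Your proof has a genuine gap in the key step, and the iteration does not help with it. Since $\mu_{\{\phi^k_t\}}(L)=k\,\mu_{\{\phi_t\}}(L)$ and $\rho_{\tilde\phi^k}(L)=k\,\rho_{\tilde\phi}(L)$ hold exactly, the estimate $\mu_{\{\phi^k_t\}}(L)=2k\rho_{\tilde\phi}(L)+O(1)$ is equivalent to the exact identity $\mu_{\{\phi_t\}}(L)=2\rho_{\tilde\phi}(L)$. Asserting this for $L$ and $L'$ with one and the same lift $\tilde\phi$ of $\phi|_A$ is just a reformulation of the lemma.

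Your justification does not give it. Along the orbit of the fixed point $q$, the rotation of $d\phi_t(T_qL)$ relative to $\partial_\theta$ over one period is an integer. After $k$ periods it is exactly $k$ times that integer, not $O(1)$. The Whitney turning number of $\phi_t(L)$ controls how the tangent line turns as $p$ moves along the curve at fixed $t$. It says nothing about how the line turns in $t$ at fixed $p$. The latter, measured against $TL$ along the arc from $p$ to $\phi(p)$ picked out by the lift, is locally constant in $p$. Hence it equals its value $\mu_{\{\phi_t\}}(L)-2\rho_{\tilde\phi}(L)$ at $q$, and averaging over $p$ returns exactly the quantity you are trying to control.

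That quantity need not vanish for a lift read off from orbits. Precompose $\{\phi_t\}$ with a Hamiltonian loop supported in a disk containing $\bar D$ but not $q'$, which pushes $q$ once around $L$. This leaves both indices unchanged and changes the winding of the orbit of $q$ by one while not moving $q'$. So your relative term at $q$ shifts by $\pm 2$ while the one at $q'$ does not. Relatedly, ``the lift induced by the isotopy'' is undefined when $\phi_t(A)\neq A$. Your claim that ``the offset is the same for $L$ and $L'$ because both are essential and isotopic in $A$'' is precisely the statement that needs proof.

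What is missing is a two-dimensional object tying $L$ to $L'$. The paper takes an arc $\gamma\subset A$ from $q$ to $q'$, tangent to $L$ and $L'$ at its ends, whose tangent line has index zero in some trivialization. It spans the square $v(s,t)=\phi_t(\gamma(s))$. The loop of lines on $\partial[0,1]^2$ given by $d\phi_t(T_{\gamma(s)}\gamma)$ extends over the square, hence has index zero. The four sides contribute the paths defining $\mu_{\{\phi_t\}}(L)$ and $\mu_{\{\phi_t\}}(L')$, the tangent line of $\gamma$, and the tangent line of the embedded arc $\phi_1\circ\gamma$. So $\mu_{\{\phi_t\}}(L')-\mu_{\{\phi_t\}}(L)$ is the index of the last one. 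Since $q,q'$ are fixed, $\phi_1\circ\gamma$ wraps $\rho_{\tilde\phi}(L')-\rho_{\tilde\phi}(L)$ times around $A$ relative to $\gamma$, for any lift, which gives the factor $2$. This uses the given isotopy as it is, needs no preservation of $A$, and involves no iteration or averaging.

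Your fallback of making the isotopy preserve $L$ near $L$, and $L'$ near $L'$, does not substitute for this. Done separately, it produces lifts of $\phi|_L$ and $\phi|_{L'}$ that need not be restrictions of a single lift of $\phi|_A$. Moreover, since $\phi_t$ may move $L$ far away, ``restricting near $L$'' is not well defined. To make that route work you would need one isotopy preserving $A$ and homotopic rel endpoints to $\{\phi_t\}$, so that both indices are unchanged. That is a genuine topological input you have not supplied. Once you had it, $\mu_{\{\phi_t\}}(L)=2\rho_{\tilde\phi}(L)$ and $\mu_{\{\phi_t\}}(L')=2\rho_{\tilde\phi}(L')$ would hold exactly for the induced lift, without any iteration.
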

\begin{proof}
Let $\gamma: [0,1] \rightarrow A$ be a smooth curve with $\gamma(0)=q$ and $\gamma(1) = q'$ so that it is tangent to $L$ and $L'$ at the end points $q$ and $q'$ and $\mu_{RS}(\Gamma(T_{\gamma(s)}\gamma)) = 0$ for some symplectic trivialization $\Gamma: \gamma^{*}T\Sigma \rightarrow [0,1]\times \R^{2}.$ Define $v: [0,1]^{2} \rightarrow \Sigma$ by $v(s,t) = \phi_{t}(\gamma(s))$. Extend the trivialization $\Gamma$ to a symplectic trivialization, still denoted by $\Gamma: v^{*}T\Sigma \rightarrow [0,1]^{2}\times \R^{2}$. Then, $\Gamma\big(d_{\gamma(s)}\phi_{t}(T_{\gamma(s)}\gamma)\big)$ defines a contraction of the loop of Lagrangians defined on the boundary of $[0,1]^{2}$, so the sum of the following indices must vanish,
\begin{enumerate}
\item $\mu_{RS}\big(\Gamma\big(T_{\gamma(s)}\gamma\big)\big) + \mu_{RS}\big(\Gamma\big(\{d\phi_{t}(T_{q'}L')\}\big)\big),$
\item $-\mu_{RS}\big(\Gamma\big(d_{\gamma(s)}\phi_{1}(T_{\gamma(s)}\gamma)\big)\big) -\mu_{RS}\big(\Gamma\big(\{d\phi_{t}(T_{q}L)\}\big)\big),$
\end{enumerate}
implying that the difference $\mu_{\{\phi_{t}\}}(L') - \mu_{\{\phi_{t}\}}(L)$ is equal to,
$$\mu_{\{\phi_{t}\}}(L') - \mu_{\{\phi_{t}\}}(L)= \mu_{RS}\big(\Gamma\big(d_{\gamma(s)}\phi_{1}(T_{\gamma(s)}\gamma)\big)\big).$$
It is clear that $d_{\gamma(s)}\phi_{1}(T_{\gamma(s)}\gamma) = T_{\phi_{1}(\gamma(s))}(\phi_{1}\circ \gamma)$ and the following holds,
$$\mu_{RS}\big(\Gamma\big(T_{\phi_{1}(\gamma(s))}(\phi_{1}\circ \gamma)\big)\big) = 2(\rho_{\t{\phi}}(L') - \rho_{\t{\phi}}(L));$$
this is basically because the number of times $\phi_{1}\circ \gamma$ winds around the disk $D$ is equal to the difference of the rotation numbers of ends $q$ and $q'$; note that $\phi_{1}\circ \gamma$ is an embedded curve.
\end{proof}

\begin{theorem}\phantomsection\label{mainthm:diff_act_chords_vs_ave_rot_num}
Suppose $A$ is $\phi$-invariant, $\phi$ possesses contractible fixed points $q\in L$ and $q'\in L'$. Then, for every lift $\t{\phi}$ of $\phi$ to $\t{A}$ one has,
\begin{equation}\phantomsection\label{eq:act_diff_for_all_lifts}
\mathrm{D}(\phi) = \int_{A}\rho_{\t{\phi}}(x)\omega(x) + (\hbar' - \hbar)( \frac{\mu_{\{\phi_{t}\}}(L)}{2} - \rho_{\t{\phi}}(L)),
\end{equation} 
\end{theorem}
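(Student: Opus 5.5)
The plan is to compute each index-one chord action explicitly, subtract, and then convert the difference into a flux computation on $A$. The flux identity \eqref{eq:flux_vs_mean_rot} will then turn the flux into the mean rotation number.

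\emph{Step 1: identify and compute the index-one chords.} Since $\phi$ preserves $L$ and fixes $q\in L$, perturb the pair $(L,\phi)$ to a non-degenerate one by composing with the flow of a $C^2$-small function that is the lift of a Morse function on $L$ with maximum at $q$ (after first isotoping so that $\phi|_L$ is close to a rotation fixing $q$). The chord with index one is then the constant chord at $q$, with a capping built from the disk $D$. Its action is $\int_0^1 H_t(\phi_t(q))\,dt - \int_{u_q}\omega$, where $u_q$ is a capping half-disk for the path $\{\phi_t(q)\}$ with boundary on $L$. The capping is fixed up to multiples of $[D]$, and these are pinned down by the index normalization. Changing the capping by $k[D]$ changes the Maslov index by $2k$ and the action by $k\hbar$. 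The index-one requirement therefore forces the number of copies of $D$ to be $\mu_{\{\phi_t\}}(L)/2$, corrected by the winding $\rho_{\t\phi}(L)$ of the path $\phi_t(q)$ along $L$. This gives an expression of the form
$$a(\phi,1)=\mathcal{A}_H(q)+\hbar\Big(\tfrac{\mu_{\{\phi_t\}}(L)}{2}-\rho_{\t\phi}(L)\Big)+(\text{terms depending on the lift}).$$
Here $\mathcal{A}_H(q)$ is the Hamiltonian action of the contractible orbit with its disk capping. The analogous formula holds for $L'$, with $\hbar'$, $q'$, $\mu_{\{\phi_t\}}(L')$ and $\rho_{\t\phi}(L')$.

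\emph{Step 2: subtract and use a connecting strip.} Compute $\mathcal{A}_H(q')-\mathcal{A}_H(q)$ using the strip $v(s,t)=\phi_t(\gamma(s))$ from the proof of Lemma \ref{lem:diff_index_diff_rot}, where $\gamma\subset A$ runs from $q$ to $q'$. Stokes' theorem gives
$$\mathcal{A}_H(q')-\mathcal{A}_H(q)=\int v^*\omega+(\text{area between }\gamma\text{ and }\phi_1\circ\gamma).$$
In the universal cover $\t A$ this is exactly the flux of the isotopy $\{\phi_t|_A\}$ through the arc $\gamma$, i.e. the area of the region $C(\gamma)$ appearing in \S\ref{sec:flux_average_rot_vec}. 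Now apply \eqref{eq:flux_vs_mean_rot}, after rescaling $A$ to $[0,1]\times\R/\Z$ with area $\hbar'-\hbar$. This identifies the flux, for the lift $\t\phi$ induced by $\{\phi_t\}$, with $\int_A\rho_{\t\phi}\,\omega$ plus boundary corrections. Those corrections come from the displacement of the endpoints of $\gamma$ along $L$ and $L'$, namely $\rho_{\t\phi}(L)$ and $\rho_{\t\phi}(L')$ weighted by the areas $\hbar$ and $\hbar'$.

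\emph{Step 3: combine and check lift-independence.} Add the results of Steps 1 and 2, and use Lemma \ref{lem:diff_index_diff_rot} to eliminate $\mu_{\{\phi_t\}}(L')$ in favour of $\mu_{\{\phi_t\}}(L)+2(\rho_{\t\phi}(L')-\rho_{\t\phi}(L))$. The $\hbar'$ and $\hbar$ terms should then collapse to $(\hbar'-\hbar)(\mu_{\{\phi_t\}}(L)/2-\rho_{\t\phi}(L))$. To pass to an arbitrary lift, replace $\t\phi$ by $\t\phi\circ T^m$. This shifts every rotation number by $m$, so $\int_A\rho\,\omega$ changes by $m(\hbar'-\hbar)$ and $-\rho_{\t\phi}(L)(\hbar'-\hbar)$ changes by $-m(\hbar'-\hbar)$. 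The right-hand side is therefore lift-independent, as is $\mathrm{D}(\phi)$, and it suffices to prove \eqref{eq:act_diff_for_all_lifts} for the lift induced by $\{\phi_t\}$.

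I expect the main obstacle to be Step 1. It requires tracking the sign conventions and the exact capping class that yields Conley–Zehnder index one via the Robbin–Salamon concatenation of \S\ref{sec:indices}, together with the mutual consistency of the $\rho(L)$ corrections on $L$ and $L'$. A half-integer or orientation slip here would spoil the cancellation in Step 3. I would first verify all signs on an explicit integrable example: a rotation-type flow $H=h(r)$ on a round annulus, where both sides can be computed by hand.
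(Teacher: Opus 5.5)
\paragraph{Step 1 is wrong, not merely sign-sensitive.} Your overall architecture matches the paper's: compute the actions of the index-one chords, write $\mathcal{A}_H(q')-\mathcal{A}_H(q)$ as a signed area between $\gamma$ and $\phi(\gamma)$, turn the flux-type area into $\int_A\rho_{\tilde\phi}\,\omega$ by Birkhoff and Stokes, and eliminate $\mu_{\{\phi_t\}}(L')$ with Lemma~\ref{lem:diff_index_diff_rot}. The problem is the formula you propose in Step 1. The correct value (Theorem~\ref{thm:lag_act_of_ind_one_chords_eq_ham_act}) is $a(\phi,1)=\frac{\hbar}{2}\mu_{\{\phi_t\}}(L)+\mathcal{A}_H(q)$ up to $e(\delta)$, with no rotation-number term. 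A quick check rules out yours:
\begin{itemize}
\item $a(\phi,1)$, $\mathcal{A}_H(q)$ and $\mu_{\{\phi_t\}}(L)$ (computed over the capping disk of the contractible loop $\phi_t(q)$) are independent of any lift, whereas $\rho_{\tilde\phi}(L)$ shifts by $m$ under $\tilde\phi\mapsto\tilde\phi\circ T^m$.
\item $\phi_t(q)$ need not stay on $L$, or even in $A$, for $0<t<1$, so there is no ``winding along $L$'' to correct by.
\end{itemize}
The boundary rotation numbers enter only through Step 2. Take $\lambda$ to be the primitive induced by $x\,dy$ on $\tilde A=[\hbar,\hbar']\times\mathbb{R}$. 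Then $\int v^*\omega$ and the two capping areas combine (the former is absorbed, not added) to give exactly $\mathcal{A}_H(q')-\mathcal{A}_H(q)=\int_\gamma\lambda-\int_{\phi(\gamma)}\lambda=\mathrm{area}(Q)-\hbar'\rho_{\tilde\phi}(L')+\hbar\rho_{\tilde\phi}(L)$. Here $Q$ is bounded by $\tilde\gamma$, $\tilde\phi(\tilde\gamma)$ and two vertical segments, and $\mathrm{area}(Q)=\int_{\tilde\phi(\tilde\gamma)}y\,dx=\int_A\rho_{\tilde\phi}\,\omega$. If Step 1 also carries $-\hbar\rho_{\tilde\phi}(L)$ and $-\hbar'\rho_{\tilde\phi}(L')$, these terms are counted twice. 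The result is then off by $\hbar\rho_{\tilde\phi}(L)-\hbar'\rho_{\tilde\phi}(L')$, which is not even lift-invariant, so Step 3 cannot collapse. Your lift-independence check in Step 3 is itself correct.

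\paragraph{What Step 1 actually requires.} It needs the paper's two-stage computation.
\begin{itemize}
\item \emph{Case $\phi_t(q)\equiv q$} (Theorem~\ref{thm:action_of_index_one_chords}). Perturb to $H\#\delta g$, with $g$ a perfect Morse function on $L$ and $\mathrm{Crit}(g)=\{p,q\}$. Since $\phi(L)=L$ this is already non-degenerate, so no preliminary isotopy of $\phi|_L$ is needed (a ``rotation fixing $q$'' is the identity anyway). Via the transform map, with the obvious cappings, the generator at $q$ has the even index $\mu_{\{\phi_t\}}(L)$ and the one at $p$ has index $1+\mu_{\{\phi_t\}}(L)$. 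So the unique index-one generator is the $p$-chord, not a chord at $q$. Its action with the obvious capping is $\int_0^1H_t(q)\,dt+e(\delta)$, and recapping to index one raises it by $\frac{\hbar}{2}\mu_{\{\phi_t\}}(L)$.
\item \emph{General case}, where $\phi_t(q)$ is a nonconstant contractible loop. Reduce to the first case using a contractible, mean-normalized Hamiltonian loop $\eta_t$ with $\eta_t(q)=\phi_t(q)$. The naturality map $\mathfrak{n}_{G,w}$ preserves action and index, since $\Sigma$ is aspherical and $\mathcal{A}_G(q)=0$. Also, $\mu$ is unchanged when $\phi_t$ is replaced by $\eta_t^{-1}\phi_t$.
\end{itemize}

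\paragraph{A smaller point in Step 3.} There is no lift ``induced by $\{\phi_t\}$'' unless $\phi_t(A)=A$ for all $t$; that is only the special case treated in the Remark after the proof. This is harmless: $\mathrm{area}(Q)=\int_A\rho_{\tilde\phi}\,\omega$ uses only the time-one lift and holds for every lift, so no reduction to a particular lift is needed.
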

\begin{proof}[Proof of Theorem \ref{mainthm:diff_act_chords_vs_ave_rot_num}]
For brevity, in the course of the proof we denote $\rho_{\{\phi_{t}\}}(L)$ and $\rho_{\{\phi_{t}\}}(L')$ by $\rho$ and $\rho'$ respectively and we denote $\mu_{\{\phi_{t}\}}(L)$ and $\mu_{\{\phi_{t}\}}(L)$ by $\mu$ and $\mu'$ respectively. By Theorem \ref{thm:lag_act_of_ind_one_chords_eq_ham_act} we know that,
$$\mathrm{D} = \mu'\hbar'/2 - \mu\hbar/2 + \mathcal{A}_{H}(q') - \mathcal{A}_{H}(q).$$
Let $\lambda$ be a primitive of $\omega$ on $D'$. Let $a_{\phi,\lambda}$ be a smooth function on $D'$ so that $\phi^{*}\lambda - \lambda = da_{\phi,\lambda};$
we do not necessarily normalize the function $a_{\phi, \lambda}$. It is not hard to show that,
$$\mathcal{A}_{H}(q') - \mathcal{A}_{H}(q) = -a_{\phi,\lambda}(q') + a_{\phi,\lambda}(q);$$
we do not claim that \eqref{eq:two_acts_compared} holds in the current situation, however the differences of the actions in the above identity are still equal and the proof is similar to the proof of \eqref{eq:two_acts_compared}. Note that both sides of the above equation do not depend on the choice of the primitive $\lambda$. Let $\lambda$ be the primitive induced by the one-form $\t{\lambda} = xdy$ on the universal cover $\t{A} = [\hbar,\hbar']_{x}\times \R_{y}$ with fundamental domain $\mathrm{FD} = [\hbar, \hbar']\times [0,1]$. Then, for the smooth curve $\gamma\subset A$ from $q$ to $q'$ that is the image of the curve $\t{\gamma} = [\hbar, \hbar'] \times \{0\}$ under the projection $\t{A}\rightarrow A$ we have
$$\begin{aligned}
-a_{\phi,\lambda}(q') + a_{\phi,\lambda}(q) &= -\int_{\gamma}da_{\phi,\lambda} = \int_{\gamma}\lambda - \phi^{*}\lambda\\
&= \int_{\t{\gamma}}\t{\lambda} - \t{\phi}^{*}\t{\lambda}= \int_{\t{\gamma}}xdy - \int_{\t{\phi}(\t{\gamma})}xdy \\
&=\int_{Q}dx \wedge dy - \int_{\ell'}xdy + \int_{\ell}xdy =: X,
\end{aligned}$$
where here $Q$ is the region enclosed by the curves $\t{\gamma}, \t{\phi}(\t{\gamma})$ and the vertical segments $\ell$ and $\ell'$ where $\ell$ is the vertical segment on $\{\hbar\} \times \R$ connecting $(\hbar,0)$ and $(\hbar, \rho)$ and $\ell'$ is the vertical segment on $\{\hbar'\} \times \R$ connecting $(\hbar',0)$ to $(\hbar', \rho')$. Here we have used the fact that $\t{\phi}(\hbar,0) = (\hbar, \rho)$ and $\t{\phi}(\hbar',0) = (\hbar', \rho')$, these follow from the fact that $q$ and $q'$ are fixed points of $\phi$. Then,
$$\begin{aligned}
X &= \int_{Q}dx \wedge dy -  \rho'\hbar' + \rho\hbar = \int_{\partial Q}-ydx -  \rho'\hbar' + \rho\hbar\\
&= \int_{\t{\phi}(\t{\gamma})} ydx -  \rho'\hbar' + \rho\hbar.
\end{aligned}$$
Therefore, so far we have shown the following identity,
$$\mathrm{D} = \mu'\hbar'/2 - \mu\hbar/2 -\rho'\hbar' + \rho\hbar + \int_{\t{\phi}(\t{\gamma})} ydx.$$
On the other hand, applying the Birkhoff's ergodic theorem \S\ref{sec:birkhoff_erg_thm} to the measure preserving transformation $T := \phi$ on $A$ and the function $f(p):= y(\t{\phi}(\t{p}))- y(\t{p})$ where $\t{p}$ is a choice of a lift for $p$ to $\t{A}$, we have
$$\begin{aligned}
\int_{A}\rho_{\t{\phi}}(p)\omega(p) &= \int_{A}(y(\t{\phi}(\t{p})) - y(\t{p}))\omega(p) = \int_{\mathrm{FD}}(y(\t{\phi}(\t{p})) - y(\t{p}))\t{\omega}(\t{p})\\
&= \int_{\t{\phi}(\mathrm{FD})}yd\t{\lambda} - \int_{\mathrm{FD}}yd\t{\lambda}=\int_{\t{\phi}(\mathrm{FD})}ydx\wedge dy - \int_{\mathrm{FD}}ydx\wedge dy\\
&= \int_{\partial\t{\phi}(\mathrm{FD})}-y^{2}dx/2 - \int_{\partial(\mathrm{FD})}- y^{2}dx/2=: Y,\\
\end{aligned}$$
the second term is non-zero only on the curve $[\hbar, \hbar']\times \{1\}$, with reverse orientation, where it is equal to $(\hbar'-\hbar)/2$, and the first term is non-zero only on the curves $\t{\phi}([\hbar,\hbar']\times\{0\})$ and $\t{\phi}([\hbar,\hbar']\times\{1\})$ where the latter is actually the same as $\t{\phi}([\hbar,\hbar']\times\{0\}) + (0,1)$, so we have
$$\begin{aligned}
Y &= \int_{\t{\phi}(\t{\gamma})}-y^{2}dx/2 - \int_{\t{\phi}(\t{\gamma}) + (0,1)}-y^{2}dx/2 - (\hbar' - \hbar)/2\\
&= \int_{\t{\phi}(\t{\gamma})}((y+1)^{2}-y^{2})dx/2 - (\hbar' - \hbar)/2= \int_{\t{\phi}(\t{\gamma})}ydx\\
\end{aligned}$$
Putting all the identities above together one obtains, 
$$\begin{aligned}
\mathrm{D} &= \int_{A}\rho_{\t{\phi}}(x)\omega(x) + \mu'\hbar'/2 - \mu\hbar/2 - \rho'\hbar' + \rho \hbar,
\end{aligned}$$
which together with Lemma \ref{lem:diff_index_diff_rot} we get the following formula,
\begin{equation}
\mathrm{D} = \int_{A}\rho_{\t{\phi}}(x)\omega(x) + (\hbar' - \hbar)(\mu/2 - \rho),
\end{equation}
which finishes the proof of the theorem.
\end{proof}

\begin{remark}
In this remark we present a different proof of a particular case of Theorem \ref{mainthm:diff_act_chords_vs_ave_rot_num} which maybe of independent interest. Let $\phi_{t}\in \Ham(\Sigma)$ be a Hamiltonian isotopy generated by $H$ with time-one map $\phi= \phi_{1}$. Suppose that the smooth closed annulus $A$ is $\phi_{t}$-invariant for all $t\in [0,1]$, and $\{\phi_{t}\}$ possesses contractible fixed points $q\in L$ and $q'\in L'$. Then, the isotopy $\{\phi_{t}\}$ naturally defines a lift of $\phi$ to the universal cover $\t{A}$ of $A$ and we denote its corresponding rotation function by $\rho_{\{\phi_{t}\}}$. Then,
$$\mathrm{D}(\phi) = \mathcal{F}([\{\phi_{t}|_{A}\}]) = \int_{A}\rho_{\{\phi_{t}\}}(x)\omega(x);$$
see \S\ref{sec:flux_average_rot_vec}. To prove this, it follows from Theorem \ref{thm:lag_act_of_ind_one_chords_eq_ham_act} and an easy computation of the Robbin-Salamon indices $\mu_{\{\phi_{t}\}}(L)$ and $\mu_{\{\phi_{t}\}}(L')$ that,
$$\mathrm{D}(\phi) = \rho_{\{\phi_{t}\}}(L')\hbar' - \rho_{\{\phi_{t}\}}(L)\hbar + \mathcal{A}_{H}(q') - \mathcal{A}_{H}(q).$$
Let us normalize $H$ so that it vanishes on $L'$; note that $H_{t}$ is constant on $L'$ for every $t\in [0,1]$. We use the following formula from \S\ref{sec:awr_identities},
$$a^{\infty}_{\{\phi_{t}\}}(q) = \int_{D'} W^{\infty}_{\{\phi_{t}\}}(q,y)\omega(y) = \rho_{\{\phi_{t}\}}(L)\hbar + \int_{A}\rho_{\{\phi_{t}\}}(y)\omega(y),$$
and similarly one has,
$$a^{\infty}_{\{\phi_{t}\}}(q') = \int_{D'} W^{\infty}_{\{\phi_{t}\}}(q,y)\omega(y) = \rho_{\{\phi_{t}\}}(L')\hbar'.$$
Therefore, it follows that,
$$\begin{aligned}
\mathrm{D}(\phi) &= \rho_{\{\phi_{t}\}}(L')\hbar' - \rho_{\{\phi_{t}\}}(L)\hbar + \mathcal{A}_{H}(q') - \mathcal{A}_{H}(q)\\
&= \rho_{\{\phi_{t}\}}(L')\hbar' - \rho_{\{\phi_{t}\}}(L)\hbar -a^{\infty}_{\{\phi_{t}\}}(q') + a^{\infty}_{\{\phi_{t}\}}(q)= \int_{A}\rho_{\{\phi_{t}\}}(y)\omega(y),
\end{aligned}$$
where the second equality is by \eqref{eq:two_acts_compared}; note that \eqref{eq:two_acts_compared} holds up to a constant in general but this does not affect the differences of two actions.
\end{remark}

\begin{corollary}\phantomsection\label{cor:scaling_prop_diff_act_ind_ones}
Suppose that $\phi$ has no fixed points in the interior of $A$. Then $\mathrm{D}\neq 0$. Moreover, one has $\mathrm{D}(\phi^{k}) = k\mathrm{D}(\phi)$ for all $k \geq 1$.
\end{corollary}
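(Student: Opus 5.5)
We will deduce both assertions from formula \eqref{eq:act_diff_for_all_lifts} of Theorem \ref{mainthm:diff_act_chords_vs_ave_rot_num}. The key point is that the right-hand side holds for \emph{every} lift $\t{\phi}$ of $\phi|_A$. Changing the lift by a deck transformation, $\t{\phi}\mapsto \t{\phi}+(0,m)$, shifts every rotation number by $m$. It therefore adds $m\,\mathrm{area}(A) = m(\hbar'-\hbar)$ to the integral term and $-m(\hbar'-\hbar)$ to the second term, so the sum is unchanged, as it must be. We use this freedom to choose a convenient lift.

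Since $q\in L$ is a fixed point of $\phi$ and $\mu_{\{\phi_t\}}(L)$ is even, we pick the lift $\t{\phi}$ with $\rho_{\t{\phi}}(L)=\mu_{\{\phi_t\}}(L)/2$. This requires that $\rho_{\t{\phi}}(L)$ be an integer for a lift fixing a lift of $q$, and that the integer can be matched with $\mu/2$ by an integer shift, which holds because $\mu$ is even. For this lift the second term of \eqref{eq:act_diff_for_all_lifts} vanishes, leaving
\[
\mathrm{D}(\phi)=\int_A\rho_{\t{\phi}}\,\omega.
\]
Now suppose $\mathrm{D}(\phi)=0$. Then the mean rotation number $\frac{1}{\mathrm{area}(A)}\int_A\rho_{\t{\phi}}\omega$ equals $0$, which is an integer. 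By Remark \ref{rem:rest_are_isotop_to_id}, $\phi|_A$ is isotopic to the identity, so Lemma \ref{lem:zero_ave_rot_num_fixed_point} produces a fixed point in the interior of $A$. This contradicts the hypothesis, hence $\mathrm{D}(\phi)\neq0$.

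For the scaling $\mathrm{D}(\phi^k)=k\,\mathrm{D}(\phi)$, we apply the theorem to $\phi^k$ with the isotopy $\{\phi_t^k\}$ (or the $k$-fold concatenation), the lift $\t{\phi}^k$, and the same fixed points $q,q'$. We need three facts:
\begin{enumerate}
\item $\rho_{\t{\phi}^k}(x)=k\rho_{\t{\phi}}(x)$ pointwise, which follows directly from the definition of the rotation number;
\item $\mu_{\{\phi^k_t\}}(L)=k\,\mu_{\{\phi_t\}}(L)$, which holds by concatenation additivity of the Robbin–Salamon index for the loop $d\phi_t(T_qL)$, since $q$ is fixed and $T_qL$ is preserved;
\item $\rho_{\t{\phi}^k}(L)=k\rho_{\t{\phi}}(L)$, by the first fact.
\end{enumerate}
Substituting these into \eqref{eq:act_diff_for_all_lifts}, both terms scale by $k$, which gives the claim.

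The main obstacle is checking that the lift and the index-one chords are compatible for $\phi^k$. The quantity $\mathrm{D}(\phi^k)$ is defined through the Hamiltonian $H^{(k)}$ generating $\phi^k$, and we must verify that Theorem \ref{thm:lag_act_of_ind_one_chords_eq_ham_act} applied to this Hamiltonian yields the index data $k\mu$ and the actions $k\mathcal{A}_H(q)$. The latter should follow since $\mathcal{A}_{H^{(k)}}$ of the $k$-fold iterated fixed point is $k\mathcal{A}_H$ for symplectically aspherical $\Sigma$. The integrality of $\rho_{\t{\phi}}(L)$ used in the first step also needs care, and we justify it via Lemma \ref{lem:diff_index_diff_rot}: it gives $\rho'-\rho\in\Z$, and $\rho\in\Z$ holds for any lift fixing a lift of $q$.
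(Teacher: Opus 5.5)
Your proposal is correct and follows the paper's proof. You use the lift-independence of \eqref{eq:act_diff_for_all_lifts} together with $\mu/2,\rho_{\t{\phi}}(L)\in\Z$ to reduce $\mathrm{D}=0$ to a lift with vanishing mean rotation number. You then invoke Lemma \ref{lem:zero_ave_rot_num_fixed_point}, and you get the scaling from $\rho_{\t{\phi}^k}=k\rho_{\t{\phi}}$ and $\mu_{\{\phi_{kt}\}}(L)=k\mu$, exactly as the paper does (it merely phrases the first step as $\int_A\rho_{\t{\phi}}\omega\in(\hbar'-\hbar)\Z$ before choosing the lift).

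The ``main obstacle'' you flag is unnecessary. Formula \eqref{eq:act_diff_for_all_lifts} applies directly to $\phi^k$ with the same fixed points $q,q'$, and it contains no action terms. Likewise, $\rho_{\t{\phi}}(L)\in\Z$ holds for every lift simply because $q$ is fixed, so Lemma \ref{lem:diff_index_diff_rot} is not needed for that.
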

\begin{proof}
Because by assumption $\phi$ possesses fixed points on $L$ we must have $\rho\in \Z$. If $\mathrm{D} = 0$ then we must have,
$$\int_{A}\rho_{\t{\phi}}(x)\omega(x)\in (\hbar' - \hbar)\Z,$$
for every lift $\t{\phi}$ of $\phi$. This implies that one can pick a lift $\t{\phi}$ so that the mean rotation number over $A$ vanishes. Then,  Lemma \ref{lem:zero_ave_rot_num_fixed_point} implies the existence of a fixed point of $\phi$ in the interior of $A$ which is a contradiction; $\phi|_{A}$ is isotopic to the identity by Remark \ref{rem:rest_are_isotop_to_id}. To prove $\mathrm{D}(\phi^{k}) =k\mathrm{D}(\phi)$, observe that the identity \eqref{eq:act_diff_for_all_lifts} holds for every lift $\t{\phi}$ of $\phi$, hence, we pick any lift $\t{\phi}$ of $\phi$ and for $\phi^{k}$ we choose $(\t{\phi})^{k}$ and we observe that, for almost every $x$ in $A,$ 
$$\rho_{(\t{\phi})^{k}}(x) = k\rho_{\t{\phi}}(x).$$
Moreover, the index $\mu = \mu_{\{\phi_t\}}(L) = \mu_{RS}(\{d\phi_{t}(T_{q}L)\})$ does not depend on the choice of the isotopy $\{\phi_{t}\}$, see \S\ref{sec:indices}, so to compute the corresponding index for $\phi^{k}$ we can use the isotopy $\phi_{kt}$,
$$\begin{aligned}
\mu_{\{\phi_{kt}\}}(L)= \mu_{RS}(\{d\phi_{kt}(T_{q}L)\}) &= \sum_{j=0}^{k-1}\mu_{RS}(\{d\phi_{t}(d\phi_{1}^{j}(T_{q}L))\})\\
&=\sum_{j=0}^{k-1}\mu_{RS}(\{d\phi_{t}(T_{q}L)\}) = k\mu,
\end{aligned}$$
where the last equality follows from $\phi(L) = L$ and $\phi(q) = q$.
\end{proof}

\subsubsection{Computation of actions of index one chords}\phantomsection\label{sec:comp_act_index_one_chords}
Let $\phi\in \Ham(\Sigma)$ and $L, D,\hbar$ be as before. Suppose that $\phi(L) = L$. Moreover, assume that there is at least one contractible fixed point $q\in L$ for $\phi$. Let $H$ be a mean normalized Hamiltonian generating $\{\phi_{t}\}$ with $\phi_{1} = \phi$. Every point $x\in L$ defines a Hamiltonian chord from $L$ to itself. Consider the Floer homology $\HF(L, H)$ which is defined by
$$\HF^{[a,b]}(L,H; J) = \lim_{n\to \infty}\HF^{[a,b]}(L,H_{n}; J)$$
where the limit is over a sequence of Hamiltonians $H_{n}$ converging to $H$ in $C^{\infty}$ topology and all their Hamiltonian chords from $L$ to itself are non-degenerate, i.e. $\phi_{H_{n},1}(L) \pitchfork L$. In the particular case of the above, one can take a perfect Morse function $g: L \rightarrow \R$ with critical points on $p,q$, for some $p\in L$ distinct from $q$, with $\mathrm{ind}(p) = 0, \mathrm{ind}(q) =1$, and extend it to a small tubular neighborhood of $L$ so that it is independent of the normal co-ordinate to $L$ and finally extend to $\Sigma$ by a cutting off procedure; by abuse of notation we denote the extension by $g$.
\begin{figure}[ht]
  \centering
  \begin{tikzpicture}
   \coordinate (A) at (0,-1);
   \draw (0,0) circle (0.8 and 1) coordinate(X);
   \node at (0.8, 0) {$\curlyvee$}; \node at (-0.8, 0) {$\curlyvee$};
   \node at (0, 1) {\tiny{$\bullet$}};\node at (0, -1) {\tiny{$\bullet$}};
   \node at (0, 1.3) {$p$};\node at (0, -1.3) {$q$};
  \end{tikzpicture}
  \caption{\emph{Positive} gradient flow of $g$ on $L$.}
  \phantomsection\label{fig:grad_g}
\end{figure}
Take $H' = H \# \delta g$ as a perturbation of $H$ with flow $\phi_{\delta, t} = \phi_{t}\psi_{\delta,t}$ with transverse intersections $\phi_{1}\psi_{\delta,1}(L) \pitchfork L$ where $\psi_{\delta,t}$ is the flow of $\delta g$. One can pick a sequence $\delta_{n}\to 0$ and work with $H_{n} = H\#\delta_{n}g$ to define $\HF(L, H)$. Below, we denote $\psi_{\delta,1}$ by $\psi_{\delta}$. In the following theorem by $\u{L}$ we mean the anchored Lagrangian $\u{L} = (L,\alpha,\Lambda)$ with constant anchor $\alpha \equiv q$ and $\Lambda \equiv T_{q}L$, see \S\ref{sec:anchored_lag_floer_hom}, and $\mu(\{d\phi_{t}(T_{q}L)\})$ is computed using a constant trivialization $\R^{2} \rightarrow T_{q}L$.

\begin{theorem}\phantomsection\label{thm:action_of_index_one_chords}
Suppose $\phi_{t}(q) = q$ for all $t\in [0,1]$. For every non-degenerate perturbation $H\# \delta g$ as above, if $u: [0,1] \times [0,1] \rightarrow \Sigma$ is a Lagrangian capping of a Hamiltonian chord $x$ with $\mathrm{CZ}$-index equal to $1$ then,
$$\mathcal{A}_{\u{L},H\#\delta g}(x,u) = \frac{\mu_{\{\phi_{t}\}}(L)\hbar}{2} + \int_{0}^{1}H_{t}(q)dt + e(\delta),$$
where $e(\delta) \to 0$ as $\delta \to 0$ and $\mu_{\{\phi_{t}\}}(L) := \mu_{RS}(\{d\phi_{t}(T_{q}L)\})$.
\end{theorem}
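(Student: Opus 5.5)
We compute the action by passing to intersection points via the transform map and an explicit homotopy. Since $\phi_t(q)=q$ for all $t$ and $\phi_1(L)=L$, the perturbed flow $\phi_{\delta,t}=\phi_t\psi_{\delta,t}$ has chords from $L$ to itself near $L$. These are exactly the points $x_0\in L$ with $\phi_1\psi_\delta(x_0)\in L$. For $\delta$ small, $\psi_\delta$ pushes $L$ off itself in the normal direction by a graph of $\delta\, dg$ in a Weinstein neighborhood. Hence $\phi_1\psi_\delta(L)$ is the graph of $\delta\,d(g\circ\phi_1^{-1})$ over $L=\phi_1(L)$. The chords therefore correspond to the critical points of $g\circ\phi_1^{-1}$ on $L$, namely $\phi_1(p)$ and $\phi_1(q)=q$. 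Up to recapping by multiples of the disk $D$, whose class has Maslov index $2$ and area $\hbar$, the only candidate generators are $[q,u]$ and $[\phi_1(p),u']$ with cappings. The capped chord of index $1$ will be the one based at the degree-one critical point, after the index shift caused by the rotation of $d\phi_t(T_qL)$. The statement only concerns index-one chords, and the action of any capped chord changes by $\hbar$ exactly when its index changes by $2$. So it suffices to compute the action and index of one explicit capped chord based at $q$, and then shift the capping by the appropriate multiple of $[D]$.

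\emph{Step 1 (the chord at $q$).} Because $q$ is a critical point of $g$ and $\phi_t(q)=q$, the constant path $x\equiv q$ is a chord of $H\#\delta g$. Take the trivial capping $u\equiv q$. Its action is $\int_0^1 (H\#\delta g)_t(q)\,dt=\int_0^1 H_t(q)\,dt+\delta g(q)$, so the error term is $e(\delta)=\delta g(q)$, or $O(\delta)$ after normalization. This chord has index $\mathrm{CZ}([q,q])$. By the definition in \S\ref{sec:indices}, in the constant trivialization at $q$ this is the Robbin--Salamon index of the concatenation of $d\phi_{\delta,t}(T_qL)=d\phi_t d\psi_{\delta,t}(T_qL)$ with the short canonical path. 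I would compute it as the sum of $-\mu_{\{\phi_t\}}(L)$ and the Morse contribution of the critical point $q$. The latter is $n-\mathrm{ind}(q)=0$ for the maximum, or $1$ for the minimum, by the computation at the end of \S\ref{sec:indices}. The homotopy $d\phi_t\,d\psi_{\delta,t}\simeq d\phi_t$ followed by $d\psi_{\delta,t}$ gives additivity. The sign convention needs care here; it would be fixed by checking the case $\phi_t=\mathrm{id}$, where $\mu=0$ and the Morse example of \S\ref{sec:indices} applies directly.

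\emph{Step 2 (recapping).} Gluing $k$ copies of $D$ to the capping changes the index by $2k$ and the action by $-k\hbar$, with signs fixed by the conventions $\mathcal{A}=\int H-\int u^*\omega$ and $\mu(D)=2$. Solving for the $k$ that brings the index to $1$ gives $k=\mu_{\{\phi_t\}}(L)/2$ up to sign. The resulting action is $\int_0^1H_t(q)\,dt+\mu\hbar/2+O(\delta)$. The evenness of $\mu$, noted in \S\ref{sec:diff_act_index_one_chords}, guarantees that $k$ is an integer.

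\emph{Step 3 (the other chord).} The chord near $\phi_1(p)$ needs a separate argument, since its Morse degree differs by one. I would cap it by following the $L$-arc from $q$ to $\phi_1(p)$ together with the thin strip swept by the perturbation. Its action then differs from that of the $q$-chord with the same Maslov shift by $O(\delta)$ plus a multiple of $\hbar$. In the index-one normalization it would carry index $0$ or $2$, so it does not contribute. Alternatively, the statement only needs that every index-one capped chord has this action. The main obstacle is pinning down the index bookkeeping in Steps 1 and 3: the sign of $\mu_{\{\phi_t\}}(L)$ relative to the capping orientation, and which critical point carries index $1$. I would settle both by testing on the rotation $\phi_t$ of the round disk, where everything is explicit.
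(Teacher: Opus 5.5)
There is a genuine gap: your proposal picks the wrong chord as the index-one generator.

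In the setup of the statement, $g$ has $\mathrm{ind}(p)=0$ and $\mathrm{ind}(q)=1$. So for the constant chord at $q$ the Morse contribution is $n-\mathrm{ind}(q)=0$, and its Conley--Zehnder index with the constant capping is $\mu_{\{\phi_t\}}(L)$, which is even (the paper gets exactly $2r=\mu$). Recapping by $[D]$ changes the index by even amounts. Hence no capping of the $q$-chord has index one, and your Step~2 has no solution. The index-one generator is the chord through $p$, which is precisely the one your Step~3 discards as having index $0$ or $2$. Steps~1--2 would give a short proof if $g$ had its minimum at $q$. But that is not the perturbation the statement refers to, and passing between perturbations would need an extra filtered continuation argument. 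Your proposed sign check at $\phi_t=\mathrm{id}$ also cannot detect the sign in front of $\mu$, since $\mu=0$ there. With your tentative $-\mu$, the recapping would produce $\int_0^1H_t(q)\,dt-\mu\hbar/2$, which is the wrong sign.

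So the actual content of the theorem is what your Step~3 leaves as ``$O(\delta)$ plus a multiple of $\hbar$''. The capping you sketch there is, up to homotopy, the paper's capping: flow an arc $\gamma\subset L$ from $q$ to $p$ by $\phi_t\psi_{\delta,t}$, then close up with the thin strip between $\phi_1\psi_\delta(\gamma)$ and $\phi_1(\gamma)$. Its action is $\int_0^1H_t(q)\,dt+O(\delta)$ with no $\hbar$-ambiguity. What has to be proved is that its index is exactly $1+\mu$.

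The paper does this through the transform map. The chord becomes the intersection point $p\in L\cap\psi_\delta^{-1}(L)$, capped by the thin lune $v_p$ of area $e(\delta)$ between the two circles. The index is the sum of the Robbin--Salamon contributions $-1$, $\tfrac12$, $2$ and $-\tfrac12+\mu$ of the four boundary pieces. The last of these comes from the connecting anchor $\overline{\Lambda}$, which is where $\mu$ enters. Recapping by $\mu/2$ copies of $D$ lowers the index by $\mu$ and raises the action by $\mu\hbar/2$. This gives the unique index-one capping, with action $\mu\hbar/2+\int_0^1H_t(q)\,dt+e(\delta)$. Without this index computation, the multiple of $\hbar$ --- exactly what the theorem asserts --- remains undetermined.
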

\begin{proof}
For brevity we denote $\mu_{\{\phi_{t}\}}(L)$ by $\mu$ in the course of the proof. Let $p = x(0)$ if $x(0)\neq q$ and choose $p\in L\backslash\{q\}$ arbitrarily if $x(0) = q$. By assumption we have $\phi_{t}(q) = q$ for all $t\in [0,1]$, and $p,q$ are critical points of $g$, so we also have $\psi_{\delta,t}(p) = p$ and $\psi_{\delta,t}(q) = q$ for all $t\in [0,1]$. Defining $\t{L}:= \phi_{\delta,1}^{-1}(L)$ one has
$$\t{L}\cap L= \psi_{\delta}^{-1}\phi^{-1}(L) \cap L = \psi_{\delta}^{-1}(L)\cap L =  \{p,q\};$$ 
see Figure \ref{fig:two_circles_intersect}. Given that $\phi(L) = L$, and so $d\phi(T_{z}L) = T_{\phi(z)}L$ for all $z\in L$, and the fact that the linearization of $\psi_{\delta,t}$ is given by 
$$d\psi_{\delta,t}(p) = e^{\delta it(\nabla^{2}g)(p)},$$
similarly for $q$, one can check that $\phi\psi_{\delta}(L)$ intersects $L$ transversely at $p,q$ for all $\delta > 0$; so we can use the perturbations $H \# \delta g$ to define $\HF(\u{L}, H)$. 
\begin{figure}[ht]
  \centering
  \begin{tikzpicture}
   \coordinate (A) at (0,-1);
   \draw (0,0) circle (1 and 1) coordinate(X);
   \draw (-0.3,0) circle (1.05 and 1.05);
   \node at (0, 1) {\tiny{$\bullet$}};\node at (0, -1) {\tiny{$\bullet$}};
   \node at (0, 1.3) {$p$};\node at (0, -1.3) {$q$};
   \node at (-1.7, 0) {$\t{L}$};\node at (1.2, 0) {$L$};
  \end{tikzpicture}
  \caption{Graph of $L$ and $\t{L} = \phi_{\delta,1}^{-1}(L) = \psi_{\delta}^{-1}(L)$ intersecting transversely. The strips on the sides have equal areas.}
  \phantomsection\label{fig:two_circles_intersect}
\end{figure}
By the discussion in \S\ref{sec:transform_map} it is enough to prove that any capped intersection point $(x,v)$ in the complex $\CF(\u{L}, \u{\t{L}})$ with Conley-Zehnder index $1$ has action near $r\hbar$ and its action converges to $r\hbar$ as $\delta \to 0$, see \eqref{eq:action_shift_int_chord} and the discussion afterwards, where $\u{\t{L}} = (\overline{H_{\delta}})_{*}\u{L}$ and  $H_{\delta} = H \# \delta g$. From now on we transfer the situation to the Euclidean plane $\mathbb{C}$ by picking a Darboux chart so that $q = -i$ and $p = i$ and $L = S^{1}(r)$; let us assume $\hbar = \pi$ for simplicity so that we can assume $r = 1$. The Robbin-Salamon indices are calculated with respect to the standard trivialization $T\R^{2} = \R^{2} \times \R^{2}.$ The anchor $\alpha_{\u{L}, \u{\t{L}}}$ is equal to $\phi_{\delta,t}^{-1}(q) = \psi_{\delta,t}^{-1}\phi_{t}^{-1}(q) \equiv q$ and $\Lambda_{\u{L}, \u{\t{L}}}(t) = d\phi_{\delta,t}^{-1}(T_{q}L)$. We now try to find the cappings of the intersection points $p,q$ with index $1$. Here is a particular capping $v: [0,1]\times [0,1] \rightarrow \Sigma$ for the intersection point $p$:
\begin{enumerate}
\item $v_{p}(s,0) = e^{-\pi is}q$,
\item $v_{p}(1,t) \equiv p,$
\item $v_{p}(s,1) = \phi_{\delta,1}^{-1}e^{-\pi is}q,$
\item $v_{p}(0, t) = \alpha_{\u{L}, \u{\t{L}}}(t) \equiv q$;
\end{enumerate}
this is the strip on the left side of Figure \ref{fig:two_circles_intersect}. Here are the computations of the Robbin-Salamon indices \S\ref{sec:indices}:
\begin{enumerate}
\item $\mu_{RS}(T_{v_{p}(s,0)}L) = -\frac{1}{2} - \frac{1}{2} = -1,$,
\item $\mu_{RS}(\Lambda_{can}(T_{p}L, T_{p}\t{L})) = \frac{1}{2}$,
\item $\mu_{RS}(T_{v_{p}(1-s,1)}\t{L}) = 1 + 1 = 2$,
\item $\mu_{RS}(\overline{\Lambda}_{\u{L}, \u{\t{L}}}) = -\mu_{RS}(\{d\psi_{\delta,t}^{-1}d\phi_{1}^{-1}(T_{q}L)\}) -\mu_{RS}(\{d\phi_{t}^{-1}(T_{q}L)\}) = -\frac{1}{2} + 2r,$
\end{enumerate}
where $\mu = 2r$; note that $d\phi_{1}^{-1}(T_{q}L) = T_{q}L$. By the discussion in \S\ref{sec:transform_map} the index of the corresponding capped chord $(\phi_{\delta,t}(p), u_{p})$ is,
$$\mathrm{CZ}(\phi_{\delta,t}(p),u_{p}) = 1 + \mu_{RS}(\{d\phi_{t}(T_{q}L)\}) =1+2r.$$
Here is a particular capping $v_{q}: [0,1] \times [0,1] \rightarrow \Sigma$, described in the Darboux chart, for the intersection point $q$:
\begin{enumerate}
\item $v_{q}(s,t) \equiv q$.
\end{enumerate}
The Robbin-Salamon index of each Lagrangian path involved in the definition of the Conley-Zehnder index of $v_{q}$ is as follows,
\begin{enumerate}
\item $\mu_{RS}(T_{v_{q}(s,0)}L) = 0$,
\item $\mu_{RS}(\Lambda_{can}(T_{q}L, T_{q}\t{L})) = \frac{1}{2}$,
\item $\mu_{RS}(T_{v_{p}(1-s,1)}\t{L}) =0$,
\item $\mu_{RS}(\overline{\Lambda}_{\u{L}, \u{\t{L}}}) = -\mu_{RS}(\{d\psi_{\delta,t}^{-1}d\phi_{1}^{-1}(T_{q}L)\}) -\mu_{RS}(\{d\phi_{t}^{-1}(T_{q}L)\})= -\frac{1}{2} +2r.$
\end{enumerate}
Hence, the Conley-Zehnder index of $(q,v_{q})$ is equal to $- \mu_{RS}(\{d\phi_{t}^{-1}(T_{q}L)\})$ which by the discussion in \S\ref{sec:transform_map} we conclude that the corresponding capped chord $(\phi_{\delta,t}(q), u_{q})$ has index,
$$\mathrm{CZ}(\phi_{\delta,t}(q),u_{q}) = \mu_{RS}(\{d\phi_{t}(T_{q}L)\}) =2r.$$
The re-cappings of these capped chords are concatenations $w * u_{p}$ or $w * u_{q}$ where $w: [0,1] \times [0,1] \rightarrow \Sigma$ with $w(s,0)$ rotating $m$-times around $L$ and $w(s,1)$ rotating $n$-times around $L$ in which case it will shift the index by $\pm 2(m - n)$. Hence, only the capped chords $(\phi_{\delta,t}(p),w*u_{p})$ can have index $1$ and only a unique such cappings. Let,
$$e(\delta):= \mathcal{A}_{\u{L}, \u{\t{L}}}(p,v_{p}) = \text{ area of the left strip in Figure \ref{fig:two_circles_intersect}},$$
which obviously satisfies $e(\delta) \to 0$ as $\delta \to 0$. We conclude that the action of the capped chord $(\phi_{\delta,t}(p), u_{p}')$ that has index $1$, denoted by $a(\phi,1)$, satisfies,
$$a(\phi,1)= r\hbar + \int_{0}^{1} H_{t}(q) dt  + e(\delta),$$
which finishes the proof of the theorem.
\end{proof}
In the following theorem $\mu_{RS}(\{d\phi_{t}(T_{q}L)\})$ is computed using a fixed trivialization $\R^{2} \times \D \rightarrow u_{q}^{*}T\Sigma$ where $u_{q}$ is the unique capping of $\phi_{t}(q)$; the index does not depend on the trivialization.
\begin{theorem}\phantomsection\label{thm:lag_act_of_ind_one_chords_eq_ham_act}
Let $L \subset \Sigma$ be as before and $H$ be a Hamiltonian generating $\{\phi_{t}\}$ with $\phi_{1}(L) = L$. Suppose $q\in L$ is a contractible fixed point of $\phi_{1}$. Then, the Lagrangian action of any generator $(x,u)$ of $\CF_{1}(\u{L}, H\#\delta g)$ equals,
$$\mathcal{A}_{\u{L},H}(x,u) = \frac{\mu_{\{\phi_{t}\}}(L)\hbar}{2} + \mathcal{A}_{H}(q) + e(\delta),$$ 
where $e(\delta) \to 0$ as $\delta \to 0$; $\mathcal{A}_{H}(q)$ is the Hamiltonian action of $q$ and $\mu_{\{\phi_{t}\}}(L):= \mu_{RS}(\{d\phi_{t}(T_{q}L)\})$.
\end{theorem}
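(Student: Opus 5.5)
The plan is to reduce Theorem \ref{thm:lag_act_of_ind_one_chords_eq_ham_act} to the special case already treated in Theorem \ref{thm:action_of_index_one_chords}, where the isotopy fixes $q$ for all times. To do this I will use the naturality transformation of \S\ref{sec:naturality_transformation}. Since $q$ is a contractible fixed point of $\phi_{1}$, the loop $\{\phi_{t}(q)\}$ bounds a disk $u_{q}$. From this I will build a Hamiltonian loop $\{\psi_{t}\}$, generated by some $G$, with $\psi_{t}(\phi_{t}(q)) = q$. Concretely, I take a Hamiltonian isotopy $\{\chi_{t}\}$ that transports $q$ back along the path $\phi_{t}(q)$, following the contraction $u_{q}$. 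I choose it so that $\chi_{1} = \id$, so that $\chi_{t}$ is supported near $u_{q}$, and so that it is the identity near $L$ away from that region. Setting $\psi_{t}=\chi_{t}$, the isotopy $\phi'_{t} := \psi_{t}\phi_{t}$ fixes $q$ for all $t$ and has the same time-one map $\phi_{1}$.

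The loop $\{\psi_{t}\}$ is contractible, since $\pi_{1}(\Ham(\Sigma))$ is trivial in genus $\geq 1$. After mean-normalising $G$, the naturality map $\mathfrak{n}_{G,w}:\CF(\u L, H\#\delta g)\to \CF(\u L, G\#H\#\delta g)$ therefore preserves actions and indices, by the identity in \S\ref{sec:naturality_transformation}. The capping $w$ of the trivial loop $\psi_{t}(q)$ is chosen so that the correction term $\mathcal{A}_{G}([\psi_{t}(q),w])$ vanishes. I will also check that the constant anchor at $q$ is carried to the constant anchor: this holds because $\psi_{t}(q)\equiv q$ is false in general, but $\psi_{t}(\phi_{t}(q)) = q$, and this is exactly what the new chord through $q$ requires. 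So it suffices to compute the action of index one generators for $H' := G\#H$, whose flow $\phi'_{t}$ fixes $q$.

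Theorem \ref{thm:action_of_index_one_chords} applied to $H'$ gives
\[
\mathcal{A} = \frac{\mu_{\{\phi'_{t}\}}(L)\hbar}{2} + \int_{0}^{1}H'_{t}(q)\,dt + e(\delta).
\]
It remains to match the two terms. For the action, $\int_{0}^{1}H'_{t}(q)\,dt$ is the Hamiltonian action of the constant orbit $q$ of $H'$ with the constant capping. By the naturality identity for Hamiltonian orbits, this equals $\mathcal{A}_{H}(q)$ computed with capping $u_{q}$, up to the term $\mathcal{A}_{G}$, which is zero because the loop is contractible and $G$ is normalised. For the index, $\mu_{RS}(\{d\phi'_{t}(T_{q}L)\})$ in the constant trivialization should equal $\mu_{RS}(\{d\phi_{t}(T_{q}L)\})$ computed in a trivialization over $u_{q}$. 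The loop $d\psi_{t}$ along $\phi_{t}(q)$ exactly implements the change from the trivialization over $u_{q}$ to the constant one. Applying the same kind of homotopy argument to the boundary of the square used in Lemma \ref{lem:diff_index_diff_rot} then gives the equality.

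I expect the main obstacle to be this bookkeeping: checking that the naturality map respects the anchored Lagrangian data and the Maslov and Conley--Zehnder normalisations, so that index one maps to index one and no spurious Maslov correction of $\hbar$ appears. I would handle it by trivializing $T\Sigma$ over $u_{q}$ and its image under $\psi$, and computing all the Robbin--Salamon indices in one common frame, exactly as in the proof of Theorem \ref{thm:action_of_index_one_chords}.
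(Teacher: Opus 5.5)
Your proposal is correct and is essentially the paper's proof. The paper takes a mean-normalized $G$ generating a contractible loop $\eta_t$ with $\eta_t(q)=\phi_t(q)$ (so your $\psi_t$ is $\eta_t^{-1}$) and applies Theorem~\ref{thm:action_of_index_one_chords} to $\overline{G}\# H$. It then transports the result with the action- and index-preserving naturality map and gets the action term from $\int_0^1(\overline{G}\# H)_t(q)\,dt=\mathcal{A}_H(q)-\mathcal{A}_G(q)=\mathcal{A}_H(q)$. Finally it matches the indices using the trivialization $d\eta_{s,t}\circ\Phi(1)$ over the capping disk $\eta_{s,t}(q)$, which comes from a contraction $\eta_{s,t}$ of the loop in $\Ham(\Sigma)$; this is exactly what justifies your ``change of trivialization'' claim.

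One small correction: the constant anchor is restored by the capping $w$ of the loop $\psi_t(q)$ built into $\mathfrak{n}_{G,w}$, not by the identity $\psi_t(\phi_t(q))=q$. That identity is needed only to apply Theorem~\ref{thm:action_of_index_one_chords} to $G\# H$.
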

\begin{proof}
Let $G$ be a mean-normalized Hamiltonian generating a contractible loop of Hamiltonian diffeomorphisms $\eta_{t}$ so that $\eta_{t}(q) = \phi_{t}(q)$; see \cite[\S9]{salamon-zehnder-92}. Let $K = \overline{G}\# H$ generating the flow $\varphi_{t} = \eta_{t}^{-1}\phi_{t}$. Then, the flow $\varphi_{t}$ preserves $q$ for all times. Define $K_{\delta} = \overline{G}\# H \# \delta g.$ By Theorem \ref{thm:action_of_index_one_chords} we have that the Lagrangian action of any generator of $\CF_{1}(\u{L}, K_{\delta})$ equals,
\begin{equation}\phantomsection\label{eq:lag_act_formula}
r\hbar + \int_{0}^{1}K_{t}(q)dt + e(\delta),
\end{equation}
where $r$ satisfies $\mu_{RS}(\{d\varphi_{t}(T_{q}L)\}) = 2r$ and $e(\delta) \to 0$ as $\delta \to 0$. Since $G$ is a mean-normalized Hamiltonian the transformation $\mathfrak{n}_{G,w}: \CF(\u{L}, K_{\delta}) \rightarrow \CF(\u{L}, H_{\delta})$ preserves both degrees and actions; see \S\ref{sec:naturality_transformation}. So, the action of every generator of $\CF_{1}(\u{L}, H_{\delta})$ equals,
$$\begin{aligned}
r\hbar + \int_{0}^{1}K_{t}(q)dt + e(\delta) &= r\hbar + \int_{0}^{1}(\overline{G}\# H)_{t}(q)dt + e(\delta)\\
&=r \hbar - \int_{0}^{1}G_{t}\circ\eta_{t}(q)dt + \int_{0}^{1}H_{t}\circ\eta_{t}(q)dt + e(\delta)\\
&= r\hbar - \mathcal{A}_{G}(q) + \mathcal{A}_{H}(q) + e(\delta)= r \hbar + \mathcal{A}_{H}(q) + e(\delta),
\end{aligned}$$
where the last two equalities are by \cite[\S4.3]{schwarz-00} and the fact that $\Sigma$ has genus $g \geq 1$ and is a symplectically aspherical manifold. Because $\eta_{t}$ is a contractible loop of Hamiltonian diffeomorphisms we have
$$\mu_{RS}(\{d\varphi_{t}(T_{q}L)\}) = \mu_{RS}(\{d\phi_{t}(T_{q}L)\}).$$
Indeed, fix a trivialization $\Phi(1): \R^{2} \rightarrow T_{q}L$. Pick a path of loops of Hamiltonian diffeomorphisms $\eta_{s,t}$ where $\eta_{0,t} \equiv id$ and $\eta_{1,t} = \eta_{t}$. Consider the trivialization 
$$\Phi: \D \times \R^{2} \rightarrow u_{q}^{*}T\Sigma; se^{2\pi it}\mapsto d\eta_{s,t} \circ \Phi(1),$$
where $u_{q}:\D \rightarrow \Sigma$ is defined by $u_{q}(se^{2\pi it}) = \eta_{s,t}(q)$. Then 
$$\begin{aligned}
\mu_{RS}(\{d\phi_{t}(T_{q}L)\}) &= \mu_{RS}(\big\{\Phi(e^{2\pi it})^{-1} d\phi_{t}(T_{q}L)\big\})\\
&= \mu_{RS}(\big\{\Phi(1)^{-1}d\eta_{t}^{-1}d\phi_{t}(T_{q}L)\big\})\\
&= \mu_{RS}(\big\{\Phi(1)^{-1}d\varphi_{t}(T_{q}L)\big\}) = \mu_{RS}(\{d\varphi_{t}(T_{q}L)\}).
\end{aligned}$$
This completes the the proof of the theorem.
\end{proof}

\subsection{Inessential Franks-Handel invariant annuli}\phantomsection\label{sec:non_smooth_inv_ann} In this section we prove Theorem \ref{thm:main_theorem_introduction} in the case where there is an invariant Frank-Handel type annulus that is not necessarily smooth. Depending on the type of the annulus we divide the proof into two cases. In the current section we prove the theorem in the case when the annulus is inessential, i.e., contractible, and in \S\ref{sec:smooth_ess_inv_ann} we deal with the case where the annulus is essential. The main theorem of this section is the following where $\Sigma$ is a closed oriented surface of genus at least $1$ and $\phi_{c}$ is the prime extension of $\phi|_{A}$ to the prime end compactification $A_{c}$ of $A$, see Lemma \ref{lem:carath_ext_of_maps} and Remark \ref{rem:prime_end_on_annulus}, and $\t{\phi}_{c}$ is some lift of $\phi_{c}$ to the universal cover $\t{A}_{c}$ of $A_{c}$; see \S\ref{sec:theory_prime_ends}.

\begin{theorem}\phantomsection\label{mainthm:iness_franks_handel_inv_ann}
Let $\phi\in \Ham(\Sigma)$ be a Hamiltonian diffeomorphism with zero topological entropy. Suppose $A$ is an open \emph{inessential} annulus that is $\phi$-invariant and the rotation interval $\rho_{\t{\phi}_{c}}(A_{c})$ is non-trivial. Then, there exists a constant $\hbar > 0$ so that $\liminf_{k\geq 1}\gamma(\phi^{k}) \geq \hbar/2 > 0.$
\end{theorem}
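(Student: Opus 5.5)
Our plan is to reduce Theorem \ref{mainthm:iness_franks_handel_inv_ann} to the smooth case, Theorem \ref{mainthm:smth_cont_ann_case} in the strengthened form of Remark \ref{rem:theorem_B_with_mean_rot_ass}, by approximating $A$ from inside with smooth invariant-up-to-small-error annuli.

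\emph{Reduction to a regular annulus.} Using the Franks--Handel structure (Theorems \ref{thm:franks_handel_theorem}, \ref{thm:regopen}), applied to the sphere obtained by collapsing the complementary components of $A$ (Remark \ref{rem:franks_handel_for_homeos}), we replace $\phi$ by a large prime iterate $\phi^{q}$ and $A$ by an annulus of $\mathcal{A}(\phi,q)$ inside it. This new annulus is regular, so its complement has two components with non-empty interior, and its rotation interval is still non-trivial by Lemma \ref{lem:rot_num_int_ann_vs_ext_ann}. Lemma \ref{lemma:biinv} and the triangle inequality control the passage from $\phi$ to $\phi^q$: a uniform lower bound on $\gamma(\phi^{qk})$, together with a bound on the finitely many residues, gives the claim. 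Let $\hbar$ be the area of the disk bounded by the inner complementary component.

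\emph{Construction of approximately invariant circles.} Via a Riemann map $\A\to A$ and Remark \ref{rem:prime_end_on_annulus}, $\phi$ extends to a homeomorphism $\phi_c$ of $A_c$. We then apply the construction of \S\ref{sec:fragmentation_c0_approximation} near both boundary components, following Remark \ref{rem:adjustment_from_disk_to_annulus}. This produces Lagrangian circles $L_p, L'_p$ converging to the two prime-end boundaries and bounding nested subannuli $A_p$, together with area-preserving maps $\psi_p$ supported in $A$ such that $\psi_p(A_p)=\phi(A_p)$. Since $A$ is contractible, the $\psi_p$ are Hamiltonian on $\Sigma$, and they are $C^0$-small away from a thin collar of $\partial A_c$. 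We need $\gamma(\psi_p)\to0$. The first step toward this is to localize $\psi_p$ in disjoint small disks, using the cellular structure of the construction together with the energy--capacity inequality and Remark \ref{rem:spec_cap_of_disks_in_high_genus_surf}. The uncontrolled red region is handled by fragmenting it into many small-area pieces. Set $\phi_p=\psi_p^{-1}\phi$, which preserves $A_p$. By Lemma \ref{lem:pert_to_make_fixed_points}, after a further small perturbation $\phi_p$ has contractible fixed points on $L_p$ and $L'_p$.

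\emph{Uniform application of the smooth case.} The proof of Theorem \ref{mainthm:smth_cont_ann_case} gives the following: if $\gamma(\phi_p^k)<\hbar_p/2-2\delta$, then $|\mathrm{D}(\phi_p^k)|\le \hbar_p/2$. Here $\hbar_p\to\hbar$ and $\mathrm{D}(\phi_p^k)=k\mathrm{D}(\phi_p)$. By Lemma \ref{lemma:biinv}, $\gamma(\phi_p^k)\le k\gamma(\psi_p^{-1})+\gamma(\phi^k)$, so smallness of $\gamma(\phi^k)$ transfers to $\phi_p$ only for $k\lesssim 1/\gamma(\psi_p)$. It therefore suffices to show $|\mathrm{D}(\phi_p)|\ge c>0$ uniformly in $p$. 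Then, if $\gamma(\phi^k)<\hbar/2-3\delta$ for infinitely many $k$, we pick such a $k$ with $kc>\hbar$ and then $p$ large enough that $k\gamma(\psi_p)<\delta$, which gives a contradiction.

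\emph{Main obstacle.} The key step is the uniform bound on $\mathrm{D}(\phi_p)$. By \eqref{eq:act_diff_for_all_lifts}, $\mathrm{D}(\phi_p)$ is the mean rotation number of $\phi_p$ over $A_p$ plus an integer multiple of $(\hbar'_p-\hbar_p)$ shifted by boundary rotation. We plan to show that the mean rotation of $\phi_p$ on $A_p$ converges to $\int_{A_c}\rho_{\t\phi_c}\omega$, modulo the lattice $(\hbar'-\hbar)\Z$ and the boundary terms. The argument uses the $C^0$-convergence $\psi_p\to\id$ on compacta together with the continuity of $\rho_{\t\phi_c}$ on $A_c$ from Theorem \ref{thm:franks_handel_rotation_num_is_def}. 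The collars, where $\psi_p$ is not controlled, have area tending to $0$, and we will try to bound their contribution using bounded displacement in the universal cover. Non-triviality of the rotation interval, combined with continuity, then rules out the limit landing exactly in the degenerate lattice set where $\mathrm{D}=0$. The difficulty is that a single value of the mean could still coincide with a lattice point. If it does, we will first pass to a further iterate, or vary the choice of boundary fixed points in the rotation interval, to shift it off the lattice; this is the content we expect from Lemma \ref{lem:mean_rot_vec_non_zero_forallp}.
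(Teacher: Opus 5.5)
Your architecture matches the paper's: reduce to a regular annulus by a prime iterate, approximate by smooth subannuli $A_p$ with $\psi_p(A_p)=\phi(A_p)$, create boundary fixed points with Lemma \ref{lem:pert_to_make_fixed_points}, and run the smooth argument uniformly in $p$, closing with Lemma \ref{lemma:biinv}. The crux, however, is the uniform bound $|\mathrm{D}(\phi_p)|\ge c>0$, and you argue it by a mechanism that fails.

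Since $\mathrm{D}(\phi'_p)\in\int_{A_p}\rho_{\t{\phi}'_p}\omega+a_p\Z$ and this mean rotation tends to $\bar\rho=\int_{A_c}\rho_{\t{\phi}_c}\omega$, what you need is $d(\bar\rho,a\Z)>0$, where $a=\mathrm{area}(A)$. Non-triviality of the rotation interval and continuity of $\rho_{\t{\phi}_c}$ say nothing about this. For example, an integrable twist of a closed annulus whose rotation numbers range linearly over $[-1/3,1/3]$ has mean $0$.

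Your fallbacks do not help either:
\begin{itemize}
\item $\bar\rho\bmod a\Z$ is intrinsic to $\phi_c$, since changing the lift shifts $\bar\rho$ by an element of $a\Z$.
\item Passing to $\phi^k$ replaces $\bar\rho$ by $k\bar\rho$, which stays in $a\Z$.
\item Changing the boundary fixed points only alters $\phi'_p$ by a $C^0$-small perturbation, which does not affect the limit.
\end{itemize}

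The missing idea is fixed-point freeness. The paper first arranges, in Step 1 (via Theorem \ref{thm:franks_handel_theorem}(i) and \cite[Lemma 15.9]{franks-handel-zero-ent}), that $\phi$ has no fixed point in $A$. Franks' theorem, in the form of Lemma \ref{lem:zero_ave_rot_num_fixed_point}, then gives $\bar\rho\notin a\Z$ for every lift; this is \eqref{eq:key_ave_non_zero_thmE}. Your reduced annulus from $\mathcal{A}(\phi,q)$ is in fact fixed-point free for $\phi^q$, so the input is available, but you must invoke it. Lemma \ref{lem:mean_rot_vec_non_zero_forallp} only transports \eqref{eq:key_ave_non_zero_thmE} from $A_c$ to $A_p$; it cannot create it.

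Two smaller points also need repair.

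First, Lemma \ref{lem:pert_to_make_fixed_points} is not free-standing. Its perturbation is built around fixed points of $\phi_c$ on both boundary circles of $A_c$. It is the $C^0$-smallness of that perturbation in the prime-end topology that keeps the mean-rotation estimate intact; creating fixed points on $L_p,L'_p$ without such anchors would cost a definite rotation. You never produce these boundary fixed points. The paper obtains them from Theorem \ref{thm:regopen}(iv), which is why the reduced annulus must be chosen, again via \cite[Lemma 15.9]{franks-handel-zero-ent}, with both frontiers containing more than one point. That lemma is also what guarantees a non-trivial rotation interval for the new annulus. This does not follow from Lemma \ref{lem:rot_num_int_ann_vs_ext_ann}, which only matches rotation numbers mod $\Z$ at points of the subannulus.

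Second, for $\gamma(\psi_p)\to0$ the relevant tool is the upper bound $\gamma\le\mathrm{area}$ for maps supported in a disk (Remark \ref{rem:spec_cap_of_disks_in_high_genus_surf}), not energy--capacity. Moreover, the supports are thin collar annuli, not disks. The paper cuts each collar using $C^0$-control along the radial segment $\ell_0$ and the extension lemma of Entov--Polterovich--Py. This writes each collar piece as a map supported in a small rectangle composed with a map supported in the collar minus a rectangle, which is a disk of small area. Your plan to ``fragment the red region into small pieces'' needs exactly this kind of cut to make sense.
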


\begin{remark}\phantomsection\label{rem:exist_of_such_ann}
In fact, for every Hamiltonian diffeomorphism $\phi \neq id$ of a closed oriented surface $\Sigma$ with genus at least $1$ that has zero topological entropy, there exists an open topological annulus $A\subset \Sigma$ such that it is invariant under $\phi$ and the rotation interval $\rho_{\t{\phi}_{c}}(A_{c})$ is a non-trivial segment. This is a result proved in a work in progress by Guih\'eneuf-Le Calvez-Passeggi-Tal; see Theorem \ref{thm:up_coming_work_of_LCT}. In the case where such an invariant annulus is essential we show in \S\ref{sec:smooth_ess_inv_ann} that $\liminf_{k\geq }\gamma(\phi^{k}) = \infty$. And the case where $A$ is inessential is covered by Theorem \ref{mainthm:iness_franks_handel_inv_ann} above.
\end{remark}

Before starting the proofs, we introduce the following terminology.
\begin{definition}\phantomsection\label{def:strong_reg}
Let $A$ be an open contractible annulus inside $\Sigma$. Let $\phi: S^{1} \times (0,1) \rightarrow \Sigma$ be a homeomorphism onto $A$ and $\gamma := \phi(S^{1}\times \{1/2\})$. Since $A$ is contractible so must be $\gamma$. Let $D$ be the unique embedded disk in $\Sigma$ whose boundary is $\gamma$, clearly $D\not\subset A$ as $\gamma$ is essential in $A$. Since $A$ is homotopy equivalent to $\gamma$ the set $\Sigma \backslash A$ has two closed connected components. Let $C_{A}$ be the connected component $D \backslash A$. It is obvious that $D_{A}:= \mathrm{int}(C_{A}) = D\backslash \bar{A}$. We say that $A$ is regular if $D_{A} \neq \emptyset$. In particular, $\hbar_{A}:=\mathrm{area}(D_{A}) > 0$. 
\end{definition}
\begin{remark}
Note that if $A$ is not regular, then the closure of $A$ covers $D$ and so $\bar{A}$ becomes an open disk. For an open annulus we let $\partial\bar{A} = \mathrm{Fr}_{1}(A) \sqcup \mathrm{Fr}_{2}(A)$ where $\mathrm{Fr}_{i}(A)$ are the two connected components of $\partial{\bar{A}}$ and $\mathrm{Fr}_{1}(A) = D \cap \partial{\bar{A}}$. Note that $\mathrm{Fr}_{1}(A)$ is empty unless $A$ is regular in which case $\mathrm{Fr}_{1}(A) = \partial\bar{D}_{A}\neq \emptyset$.
\end{remark}
\begin{remark}\phantomsection\label{rem:regularity_to_some_people}
Note that to some specialists regularity means $\mathrm{int}(\bar{A}) = A$. In this case one can show that $D_{A}$ is a non-empty open simply connected domain as follows. First we have $D_{A}\neq \emptyset$ since otherwise we would have $D\subset \bar{A}$ which then one picks a point $x\in D\backslash A$ and an open neighborhood $U_{x} \subset D \subset \bar{A}$ of $x$ and conclude that $x\in \mathrm{int}(\bar{A})\backslash A$. Assuming that $D_{A}$ is not simply connected there must exist a closed essential loop $\ell$ in $D_{A}$ which implies that $\bar{A}$ is disconnected and this is a contradiction as the closure of any connected set is connected. Define $\mathrm{Fr}_{1}(A) = D \cap \partial\bar{A} = \partial\bar{D}_{A}$, which is connected as the boundary of a simply connected domain. Since $D \cup A$ is also simply connected open domain, $\mathrm{Fr}_{2}(A) := \partial\overline{D \cup A}$ is another connected component of $\partial\bar{A}$ which is connected. So, $\partial{\bar{A}} =\mathrm{Fr}_{1}(A) \sqcup \mathrm{Fr}_{2}(A)$ so that $\mathrm{Fr}_{1}(A)$ bounds a non-empty open simply connected domain $D_{A}$, i.e., an open disk. This notion is also different from Franks-Handel regularity, their annuli satisfy $\mathrm{int}(\bar{A}) = A$ \emph{only inside} $\Sigma \backslash \Fix(\phi)$ where $A$ is $\phi$-invariant fixed point free annulus and $\phi$ is some area-preserving diffeomorphism of $S^{2}$. 
\end{remark}

\subsubsection{Inessential regular Franks-Handel invariant annuli}\phantomsection\label{sec:non_smooth_non_ess_inv_ann} In this section we study the case where the annulus $A$ from the statement of Theorem \ref{mainthm:iness_franks_handel_inv_ann} is regular in the sense of Definition \ref{def:strong_reg}. The main techniques of this section are the theory of prime ends developed by Carathéodory, see \cite{caratheodory-original-paper}, \cite[\S5.1]{MR2167675} and \S\ref{sec:theory_prime_ends}, and $C^{0}$-approximation techniques of Lefeuvre-Sikorav and Entov-Polterovich-Py explained in details in \S\ref{sec:fragmentation_c0_approximation}. These techniques allow us to reduce the problem to the smooth setting of \S\ref{sec:smooth_non_ess_inv_ann}.

Fix a conformal diffeomorphism $\Gamma:\A \rightarrow A$ where $\A =\{a <|z| <b\}$ for some numbers $0 < a <b <\infty$; this uses the regularity assumption on $A$. We denote by $\Gamma_{c}: \bar{\A} \rightarrow A_{c}$ the prime extension of $\Gamma$ given by the celebrated theorem of Carathéodory \cite{caratheodory-original-paper, mather-top-proof-82}, (see Theorem \ref{thm:ext_to_closure_vs_carath_compl}) and Remark \ref{rem:prime_end_on_annulus}, which is a homeomorphism, where on the right-hand side we consider the prime end topology. 

Throughout this section, by $C^{0}$-norm of a map $\phi\in \Ham(\Sigma)$ on a subset $K\subset \Sigma$ we shall mean $\max_{x\in K} d(x, \phi(x))$ where $d$ is a distance function induced by some apriori fixed metric on $\Sigma$.

\begin{proof}[Proof of Theorem \ref{mainthm:iness_franks_handel_inv_ann} (regular case)] In addition to the properties assumed for $A$ in the statement, suppose also that $A$ is regular in the sense of Definition \ref{def:strong_reg}. Suppose by contradiction that,
\begin{equation}\phantomsection\label{eq:contrary_assumption}
\liminf_{k\geq 1}\gamma(\phi^{k}) = 0.
\end{equation}
Let $\hbar >0$ be the area of the disk $D_{A}$ introduced in Definition \ref{def:strong_reg} for $A$. In the course of the proof we are free to work with any iteration of $\phi$ as if one proves that $\liminf \gamma(\phi^{Nk}) \geq \eps > 0$ for some $\eps>0$ and $N >0$ then by triangle inequality $\liminf \gamma(\phi^{k}) \geq \eps/N > 0$ which will be a contradiction with the assumption \eqref{eq:contrary_assumption}.

Step $1$. One can assume that $A$ is fixed point free. Since the rotation interval $\rho_{\t{\phi}_{c}}(A_{c})$ is non-trivial it follows from \cite[Theorem 2.3]{franks-handel-zero-ent} that for every rational number $p/q\in \rho_{\t{\phi}_{c}}(A_{c})$, in its reduced form, $\phi$ has a periodic point of period $q$ with rotation number $p/q$. This in particular implies that $\phi|_{A}$ is not torsion and up to passing to an iteration it has at least one fixed point in $A$. Let $S^{2}_{A} = A/\sim$ be obtained as above by collapsing the frontiers of $A$ to points $p_{1}$ and $p_{2}$ and denoted the induced homeomorphism by $\phi_{A}$. It is clear that $\phi_{A}$ has entropy zero, infinite order and at least three fixed points, and it is smooth away from a finite set of fixed points. Hence, by Remark \ref{rem:franks_handel_for_homeos} we can apply Theorem \ref{thm:franks_handel_theorem} to $\phi_{A}$. Up to passing to an iteration of $\phi$ and so an iteration of $\phi_{A}$ it follows from \cite[Lemma 15.9]{franks-handel-zero-ent} that there exists a fixed point free $\phi_{A}$-invariant annulus $U\in \mathcal{A}(\phi_{A},1)$ so that both of its frontiers are non-singular, they contain more than one points, $U$ is regular in the sense of Definition \ref{def:strong_reg}, and the rotation interval of $U$ is non-trivial; see the proof of \cite[Lemma 15.9]{franks-handel-zero-ent}. So, from now on we assume that $A$ is $\phi$-invariant, inessential, has non-trivial rotation interval, and is fixed point free for $\phi$. Note that $\phi|_{A}$ is also isotopic to the identity by Remark \ref{rem:rest_are_isotop_to_id}. In particular, fixed point free-ness together with the result of Franks \cite{franks1988generalizations, franks-recurrence-88} and \cite[Proposition 2.6]{franks-handel-zero-ent} we have
\begin{equation}\phantomsection\label{eq:key_ave_non_zero_thmE}
\int_{A_{c}}\rho_{\t{\phi}_{c}}(x)\omega(x) \neq 0.
\end{equation}
Step $2$. We show that up to passing to an iteration one can assume that $\gamma(\phi) < \hbar/2$. To see this, by assumption \eqref{eq:contrary_assumption} there is a large number $N$ for which $\gamma(\phi^{N}) < \hbar/2$. Let $p/q$ be a rational number in the rotation interval of $A$, which is assumed to be non-trivial, so that $q$ is a prime number with $q\nmid N$ and let $z\in A$ be a periodic point with rotation number $p/q$. Let $V\in \mathcal{A}(\phi,N)$ be an invariant annulus that contains $z$; see \S\ref{sec:franks_handel_thm_subsec}. It follows from \cite[Lemma 15.7 (2)]{franks-handel-zero-ent} that $V$ is essential in $A$. Hence, if we let $\hbar_{V}$ be the area of the disk $D_{V}$, then $V$ is an open annulus invariant under $\phi^{N}$, it contains no fixed points of $\phi^{N}$, and moreover, $\gamma(\phi^{N})<\hbar_{U}/2 \leq \hbar_{V}/2$ where the latter inequality is because $V$ is essential in $U$. So, for the rest of the proof we assume that $\gamma(\phi) < \hbar/2$. 

Step $3$. The prime extension $\phi_{c}$ of $\phi$ to the prime end compactification $A_{c}$ of $A$ possesses fixed points $x$ and $y$ on the two connected components of the boundary of $A_{c}$. This is proved in item $(\mathrm{iv})$ of Theorem \ref{thm:regopen}. Let $A_{p}$ be the sequence of nested annuli inside $A_{c}$ constructed in Theorem \ref{mainthm:approx_smth_inv_annulus} and let $\partial A_{p} = L_{p}\sqcup L_{p}'$. Pick some arbitrary points $x_{p}\in L_{p}$ and $y_{p}\in L_{p}'$ so that $x_{p} \to x$ and $y_{p}\to y$ as $p \to \infty$.

Step $4$. Let $\phi_{p} = \psi_{p}^{-1}\phi$ be the Hamiltonian diffeomorphisms that are small perturbations of $\phi$ by $\psi_{p}$ constructed in Theorem \ref{mainthm:approx_smth_inv_annulus} so that $\phi_{p}(A_{p}) = A_{p}$ and they satisfy the properties listed in Theorem \ref{mainthm:approx_smth_inv_annulus}. We perturb $\phi_{p}$ so that they will have contractible fixed points on $L_{p}$ and $L_{p}'$ for all large enough $p$. Since $\gamma(\psi_{p}) \to 0$ as $p \to \infty$, for every large enough $p$ we have $\gamma(\phi_{p}) < \hbar/2 <\hbar_{p}/2$ where $\hbar_{p}$ is the area of the disk $D_{p}$ with boundary $L_{p}$; see proof of Theorem \ref{mainthm:approx_smth_inv_annulus}. It follows from Theorem \ref{mainthm:non_van_floer_hom} that for the interval $I_{p} = [-\hbar_{p}+\delta, \delta]$ and $c_{p} = c([\Sigma], \phi_{p})$,
$$\HF^{I_{p} + c_{p}}_{1}(L_{p}, \phi_{p}) \neq 0, \HF^{I_{p} + c_{p}}_{1}(L_{p}', \phi_{p}) \neq 0.$$ 
This in particular implies that there is at least one contractible chord for $\phi_{p}$ on both $L_{p}$ and $L_{p}'$ for large enough $p$. Since $L_{p}$ and $L_{p}'$ are preserved under $\phi_{p}$ we conclude that all chords are contractible. In particular, the chords $\gamma_{x_{p}}$ with ends $x_{p}$ and $\phi_{p}(x_{p})$ are contractible, similarly the chords $\gamma_{y_{p}}$ with ends $y_{p}$ and $\phi_{p}(y_{p})$ are contractible. However, the end points $x_{p},\phi_{p}(x_{p})$ get arbitrarily close to each other in prime end topology as $p \to \infty$. In Lemma \ref{lem:pert_to_make_fixed_points} we construct Hamiltonian diffeomorphisms $\psi_{p}'\in \Ham(\Sigma)$ so that $\gamma(\psi_{p}')$ is arbitrarily small for any given $p$ and $|\Gamma^{-1}_{c}\psi_{p,c}'\Gamma_{c}|_{C^{0}(\A)} \to 0$ as $p \to \infty$ where $\psi_{p,c}'$ is the prime end extension of $\psi_{p}'$, and moreover, defining \[\phi_{p}' = \psi_{p}'\phi_{p} = \psi_{p}'\psi_p^{-1}\phi,\] we have that $\phi_{p}'$ has a contractible fixed point on $L_{p}$ and $L_{p}'$ for every large enough $p$. 

Step $5$. For every large enough $p$ one has 
$$\int_{A_{p}}\rho_{\t{\phi}_{p}'}(x)\omega(x)\neq 0,$$
for any lift $\t{\phi}_{p}'$ of $\phi_{p}'$ to the universal cover $\t{A}_{p}$ of $A_{p}$. This will be shown in Lemma \ref{lem:mean_rot_vec_non_zero_forallp} and the proof will use $C^{0}$-smallness of $\psi_{p}$ and $\psi_{p}'$ in prime end topology and continuity of the rotation number, see Remark \ref{rem:rot_numb_of_zero_ent_on_closed_ann}, and that mean rotation number for $\t{\phi}_{c}$ over $A_{c}$ is never zero \eqref{eq:key_ave_non_zero_thmE} for any lift $\t{\phi}_{c}$ of $\phi_{c}$.

Step $6$. It follows from Theorem \ref{mainthm:smth_cont_ann_case} and Remark \ref{rem:theorem_B_with_mean_rot_ass} that,
\begin{equation}\phantomsection\label{eq:liminf_gamma_for_phip}
\liminf_{k\geq 1} \gamma((\phi_{p}')^{k}) \geq \hbar_{p}/2 > 0,
\end{equation}
Hence, for a fixed $\delta >0$, for every $p$ there exists $N_{p}$ so that for every $n \geq N_{p}$,
$$\gamma((\phi_{p}')^{n}) \geq \hbar_{p}/2 - \delta/2.$$
In Lemma \ref{lem:bounded_nps} we show that $\sup_{p}N_{p} = N<\infty$, hence, there exists a universal number $N$ such that for every $n \geq N$ and every $p\gg 1$ we have $\gamma((\phi_{p}')^{n}) \geq \hbar_{p}/2 - \delta/2$. On the other hand by assumption \eqref{eq:contrary_assumption} there exists a sequence $n_{k}\to \infty$ so that $\gamma(\phi^{n_{k}}) \leq \hbar/2 - 2\delta.$ Hence, for every large enough $p$ and $n_{k} \geq N$ we have, by Lemma \ref{lemma:biinv},
$$\begin{aligned}
\hbar_{p}/2 - \delta/2 \leq \gamma((\phi_{p}')^{n_{k}}) &\leq n_{k}(\gamma(\psi_{p})+\gamma(\psi_{p}')) + \gamma(\phi^{n_{k}})\\
&\leq n_{k}(\gamma(\psi_{p}) + \gamma(\psi_{p}')) + \hbar/2 - 2\delta,
\end{aligned}$$
which implies that for every large enough $p$ so that $\hbar_{p}\geq \hbar -\delta$ one has $n_{k}(\gamma(\psi_{p})+ \gamma(\psi_{p}')) \geq \delta > 0$ whenever $n_{k} \geq N$. This is a contradiction because fixing $n_{k} \geq N$ and increasing $p$ and using the fact that  $\gamma(\psi_{p}), \gamma(\psi_{p}')\to 0$ as $p\to \infty$ we will have $n_{k}(\gamma(\psi_{p}) + \gamma(\psi_{p}')) < \delta$.
\end{proof}

\begin{theorem}\phantomsection\label{mainthm:approx_smth_inv_annulus}
Let $A\subset \Sigma$ be an open regular topological annulus that is $\phi$-invariant for some $\phi \in \Ham(\Sigma)$. There exist a sequence of nested closed smooth annuli $A_{1}\subset A_{2}\subset A_{3}\subset \dots \subset A$ that are essential in $A$ with $\cup A_{p} = A$ and Hamiltonian diffeomorphisms $\psi_{p}\in \Ham(\Sigma)$ satisfying,
\begin{enumerate}
\item $\psi_{p}(A_{p}) = \phi(A_{p})$ for all $p$,
\item $\mathrm{supp}(\psi_{p}) \Subset A$ for all $p$,
\item $\gamma(\psi_{p}) \to 0$ as $p \to \infty$,
\item $|\Gamma^{-1}_{c}\psi_{p,c}\Gamma_{c}|_{C^{0}(\A_{p} \cup \ell_{0})}\to 0$ as $p\to \infty$,
\end{enumerate}
where $\gamma$ is the spectral norm \S\ref{sec:spectral_invariants}, $\psi_{p,c}$ is the prime extension  \S\ref{sec:theory_prime_ends} of $\psi_{p}$ to the prime end compactification $A_{c}$ of $A$, $\A_{p} =\Gamma_{c}^{-1}(A_{p})$ and $\ell_{0} =\bar\A \cap \R^{+}.$
\end{theorem}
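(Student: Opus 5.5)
The plan is to transport the whole problem to the model annulus $\bar\A$ via the Carathéodory extension $\Gamma_c$, run the constructions of \S\ref{sec:fragmentation_c0_approximation} on each boundary component, and then push the resulting maps back into $A$.

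\emph{Transport to the model annulus.} Put $\Omega:=\Gamma^{*}\omega$ on $\A$. Since $\Gamma$ is conformal it is smooth, so $\Omega$ is a symplectic form on $\A$, and its total area equals $\mathrm{area}(A)<\infty$, although $\Omega$ need not extend to $\partial\bar\A$. The conjugate $\Phi:=\Gamma^{-1}\phi\Gamma$ preserves $\Omega$. By Theorem \ref{thm:ext_to_closure_vs_carath_compl}, Lemma \ref{lem:carath_ext_of_maps} and Remark \ref{rem:prime_end_on_annulus} it extends to a homeomorphism $\Phi_c=\Gamma_c^{-1}\phi_c\Gamma_c$ of $\bar\A$. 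This is precisely the setting of \S\ref{sec:fragmentation_c0_approximation} for the annulus (Remark \ref{rem:adjustment_from_disk_to_annulus}).

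\emph{Constructions near the two ends.} I would run the construction of \S\ref{sec:smooth_pert_inv_lag} and \S\ref{sec:smooth_pert_inv_lag_symp} separately near the inner circle $|z|=a$ and the outer circle $|z|=b$, using concentric circles $\Lambda_n=\{|z|=a+\eps_n\}$ and $\Lambda_n'=\{|z|=b-\eps_n\}$. Because these collars are disjoint for large $n$, the two resulting maps have disjoint supports and can be composed. This gives $\Omega$-preserving diffeomorphisms $\Psi_p$ of $\A$ with the following properties:
\begin{enumerate}
\item they are compactly supported in the thin collars $U_p$ near $\partial\bar\A$;
\item they satisfy $\Psi_p(\Lambda_{n_p})=\Phi(\Lambda_{n_p})$ and $\Psi_p(\Lambda'_{n_p})=\Phi(\Lambda'_{n_p})$;
\item they satisfy $|\Psi_p|_{C^0(\A_p\cup\ell_0)}\le 1/p$, where $\A_p$ is the closed annulus bounded by $\Lambda_{n_p}$ and $\Lambda'_{n_p}$.
\end{enumerate}
I then set $A_p:=\Gamma(\A_p)$ and $\psi_p:=\Gamma\Psi_p\Gamma^{-1}$, extended by the identity outside $A$. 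Passing to a subsequence so that $\eps_{n_p}$ decreases makes the $A_p$ nested, essential in $A$ and exhausting $A$, and gives (i) and (ii). Property (iv) is the bound in (iii), which is measured in $\bar\A$.

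\emph{Hamiltonianity.} One point to check is that $\psi_p$ is Hamiltonian on $\Sigma$. Since $A$ is inessential, $\psi_p$ is supported in a disk $D\cup A$. A compactly supported symplectic diffeomorphism of a disk lies in $\Ham$, so $\psi_p\in\Ham(\Sigma)$. For this I would want the Moser isotopies of \S\ref{sec:const_of_phi_two}--\S\ref{sec:const_of_phi_three} to be compactly supported, which they are by construction.

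\emph{Main obstacle: (iii).} The hard part is $\gamma(\psi_p)\to0$. The $C^0$ control of $\Psi_p$ holds only on $\A_p\cup\ell_0$, and in the red collar region $\Psi_p$ is uncontrolled. Moreover $\gamma$ is not $C^0$-continuous at the level of supports in $A$ in any naive way. I would first try the energy--capacity inequality together with the spectral capacity of small sets:
\begin{itemize}
\item Fragment $\psi_p$ into pieces supported in small disks, using the cell structure of the partition $K_p$ and the fact that the supports of the Khanevsky factors have diameter tending to zero.
\item Handle the collar part separately. Its support is a thin annulus $\Gamma(U_p)$, whose area tends to $0$ because $\Omega$ has finite area.
\end{itemize}
On a surface of genus $\ge1$, any Hamiltonian supported in an inessential (contractible) open set $W$ has $\gamma$ bounded by a multiple of the area of a disk containing $W$ (Remark \ref{rem:spec_cap_of_disks_in_high_genus_surf}). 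Since $\Gamma(U_p)$ is a thin contractible annulus, it sits inside a disk whose area is roughly $\mathrm{area}(D_A)$ plus the collar area. That bound is not small, so instead I would use the $C^0$-continuity of $\gamma$ on surfaces (Seyfaddini \cite{MR3123670}). This requires a $C^0$ estimate on all of $\Sigma$, so I would also control $\psi_p$ on the collars using the fact that $\Psi_p$ maps each thin collar cell $\sigma''$ into a set near the boundary. Pushed forward by $\Gamma$, such sets have small diameter in $\Sigma$ for most cells, by Lemma \ref{lem:shrinking_nested_neighbs}-type estimates on crosscuts. Pinning down this $C^0$ smallness of $\psi_p$ in $\Sigma$, rather than in the prime-end topology, is where I expect the real difficulty to lie.
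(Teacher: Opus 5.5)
\textbf{The gap is in (iii).} Your transport to $\bar\A$, the two-ended construction, (i), (ii), (iv) and the Hamiltonianity argument all match the paper. For (iii), however, your final route through Seyfaddini's $C^0$-continuity \cite{MR3123670} needs $d_{C^0}(\psi_p,\id)\to0$ uniformly on all of $\Sigma$, and the construction does not give this. There are two reasons.
\begin{enumerate}
\item In the boundary cells $\sigma''_{i,n}$ the primitive $\beta_{n,2}$ of \S\ref{sec:const_of_phi_two} need not vanish on the radial lines $\ell_j$, $j\neq0$. So the Moser map $\phi_{n,2}$ transports area around the collar with no displacement bound, even in $\bar\A$.
\item Smallness in $\bar\A$, i.e.\ in the prime-end topology, does not transfer to $\Sigma$. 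The map $\Gamma$ need not extend continuously to $\partial\bar\A$, since a prime end can have a nondegenerate impression. Points close to such a prime end in $A_c$ can then be far apart in $\Sigma$.
\end{enumerate}
An estimate giving ``small diameter for most cells'' cannot yield a uniform $C^0$ bound, so the argument for (iii) does not close as proposed.

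\textbf{The missing idea.} The fragmentation approach you abandoned does work, provided you use the segment $\ell_0$. This is exactly why $\ell_0$ appears in (iv), and why the construction forces $\beta_{n,2}=0$ on $\ell_0$: $\Psi_p$ moves $\ell_0$ only slightly. The argument at the outer end goes as follows.
\begin{enumerate}
\item Choose nested thin rectangles $Q\subset Q'\subset Q''\subset U_{b,p}$ straddling $\ell_0\cap U_{b,p}$ with $\Psi_{b,p}(Q)\subset Q'$.
\item By the extension lemma \cite[Lemma 6.5]{MR2884036}, there is a Hamiltonian $\xi_{b,p}$ compactly supported in $Q''$ with $\xi_{b,p}=\Psi_{b,p}$ on $Q$.
\item Then $\eta_{b,p}:=\xi_{b,p}^{-1}\Psi_{b,p}$ is the identity on $Q$, so it is supported in $U_{b,p}\setminus Q^{sh}$. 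This set is a topological disk (the collar cut open), not an annulus, so your objection that the filled-in disk has area $\approx\mathrm{area}(D_A)$ no longer applies.
\end{enumerate}
Doing the same at $|z|=a$ gives $\Psi_{p}=\xi_{a,p}\xi_{b,p}\eta_{a,p}\eta_{b,p}$. Each factor is supported in a disk of $\Omega$-area at most $\mathrm{area}_\Omega(U_{a,p}\cup U_{b,p})$, which tends to $0$ because $\Omega$ has finite total area. Conjugating by $\Gamma$ gives four fragments of $\psi_p$, each supported in a topological disk in $\Sigma$ of small area. The triangle inequality together with $c(B)=\mathrm{area}(B)$ for disks (Remark \ref{rem:spec_cap_of_disks_in_high_genus_surf}, \cite[Theorem 1]{MR5018404}) then yields $\gamma(\psi_p)\to0$, with no $C^0$ control on $\Sigma$ needed.

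Note also that the fragments must be cut out of the symplectic composite $\Psi_p$ itself. The individual factors $sh_{n,i}$, $\phi_{n,1}$, $\phi_{n,2}$ that you proposed to fragment are not symplectic, and $sh_{n,1}$ is supported on a whole annulus. The extension lemma is what produces Hamiltonian fragments from $\Psi_p$.
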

\begin{proof}
Let $\phi_{c}:A_{c} \rightarrow A_{c}$ be the prime extension  of $\phi$ to the prime end compactification $A_{c}$ of $A$; see \cite{caratheodory-original-paper, mather-top-proof-82}, Lemma \ref{lem:carath_ext_of_maps} and Remark \ref{rem:prime_end_on_annulus}. Let $\Omega := \Gamma^{*}(\omega|_{A})$ where $\omega$ is the symplectic form on $\Sigma$. Note that the $\Omega$-area of $\A$ is finite. Consider the diffeomorphism $\Phi:=\Gamma^{-1}\phi\Gamma: \A \rightarrow \A$ and observe that $\Phi^{*}\Omega =\Omega$. If we define $\Phi_{c}:= \Gamma_{c}^{-1}\phi_{c}\Gamma_{c}$ then $\Phi_{c}$ is a homeomorphism of $\bar{\A}$ that extends $\Phi$. First let $\Lambda^{b}_{\eps} = \{|z| = b - \eps\}$. Let $\phi^{b}_{n}=\phi^{b}_{\kh,n}$ be the smooth diffeomorphisms constructed in \S\ref{sec:smooth_pert_inv_lag} satisfying $\phi^{b}_{n}(\Lambda_{b,n}) = \Phi(\Lambda_{b,n})$ where $\Lambda_{b,n} = \Lambda^{b}_{\eps_{n}}$ for some sequence of numbers $\eps_{n}\to 0$ as $n \to \infty$; see Remark \ref{rem:adjustment_from_disk_to_annulus}. Moreover, we have $|\phi^{b}_{n}|_{C^{0}} \to 0$ as $n \to \infty$ and,
$$\mathrm{supp}(\phi^{b}_{n})\Subset V_{n}':=\{\frac{(b - R_{n})^{2}}{b - r_{n}}<|z|<b - r_{n}\},$$
where $r_{n},R_{n} > 0$ are some numbers converging to zero as $n \to \infty$. Let,
$$\{\Psi_{b,p} = \phi^{b}_{n_{p}}\circ \phi^{b}_{n_{p},1}\circ \phi^{b}_{n_{p},2} \circ \phi^{b}_{n_{p},3}\}_{p\geq 1}$$
be the sequence of diffeomorphisms of $\A$ from \S\ref{sec:smooth_pert_inv_lag_symp} satisfying
$$\Psi_{b,p}^{*}\Omega = \Omega,\ |\Psi_{b,p}|_{C^{0}(\A_{b,p}\cup \ell_{0})}\leq 1/p,\ \Psi_{b,p} (\Lambda_{b,n_{p}}) = \Phi(\Lambda_{b,n_{p}}),$$
where $n_{p}$ is a sequence of numbers diverging to infinity as $p \to \infty$, $\A_{b,p} = \{a < |z| \leq b-\eps_{n_{p}}\}$, $\ell_{0} = \bar\A\cap \R^{+}$, and,
$$\mathrm{supp}(\Psi_{b,p})\Subset U_{b,p}:=\{b-1/p<|z|<b\}.$$
Repeat the process for $\Lambda^{a}_{\eps} = \{|z| = a + \eps\}$ and $\Phi$ to find diffeomorphisms
$$\{\Psi_{a,p} = \phi^{a}_{n_{p}}\circ \phi^{a}_{n_{p},1}\circ \phi^{a}_{n_{p},2} \circ \phi^{a}_{n_{p},3}\}_{p\geq 1}$$
so that they satisfy the following properties:
$$\Psi_{a,p}^{*}\Omega = \Omega,\ |\Psi_{a,p}|_{C^{0}(\A_{a,p} \cup \ell_{0})}\leq 1/p,\ \Psi_{a,p} (\Lambda_{a,n_{p}}) = \Phi(\Lambda_{a,n_{p}}),$$
where $n_{p}$ is a sequence of numbers diverging to infinity, $\Lambda_{a,n_{p}} = \Lambda^{a}_{\eps_{n_{p}}}$ for some sequence $\eps_{n_{p}}\to 0$ as $p \to \infty$, $\A_{a,p} = \{a + \eps_{n_{p}} \leq |z| < b\}$, and,
$$\mathrm{supp}(\Psi_{a,p})\Subset U_{a,p}:=\{a<|z|< a + 1/p\}.$$
We define $\Psi_{p,c}:= \Psi_{a,p}\circ \Psi_{b,p}$; note that for large enough $p$ the supports of $\Psi_{a,p}$ and $\Psi_{b,p}$ are disjoint and near the two boundaries of $\A$ and so
$$|\Psi_{p,c}|_{C^{0}(\A_{p}\cup \ell_{0})} \to 0 \text{ as } p \to \infty,$$
where $\A_{p}:= \A_{a,p}\cap \A_{b,p}$. We relabel the sequence so that it starts from a large enough $p$. Define $\psi_{p,c}:= \Gamma_{c}\Psi_{p,c}\Gamma^{-1}_{c}$ and let $\psi_{p}= \psi_{p,c}|_{A}$. Note that $\A_{p}\subset \A$ is the annulus whose boundaries are $\Lambda_{a,n_{p}}$ and $\Lambda_{b,n_{p}}$, we define $A_{p}:= \Gamma_{c}(\A_{p})$ whose boundary is $L_{p}\sqcup L_{p'}$ where $L_{p} := \Gamma_{c}(\Lambda_{a,p})$ and $L_{p}' := \Gamma_{c}(\Lambda_{b,p})$. 

Let us check that the sequences $\psi_{p}$ and $A_{p}$  satisfy all the required properties. First, it is obvious that $\cup A_{p} = A$ and that $\psi_{p}$ are Hamiltonian diffeomorphisms of $\Sigma$. By construction $\mathrm{supp}(\psi_{p}) \Subset A$ and we have
$$\begin{aligned}
\psi_{p}(A_{p}) &= \psi_{p,c}(A_{p})= \Gamma_{c}\Psi_{p,c}\Gamma^{-1}_{c}\Gamma_{c}(\A_{p}) = \Gamma_{c}\Psi_{p,c}(\A_{p})\\ &= \Gamma_{c} \Phi_{c}(\A_{p}) = \Gamma_{c} \Gamma^{-1}_{c}\phi_{c}\Gamma_{c}(\A_{p}) = \phi(A_{p}).
\end{aligned}$$
As we pointed out above for large enough $p$ the supports of $\Psi_{b,p}$ and $\Psi_{a,p}$ are disjoint and since their $C^{0}$-norm converge to zero on $\A_{p} \cup \ell_{0}$ as $p \to \infty$ so the $C^{0}$-norm of $\Gamma_{c}^{-1}\psi_{p,c}\Gamma_{c} = \Psi_{p,c}$ which are the prime end compactifications of $\psi_{p}$ must go to zero on $\A_{p} \cup \ell_{0}$. 

It remains to prove that $\gamma(\psi_{p}) \to 0$ as $p \to \infty$. To prove this we use a Hamiltonian extension lemma of Entov-Polterovich-Py \cite{MR2884036}; see also \cite{MR5031842}. Let $Q_{b,p} \subset Q_{b,p}' \subset Q_{b,p}'' \subset U_{b,p}$ be a nested sequence of arbitrary small rectangles near the support of $\Psi_{b,p}$ containing $\ell_{0}\cap U_{b,p}$ so that $\Psi_{b,p}(Q_{b,p}) \subset Q_{b,p}'$. 
\begin{figure}[ht]
\centering
\begin{tikzpicture}[scale=0.4]
  \begin{scope}
    \clip ({sqrt(4.5^2-0.5^2)},0.5) -- ({sqrt(5.5^2-0.5^2)},0.5)
      arc ({asin(0.5/5.5)}:{-asin(0.5/5.5)}:5.5)
      -- ({sqrt(4.5^2-0.5^2)},-0.5)
      arc ({-asin(0.5/4.5)}:{asin(0.5/4.5)}:4.5)
      -- cycle;
    \foreach \c in {3.8,4.0,...,6.2}
      \draw[red] ({\c-2},-2) -- ({\c+2},2);
  \end{scope}
  \draw[dashed] (0,-6) -- (0,6);
  \draw (0,-1) arc (-90:90:1);
  \draw (0,-2) arc (-90:90:2);
  \draw (0,-5) arc (-90:90:5);
  \draw (0,-6) arc (-90:90:6);
  \draw[dashed] (0,-4.5) arc (-90:90:4.5);
  \draw[dashed] (0,-5.5) arc (-90:90:5.5);
  \draw (0,0) -- (6,0);
  \draw[blue] ({sqrt(4.5^2-0.5^2)},0.5) -- ({sqrt(5.5^2-0.5^2)},0.5);
  \draw[blue] ({sqrt(4.5^2-0.5^2)},-0.5) -- ({sqrt(5.5^2-0.5^2)},-0.5);
  \draw[blue] ({sqrt(4.5^2-1^2)},1) -- ({sqrt(5.5^2-1^2)},1);
  \draw[blue] ({sqrt(4.5^2-1^2)},-1) -- ({sqrt(5.5^2-1^2)},-1);
  \draw[blue] ({sqrt(4.5^2-1.5^2)},1.5) -- ({sqrt(5.5^2-1.5^2)},1.5);
  \draw[blue] ({sqrt(4.5^2-1.5^2)},-1.5) -- ({sqrt(5.5^2-1.5^2)},-1.5);
  \draw[blue] ({sqrt(5.5^2-1.5^2)},1.5) arc ({asin(1.5/5.5)}:{-asin(1.5/5.5)}:5.5);
  \draw[blue] ({sqrt(4.5^2-1.5^2)},1.5) arc ({asin(1.5/4.5)}:{-asin(1.5/4.5)}:4.5);
  \node at (6.5, 0) {\small{$\ell_{0}$}};
  \draw [decorate,decoration={brace,amplitude=5pt,mirror,raise=0.5ex}] (0,5) -- (0,2) node[midway,xshift=-1.5em]{$\A_{p}$};
  \draw [decorate,decoration={brace,amplitude=5pt,mirror,raise=0.5ex}] (0,-1) -- (0,-6) node[midway,xshift=-1.5em]{$\A$};
\end{tikzpicture}
\caption{The nested sequence of cubes $Q_{b,p}\subset Q_{b,p}'\subset Q_{b,p}''$. The red dashed cube is $Q_{b,p}$. The region between the two dashed half circles is $U_{b,p}$.}
\label{fig:the_cubes}
\end{figure}
This is possible because $\Psi_{b,p} \equiv id$ on $\ell_{0}$; see Figure \ref{fig:the_cubes}. Then, by \cite[Lemma 6.5]{MR2884036} we can find a Hamiltonian diffeomorphism $\xi_{b,p}$ compactly supported in $Q_{b,p}''$ such that $\xi_{b,p}|_{Q_{b,p}} = \Psi_{b,p}|_{Q_{b,p}}$. Letting $\eta_{b,p} = \xi_{b,p}^{-1}\Psi_{b,p}$ we observe that $\eta_{b,p}$ is compactly supported inside $U_{b,p}\backslash Q_{b,p}^{sh}$ where $Q_{b,p}^{sh} \subset Q_{b,p}$ is a slightly smaller rectangle. Constructing $\xi_{a,p}$ and $\eta_{a,p}$ similarly for some rectangles $Q_{a,p}\subset Q_{a,p}' \subset Q_{a,p}'' \subset U_{a,p}$ we can fragment $\Psi_{p,c}$ as follows,
$$\Psi_{p,c} = \xi_{a,p}\circ \xi_{b,p}\circ \eta_{a,p}\circ \eta_{b,p}.$$
An important observation is that we have 
$$\psi_{p} = (\Gamma\xi_{a,p}\Gamma^{-1})\circ (\Gamma\xi_{b,p}\Gamma^{-1})\circ (\Gamma\eta_{a,p}\Gamma^{-1}) \circ (\Gamma\eta_{b,p}\Gamma^{-1}),$$
where each one of the fragments above are supported in some topological disk whose area converge to zero as $p \to \infty$. Hence we have
$$\begin{aligned}
\gamma(\psi_{p}) &\leq \gamma(\Gamma\xi_{a,p}\Gamma^{-1})+ \gamma(\Gamma\xi_{b,p}\Gamma^{-1})+ \gamma(\Gamma\eta_{a,p}\Gamma^{-1}) + \gamma(\Gamma\eta_{b,p}\Gamma^{-1})\\
&\leq \mathrm{area}(\Gamma(Q_{a,p}'')) + \mathrm{area}(\Gamma(Q_{b,p}'')) \\
&+ \mathrm{area}(\Gamma(U_{a,p}\backslash Q^{sh}_{a,p})) + \mathrm{area}(\Gamma(U_{b,p}\backslash Q^{sh}_{b,p}));\end{aligned}$$
which implies that $\gamma(\psi_{p})\to 0$ as $p \to \infty$; see \cite[Theorem 1]{MR5018404}.
\end{proof}

\begin{lemma}\phantomsection\label{lem:pert_to_make_fixed_points}
Let $\psi_{p}$ and $\phi$ be as in the proof of Theorem \ref{mainthm:iness_franks_handel_inv_ann}. There is a Hamiltonian diffeomorphism $\psi_{p}'$ that is compactly supported in $A$, it preserves $A_{p}$, it has arbitrary small spectral norm and $|\Gamma^{-1}_{c}\psi_{p,c}'\Gamma_{c}|_{C^{0}(\A)} \to 0$ as $p \to \infty$ where $\psi_{p,c}'$ is its prime extension to $A_{c}$. Moreover, $\psi_{p}'\psi_{p}^{-1}\phi$ possesses a contractible fixed point on $L_{p}$ and $L_{p}'$.
\end{lemma}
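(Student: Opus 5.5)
We work entirely in the model annulus $\A$ via the conformal chart $\Gamma$, and then push the construction back to $A$. Set $\Phi_p:=\Gamma^{-1}\psi_p^{-1}\phi\,\Gamma$. On the closed annulus $\A_p=\Gamma_c^{-1}(A_p)$, $\Phi_p$ is a smooth area-preserving diffeomorphism preserving the two boundary circles $\Lambda_{a,n_p}$ and $\Lambda_{b,n_p}$. The fixed points $x,y$ of $\phi_c$ on the two boundary components of $A_c$, provided by Theorem \ref{thm:regopen}(iv), correspond to fixed points $X,Y$ of $\Phi_c$ on $\partial\bar\A$.

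Choose points $X_p\in\Lambda_{a,n_p}$ and $Y_p\in\Lambda_{b,n_p}$ with $X_p\to X$ and $Y_p\to Y$, for instance by taking them on the same radial ray as $X$ and $Y$. Property (iv) of Theorem \ref{mainthm:approx_smth_inv_annulus} gives $\Psi_{p,c}\to\id$ in $C^0$ on $\A_p$. Together with continuity of $\Phi_c$ on $\bar\A$, this yields
$$d\bigl(\Phi_p(X_p),X_p\bigr)\to 0,\qquad d\bigl(\Phi_p(Y_p),Y_p\bigr)\to 0.$$
Both points lie on the same circle, so in the angular coordinate the displacement is a small angle $\theta_p\to 0$. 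We take the representative of $\theta_p$ in $(-\pi,\pi]$, which is the short arc.

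Next we build $\Psi'_p$ on $\A$ as the time-one map of an autonomous Hamiltonian $K_p=K_p(r)$, a function of $|z|$ alone. This generates rotations on each circle, with angular speed given by the derivative of $K_p$ with respect to the $\Omega$-area coordinate. We prescribe $K_p$ to be locally constant near $\partial\bar\A$, and to rotate $\Lambda_{a,n_p}$ by $-\theta^a_p$ and $\Lambda_{b,n_p}$ by $-\theta^b_p$. Then $\Psi'_p$ preserves every circle, hence preserves $\A_p$, and maps $\Phi_p(X_p)$ back to $X_p$ and $\Phi_p(Y_p)$ back to $Y_p$. Consequently $X_p$ and $Y_p$ are fixed points of $\Psi'_p\Phi_p$.

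By construction $\Psi'_p$ is a product of rotations by angles at most $\max(|\theta^a_p|,|\theta^b_p|)\to 0$. Since it preserves radii, its $C^0$ norm on all of $\bar\A$ tends to zero. Transporting by $\Gamma$, we set $\psi_p'=\Gamma\Psi'_p\Gamma^{-1}$ on $A$ and extend it by the identity. This map is compactly supported in $A$, and since $A$ is inessential it is Hamiltonian on $\Sigma$ (Remark \ref{rem:adjustment_from_disk_to_annulus}).

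The fixed points are contractible. The chord of $\phi_p$ on $L_p$ is contractible by Theorem \ref{mainthm:non_van_floer_hom}, as in Step 4 of the proof of Theorem \ref{mainthm:iness_franks_handel_inv_ann}. The perturbation is a short-arc rotation inside the disk $D$, so composing with it keeps the orbit loop contractible in $\Sigma$.

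The main obstacle is making $\gamma(\psi'_p)$ arbitrarily small. A rotation profile that is nontrivial only near the two boundary circles has Hofer norm of order $|\theta_p|$ times the area it sweeps. That area is comparable to the area of the whole of $A$ between the two twisting regions, which is not small.

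To fix this, we localize the twist. The rotation is needed only on a thin arc through $X_p$, not on the whole circle. So instead of an autonomous radial Hamiltonian, we use a Hamiltonian supported in a small disk $B_p$ around the short arc from $\Phi_p(X_p)$ to $X_p$. We choose it to preserve $\Lambda_{a,n_p}$ set-wise, either by arranging that its level curves include $\Lambda_{a,n_p}\cap B_p$ or by using the fragmentation lemma of Entov--Polterovich--Py \cite[Lemma 6.5]{MR2884036} as in the proof of Theorem \ref{mainthm:approx_smth_inv_annulus}. We do the same near $Y_p$.

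The disks $\Gamma(B_p)$ can be chosen with area tending to zero, since the arcs shrink and $\Gamma$ has finite area. The energy--capacity bound (Remark \ref{rem:spec_cap_of_disks_in_high_genus_surf} together with \cite[Theorem 1]{MR5018404}) then gives $\gamma(\psi'_p)\le \mathrm{area}(\Gamma(B_p))+\mathrm{area}(\Gamma(B'_p))\to 0$. The $C^0$ smallness in the $\A$-chart is immediate because $\mathrm{diam}(B_p)\to 0$.
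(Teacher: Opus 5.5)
Your final construction is correct, but it is not the paper's. Also, the reason you give for abandoning the radial construction is a miscalculation.

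The paper uses exactly the radial Hamiltonian of your first paragraph. Concretely, $h_{a,p}$ is a function of $|z|$ that vanishes on $\Lambda_{a,n_p}$ and outside a collar of width $w$ around it, with prescribed radial derivative on the circle (think $\theta_p(r-r_0)\chi(r)$), and $h_{b,p}$ is built the same way near $\Lambda_{b,n_p}$. Its oscillation is $O(\theta_p w)$. More generally, a radial profile changes only by (rotation fraction)$\times$(area of the region where it actually rotates). The region between the two collars, where the function is constant, contributes nothing. Hence $\|h_{a,p}+h_{b,p}\|_{\Hof}$, and with it $\gamma(\psi_p')$, is as small as you like, and there is no ``main obstacle''.

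Your localized map is a legitimate alternative, provided you use the level-set option. For example, take $-c\,t\,\beta(s)\beta_2(t)$ in Darboux coordinates $(s,t)$ for $\Omega$ near the arc, with $\Lambda_{a,n_p}=\{t=0\}$. This keeps the circle invariant, hence $\A_p$, and pushes $\Phi_p(X_p)$ to $X_p$ along $\Lambda_{a,n_p}$. The Entov--Polterovich--Py cut-off alone does not guarantee invariance of the circle in the transition region.

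Each route buys something different.
\begin{itemize}
\item The radial route preserves every circle and gives a monotone isotopy with a one-line Hofer bound.
\item Your route gives the $C^0$ bound on all of $\A$, uniformly along the isotopy, directly from $\mathrm{diam}(B_p)\to 0$. This uniform bound is what Lemma \ref{lem:mean_rot_vec_non_zero_forallp} needs.
\end{itemize}
For the radial map this bound is less automatic than the paper's ``clearly'' suggests. Since $\Omega=\Gamma^*\omega$ is not rotation invariant, the flow of a function of $|z|$ moves every point of $\Lambda_{a,n_p}$ by the same $\Omega$-weighted arc, not the same angle. So the Euclidean displacement away from $X_p$ is small only given some uniformity of $\Omega$ along the circle, and your localization sidesteps this.

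The contractibility argument is the same in both versions, since the trajectory of $\phi_p(\Gamma(X_p))$ under the perturbing isotopy runs along $L_p$.
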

\begin{proof}
Let $x\in \{|z| = a\}$ and $y\in \{|z| = b\}$ be the fixed points of $\Phi_{c} = \Gamma_{c}^{-1}\phi_{c}\Gamma_{c}$; their existence was shown in Step $3$ of the proof of Theorem \ref{mainthm:iness_franks_handel_inv_ann}. Let $x_{p}\in \Lambda_{a,n_{p}}$ and $y_{p}\in \Lambda_{b,n_{p}}$ be some arbitrary points so that $x_{p} \to x$ and $y_{p} \to y$ as $p\to \infty$; recall from Theorem \ref{mainthm:approx_smth_inv_annulus} that $\partial \A_{p} = \Lambda_{a,n_{p}} \sqcup \Lambda_{b,n_{p}}$. Since
$$\abs{\Psi_{p,c}^{-1}}_{C^{0}(\Phi_{c}(\A_{p}))} \to 0 \text{ as } p \to \infty,$$
and $\Phi_{c}(x) = x$ we must have,
$$d(x_{p}, \Psi_{p,c}^{-1}\Phi_{c}(x_{p})) \to 0 \text{ as } p\to \infty.$$
Let $h_{a,p}: \A \rightarrow \R$ be a radial Hamiltonian function so that on $\Lambda_{a,n_{p}}$ we have $h_{a,p} = 0$ and $\partial_{r}h_{p}=\theta_{p}$ is some small number so that $R_{\theta_{p}}\Psi_{p,c}^{-1}\Phi_{c}(x_{p}) = x_{p}$ where $R_{\theta_{p}}$ is the rotation with angle $\theta_{p}$, and $h_{a,p} = 0$ outside of a small neighborhood of $\Lambda_{a,n_{p}}$ so that $|h_{a,p}|_{C^{1}} \leq \theta_{p}$ and $|h_{a,p}|$ is arbitrary small where the latter is the Hofer norm of $h_{a,p}$. Define $\Psi_{a,p}'$ to be the time-one map of $h_{a,p}$. Then, clearly $|\Psi_{a,p}'|_{C^{0}} \to 0$ as $p \to\infty$ and $\Psi_{a,p}'\Psi_{p,c}^{-1}\Phi_{c}(x_{p}) = x_{p}$. 

Construction of $\Psi_{b,p}'$ is similar. It is supported in a small neighborhood of $\Lambda_{b,n_{p}}$ and $\Psi_{b,p}'\Psi_{p,c}^{-1}\Phi_{c}(y_{p}) = y_{p}$. Define
$$\Psi_{p,c}' = \Psi_{a,p}' \circ \Psi_{b,p}',$$ 
and let $\psi_{p}'$ be the restriction of $\Gamma_{c}\Psi_{p,c}'\Gamma_{c}^{-1}$ to $A$. Then, it is clear that 
$$\psi_{p}'\psi_{p}^{-1}\phi(\Gamma(x_{p})) = \Gamma(x_{p}),\  \psi_{p}'\psi_{p}^{-1}\phi(\Gamma(y_{p})) = \Gamma(y_{p}).$$
Moreover, $\psi_{p}'$ is a Hamiltonian generated by $(h_{a,p} + h_{b,p})\circ \Gamma^{-1}$ which has arbitrary small Hofer norm and hence $\gamma(\psi_{p}')$ is arbitrary small by the well known inequality $\gamma(G)\leq |G|$. It remains to show that the fixed points $\Gamma(x_{p})$ and $\Gamma(y_{p})$ are contractible fixed points of $\psi_{p}'\psi_{p}^{-1}\phi$. It was shown in Step $4$ of the proof of Theorem \ref{mainthm:iness_franks_handel_inv_ann} that all the Hamiltonian chords of $\phi_{p}$ from $L_{p}$ to itself and $L_{p}'$ to itself are contractible. Let $\psi_{p,t}'$ be the Hamiltonian isotopy from $id$ to $\psi_{p}'$ generated by $(h_{a,p} + h_{b,p})\circ \Gamma^{-1}$. Since the latter is supported in $A$, for large enough $p$ one finds a Hamiltonian isotopy $\phi_{p,t}'$ from $id$ to $\phi_{p}'$ so that $\{\phi_{p,t}'(\Gamma(x_{p}))\}$ and $\{\phi_{p,t}'(\Gamma(y_{p}))\}$ are contractible.
\end{proof}

\begin{lemma}\phantomsection\label{lem:mean_rot_vec_non_zero_forallp}
For every large enough $p$ one has $$\int_{A_{p}}\rho_{\t{\phi}_{p}'}(x)\omega(x)\neq 0.$$
\end{lemma}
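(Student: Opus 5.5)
I would argue by contradiction and transfer everything to the model annulus $\bar\A$ via $\Gamma_{c}$. By Lemma \ref{lem:rotation_number_conjugation}, rotation numbers are invariant under conjugation by $\Gamma_{c}$, and areas are measured by $\Omega=\Gamma^{*}\omega$. So it suffices to study $\Phi_{p}' := \Gamma_{c}^{-1}\phi_{p}'\Gamma_{c} = \Psi_{p,c}'\Psi_{p,c}^{-1}\Phi_{c}$ on $\A_{p}$ and compare it with $\Phi_{c}$ on $\bar\A$.

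Suppose that along a subsequence $p_{j}\to\infty$ there are lifts $\t{\phi}'_{p_{j}}$ with $\int_{A_{p_{j}}}\rho_{\t{\phi}'_{p_{j}}}\omega = 0$. The first step is to normalize the lifts. Since $\phi_{p}'$ has a fixed point $\Gamma(x_{p})\in L_{p}$, I choose the lift of $\phi_{p}'|_{A_{p}}$ fixing a lift $\t{x}_{p}$ on $\t{L}_{p}$. Changing the lift changes the mean by a multiple of $\mathrm{area}(A_{p})$, so vanishing for some lift means the mean rotation number is an integer $m_{p}$ for this normalized lift. Likewise, I take the lift $\t{\Phi}_{c}$ fixing a lift of the boundary fixed point $x$. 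Since $\Phi_{c}$ and $\Phi'_{p}$ are uniformly $C^{0}$-close on $\A_{p}\cup\ell_{0}$ (by Theorem \ref{mainthm:approx_smth_inv_annulus}(iv) and Lemma \ref{lem:pert_to_make_fixed_points}), and $x_{p}\to x$, these normalized lifts are uniformly close as maps of $\t{\A}_{p}$ to $\t{\A}$. Closeness on the arc $\ell_{0}$ is what pins down the deck translation.

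The second step, which is the main obstacle, is to show pointwise convergence of rotation numbers: $\rho_{\t{\Phi}'_{p}}(z)\to\rho_{\t{\Phi}_{c}}(z)$ for almost every $z\in\A$. Mere $C^{0}$-closeness of the maps does not control long orbits, so I would instead exploit invariant-set structure and the continuity of $\rho_{\t\Phi_{c}}$ on all of $\bar\A$ (Theorem \ref{thm:franks_handel_rotation_num_is_def}, Remark \ref{rem:rot_numb_of_zero_ent_on_closed_ann}).

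The route I would try first is the flux identity \eqref{eq:flux_vs_mean_rot}. This expresses the mean rotation number as a flux, i.e. as the signed area between $\t{\gamma}_{0}$ and its image under the lift, which depends only on the time-one lift and is continuous in $C^{0}$ for area-preserving maps of a fixed annulus. To apply it I would extend $\Phi'_{p}$ from $\A_{p}$ to an area-preserving homeomorphism of $\bar\A$ that stays $C^{0}$-close to $\Phi_{c}$. One way is to interpolate on the thin collars $\bar\A\setminus\A_{p}$, whose $\Omega$-area tends to $0$, between the boundary action and the action of $\Phi_{c}$ on $\partial\bar\A$. The collars then contribute $o(1)$ to the mean, and the mean over $A_{p}$ of $\t\phi'_{p}$ converges to $\int_{A_{c}}\rho_{\t{\phi}_{c}}\omega$. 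The relevant quantity is the signed area enclosed by the image of the segment $\ell_{0}$, and $\Psi_{p,c},\Psi'_{p,c}$ are $C^{0}$-small exactly on $\ell_{0}$, so this limit should be controllable.

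To finish, note that $m_{p}\,\mathrm{area}(A_{p})\to\int_{A_{c}}\rho_{\t\phi_{c}}\omega$ and $\mathrm{area}(A_{p})\to\mathrm{area}(A)$. Hence $m_{p}$ is eventually constant and equal to some $m$, and $\int_{A_{c}}\rho_{\t\phi_{c}}\omega = m\,\mathrm{area}(A)$. This says the mean rotation number of $\t\phi_{c}$ is an integer, so the lift $T^{-m}\t\phi_{c}$ has zero mean rotation number, contradicting \eqref{eq:key_ave_non_zero_thmE}.
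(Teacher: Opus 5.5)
Your outer structure is sound: fix a consistent lift, show that $\int_{A_p}\rho_{\widetilde\phi_p'}\,\omega$ converges to $\int_{A_c}\rho_{\widetilde\phi_c}\,\omega$, and conclude by integrality against \eqref{eq:key_ave_non_zero_thmE}. The gap is the convergence step, which is the entire content of the lemma.

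\begin{itemize}
\item You route it through an $\Omega$-preserving homeomorphism $E_p$ of $\bar\A$ with $E_p=\Phi_p'$ on $\A_p$ and $E_p$ uniformly $C^0$-close to $\Phi_c$. This is never constructed; a radial interpolation between the two boundary maps is not $\Omega$-preserving.
\item With uniform closeness required, $E_p$ need not exist at all. $E_p$ must map each collar $C$ of $\bar\A\setminus\A_p$ onto itself, whereas $\Phi_c$ maps $C$ onto $\Phi_c(C)$, which is bounded by the non-round curve $\Phi_c(\partial\A_p)$. An $\Omega$-preserving self-map of $C$ staying uniformly close to $\Phi_c$ would force $\Phi_c|_{\partial\bar\A}$ to nearly preserve the angular distribution of $\Omega|_C$, and this fails in general.
\item Your closing sentence does not fill the gap. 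The theorem controls $\Psi_{p,c}$ on $\ell_0$, but closeness of $\Phi_p'=\Psi'_{p,c}\Psi_{p,c}^{-1}\Phi_c$ to $\Phi_c$ on the collar part of $\ell_0$ involves $\Psi_{p,c}^{-1}$ on $\Phi_c(\ell_0)$. That lies exactly where the construction of \S\ref{sec:smooth_pert_inv_lag_symp} gives no control. For your own $E_p$, $\Psi_{p,c}$ plays no role on the collars at all.
\end{itemize}

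The missing idea is that neither an extension nor a fixed ambient annulus is needed. Since $\A_p$ is invariant under $\Phi_p'$ and $\Phi_p'$ preserves $\Omega$, Birkhoff's theorem applied on $\A_p$ itself to the one-step displacement gives
\[\int_{\A_p}\rho_{\widetilde\Phi_p'}\,\Omega=\int_{\mathrm{FD}_p}\bigl(y(\widetilde\Phi_p'(\tilde x))-y(\tilde x)\bigr)\,\Omega,\]
and likewise for $\widetilde\Phi_c$ over $\mathrm{FD}\supset\mathrm{FD}_p$. This is the identity behind \eqref{eq:flux_vs_mean_rot}. It also removes the pointwise convergence of rotation numbers, which you rightly flagged as out of reach of $C^0$ estimates.

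Write $\widetilde\Phi_p'=\widetilde\Psi''_p\widetilde\Phi_c$, where $\widetilde\Psi''_p$ is the lift of $\Psi'_{p,c}\Psi_{p,c}^{-1}$ that is uniformly small on $\widetilde\Phi_c(\widetilde\A_p)$. It exists for two reasons:
\begin{itemize}
\item Theorem \ref{mainthm:approx_smth_inv_annulus}(iv) gives smallness of $\Psi_{p,c}$ on all of $\A_p$, not just on $\ell_0$, hence smallness of $\Psi_{p,c}^{-1}$ on $\Psi_{p,c}(\A_p)=\Phi_c(\A_p)$.
\item $\Psi'_{p,c}$ is small everywhere by Lemma \ref{lem:pert_to_make_fixed_points}.
\end{itemize}
The difference of the two integrals is then at most
\[\sup\bigl|y\circ\widetilde\Psi''_p-y\bigr|\cdot\Omega(\A)+\sup_{\mathrm{FD}}\bigl|y\circ\widetilde\Phi_c-y\bigr|\cdot\Omega(\bar\A\setminus\A_p).\]
Both terms tend to $0$; the second supremum is finite because $\Phi_c$ is a homeomorphism of the compact annulus. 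Other lifts shift the left side by multiples of $\Omega(\A_p)\to\Omega(\A)$, and your integrality argument then applies verbatim. This is the paper's proof.
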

\begin{proof}
For a given $p$, let $\phi_{c}, \psi_{p,c}, \psi_{p,c}'$ be the prime extensions of $\phi, \psi_{p}, \psi_{p}'$ to the prime end compactification $A_{c}$ of $A$. Define $\Phi_{c}, \Psi_{p,c}, \Psi_{p,c}'$ to be the conjugations of $\phi_{c}, \psi_{p,c}, \psi_{p,c}'$ by $\Gamma_{c}: \bar{\A} \rightarrow A_{c}$ respectively. Let $\t{\phi}_{c}, \t{\psi}_{p,c}, \t{\psi}_{p,c}'$ be some lifts of their corresponding maps on $A_{c}$ to the universal cover $\t{A}_{c}$ of $A_{c}$. Set $\t{\Phi}_{c}, \t{\Psi}_{p,c}, \t{\Psi}_{p,c}'$ to be the conjugation of the latter maps by $\Gamma_{c}$ and
$$\Phi_{p,c}':= \Psi_{p,c}'\Psi_{p,c}^{-1}\Phi_{c},\ \t{\Phi}_{p,c}':= \t{\Psi}_{p,c}'\t{\Psi}_{p,c}^{-1}\t{\Phi}_{c}.$$
It is clear that $\t{\Phi}_{p,c}'|_{\t{\A}_{p}}$ is a lift of $\Phi_{p,c}'|_{\A_{p}}$. Recall from Theorem \ref{mainthm:approx_smth_inv_annulus} and Lemma \ref{lem:pert_to_make_fixed_points} that 
$$\abs{\Psi_{p,c}'}_{C^{0}(\A)}, \abs{\Psi_{p,c}^{-1}}_{C^{0}(\Phi_{c}(\A_{p}))} \to 0 \text{ as } p \to \infty.$$
We make particular choices of lifts $\t{\Psi}_{p,c}'$ and $\t{\Psi}_{p,c}$. Let $\Psi_{p,c,t}'$ be the symplectic isotopy generated by $h_{a,p} + h_{b,p}$ from $id$ to $\Psi_{p,c}'$ with support inside $\A$ which satisfies,
$$\abs{\Psi_{p,c,t}'}_{C^{0}(\A)} \leq \abs{\Psi_{p,c}'}_{C^{0}(\A)}, \forall t\in [0,1].$$
Recall that $\Psi_{p,c} = \Psi_{a,p}\Psi_{b,p}$ as constructed in Theorem \ref{mainthm:approx_smth_inv_annulus}. The constructions of $\Psi_{a,p}$ and $\Psi_{b,p}$ have been performed in \S\ref{sec:smooth_pert_inv_lag} and  \S\ref{sec:smooth_pert_inv_lag_symp} where each one of these maps are compositions of four types of maps,
$$\Psi_{a,p} = \phi^{a,p}_{n,0}\circ \phi^{a,p}_{n,1}\circ \phi^{a,p}_{n,2}\circ \phi^{a,p}_{n,3},\ \Psi_{b,p} = \phi^{b,p}_{n,0}\circ \phi^{b,p}_{n,1}\circ \phi^{b,p}_{n,2}\circ \phi^{b,p}_{n,3},$$
where as explained in \S\ref{sec:smooth_pert_inv_lag} one also has 
$$\phi^{a,p}_{n,0} = (sh^{a,p}_{n,3}\circ sh^{a,p}_{n,2} \circ sh^{a,p}_{n,1})^{-1},\ \phi^{b,p}_{n,0} = (sh^{b,p}_{n,3}\circ sh^{b,p}_{n,2} \circ sh^{b,p}_{n,1})^{-1}.$$
It is shown in \S\ref{sec:smooth_pert_inv_lag} that,
$$\abs{\phi^{a,p}_{n,i}}_{C^{0}(\A)}, \abs{\phi^{b,p}_{n,i}}_{C^{0}(\A)} \to 0 \text{ as } p\to \infty; \forall i\in\{0,1,3\},$$
and $|\phi^{a,p}_{n,2}|_{C^{0}(\A_{p})}, |\phi^{b,p}_{n,2}|_{C^{0}(\A_{p})}\to 0$ as $p\to \infty$. 

It is evident from the constructions of the maps $sh^{a,p}_{n,i}, sh^{b,p}_{n,i}, \phi^{a,p}_{n,j}, \phi^{b,p}_{n,j}$ for $i\in \{1,2,3\}$ and $j\in \{1,3\}$ that they are all time-one maps of some flows whose isotopy-$C^{0}$-norms converge to zero as $p\to \infty$ where by isotopy-$C^{0}$-norm of $\{f_{t}\}_{t\in [0,1]}$ we mean $\max_{t\in [0,1]}|f_{t}|_{C^{0}}$. For a similar reason, as pointed out in \S\ref{sec:const_of_phi_two}, there are smooth isotopies with time-one maps $\phi^{a,p}_{n,2}$ and $\phi^{b,p}_{n,2}$ whose isotopy-$C^{0}$-norms on the subset $\A_{p}\cup \ell_{0}$ converge to zero as $p \to \infty$. Hence, there exists an isotopy $\Psi_{p,c,t}$ from $id$ to $\Psi_{p,c}$ supported in $\A$ so that,
$$\max_{t\in [0,1]}\abs{\Psi_{p,c,t}^{-1}}_{C^{0}(\Phi_{c}(\A_{p}))}\to 0 \text{ as } p\to \infty.$$
We pick the lifts $\t{\Psi}_{p,c}'$ and $\t{\Psi}_{p,c}^{-1}$ to be given by the isotopies $\Psi_{p,c,t}'$ and $\Psi_{p,c,t}$ respectively. Then, the key conclusion is that if we let $\t{\Psi}_{p,c}'':=\t{\Psi}_{p,c}'\circ \t{\Psi}_{p,c}^{-1}$ then
\begin{equation}\phantomsection\label{eq:key_estimate_on_univ_cover}
\abs{\t{\Psi}_{p,c}''}_{C^{0}(\t{\Phi}_{c}(\t{\A}_{p}))} \to 0 \text{ as } p\to \infty.
\end{equation}
We apply Birkhoff's ergodic theorem \S\ref{sec:birkhoff_erg_thm} to the measure preserving transformations $T(\t{x}) = \t{\Phi}_{c}(\t{x})$ and $T(\t{x}) = \t{\Phi}_{p,c}'|_{\t{\A}_{p}}(\t{x})$ and the functions $f(x) = y(T(\t{x})) - y(\t{x})$ for $x\in \A$ and $x \in \A_{p}$ respectively where $\t{x}$ is some lift of $x$ to the universal covers $\t{\A}$ and $\t{\A}_{p}$, and we estimate as follows:
$$\begin{aligned}
&\int_{\A_{p}}\rho_{\t{\Phi}'_{p,c}}(x)\omega(x)-\int_{\bar{\A}}\rho_{\t{\Phi}_{c}}(x)\omega(x)
=\int_{\A_{p}}\rho_{\t{\Psi}_{p,c}''\t{\Phi}_{c}}(x)\omega(x) - \int_{\bar{\A}}\rho_{\t{\Phi}_{c}}(x)\omega(x)\\
&=\int_{\mathrm{FD}_{p}}\big(y(\t{\Psi}_{p,c}''(\t{\Phi}_{c}(\t{x}))) - y(\t{x})\big)\omega(\t{x}) - \int_{\mathrm{FD}}\big(y(\t{\Phi}_{c}(\t{x})) - y(\t{x})\big)\omega(\t{x})\\
&=\int_{\mathrm{FD}_{p}}\big(y(\t{\Psi}_{p,c}''(\t{\Phi}_{c}(\t{x}))) - y(\t{\Phi}_{c}(\t{x}))\big)\omega(\t{x}) - \int_{\mathrm{FD}\backslash \mathrm{FD}_{p}}\rho_{\t{\Phi}_{c}}(x)\omega(x),
\end{aligned}$$
where $\mathrm{FD}_{p}\subset \mathrm{FD}$ are some fundamental domains of $\t{\A}_{p}$ and $\t{\bar{\A}}$. By the key estimate \eqref{eq:key_estimate_on_univ_cover}, for $p$ large enough the $C^{0}$-norm of $\t{\Psi}_{p,c}''$ will be small enough and since $\phi$ has zero entropy the rotation number $\rho_{\t{\Phi}_{c}}: \bar{\A} \rightarrow \R$ is a continuous function, see Remark \ref{rem:rot_numb_of_zero_ent_on_closed_ann} and Lemma \ref{lem:rotation_number_conjugation}, so the last two terms become arbitrarily small for large enough numbers $p$. The above discussion together with \eqref{eq:key_ave_non_zero_thmE} and Lemma \ref{lem:rotation_number_conjugation} imply,
\begin{equation}\phantomsection\label{eq:ave_of_bigger_rot_funct_non_zero}
\int_{\A_{p}}\rho_{\t{\Phi}_{p,c}'}(x)\omega(x)\not\in a_{p}\Z,
\end{equation}
for the above particular lift, and so for all lifts, and for any large enough $p$; here $a_{p} = \mathrm{area}_{\omega}(A_{p}) \to a$ as $p \to \infty$ since $\cup A_{p} = A$. We emphasize that the proof is not finished yet because the quantity in \eqref{eq:ave_of_bigger_rot_funct_non_zero} is the integral of the rotation function $\rho_{\t{\Phi}_{p,c}'}: \bar{\A} \rightarrow \R$ over the sub annulus $\A_{p}$. What we need to show is that the integral of the rotation function $\rho_{\t{f}}:\A_{p} \rightarrow \R$ is not in $a_{p}\Z$ where $\t{f}$ is the lift of $\Phi_{p,c}'|_{\A_{p}}$ to $\t{\A}_{p}$ obtained by restricting $\t{\Phi}_{p,c}'$ to $\t{\A}_{p}$. However, this immediately follows from Lemma \ref{lem:rot_num_int_ann_vs_ext_ann} and Remark \ref{rem:equality_of_rot_num_nested_ann} which imply that the integral in \eqref{eq:ave_of_bigger_rot_funct_non_zero} coincides with the integral $\int_{\A_{p}}\rho_{\t{f}}(x)\omega(x)$ modulo $a_{p}\Z$.
\end{proof}

\begin{lemma}\phantomsection\label{lem:bounded_nps}
The numbers $\{N_{p}\}_{p\geq 1}$ are uniformly bounded.
\end{lemma}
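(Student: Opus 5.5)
The plan is to make the argument of Theorem \ref{mainthm:smth_cont_ann_case} quantitative and uniform in $p$. Recall where $N_p$ comes from. For $\phi_p'$ and the invariant annulus $A_p$ bounded by $L_p, L_p'$, Theorem \ref{mainthm:non_van_floer_hom} says the following: whenever $\gamma((\phi_p')^n) \leq \hbar_p/2 - 2\delta'$, both $L_p$ and $L_p'$ carry index-one chords whose actions lie in a common window of length at most $\hbar_p/2$. Hence $|\mathrm{D}((\phi_p')^n)| \leq \hbar_p/2$. On the other hand, Corollary \ref{cor:scaling_prop_diff_act_ind_ones} gives $\mathrm{D}((\phi_p')^n) = n\,\mathrm{D}(\phi_p')$. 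So we may take
\[N_p = \left\lceil \frac{\hbar_p}{2|\mathrm{D}(\phi_p')|}\right\rceil + 1,\]
and it suffices to bound $|\mathrm{D}(\phi_p')|$ from below by a constant $d>0$ independent of $p$, since $\hbar_p \leq \mathrm{area}(\Sigma)$.

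To get this lower bound, use formula \eqref{eq:act_diff_for_all_lifts} for $\phi_p'$ on $A_p$:
\[\mathrm{D}(\phi_p') = \int_{A_p}\rho_{\t{\phi}_p'}\,\omega + (\hbar_p'-\hbar_p)\Big(\tfrac{\mu_p}{2}-\rho_p\Big),\]
where $\mu_p/2-\rho_p \in \Z$ because $\phi_p'$ has a fixed point on $L_p$. So $\mathrm{D}(\phi_p')$ lies in the coset $\int_{A_p}\rho_{\t f_p}\omega + a_p\Z$, with $a_p = \hbar_p'-\hbar_p = \mathrm{area}(A_p)$. It follows that $|\mathrm{D}(\phi_p')| \geq \mathrm{dist}\big(\int_{A_p}\rho_{\t f_p}\omega,\ a_p\Z\big)$.

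Now refine the argument of Lemma \ref{lem:mean_rot_vec_non_zero_forallp}. That proof shows, via the estimate \eqref{eq:key_estimate_on_univ_cover}, the continuity of $\rho_{\t\Phi_c}$ on $\bar\A$ (Remark \ref{rem:rot_numb_of_zero_ent_on_closed_ann}), and Lemma \ref{lem:rot_num_int_ann_vs_ext_ann} with Remark \ref{rem:equality_of_rot_num_nested_ann}, that
\[\int_{A_p}\rho_{\t f_p}\omega \equiv \int_{A_c}\rho_{\t\phi_c}\omega + o(1) \pmod{a_p\Z}\]
as $p\to\infty$. By \eqref{eq:key_ave_non_zero_thmE}, applied to every lift of $\phi_c$, the quantity $r := \int_{A_c}\rho_{\t\phi_c}\omega$ does not lie in $a\Z$, where $a = \mathrm{area}(A)$. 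Set $d_0 = \mathrm{dist}(r, a\Z) > 0$. Since $a_p\to a$ and $r$ is fixed (the relevant lifts differ by integers), we get $\mathrm{dist}(r, a_p\Z) \geq d_0/2$ for $p$ large. Combined with the $o(1)$ error, this gives $|\mathrm{D}(\phi_p')| \geq d_0/4$ for all large $p$, and hence
\[N_p \leq \frac{2\,\mathrm{area}(\Sigma)}{d_0} + 2.\]
The finitely many small values of $p$ are trivially bounded.

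The main obstacle is making sure every estimate is really uniform in $p$. The threshold $\hbar_p/2-2\delta'$ used in Theorem \ref{mainthm:non_van_floer_hom} must be chosen independently of $p$; this works because $\hbar_p \to \hbar$, so a fixed $\delta$ suffices. The error $e(\delta)$ in Theorem \ref{thm:lag_act_of_ind_one_chords_eq_ham_act} must be handled by letting the perturbation parameter tend to zero for each fixed $p$. Finally, the $o(1)$ term in the rotation-integral comparison must be controlled modulo $a_p\Z$ rather than just as a nonvanishing statement. I would first check that last point carefully, since Lemma \ref{lem:mean_rot_vec_non_zero_forallp} as written only asserts non-membership in $a_p\Z$, whereas here we need a quantitative distance.
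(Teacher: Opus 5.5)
Your proposal is correct and follows essentially the same route as the paper: choose $N_p$ so that $N_p|\mathrm{D}(\phi_p')| > \hbar_p/2$, using Theorem \ref{mainthm:non_van_floer_hom} and the linear scaling of $\mathrm{D}$, and then bound $|\mathrm{D}(\phi_p')|$ below uniformly. That lower bound comes from formula \eqref{eq:act_diff_for_all_lifts}, the convergence of the mean rotation term to $\bar\rho\notin a\Z$ (from the proof of Lemma \ref{lem:mean_rot_vec_non_zero_forallp}), and the fact that the remaining term lies in $a_p\Z$ with $a_p\to a$. Your write-up is slightly more careful than the paper's: you work explicitly with $\phi_p'$, and you use $|\mathrm{D}|$ and $\mathrm{dist}(\cdot,a_p\Z)$ rather than a signed lower bound.
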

\begin{proof}
Recall that for large enough $p$ we have $D(\phi_{p}) \neq 0$ where $\phi_{p}$'s are as in the proof of Theorem \ref{mainthm:iness_franks_handel_inv_ann}. Note that if $\gamma(\phi_{p}^{N})\leq \hbar_{p}/2 - 2\delta$ for some $\delta >0$ and $N>0$ then by Theorem \ref{mainthm:non_van_floer_hom},
$$\HF^{I+c}_{1}(L_{p}, \phi_{p}^{N})\neq 0 \text{ and } \HF^{I+c}_{1}(L_{p}', \phi_{p}^{N})\neq 0,$$
where $I = [-\hbar_{p}/2 + \delta, \delta]$, $c = c([\Sigma], \phi_{p}^{N})$ and $L_{p},L_{p}'$ are the connected components of $\partial A_{p}$. It follows that if $D(\phi_{p}^{N}) = ND(\phi_{p}) > \hbar_{p}/2$ then $\gamma(\phi_{p}^{N}) \geq \hbar_{p}/2$. Therefore, the numbers $N_{p}$ only need to be large enough so that,
$$N_{p}\mathrm{D}(\phi_{p}) > \hbar_{p}/2.$$
So we only need to show that the following invariant is bounded from below,
$$\mathrm{D}(\phi_{p})= \int_{A_{p}}\rho_{\t{\phi}_{p}}(x)\omega(x) + a_{p}( \frac{\mu_{\{\phi_{p,t}\}}(L_{p})}{2} - \rho_{\t{\phi}_{p}}(L_{p})),$$
where $a_{p} = \mathrm{area}_{\omega}(A_{p})$. Note that the first term, the mean rotation number, converges to a fixed number that depends only on $\t{\phi}_{c}$ and $A_{c}$, namely the mean rotation number over $A_{c}$ with respect to $\t{\phi}_{c}$, see the proof of Lemma \ref{lem:mean_rot_vec_non_zero_forallp}, which was shown that it is not an integer multiple of $a = \mathrm{area}_{\omega}(A)$. Given that the rest of the terms are inside $a_{p}\Z$ for all $p$ we conclude that for a fixed small $\eps > 0$ once $p$ is large enough,
$$\mathrm{D}(\phi_{p}) \geq d(\bar{\rho}, a\Z) - \eps >0,$$
where $\bar{\rho}$ is the integral of $\rho_{\t{\phi}_{c}}$ over $A_{c}$ and $d$ is the Euclidean distance.
\end{proof}

\subsubsection{Inessential irregular Franks-Handel invariant annuli}\phantomsection\label{sec:non_smooth_non_reg_non_ess_inv_ann}
This case is reduced to the case of \S\ref{sec:non_smooth_non_ess_inv_ann} by passing to an iteration of $\phi$. 
\begin{proof}[Proof of Theorem \ref{mainthm:iness_franks_handel_inv_ann} (irregular case)] Suppose that $A$ is inessential open irregular annulus that is $\phi$-invariant and the rotation interval $\rho_{\t{\phi}_{c}}(A_{c})$ is non-trivial. This in particular implies that $\phi$ has periodic points in the interior of $A$. Let $S^{2}_{A}:= A/\sim$ where we have collapsed the frontiers $\mathrm{Fr}_{1}(A), \mathrm{Fr}_{2}(A)$ of $A$ into single points $p_{1}:= [\mathrm{Fr}_{1}(A)]$ and $p_{2}:= [\mathrm{Fr}_{2}(A)]$ and let $\phi_{A}: S^{2}_{A} \rightarrow S^{2}_{A}$ be the induced homeomorphism by $\phi$. The resulting map $\phi_{A}$ has zero entropy, it has at least three fixed points up to passing to an iteration, and has infinite order as $\phi|_{A}$ is not torsion since it has non-trivial rotation interval $J$, i.e., by \cite{franks1988generalizations} there are periodic points of $\phi$ of period $q$ and rotation number $p/q\in J$ where $q$ is any arbitrary large prime number, this clearly implies that $\phi$ is not torsion. By Remark \ref{rem:franks_handel_for_homeos} we can still apply the structure theorem of Franks and Handel, Theorem \ref{thm:franks_handel_theorem}, to the homeomorphism $\phi_{A}$ and all of its iterates to find invariant open topological annuli. It follows from \cite[Lemma 15.9]{franks-handel-zero-ent} that for a sufficiently large prime number $q$ there is an open annulus $V\in \mathcal{A}(\phi,q)$ that is a proper and essential subset of $A$ and it has a non-trivial rotation interval $\rho_{\t{f}}(V_{c})$ where $\t{f}$ is a lift of $f := \phi^{q}|_{V}$ to the universal cover $\t{V}$ of $V$. In particular, $V$ is an open regular $\phi^{q}$-invariant annulus that has non-trivial rotation interval. Replacing $\phi$ with $\phi^{q}$ we can apply the result of \S\ref{sec:non_smooth_non_ess_inv_ann} to $\phi^{q}$ and find a constant $\hbar > 0$ so that
$$\liminf_{k\geq 1}\gamma(\phi^{qk}) \geq \hbar/2 > 0.$$
It follows from the triangle inequality that $\liminf_{k\geq 1}\gamma(\phi^{k}) \geq \hbar/2q >0$.
\end{proof}
\subsection{Essential invariant annuli}\phantomsection\label{sec:smooth_ess_inv_ann}
In this section we prove Theorem \ref{thm:main_theorem_introduction} in the case where there is an essential invariant annulus and the Hamiltonian diffeomorphism has zero topological entropy. Namely, we show the following theorem where $\Sigma$ is a closed orientable surface of genus $g\geq 1$,

\begin{theorem}\phantomsection\label{mainthm:ess_smth_inv_ann}
Suppose $\phi \in \Ham(\Sigma)$ is a Hamiltonian diffeomorphism and $A\subset \Sigma$ is an essential open annulus that is $\phi$-invariant. Assume that $\phi$ has zero topological entropy and $A$ has a non-trivial rotation interval with respect to some, and hence all, lifts of $\phi$ to $\t{A}$. Then, $\liminf_{k\geq 1}\gamma(\phi^{k}) = \infty.$
\end{theorem}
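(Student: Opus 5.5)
The plan is to adapt the Lalonde--McDuff--Polterovich covering argument. The idea is to pass to a finite cover of $\Sigma$ in which the lift of $A$ becomes ``long'', and there find a disk of large area that a lift of $\phi^k$ displaces. First, by residual finiteness of surface groups (applied to the essential simple class $[\gamma]\in\pi_1(\Sigma)$ of the core of $A$), for every $m\geq 1$ we choose a finite cover $\pi_m:\Sigma_m\to\Sigma$ of degree $d_m$. We want the preimage of $A$ in $\Sigma_m$ to contain a component $A_m$ that is an annulus mapping to $A$ with degree $m$, so that $\mathrm{area}(A_m)=m\,\mathrm{area}(A)$, or at least $A_m$ is a long strip in the $\Z$-cover direction. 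We lift $\phi$ to the Hamiltonian lift $\phi_m\in\Ham(\Sigma_m)$, using the lifted mean-normalized Hamiltonian. The covering inequality for the spectral norm (Corollary \ref{cor:spec_norm_covering_spc}, announced earlier) should give $\gamma(\phi_m^k)\leq d_m\,\gamma(\phi^k)$. The precise form may be $\gamma(\phi^k)\geq \gamma(\phi_m^k)/d_m$ after normalizing areas; whichever it is, we will use it to compare.

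Next comes the rotation input. Since the rotation interval of $A$ is non-trivial, by \cite[Theorem 2.3]{franks-handel-zero-ent} or its analogue for open annuli (Theorem \ref{thm:rot_numb_ann_defined} together with Franks' theorem), there are periodic orbits in $A$ with two distinct rotation numbers $\rho_1<\rho_2$ with respect to a fixed lift $\t\phi$. Passing to an iterate we may assume both are fixed points $z_1,z_2$ with integer rotation numbers differing by at least $1$. Then under $\t\phi^k$ the lifts of $z_1,z_2$ drift apart by about $k(\rho_2-\rho_1)$ fundamental domains. In the cover $\Sigma_m$ with $m\approx k(\rho_2-\rho_1)/2$, the points $z_1,z_2$ lift to points in $A_m$ which $\phi_m^k$ moves relative to each other by roughly half the circumference of $A_m$.

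To convert this into displacement, we take a small disk $B$ around a lift of $z_1$ in $A_m$ and use an invariant essential curve structure. Since $A_m$ is $\phi_m$-invariant, an arc in $A_m$ joining the two boundary components is sheared by $\phi_m^k$. This yields an embedded disk $B_k\subset A_m$, a thickened ``sector'' of $A_m$ of area comparable to $c\,k\,\mathrm{area}(A)$, with $\phi_m^k(B_k)\cap B_k=\emptyset$. The key tool here is the total rotation difference and area preservation; alternatively, we can use Remark \ref{rem:spec_cap_of_disks_in_high_genus_surf} directly, which gives $\gamma(\phi_m^k)\geq\mathrm{area}(B_k)$.

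The main obstacle is twofold. First, making the displacement rigorous: the points with distinct rotation numbers are only fixed points, and neighborhoods need not be carried coherently. I would first try to use the continuity of the rotation function on bi-recurrent points (Remark \ref{rem:rot_numb_of_zero_ent_on_closed_ann}) to get open sets $U_1,U_2$ of definite area with rotation numbers near $\rho_1$ and $\rho_2$. Then a radial sector of $A_m$ avoiding their relative drift gives the displaced disk. Second, the degree bookkeeping must leave growth: we need $\mathrm{area}(B_k)/d_m\to\infty$. Since $d_m$ grows with $m\sim k$ and $\mathrm{area}(B_k)\sim k$, the naive count gives only a constant, so the cover must be chosen so that $d_m$ grows slower, or so that the displaced region uses many lifts of $A$ simultaneously (all $d_m/m$ components of $\pi_m^{-1}(A)$, each contributing a disk). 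I would aim for the latter, displacing a union of disks, one per component, and using the energy-capacity inequality for the union. This would yield a lower bound that is linear in $k$, and hence $\gamma(\phi^k)\to\infty$.
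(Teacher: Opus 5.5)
There is a genuine gap at the step that produces the displaced disk, which is the heart of the argument: shear between two orbits with different rotation numbers does not give a disk $B_k$ with $\phi_m^k(B_k)\cap B_k=\emptyset$. The lift $\phi_m$ must be the Hamiltonian lift, since that is the lift the covering inequality concerns. So the relevant rotation numbers are those of the Hamiltonian lift $\t{\phi}$, and you cannot renormalize them by choosing another lift. If $0$ is interior to the rotation interval, \cite[Theorem 2.3]{franks-handel-zero-ent} gives fixed points of rotation number $0$, whose lifts are fixed by every $\phi_m^k$.

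Concretely, take $H=h(y)$ on $T^2$, with $A=\{a<y<b\}$ containing a zero $y_0$ of $h'$ and $h'$ non-constant on $A$. The circle $\{y=y_0\}$ and all its lifts are pointwise fixed, so no ``thickened sector'' joining the two boundary components of $A_m$ is ever displaced, although the theorem holds here.

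The missing idea is to first pass to an essential invariant sub-annulus on which \emph{all} rotation numbers of $\t{\phi}$ have one sign. The paper picks $p/q\neq 0$ in the rotation interval with $q$ prime, a periodic point $x$ of rotation number $p/q$, and a Franks--Handel annulus $V\in\mathcal{A}(\phi,N)$ containing $x$ with $q\nmid N$. By \cite[Lemma 15.7]{franks-handel-zero-ent}, $V$ is essential in $A$, hence $\phi$-invariant. Since $\phi^N$ has no fixed point in $V$, Franks' theorem confines its rotation interval to an interval of length $1/N$ containing $p/q$, hence to the positive reals for $N$ large. Then the strip $\FD_{V,\ell}$ of $\ell$ consecutive fundamental domains of $\t{V}$ is disjoint from $\t{\phi}^k(\FD_{V,\ell})$ for $k\ge k(\ell)$. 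Projecting to a finite cover that is injective on these sets gives a displaced disk of area $\ell\cdot\mathrm{area}(V)$. Your open sets $U_1,U_2$ cannot replace $V$: they are neither invariant nor essential, so their lifts contain no long disks.

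Second, the covering inequality is not what you guessed, and the guess derails your bookkeeping. Corollary \ref{cor:spec_norm_covering_spc} says $\gamma(\t{\phi}_n)\le\gamma(\phi)$, with no degree factor. The reason is that the lift map $L_n$ sends a PSS representative of $[\Sigma]$ to one of $[\Sigma_n]$ with the same action, since cappings lift. So the single displaced disk above already gives $\gamma(\phi^k)\ge\ell\cdot\mathrm{area}(V)$ for $k\ge k(\ell)$, and no condition like $\mathrm{area}(B_k)/d_m\to\infty$ arises.

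Had a factor $d_m$ really been lost, your strategy could not succeed. A disk in $A_m$ has area at most $m\,\mathrm{area}(A)\le d_m\,\mathrm{area}(A)$. Displacing one disk per component of $\pi_m^{-1}(A)$ does not help either, because spectral capacity is not additive over disjoint unions. By the max formula for disjointly supported Hamiltonians \cite{MR3548467,MR4587314}, the natural test Hamiltonians satisfy $c([\Sigma_m],\sum_i G_i)=\max_i c([\Sigma_m],G_i)$.

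Your choice of cover, in which one component of the preimage of $A$ unwraps $m$ times, is adequate. The displaced strip and its image lie in the invariant strip $\t{V}$, so the projection is injective on them once they span fewer than $m$ consecutive fundamental domains.
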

See \S\ref{sec:proof_of_ess_smth_case} for the proof. The idea of the proof is to show that the lifts of high iterations of $\phi$ to $n$-covering spaces $\Sigma_{n} \rightarrow \Sigma$ for large integers $n \gg 1$ displace balls of arbitrary large capacities and use the energy capacity inequality to bound the spectral norm of iterations from below. For this we need to study the relation between Floer theory of $(\Sigma, H,J)$ and the lifted data $(\Sigma_{n}, \t{H}_{n}, \t{J}_{n})$; see \S\ref{sec:floer_hom_covering_spcs} below.
\subsubsection{Floer homology and covering spaces}\phantomsection\label{sec:floer_hom_covering_spcs}
Let $\Sigma$ be a closed orientable surface of genus $r \geq 1$ equipped with a symplectic form $\omega$. Pick an almost complex structure $J: T\Sigma \rightarrow T\Sigma$ compatible with $\omega$, let $\t{J}_{n} = \pi^{*}_{n}J$ and $\t{\omega}_{n}= \pi^{*}_{n}\omega$, where $\pi_{n}: \Sigma_{n} \rightarrow \Sigma$ is an $n$-covering of $\Sigma$ from a closed surface $\Sigma_{n}$ of genus $rn - n+1$. Suppose $H: \Sigma\times \R\rightarrow \R$ is a smooth generic Hamiltonian generating the Hamiltonian flow $\{\phi_{t}\}$ with time-one map $\phi_{1} = \phi$; here generic means that the data is Floer regular, i.e., all one periodic orbits of $\{\phi_{t}\}$ are non-degenerate and transversality is achieved for the moduli spaces of Floer cylinders; see \S\ref{sec:abs_ham_floer_theory} and \cite{MR1702944} and references therein. Let $\t{H}_{n}: \Sigma_{n} \times \R \rightarrow \R$ be defined by $\t{H}_{n} = H\circ \pi_{n}$ where $\pi_{n}: \Sigma_{n} \rightarrow \Sigma$ is the covering map and let $\t{\phi}_{n,t}$ be the Hamiltonian flow of $\t{H}_{n}$.
\begin{remark}
The data $(\t{H}_{n}, \t{J}_{n})$ is Floer regular as the suitable linearized operators are isomorphic to the corresponding operators for $(H_n, J_n)$.
\end{remark}
Denote by $\CF(H,J)$ the Floer complex of $H$ and $J$ that is a free $\Z/2\Z$-module generated by the set of \emph{contractible} fixed points $\Fix_{c}(\phi)$ of $\phi$, equipped with the Floer differential $d$. Denote also by $\CF(\t{H}_{n}, \t{J}_{n})$ the corresponding Floer complex for the lifted data equipped with the Floer differential $\t{d}_{n}$. We define the following linear lift map,
$$L_{n}: \CF(H,J) \rightarrow \CF(\t{H}_{n},\t{J}_{n});$$
on a generator $x\in \Fix_{c}(\phi)$ it is defined by $L_{n}(x) = \t{x}_{1} + \dots + \t{x}_{n}$ where $\t{x}_{1}, \dots, \t{x}_{n}$ are the lifts of $x$ to the covering space $\Sigma_{n}$. Clearly this is a well-defined linear map because we only work with the contractible fixed points. As a vector space one has
$$\CF(\t{H}_{n}, \t{J}_{n}) = \CF(H,J)\oplus \dots \oplus \CF(H,J),$$ 
where the number of factors are $n$. Let $u: \R\times S^{1} \rightarrow \Sigma$ be a smooth map that solves Floer's equation for $H$ and $J$, i.e.,
$$\partial_{s}u(s,t) + J(u(s,t))(\partial_{t}u(s,t) - X_{H}(u(s,t))) = 0,$$
and suppose that $u(-\infty,t) = \phi_{t}(x)$ and $u(+\infty, t) = \phi_{t}(y)$ for some fixed points $x,y\in \Fix_{c}(\phi)$. Let $\t{z}_{1}, \dots, \t{z}_{n}$ be the lifts of $z = u(0,0)$ to $\Sigma_{n}$ and let $\t{u}_{k}: \R \times S^{1} \rightarrow \Sigma_{n}$ be the smooth lifts of $u$ so that $\t{u}_{k}(0,0) = \t{z}_{k}$. we have
$$2E(\t{u}_{k}) = \int_{\R\times S^{1}}|d\t{u}_{k}|_{\t{J}_{n}}^{2} ds\wedge dt = \int_{\R\times S^{1}}|du_{k}|_{J}^{2} ds\wedge dt < \infty,$$
where the second equality is by $|d\pi \circ d\t{u}|_{J} = |d\t{u}|_{\t{J}_{n}}$. Moreover,
$$\partial_{s}\t{u}_{k}(s,t) + \t{J}_{n}(\t{u}_{k}(s,t))(\partial_{t}\t{u}_{k}(s,t) - X_{\t{H}_{n}}(\t{u}_{k}(s,t))) = 0,$$
because the equation above holds after applying $d\pi_{n}$ to both sides and $d\pi_{n}$ is pointwise an isomorphism. Hence, it follows from \cite[Proposition 1.21]{MR1702944} that there are some fixed points $\t{x}_{k},\t{y}_{k}\in \Fix_{c}(\t{\phi})$ so that $\t{u}_{k}(-\infty,t) = \t{\phi}_{t}(\t{x}_{k})$ and $\t{u}_{k}(+\infty,t) = \t{\phi}_{t}(\t{y}_{k})$. It is not hard to show that $\t{x}_{k}$ is a lift of $x$ and $\t{y}_{k}$ is a lift of $y$. Since $u(s,t)$ converges uniformly to $x,y$ as $s\to \pm\infty$ one can extend it to a continuous map $\bar{\R} \times S^{1}\rightarrow \Sigma$ where $\bar{\R} = \R \cup\{\pm\infty\}$ and so one can show that the asymptotics $\t{x}_{k},\t{y}_{k}$ for the $k$-th lift $\t{u}_{k}$ of $u$ are all pairwise distinct, hence, for each lift $\t{x}_{k}$ of $x$ there is a unique lift $\t{y}_{k}$ of $y$ and a unique lift $\t{u}_{k}$ of $u$ so that the latter solves the Floer's equation for $(\t{H}_{n}, \t{J}_{n})$ and has asymptotics $\t{\phi}_{t}(\t{x}_{k})$ and $\t{\phi}_{t}(\t{y}_{k})$. On the other hand, every finite energy solution of the Floer's equation for $(\t{H}_{n}, \t{J}_{n})$ will be a solution of the Floer's equation for $(H,J)$ after composing it with $\pi_{n}$ and so it is one of the lifted solutions constructed above. We have
$$\begin{aligned}
\dim\mathcal{M}(\t{x}_{k}, \t{y}_{\ell}, \t{H}_{n}, \t{J}_{n}) &= \mathrm{CZ}(\t{x}_{k}) - \mathrm{CZ}(\t{y}_{\ell})\\
&= \mathrm{CZ}(x) - \mathrm{CZ}(y)=\dim\mathcal{M}(x, y, H, J),
\end{aligned}$$
for every lifts $\t{x}_{k}, \t{y}_{\ell}$ of $x,y\in \Fix_{c}(\phi)$, where $\mathrm{CZ}(x)$ is the Conley-Zehnder index of $x$; see \S\ref{sec:indices}. Let us check that $L_{n}$ is a chain map, i.e., $L_{n}d = \t{d}_{n}L_{n}$. Let $\t{x}_{1}, \dots, \t{x}_{n}$ be all lifts of $x\in \Fix_{c}(\phi)$ and suppose $dx = y_{1} + \dots + y_{\ell}$. First, one has
\begin{equation}\phantomsection\label{eq:lnd}
\begin{aligned}
L_{n}d(x) &= L_{n}\big(\sum_{i=1}^{\ell}y_{i}\big)=\sum_{i=1}^{\ell}\sum_{j=1}^{n}\t{y}_{ij},
\end{aligned}
\end{equation}
where $\t{y}_{ij}$ is the $j$-th lift of $y_{i}$. In the following, for a fixed point $y\in \Fix_{c}(\phi)$ let $\{\t{y}_{i}\}_{i=1}^{n}$ be all the lifts of $y$. On the other hand one has
\begin{equation}\phantomsection\label{eq:dnln}
\begin{aligned}
\t{d}_{n}L_{n}(x) &= \sum_{i=1}^{n}\t{d}_{n}(\t{x}_{i}) = \sum_{i=1}^{n}\sum_{y\in \Fix_{c}(\phi)}\sum_{j=1}^{n}\#\mathcal{M}(\t{x}_{i}, \t{y}_{j},\t{H}_{n},\t{J}_{n})\t{y}_{j}\\
&=\sum_{y\in \Fix_{c}(\phi)}\sum_{j=1}^{n}\big(\sum_{i=1}^{n}\#\mathcal{M}(\t{x}_{i}, \t{y}_{j},\t{H}_{n},\t{J}_{n})\big)\t{y}_{j}\\
&=\sum_{y\in \Fix_{c}(\phi)}\sum_{j=1}^{n}\#\mathcal{M}(x, y,H,J)\t{y}_{j} = \sum_{i=1}^{\ell}\sum_{j=1}^{n}\t{y}_{ij},
\end{aligned}
\end{equation}
which together with \eqref{eq:lnd} proves $L_{n}d = \t{d}_{n}L_{n}$. Next we show that the unit is mapped to the unit under the lift map $L_{n}$, i.e.,
\begin{equation}\phantomsection\label{eq:lnpss_eq_pssln}
L_{n}(\pss_{H}([\Sigma])) = \pss_{\t{H}_{n}}([\Sigma_{n}]);
\end{equation}
see \S\ref{sec:pss_isomorphisms} for the definitions of $\pss_{H}, \pss_{\t{H}_{n}}$. Pick a generic interpolation data $G: \Sigma\times \R \times \R \rightarrow \R$, in the sense of \S\ref{sec:pss_isomorphisms}, so that $G(x,t,s) = H(x,t)$ for $s\geq 1$ and $G(x,t,s) = f(x)$ for $s\leq 0$ where $f: \Sigma \rightarrow \R$ is a $C^{2}$-small perfect Morse function with a unique maximum $\zeta\in \Sigma$. Fix a metric $g$ on $\Sigma$ and lift it to the metric $\t{g}_{n}$ on $\t{\Sigma}_{n}$. Let $\t{f}_{n} = f\circ \pi_{n}$ be the lift of $f$ to $\Sigma_{n}$ that is also a Morse function, and similarly let $\t{G}_{n} = G\circ \pi_{n}$. One can show that the data $(\t{G}_{n}, \t{J}_{n}, \t{f}_{n})$ is also generic in the sense of \S\ref{sec:pss_isomorphisms}. Take the representative $\zeta \in \mathrm{CM}(f,g)$ of the fundamental class $[\Sigma]$. Let $\t{\zeta}_{1}, \dots, \t{\zeta}_{n}$ be the lifts of $\zeta$ and observe that $\t{\zeta}_{1} + \dots + \t{\zeta}_{n}$ is a representative of $[\Sigma_{n}]$ in $\mathrm{CM}(\t{f}_{n},\t{g}_{n})$. We show that $L_{n}(\pss_{H}(\zeta)) = \pss_{\t{H}_{n}}(\t{\zeta}_{1} + \dots + \t{\zeta}_{n})$. First, note that,
$$\begin{aligned}
\dim\mathcal{M}(\t{\zeta}_{k}, \t{x}_{\ell}, \t{G}_{n}, \t{J}_{n}) &= \mathrm{ind}(\t{\zeta}_{k}) - n - \mathrm{CZ}(\t{x}_{\ell})\\
&= \mathrm{ind}(\zeta) - n - \mathrm{CZ}(x)=\dim\mathcal{M}(\zeta, x, G, J),
\end{aligned}$$
where $\t{\zeta}_{k}, \t{x}_{\ell}$ are any lifts of $\zeta,x$ respectively, $\mathrm{ind}(-)$ is the Morse index and $\mathcal{M}(\zeta, x, G, J)$ is the moduli space of $\pss$-configurations as in \S\ref{sec:pss_isomorphisms}. Similar to the above analysis, one can show that every solution $u \in \mathcal{M}(\zeta, x, G, J)$ has exactly $n$ lifts, $$\t{u}_{1}, \dots, \t{u}_{n} \text{ with asymptotics }\{\t{\zeta}_{k_{1}}, \t{x}_{\ell_{1}}\}, \dots, \{\t{\zeta}_{k_{n}}, \t{x}_{\ell_{n}}\},$$ 
so that $\{k_{j}\}_{i=1}^{n} = \{\ell_{j}\}_{i=1}^{n} = \{1,\dots, n\}$ and for every $j$ one has, $$\t{u}_{j}\in \mathcal{M}(\t{\zeta}_{k_{j}}, \t{x}_{\ell_{j}}, \t{G}_{n}, \t{J}_{n}).$$ 
Write $\pss_{H}(\zeta) = x_{1} + \dots + x_{\ell}$ where $x_{j}\in \Fix_{c}(\phi)$ for $1\leq j\leq \ell$. Then,
$$L_{n}(\pss_{H}(\zeta)) = \sum_{i=1}^{\ell}\sum_{j=1}^{n}\t{x}_{ij},$$
where $\t{x}_{ij}$ is the $j$-th lift of $x_{i}$. On the other hand we have
$$\begin{aligned}
\pss_{\t{H}_{n}}(\sum_{i=1}^{n}\t{\zeta}_{i}) &= \sum_{i=1}^{n}\sum_{\t{x}\in \Fix_{c}(\t{\phi})}\#\mathcal{M}(\t{\zeta}_{i}, \t{x}, \t{G}_{n}, \t{J}_{n})\t{x}\\
&=\sum_{i=1}^{n}\sum_{x\in \Fix_{c}(\phi)}\sum_{j=1}^{n}\#\mathcal{M}(\t{\zeta}_{i}, \t{x}_{j}, \t{G}_{n}, \t{J}_{n})\t{x}_{j}\\
&=\sum_{x\in \Fix_{c}(\phi)}\sum_{j=1}^{n}\big(\sum_{i=1}^{n}\#\mathcal{M}(\t{\zeta}_{i}, \t{x}_{j}, \t{G}_{n}, \t{J}_{n})\big)\t{x}_{j}\\
&=\sum_{x\in \Fix_{c}(\phi)}\sum_{j=1}^{n}\#\mathcal{M}(\zeta, x, G, J)\t{x}_{j} = \sum_{i=1}^{\ell}\sum_{j=1}^{n}\t{x}_{j},
\end{aligned}$$
which finishes the proof of \eqref{eq:lnpss_eq_pssln}. As a corollary we have the following result where $\t{\phi}$ is the lift of $\phi$ given by a choice of a Hamiltonian isotopy $\phi_{t}\in \Ham(\Sigma)$ with $\phi_{1} = \phi$; note that this does not depend on the choice of the isotopy because $\pi_{1}(\Ham(\Sigma)) = 0$.
\begin{corollary}\phantomsection\label{cor:spec_norm_covering_spc}
Let $\phi \in \Ham(\Sigma, \omega)$ be a Hamiltonian diffeomorphism and $\t{\phi}_{n}\in \Ham(\Sigma_{n}, \t{\omega}_{n})$ be the lift of $\phi$ given by a Hamiltonian isotopy. Then, one has $\gamma(\t{\phi}_{n})\leq \gamma(\phi)$.
\end{corollary}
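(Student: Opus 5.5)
The plan is to obtain the inequality $\gamma(\t{\phi}_{n})\leq \gamma(\phi)$ from the chain-level lift map $L_{n}$ of \S\ref{sec:floer_hom_covering_spcs}, which sends the unit to the unit. We then apply the same argument to the reversed Hamiltonian. By definition $\gamma(\t{\phi}_{n}) = c([\Sigma_{n}],\t{H}_{n}) + c([\Sigma_{n}],\overline{\t{H}_{n}})$. Here $\t{H}_{n} = H\circ\pi_{n}$ generates $\t{\phi}_{n}$ and $\overline{\t{H}_{n}} = \overline{H}\circ\pi_{n}$ generates its inverse. Mean-normalization is preserved under lifting, since the integral over $\Sigma_{n}$ is $n$ times the integral over $\Sigma$. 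So it suffices to prove, for every mean-normalized generic $H$,
$$c([\Sigma_{n}],\t{H}_{n}) \leq c([\Sigma],H).$$
We will prove this first for Floer-regular $H$ and pass to degenerate $H$ by the Hofer continuity of spectral invariants. Hofer continuity applies because $|\t{H}_{n} - \t{K}_{n}| = |H-K|$ for lifted Hamiltonians.

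The key step is action preservation. Each lift $\t{x}_{k}$ of a contractible fixed point $x$ is a contractible orbit of $\t{H}_{n}$. A capping disk of $x$ lifts to a capping disk of $\t{x}_{k}$ with the same $\omega$-area, because $\t{\omega}_{n} = \pi_{n}^{*}\omega$. The Hamiltonian term is also unchanged, since $\t{H}_{n} = H\circ \pi_{n}$. Hence $\mathcal{A}_{\t{H}_{n}}(\t{x}_{k}) = \mathcal{A}_{H}(x)$. Both surfaces are aspherical, so actions do not depend on the capping. It follows that $L_{n}$ maps $\CF^{(-\infty,t)}(H,J)$ into $\CF^{(-\infty,t)}(\t{H}_{n},\t{J}_{n})$ for every $t$.

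Now fix $\eps>0$ and take a cycle $\alpha\in\CF^{(-\infty,c+\eps)}(H,J)$ representing $\pss_{H}([\Sigma])$, where $c = c([\Sigma],H)$. We use the identity $\pss_{H}(\zeta) - \alpha = d\beta$. Applying the chain map $L_{n}$ and using \eqref{eq:lnpss_eq_pssln} gives
$$\pss_{\t{H}_{n}}\Big(\sum_{i}\t{\zeta}_{i}\Big) - L_{n}(\alpha) = \t{d}_{n}L_{n}(\beta).$$
So $L_{n}(\alpha)$ represents $\pss_{\t{H}_{n}}([\Sigma_{n}])$, and by action preservation it has action below $c+\eps$. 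This gives $c([\Sigma_{n}],\t{H}_{n}) \leq c+\eps$, and letting $\eps\to0$ yields the desired inequality. The same argument applied to $\overline{H}$, which lifts to $\overline{\t{H}_{n}}$, bounds the second term, and adding the two bounds gives $\gamma(\t{\phi}_{n})\leq\gamma(\phi)$.

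The main obstacle is checking that the generators of $\CF(\t{H}_{n},\t{J}_{n})$ are exactly the lifts of contractible orbits, so that $L_{n}$ and the identification of complexes are legitimate. A contractible orbit of $\t{H}_{n}$ projects to a contractible orbit of $H$. Conversely, every lift of a contractible orbit $x$ is a closed, contractible loop, because a contractible loop lifts to a loop through any chosen basepoint. This matches the vector-space decomposition stated before the corollary. We also need the $\pss$ identity \eqref{eq:lnpss_eq_pssln} to hold on the chain level; it was established above, and with it the corollary follows directly.
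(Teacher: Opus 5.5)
Your proposal is correct and is essentially the paper's proof: take a representative of $\pss_{H}([\Sigma])$ with action at most $c([\Sigma],H)+\eps$, push it forward by the action-preserving chain map $L_{n}$, which by \eqref{eq:lnpss_eq_pssln} yields a representative of $\pss_{\t{H}_{n}}([\Sigma_{n}])$, and then repeat the argument for $\bar{H}$. The extra points you check are details the paper leaves implicit: that mean-normalization is preserved under lifting, that $\overline{\t{H}_{n}}=\bar{H}\circ\pi_{n}$, and the passage to degenerate $H$ by Hofer continuity.
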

\begin{proof}
Let $H: \Sigma \times \R \rightarrow \R$ be a Hamiltonian function generating $\phi$ and let $\t{H}_{n} = H\circ \pi_{n}$ be the lift of $H$.  Pick a representative $\xi\in \CF(H,J)$ of $\pss_{H}([\Sigma])$ so that $\mathcal{A}_{H}(\xi) \leq c([\Sigma], H) + \eps$ for some arbitrary small $\eps > 0$. Then, as was shown above the chain $L_{n}(\xi) \in \CF(\t{H}_{n}, \t{J}_{n})$ is a representative of $\pss_{\t{H}_{n}}([\Sigma_{n}])$ and clearly one has $\mathcal{A}_{\t{H}_{n}}(L_{n}(\xi)) = \mathcal{A}_{H}(\xi)$. Hence, $$c([\Sigma], \t{H}_{n}) \leq \mathcal{A}_{\t{H}_{n}}(L_{n}(\xi)) = \mathcal{A}_{H}(\xi)\leq c([\Sigma], H) + \eps.$$
By taking $\eps \to 0$ one shows that $c([\Sigma_{n}], \t{H}_{n}) \leq c([\Sigma], H)$.
The result follows via a similar analysis for $\bar{H}$ and obtaining $c([\Sigma_{n}], \t{\bar{H}}_{n}) \leq c([\Sigma], \bar{H})$.
\end{proof}

\subsubsection{Proof of Theorem \ref{mainthm:ess_smth_inv_ann} (genus \texorpdfstring{$1$}{})}\phantomsection\label{sec:proof_of_ess_smth_case}
We first write the proof in the case of the torus $T^{2} = \R^{2}/\Z^{2}$ as it is easier to visualize. The proof for higher genus surfaces is similar and will be discussed in \S\ref{sec:proof_high_gen_essent}. Let $\phi\in \Ham(T^{2})$ and $A\subset T^{2}$ be as in Theorem \ref{mainthm:ess_smth_inv_ann}. 

Step $1$. We define a type of rotation number for the points of $A$ which uses the universal cover of the surface $T^{2}$ and we show that it coincides with the standard rotation number function $\rho_{\t{\phi}}$ from \S\ref{sec:rot_numb_ann} where $\t{\phi}_{c}$ is a lift of $(\phi|_{A})_{c}$ to the universal cover $\t{A}_{c}$ of $A_{c}$. Pick a Hamiltonian function $H: T^{2} \times \R \rightarrow \R$ generating the Hamiltonian flow $\{\phi_{t}\}$ with $\phi_{1} = \phi$. Let $\R^{2}$ be the universal cover of $T^{2}$, $\t{A}$ be a connected component of $\pi^{-1}(A)$ which is a copy of the universal cover of $A$, $\FD_{A}$ be a fundamental domain for $\t{A}$, and finally define $\tau\in \Z\oplus \Z$ be the unique element so that $\t{A} = \sqcup_{\ell\in \Z}\tau^{\ell}(\FD_{A})$. Consider the lift $\{\t{\phi}_{t}\}$ of the isotopy $\{\phi_{t}\}$ where the starting points are chosen in the fundamental domain $\mathrm{FD} = [0,1] \times [0,1]$. The isotopy $\{\t{\phi}_{t}\}$ is generated by the Hamiltonian $\t{H} = H \circ \pi$ via the Hamilton's equation $\t{\omega}(\t{X}, - ) = -d\t{H}$ where $\pi: \R^{2} \rightarrow T^{2}$ is the covering map and $\t{\omega} = \pi^{*}\omega$. Since the flux of the isotopy $\{\phi_{t}\}$ vanishes, for every point $\t{x}\in \t{A}$ one has $\t{\phi}(\t{x})\in \t{A}$ where $\t{\phi} = \t{\phi}_{1}$. Let $A_{c}$ be the prime end compactification of $A$. To define the rotation number for points of $A_{c}$ relative to $\t{\phi}|_{\t{A}}:\t{A} \rightarrow \t{A}$ we consider the prime extensions $(\phi|_{A})_{c}:A_{c} \rightarrow A_{c}$ and $(\t{\phi}|_{\t{A}})_{c}: \t{A}_{c} \rightarrow \t{A}_{c}$ to the prime end compactifications and observe that $(\t{\phi}|_{\t{A}})_{c}$ is a lift of $(\phi|_{A})_{c}$. Then, we define the rotation function $\rho_{(\t{\phi}|_{\t{A}})_{c}}: A_{c} \rightarrow \R$ using the lift $(\t{\phi}|_{A})_{c}$ by
\begin{equation}\phantomsection\label{eq:rot_func_new}
\rho_{(\t{\phi}|_{\t{A}})_{c}}(x) = \lim_{k\to \infty} \frac{\nu((\t{\phi}|_{\t{A}})_{c}^{k}(\t{x}))}{k},
\end{equation}
where for a point $\t{y}\in \t{A}$ we define $\nu(\t{y})$ to be the unique integer $\ell$ so that $\tau_{c}^{-\ell}(\t{y})\in \FD_{A,c}$ where $\tau_{c}$ is the prime extension of $\tau$ to $\t{A}_{c}$. One can show that there is a homeomorphism $\Gamma_{c}: [0,1]\times \R \rightarrow \t{A}_{c}$ so that $\Gamma_{c} \circ T = \tau_{c} \circ \Gamma_{c}$ where $T(\t{x}) = \t{x} + (0,1)$, and using which, on observes that $\Gamma_{c}^{-1}(\t{\phi}|_{\t{A}})_{c}\Gamma_{c}$ is a lift of $(\phi|_{A})_{c}$ to the universal cover $[0,1]\times \R$. It is easy to check that $\rho_{(\t{\phi}|_{\t{A}})_{c}}(\Gamma_{c}(\t{x})) = \rho_{\Gamma_{c}^{-1}(\t{\phi}|_{\t{A}})_{c}\Gamma_{c}}(\t{x})$ for all points $\t{x}\in [0,1]\times \R$ for which both sides are defined; the latter is the standard rotation number, see Lemma \ref{lem:rotation_number_conjugation} and \S\ref{sec:rot_numb_ann}. This in particular implies that the rotation function $\rho_{(\t{\phi}|_{\t{A}})_{c}}$ defined by \eqref{eq:rot_func_new} is well-defined everywhere and it is continuous since so is the standard rotation function $\rho_{\Gamma_{c}^{-1}(\t{\phi}|_{\t{A}})_{c}\Gamma_{c}}$; see Remark \ref{rem:rot_numb_of_zero_ent_on_closed_ann}.

Step $2$. Let us reduce the problem to the case where the rotation numbers of points of $A$ are either all positive or all negative. For simplicity let us denote $\Gamma_{c}^{-1}(\t{\phi}|_{\t{A}})_{c}\Gamma_{c}$ by $\t{\phi}_{c}$. We know that the rotation interval $\rho_{\t{\phi}_{c}}(A_{c})$ is non-trivial. Without loss of generality assume that there is a positive rational number $p/q$ in the rotation interval with $q$ being a prime number, other wise pick a negative number and proceed. Let $x\in A$ be a periodic point with rotation number $p/q$; see \cite[Theorem 2.3]{franks-handel-zero-ent}. Let $N$ be a large integer so that $q \nmid N$ and let $V\in \mathcal{A}(\phi,N)$ be an $\phi^{N}$-invariant annulus so that $V\subset A$ and $x\in V$. Then, it follows from \cite[Lemma 15.7 (2)]{franks-handel-zero-ent} that $V$ is essential in $A$, hence it is $\phi$-invariant, and has non-trivial rotation interval because it contains $x$ and some $N$-periodic orbit on its frontiers; here we used Lemma \ref{lem:rot_num_int_ann_vs_ext_ann}. Moreover, for large enough $N$ all the rotation numbers are positive and near $p/q$. Indeed, for every two points $y,z\in V$ one has,
$$|\rho_{\t{\phi}_{c}}(y) - \rho_{\t{\phi}_{c}}(z)| \leq 1/N,$$
where this time $\t{\phi}_{c}$ refers to a lift of $(\phi|_{V})_{c}$ to $\t{V}_{c}\subset \t{A}_{c}$ where $\t{V}_{c}$ is the prime end compactification of $\t{V} = \pi^{-1}(V)\cap \t{A}$, however, due to Lemma \ref{lem:rot_num_int_ann_vs_ext_ann} and continuity of rotation numbers we freely use the same notation $\rho_{\t{\phi}_{c}}$; the inequality holds since otherwise one can apply \cite{franks1988generalizations, franks-recurrence-88}, see \cite[Proposition 2.6]{franks-handel-zero-ent}, to conclude the existence of a fixed point of $\phi^{N}$ inside $V$ which is a contradiction; $\phi^{N}|_{V}$ is isotopic to the identity by Remark \ref{rem:rest_are_isotop_to_id}. See also \cite[Lemma 3.6]{acgps-26} for a similar argument.

Step $3$. So far we have an $\phi$-invariant open annulus $V$ so that the rotation interval $\rho(V_{c})$ contains only positive numbers where $\rho := \rho_{(\t{\phi}|_{\t{V}})_{c}}$. Define $\FD_{V} := \t{V} \cap \FD_{A}$ and $\FD_{V,\ell} = \sqcup_{j=0}^{\ell-1}\ \tau^{j}(\FD_{V})$. For every integer $\ell>0$ pick a large enough number $k$ and $N_{k,\ell}$ so that, $$\cup_{t\in[0,1]}\t{\phi}_{t}^{k}(\FD_{V,\ell}) \subset Q_{N_{k,\ell}} := [-N_{k,\ell}, N_{k,\ell}]^{2},$$
and $\t{\phi}_{1}^{k}(\FD_{V,\ell}) \cap \FD_{V,\ell} = \emptyset$; the latter is possible because the rotation numbers of all points in $\FD_{V}$ are positive and rotation numbers of points in $\tau^{j}(\FD_{V})$ are equal to the rotation numbers of their corresponding points in $\FD_{V}$. Let $T^{2}_{k,\ell} = \R^2/(2N_{k,\ell}\Z^{2})$ where the fundamental domain is taken to be $Q_{N_{k,\ell}}$. Let $\t{\phi}_{k,\ell}\in \Ham(T^{2}_{k,\ell})$ be the lift of $\phi^{k}$ to $T^{2}_{k,\ell}$ given by the isotopy $\{\phi_{t}^{k}\}$. By construction and the energy capacity inequality, see Remark \ref{rem:spec_cap_of_disks_in_high_genus_surf}, for every $\ell > 0$ and every large enough $k$ depending on $\ell$ we have $\gamma(\t{\phi}_{k,\ell}) \geq \mathrm{area}(\FD_{V,\ell})$ which together with Corollary \ref{cor:spec_norm_covering_spc} implies $\gamma(\phi^{k}) \geq \gamma(\t{\phi}_{k,\ell}) \geq \mathrm{area}(\FD_{V,\ell})$ for every fixed $\ell$ and large enough $k = k(\ell)$. Hence, we have $\liminf_{k\geq 1} \gamma(\phi^{k}) = \infty,$ which finishes the proof of the Theorem \ref{mainthm:ess_smth_inv_ann} in the case of the torus $T^{2}$.

\subsubsection{Proof of Theorem \ref{mainthm:ess_smth_inv_ann} (genus \texorpdfstring{$\geq 2$}{})}\phantomsection\label{sec:proof_high_gen_essent} 
Let $\Sigma$ be a closed oriented surface of genus at least $2$. The proof in this case is similar to the proof presented in \S\ref{sec:proof_of_ess_smth_case}. Let $A\subset \Sigma$ be an essential annulus. Let $V \Subset A$ be the annulus found in Step $2$ of \S\ref{sec:proof_of_ess_smth_case} so that the rotation interval $\rho(V_{c})$ contains only positive numbers (or only negative numbers) where $\rho := \rho_{(\t{\phi}|_{\t{V}})_{c}}$ and $(\t{\phi}|_{\t{V}})_{c}$ is defined as in Step $1$ of \S\ref{sec:proof_of_ess_smth_case}. Define $\tau$ and $\FD_{V}$ as in Step $3$ of \S\ref{sec:proof_of_ess_smth_case} and let $\FD_{V,\ell} = \sqcup_{j=1}^{\ell-1}\tau^{j}(\FD_{V})$. Let $k >0$ be a sufficiently large number so that $\t{\phi}^{k}(\FD_{V,\ell}) \cap \FD_{V,\ell} = \emptyset$ which is possible due to the fact that all points of $V$ have positive rotation numbers. 

We now discuss the only step that is different from \S\ref{sec:proof_of_ess_smth_case} which is how to construct a finite covering $\Sigma_{N} \rightarrow \Sigma$ so that the corresponding covering map $\t{\Sigma} \rightarrow \Sigma_{N}$ from the universal cover $\t{\Sigma}$ of $\Sigma$ is injective on the set
$$K_{k,\ell}:=\cup_{t\in[0,1]}\t{\phi}_{t}^{k}(\FD_{V,\ell}).$$

Step $*$. Let $\pi_{1}(\Sigma) = \langle x_{1},y_{1}, \dots, x_{r}, y_{r} | \Pi_{j=1}^{r}[x_{j},y_{j}] =1\rangle$ be the fundamental group of $\Sigma$ where $r$ is the genus of $\Sigma$. Given a compact subset $K \subset \t{\Sigma}$ we would like to find a finite covering $\Sigma_{N}\rightarrow \Sigma$ so that the covering projection $\t{\Sigma}\rightarrow \Sigma_{N}$ is injective on $K$. This follows from the following claim where $\mathrm{wl}(\gamma)$ is the word length of $\gamma$ with respect to the generating set $\{x^{\pm 1}_i, y^{\pm 1}_i\}_{1 \leq i \leq r}$: 

Claim. For every integer $N >0$ there exists a finite index normal subgroup $G_{N}\subset \pi_{1}(\Sigma)$ that does not contain any element $\gamma \in \pi_{1}(\Sigma)$ with $\mathrm{wl}(\gamma) \leq N$.

Proof of Claim. A group $G$ is called residually finite if for every element $g\in G$ there is a normal subgroup $G'\subset G$ with finite index so that $g\not\in G'$. It is known that the fundamental group of compact orientable surfaces is residually finite, see \cite{hempel-72} for a very short and beautiful proof. The number of elements $\gamma \in \pi_{1}(\Sigma)$ with word length $\mathrm{wl}(\gamma)\leq N$ is bounded, let 
$$\mathrm{wl}^{-1}([0,N]) = \{\gamma_{1}, \dots, \gamma_{L}\},$$ 
be all the elements with $\mathrm{wl}\leq N$. Let $H_{1}, \dots, H_{L}$ be the normal subgroups with finite index so that $\gamma_{i}\not\in H_{i}$. Consider the map
$$\pi_{1}\oplus \dots \oplus \pi_{L}: \pi_{1}(\Sigma) \rightarrow \frac{\pi_{1}(\Sigma)}{H_{1}} \oplus \dots \oplus \frac{\pi_{1}(\Sigma)}{H_{L}},$$
where $\pi_{i}: \pi_{1}(\Sigma)\rightarrow \pi_{1}(\Sigma)/H_{i}$ is the quotient map for $i = 1,\dots, L$. Define 
$$G_{N}:= \ker(\pi_{1}\oplus \dots \oplus \pi_{L}).$$
Then, it is clear that $G_{N}$ is a normal subgroup of $\pi_{1}(\Sigma)$ with finite index and $\gamma_{i}\not\in G_{N}$ for all $i=1,\dots, L$. \qed 

We continue Step $*$. Let $N_{K}$ be a large enough number so that 
$$K \subset \sqcup_{\mathrm{wl}(\gamma)\leq N_{K}} \gamma(\FD),$$
where $\FD\subset \t{\Sigma}$ is a fundamental domain. Define $G_{K} := G_{N_{K}}$ where the latter is given by the Claim above. Let $\Sigma_{K} := \t{\Sigma}/G_{K}$. Then, it is clear that $\Sigma_{K}\rightarrow \Sigma$ is a finite covering, of degree $[\pi_{1}(\Sigma) : G_{K}]$, hence $\Sigma_{K}$ is a closed orientable surface of genus $[\pi_{1}(\Sigma) : G_{K}](r-1) + 1$, and it is clear that the projection map $\t{\Sigma}\rightarrow \Sigma_{K}$ is injective on $K$ which finishes Step $*$. 

The rest of the proof is similar to Step $3$ of \S\ref{sec:proof_of_ess_smth_case}. Indeed, let $\Sigma_{k,\ell}$ be the finite covering $\Sigma_{K}$ for $K = K_{k,\ell}$ given by the claim above and let $\t{\phi}_{k,\ell}$ be the lift of $\phi$ to $\Sigma_{k,\ell}$ obtained by the isotopy $\{\phi_{t}^{k}\}$. By the energy-capacity inequality, the choice of $\Sigma_{k,\ell}$, and Corollary \ref{cor:spec_norm_covering_spc} we have
$$\gamma(\phi^{k}) \geq \gamma(\t{\phi}_{k,\ell}) \geq \mathrm{area}(\FD_{V,\ell}), \forall \ell>0 \text{ and } k = k(\ell) \text{ large enough}.$$
Therefore, choosing $\ell$ arbitrarily large we conclude $\liminf_{k\geq 1}\gamma(\phi^{k}) = \infty.$

\section{On rigidity of autonomous Hamiltonians in higher dimensions}\phantomsection\label{sec:autonomous_ham_higher_dim} 
This section studies the problem of spectral rigidity of Hamiltonian diffeomorphisms in higher dimensions. In particular, we make some progress in the case of autonomous Hamiltonian diffeomorphisms on e.g. product of surfaces. The main theorem of this section is Theorem \ref{mainthm:auto_on_symp_hyperb} below. 

Let us first introduce the setting. Let $(M,\omega)$ be a closed symplectic manifold and let $\pi: \t{M} \rightarrow M$ be the universal covering space equipped with the form $\t{\omega} = \pi^{*}\omega$. Pick a Riemannian metric $g$ on $M$ and let $\t{g}$ be the lifted metric on $\t{M}$. We say that $(M,\omega)$ is \emph{symplectically hyperbolic} if $\t{\omega} = d\t{\lambda}$ for some smooth one form $\t{\lambda}\in \Omega^{1}(\t{M},\R)$ with $|\t{\lambda}|$ bounded, i.e., there exists a constant $C_{M} > 0$ so that $|\t{\lambda}(\xi)| \leq C_{M}|\xi|$ for all $\xi\in T\t{M}$ where the norms are with respect to $\t{g}$. See the work of Polterovich \cite{Polterovich-02}. 

One particular important property of symplectically hyperbolic manifolds is that they are symplectically atoroidal and hence aspherical, $\omega$ vanishes on tori and in particular on spheres inside $M$, this follows from the combination of the following facts: if $\omega$ is symplectically hyperbolic then so is $\omega + df$ for every smooth function $f$, pull back of a symplectically hyperbolic form is also symplectically hyperbolic, and that $S^{2}$ and $T^{2}$ with their standard forms are not symplectically hyperbolic. Symplectic hyperbolicity is equivalent to $(M,\omega)$ satisfying a symplectic isoperimetric inequality property;
$$\exists C_{M}>0 \text{ s.t. }\forall f:\bar\D \rightarrow M; \abs{\int_{\D}f^{*}\omega} \leq C_{M}\ell(f|_{\partial\bar\D}),$$
where $\ell$ is the length functional; see the work of Sikorav \cite{sikorav-01} and \cite{MR2547825}; the above isoperimetric inequality is the key to the proof of Theorem \ref{mainthm:auto_on_symp_hyperb}. 

Examples of such manifolds are closed surfaces of genus at least $2$, since the symplectic form $\t{\omega} = dx\wedge dy/y^{2}$ on the upper half plane $\mathbb{H}$ has the bounded primitive $\t{\lambda} = dx/y$ where the metric is $g = (dx^{2} + dy^{2})/y^{2}$. Every symplectic surface bundle $E \rightarrow B$ with fiber being a closed surface of genus at least $2$ is symplectically hyperbolic. Any symplectic manifold $(M,\omega)$ that is symplectically aspherical and $\pi_{1}(M)$ is hyperbolic, in particular $M$ admits a metric of negative sectional curvature, is symplectically hyperbolic; see \cite[Corollary 1.12]{MR2547825}. Symplectic hyperbolicity is preserved under products. Not every atoroidal manifold is symplectically hyperbolic; see \cite[Corollary 4.3]{MR4696270}.

\begin{theorem}\phantomsection\label{mainthm:auto_on_symp_hyperb}
Let $(M,\omega)$ be a closed symplectically hyperbolic manifold and let $(\phi^t)_{t \in \R}$ be an autonomous Hamiltonian flow. Then there exists a constant $\delta > 0$ so that $\liminf_{t\geq 1}\gamma(\phi^{t}) \geq \delta$.
\end{theorem}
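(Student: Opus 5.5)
Write $\phi^t=\phi^t_H$ with $H$ autonomous and non-constant, normalized to have zero mean. I follow the outline in the introduction. Fix small regular values $m=\min H<a<a'$ and $b'<b<\max H=M$, and set $X=\{H\le a\}$, $Y=\{H\ge b\}$. These are invariant under the whole flow, independently of $t$.

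The first step is to replace $tH$, for each $t\ge 1$, by a Hamiltonian $F_t$ with the same spectral norm that is constant on $X$ and on $Y$. Take $F_t=t\,f\circ H$, where $f$ is a smooth function equal to the identity on $[a',b']$ and equal to a constant near the extremal levels. Continue the isotopy $s\mapsto s f+(1-s)\mathrm{id}$, i.e. the path $G_s=t\,f_s\circ H$, from $tH$ to $F_t$. The spectral invariants $c([M],G_s)$ and $c([M],\bar G_s)$ are continuous in $s$ and lie in $\mathrm{Spec}(G_s)$. It therefore suffices to show that $\mathrm{Spec}(G_s)$ is a nowhere dense set that does not depend on $s$. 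The natural approach is to control the actions of the new $1$-periodic orbits, which are $t$-periodic orbits of $H$, and these lie inside $X\cup Y$.

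The second step is an action estimate. Let $x$ be a contractible $1$-periodic orbit of $G_s$ inside $X$. It traces a closed orbit of $H$ of length at most $t\,\|f_s'\|\,|X_H|$. Its action is $G_s(x)-\int_{\text{cap}}\omega$, and the isoperimetric inequality bounds the capping term by $C_M\,\ell(x)$. This alone does not give independence of $s$. The extra input is that on a level set $\{H=c\}$ the period is tied to $f_s'(c)$. So I would choose $f_s$ to be flat, with $f_s'=0$, on all of the modified region, making the new orbits constants, except in thin transition layers. Orbits in those layers have length bounded in terms of $t$, and lifting to $\tilde M$ with the bounded primitive $\tilde\lambda$ gives action $G_s(x)-\int_x\tilde\lambda$. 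I expect the key claim to be that, uniformly in $s$ and $t$, these actions stay outside a fixed window $(\,c([M],tH)-\epsilon,\ c([M],tH)+\epsilon\,)$. This is Proposition \ref{lem:just_const}-type content, and it is the main obstacle, since orbit lengths grow linearly in $t$. To handle it I would make the transition layers have width of order $1/t$, so that $t\,|f_s'|\cdot\text{width}$ stays bounded.

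The third step, once $F_t$ is constant on $X$ and on $Y$, is to compose $\phi^1_{F_t}$ with a Hamiltonian $\theta$ supported in the interior of $X$. The diffeomorphism $\theta$ displaces a fixed ball $B\subset\mathrm{int}X$ into another ball $B'\subset\mathrm{int}X$; the roles of $Y$ are analogous. The same spectrum-fixing argument, with actions of fixed points of $\theta$ controlled because $F_t$ is constant there, shows that $\gamma$ is unchanged. The composite displaces $B$, so Usher's energy–capacity inequality for the spectral norm \cite{usher-bound-depth-13} gives $\gamma(\phi^t)\ge\delta$, where $\delta$ depends only on $B$ and not on $t$.
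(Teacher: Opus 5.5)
There is a genuine gap, located at the step you yourself call the main obstacle. The claim that the actions of the newly created orbits avoid a window around the spectral invariants is never proved, and the remedy you propose does not address it. The length of an orbit of $t f_s\circ H$ on a level $\{H=c\}$ is at most $t\,|f_s'(c)|\sup_{\{H=c\}}|X_H|$. This has nothing to do with the width of the transition layer: with $f_s$ monotone you have $0\le f_s'\le 1$ in the layer regardless. So layers of width $1/t$ gain nothing.

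Note also that $\mathrm{Spec}(G_s)$ does change with $s$, since orbits are created. What you need is only that it does not change near the values of the invariants.

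The missing observation is that you need no control on lengths beyond their linear growth. The value term also grows linearly in $t$, and near an extremal level the area term has a small coefficient. Lifting to $\tilde M$, for an orbit $x$ on a level $c\le a'$ you get
\[
\mathcal{A}_{G_s}(x)=t f_s(c)-\int_{\tilde x}\tilde\lambda\le t\Bigl(a'+C_M\sup_{\{H\le a'\}}|X_H|\Bigr).
\]
Since $\sup_{\{H\le a'\}}|X_H|\to 0$ as $a'\downarrow\min H$, all new actions lie below $t(\min H+\eta)$, with $\eta>0$ as small as you like, uniformly in $s$. The same holds for the orbits created in $B$ in your third step, whose actions are $t\cdot\mathrm{const}+O(1)$. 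This is exactly where the linear isoperimetric inequality enters: with only a quadratic filling bound, the area term would dominate for large $t$.

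Second, this estimate separates the new actions from the spectral invariants only if the invariants are bounded independently of $t$, and nothing in your argument gives that. In general $c([M],tH)$ and $c([\mathrm{pt}],tH)$ can grow linearly in $t$ and approach $t\max H$ or $t\min H$. For instance, they do so when an extremal level contains an essential curve on a surface. For such $t$ no window works, and your claim that $\gamma$ is unchanged is false.

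The argument therefore has to be organized by contradiction, as in the paper:
\begin{itemize}
\item Assume $\gamma(\phi^{n_k})\to 0$ along $n_k\to\infty$. After normalizing $H$, this keeps $c([\mathrm{pt}],n_kH)\le c([M],n_kH)\le c_0$ for a fixed $c_0$.
\item All orbits created near $\max H$ have actions of order at least $n_k(\max H-\eta)>c_0$. This covers both the flattening $\theta_r\circ n_kH$ and the subsequent insertion of the displacing $F$ in the plateau.
\item Continuity of spectral invariants then freezes both invariants, so $\gamma$ is unchanged.
\item The energy--capacity inequality now gives a fixed positive lower bound, which is the contradiction.
\end{itemize}
Because both invariants lie below the new actions, the paper only needs to flatten near the maximum. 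Your two-sided modification of $X$ and $Y$ is unnecessary, and it would additionally require a lower bound on $c([\mathrm{pt}],tH)$.
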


\begin{proof}
Let $H: M \rightarrow \R$ be an autonomous Hamiltonian generating the Hamiltonian diffeomorphism $\phi$. By to shifting $H$ by a constant assume that
$$m := \max H = -\min H > 0.$$ 
Suppose that there is a sequence of numbers $\{n_{k}\}_{k\geq 1},$ $n_k \in [1,\infty),$ with $n_{k}\to \infty$ and $\gamma(\phi^{n_{k}}) \to 0$ as $k\to \infty$. This implies, without loss of generality, that $c([M], n_{k}H) \leq c_{0}$ for some fixed constant $c_{0} >0$ and all large enough $k$; here we have used the fact that symplectic hyperbolic manifolds are symplectically aspherical, and so the spectral norm $\gamma(H) = \gamma(\phi)$ for every Hamiltonian $H$ with time-one map $\phi$. Let $\pi: \t{M} \rightarrow M$ be the universal covering of $M$ and $\t{\omega} = \pi^{*}\omega$. Pick a bounded primitive $\t{\lambda}$ of $\t{\omega}$ that exists by the definition of symplectic hyperbolicity. Fix a metric $g$ on $M$ and define $\widetilde{g}=\pi^{*}g$. Let $\Sigma := H^{-1}(m)$ be the maximum set of $H$ and define 
$$\Sigma_{\delta} = H^{-1}((m -\delta, m]),$$ 
to be a small neighborhood of $\Sigma$. Let $\gamma$ be a $T$-periodic Hamiltonian orbit inside $\Sigma_{\delta}$ and $u$ be a capping of $\gamma$. Let $\widetilde{\gamma}$ and $\widetilde{u}$ be some lifts of $\gamma$ and $u$ to the universal cover $\widetilde{M}$ of $M$. Then, it is easy to check that $\widetilde{\gamma}$ is an $T$-periodic Hamiltonian orbit of $\widetilde{H} = H\circ \pi$. We estimate the action of $\widetilde{\gamma}$ as follows. First, letting $X_{H}$ be the Hamiltonian vector field of $H$ one has $|X_{H}|_{C^{0}(\Sigma_{\delta})} \leq C_{H} \delta$ for some constant $C_{H}$ depending on $H$ and the metric $g$, and $|\widetilde{\lambda}(\xi)| \leq C_{M}|\xi|_{\widetilde{g}}$ for some constant $C_{M}$ depending only on $M$. Then,
$$\begin{aligned}
\mathcal{A}_{H}(\gamma) = \mathcal{A}_{\widetilde{H}}(\widetilde{\gamma}) &= \int_{0}^{T} \widetilde{H}\circ \widetilde{\gamma}(t) dt - \widetilde{\lambda}(\dot{\widetilde{\gamma}}(t))dt\\
&\geq T(\min_{\Sigma_{\delta}} H - C_{M}|X_{H}|_{C^{0}(\Sigma_{\delta})})\\
&\geq T(m - \delta - C_{M}C_{H}\delta).
\end{aligned}$$
By the estimates above, the actions of $T$-periodic Hamiltonian orbits lying inside $\Sigma_{\delta}$, for a fixed small $\delta$, is higher than $c_{0}$ for $T = n_{k}$ and $k$ large enough, in particular their actions are strictly larger than $c([M], n_{k}H)\leq c_{0}$ for every large enough $k$. Let $\theta: \mathbb{R} \rightarrow \mathbb{R}$ be a smooth function so that it is identity on $\{x \leq m -\delta\}$ and it is the constant $m-\delta/2$ on $[m-\delta/2, m]$ and $0 \leq \theta'\leq 1$. Consider the linear interpolation $\theta_{r}(x)= (1-r)x + r\theta$ for $r\in [0,1]$. Similar estimates as above show that for every Hamiltonian $T$-periodic Hamiltonian orbit $\gamma$ of the autonomous function $\theta_{r}\circ H$ inside $\Sigma_{\delta}$ one has,
$$\mathcal{A}_{\theta_{r} \circ H}(\gamma) \geq T(m - \delta - C_{M}C_{\theta_{r}\circ H}\delta).$$ 
Fix a small symplectic ball $B \subset \Sigma_{\delta/2}$. Let $F$ be a radial Hamiltonian supported inside $B$ so that it displaces a ball of capacity $a >0$ for some positive constant $a$ which depends only on $H$ and $\delta$. Define the following parametrized Hamiltonian,
$$G_{T,r,s} = (\theta_{r} \circ TH) \# sF, r,s\in [0,1].$$ 
Let $T = n_{k}$ for some large enough $k$. First fix $s = 0$ and vary $r\in [0,1]$. Since $c([\mathrm{pt}], n_{k}H)\leq c([M], n_{k}H) \leq c_{0}$ for all large enough $k$ and the actions of each $1$-periodic Hamiltonian orbits of $n_{k}H$ inside $\Sigma_{\delta}$, where the function $\theta_{r}\circ n_{k}H$ is deforming, are strictly larger than $c_{0}$, we conclude that the action spectrum $\mathrm{spec}(\theta_{r}\circ n_{k}H)$ changes only away from the spectral invariants $c([\mathrm{pt}], n_{k}H)$ and $c([M], n_{k}H)$ as $r$ varies and so by continuity of the spectral invariants, 
$$c([\mathrm{pt}], n_{k}H) = c([\mathrm{pt}], \theta \circ n_{k}H), c([M], n_{k}H) = c([M], \theta \circ n_{k}H), \forall k\gg 1.$$
Now fix $r =1$ and let $s$ vary in the interval $[0,1]$. As $s$ varies the spectrum of $G_{n_{k},1,s}$ changes only above $n_{k}(m - \delta) - c(B)$ which is strictly larger than $c_{0}$ for large enough $k$; this is because the only new periodic orbits born as $s$ varies from $0$ to $1$ are the orbits inside $B$ whose actions are close to the value of $\theta\circ n_{k}H$ on $\Sigma_{\delta/2}$ minus the capacity $c(B)$ of $B$. Hence, by continuity of spectral invariants,
$$\begin{aligned}
c([\mathrm{pt}], \theta\circ n_{k}H) &= c([\mathrm{pt}], (\theta\circ n_{k}H)\# F),\\
c([M], \theta\circ n_{k}H) &= c([M], (\theta\circ n_{k}H)\# F), \forall k\gg 1.
\end{aligned}$$
In particular we have $\gamma((\theta\circ n_{k}H)\# F)= \gamma(n_{k}H)$ for all $k\gg 1$; here we used the fact that $M$ is symplectically aspherical and so by Poincaré duality it holds that $c([M],-H) = -c([\mathrm{pt}],H)$; see \S\ref{sec:spectral_invariants}. However, by energy-capacity inequality,
$$a \leq \gamma((\theta\circ n_{k}H)\# F)= \gamma(n_{k}H), \forall k\gg 1,$$
which is a contradiction as the right hand side converges to zero as $k\to \infty$.
\end{proof}
The following is an immediate corollary of Theorem \ref{mainthm:auto_on_symp_hyperb} since symplectic hyperbolicity is preserved under products and surfaces of genus at least $2$ are symplectic hyperbolic.
\begin{corollary}
Let $id \neq \phi\in \Ham(\Sigma_{1}\times \dots \times \Sigma_{\ell})$ be an autonomous Hamiltonian diffeomorphism where $\Sigma_{j}$ is a closed oriented surface of genus at least $2$ for all $j=1,\dots, \ell$. Then, there exists a constant $\delta(\phi)>0$ so that $\liminf_{k\geq 1}\gamma(\phi^{k}) \geq \delta(\phi)> 0$.
\end{corollary}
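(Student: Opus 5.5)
The plan is to deduce the corollary directly from Theorem \ref{mainthm:auto_on_symp_hyperb}, applied on $M=\Sigma_{1}\times\dots\times\Sigma_{\ell}$ with the product symplectic form. This requires two inputs: that $M$ is symplectically hyperbolic, and that $\phi$ sits in a non-trivial autonomous flow.

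\textbf{Symplectic hyperbolicity.} Each $\Sigma_j$ with $g(\Sigma_j)\geq 2$ is symplectically hyperbolic, as noted above. Its universal cover is $\mathbb{H}$, with $\t{\omega}_j = \kappa_j\, dx\wedge dy/y^{2}$ for a constant $\kappa_j>0$ (constant curvature metric, after an isotopy of $\omega_j$ via Moser, which does not affect the question). This form has the bounded primitive $\kappa_j\,dx/y$.

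The universal cover of $M$ is $\mathbb{H}^{\ell}$ with the product of the lifted metrics. The sum $\t{\lambda}=\sum_j \mathrm{pr}_j^{*}\t{\lambda}_j$ is a primitive of $\t{\omega}$, and $|\t{\lambda}(\xi)|\leq \sum_j C_j|\xi_j|\leq (\max_j C_j)\sqrt{\ell}\,|\xi|$. So $\t{\lambda}$ is bounded and $M$ is symplectically hyperbolic. If one prefers not to normalize $\omega_j$ to a hyperbolic form, note that hyperbolicity is invariant under exact perturbations $\omega\mapsto\omega+d\alpha$ on a closed manifold, since the lift of $\alpha$ is bounded. Only the cohomology class matters, and $[\omega_j]$ is a multiple of the hyperbolic area class.

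\textbf{Applying the theorem.} Write $\phi=\phi^1_H$ for an autonomous $H$; here "autonomous Hamiltonian diffeomorphism" is interpreted in this sense. Since $\phi\neq\mathrm{id}$, $H$ is non-constant. Theorem \ref{mainthm:auto_on_symp_hyperb} then gives $\delta>0$ with $\liminf_{t\geq1}\gamma(\phi^{t}_H)\geq\delta$. Restricting to integer times $t=k$, where $\phi^{k}_H=\phi^{k}$, yields $\liminf_{k\geq1}\gamma(\phi^{k})\geq\delta(\phi)>0$.

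\textbf{Main obstacle.} The only delicate point is the product estimate for the primitive, together with the reduction to a form with bounded primitive. The rest is bookkeeping. The notion is independent of the chosen metric $g$ on the compact $M$, so any product metric may be used.
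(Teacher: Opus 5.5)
Your proposal is correct and follows the paper's approach: the paper also derives the corollary immediately from Theorem \ref{mainthm:auto_on_symp_hyperb}, using that surfaces of genus at least $2$ are symplectically hyperbolic and that symplectic hyperbolicity is preserved under products. Your explicit bound $|\t{\lambda}(\xi)|\leq (\max_j C_j)\sqrt{\ell}\,|\xi|$ for the product primitive, the reduction to the hyperbolic area class modulo exact forms, and the restriction from real times $t\geq 1$ to integer times $k$ fill in details that the paper leaves implicit.
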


\subsection{Non-autonomous somewhere flat Hamiltonians}\phantomsection\label{sec:non_aut_some_flat}
The following is a result on the spectral rigidity of Hofer-Zehnder type Hamiltonian diffeomorphisms.
\begin{prop}\phantomsection\label{lem:just_const}
Let $(M,\omega)$ be any closed symplectically aspherical manifold. Let $U$ and $V$ be two disjoint open subsets of $M$. Suppose $H: M\times \R \rightarrow \R$ be a Hamiltonian that satisfies,
$$H|_{U\times \R} \equiv c_{1}, H|_{V\times \R}\equiv c_{2}, \text{ for some constants } c_{1} \neq c_{2}.$$ 
Then, there exists a constant $\delta >0$ so that $\liminf_{k\geq 1}\gamma(\phi_{H}^{k}) \geq \delta >0$.
\end{prop}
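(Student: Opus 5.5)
The plan is to reduce to the energy--capacity inequality for the spectral norm, in the spirit of the proof of Theorem \ref{mainthm:auto_on_symp_hyperb}. The difference is that no action estimate from symplectic hyperbolicity is needed: on $U$ and $V$ the Hamiltonian $H$ is constant, so its flow is the identity there and the only one-periodic orbits of the $k$-th iterate inside $U\cup V$ are constant ones.

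\textbf{Setup.} Without loss of generality normalize so that $c_1 > c_2$. The $k$-th iterate $\phi_H^k$ is generated by $H^{(k)}$, the concatenation of $H$ over $k$ time intervals. Rescaled, this equals $kH_{kt}$, which is identically $kc_1$ on $U$ and $kc_2$ on $V$. Since $M$ is aspherical, the constant orbits $x\in U$ have actions $kc_1$ with their trivial capping, and similarly $kc_2$ for $x\in V$; these are the only caps that matter. Moreover $\gamma(\phi_H^k)=c([M],H^{(k)})+c([M],\overline{H^{(k)}})$, and by Poincaré duality $c([M],\overline{H^{(k)}})=-c([\mathrm{pt}],H^{(k)})$.

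\textbf{Argument by contradiction.} Suppose $\gamma(\phi_H^{k_j})\to 0$ along some subsequence. Both spectral invariants of $H^{(k_j)}$ lie within a uniformly bounded window once we account for the mean normalization. Since $c([\mathrm{pt}])\le c([M])$ and their difference tends to $0$, we have $c([M],H^{(k_j)})-c([\mathrm{pt}],H^{(k_j)})\to 0$. The value $\mathrm{Cal}$-normalized $c([M],H^{(k_j)})=:s_j$ may drift, but the two constant levels $k_jc_1$ and $k_jc_2$ are separated by $k_j(c_1-c_2)\to\infty$. Hence at least one of them, call it $k_jc_i$ on $W\in\{U,V\}$, stays at distance at least $k_j(c_1-c_2)/2\to\infty$ from $s_j$.

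Fix a small ball $B\subset W$ and a radial $F$ supported in $B$ with $\phi_F(B')\cap B'=\emptyset$ for a smaller ball $B'$, or more simply with $\gamma(F)$ bounded below by the spectral capacity of some fixed ball. Consider the path $G_s=H^{(k_j)}\#sF$ for $s\in[0,1]$. Since $H^{(k_j)}$ generates the identity on $W$, the new orbits created as $s$ varies lie in $B$, and their actions are $k_jc_i$ plus a quantity bounded by $c(B)$. For $j$ large these actions lie outside a neighborhood of $s_j$ and of $c([\mathrm{pt}],\cdot)$. By continuity of spectral invariants together with spectrality (the spectrum is nowhere dense), $c([M],G_s)$ and $c([\mathrm{pt}],G_s)$ cannot jump across the gap, so they remain constant in $s$. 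Thus $\gamma(G_1)=\gamma(\phi_H^{k_j})$.

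\textbf{Concluding the contradiction.} Because $G_1$ restricted to $W$ is $kc_i+F$, the support of $F$ is displaced inside $B$. Alternatively, one can note that $\phi_{G_1}=\phi_H^{k_j}\phi_F$ with $\phi_H^{k_j}$ the identity on $B$. Applying the energy--capacity inequality to $\phi_{G_1}$ and a smaller ball displaced by $\phi_F$ gives $\gamma(G_1)\ge a>0$. This contradicts $\gamma(\phi_H^{k_j})\to 0$.

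\textbf{Main obstacle.} The delicate step is justifying that the spectral invariants stay locked as $s$ varies. This needs two things. First, the set of action values attained during the deformation must be disjoint from a neighborhood of the two invariants; this requires controlling orbits in $B$ whose action shift is at most $c(B)$, and checking that orbits outside $B$ are unchanged. Second, one must handle the possibility that $s_j$ is near $k_jc_i$ for both $i$. That is excluded by the growing gap $k_j(c_1-c_2)$, since $c([M])-c([\mathrm{pt}])\to 0$ forces the two invariants to lie close together.
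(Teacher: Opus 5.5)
Your proposal is correct and takes essentially the same route as the paper: perturb $H^{(k)}$ by $sF$ supported in a small ball inside a flat region, note that the newly created orbits have actions near $kc_i$ and hence far from both $c([M],\cdot)$ and $c([\mathrm{pt}],\cdot)$, so $\gamma$ is unchanged along the deformation, and conclude with the energy--capacity inequality. The only minor difference is your pigeonhole observation that one of $k_jc_1,k_jc_2$ lies at distance at least $k_j(c_1-c_2)/2$ from $c([M],H^{(k_j)})$; this makes explicit the paper's ``without loss of generality'' bound $c([M],H^{(n_k)})\le c_0$ and the accompanying choice of $U$ rather than $V$.
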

\begin{proof}
Assume the contrary so that $\liminf_{k\geq 1}\gamma(\phi_{H}^{k}) = 0$. Up to shifting $H$ by a constant assume that $c_{1} = -c_{2} = c > 0$. Let $\{n_{k}\}_{k\geq 1}$ be a sequence of number so that $n_{k} \to \infty$ and $\gamma(\phi_{H}^{n_{k}})\to 0$ as $k\to \infty$. Up to passing to a subsequence and without loss of generality assume that
$$c([M], H^{(n_{k})}) \leq c_{0}, \forall k\gg1,$$
where $c_{0} >0$ is some constant. Let $B\subset U$ be a small fixed symplectic ball and let $F$ be a small radial map whose time-one map displaces a ball of capacity $a>0$ where $a$ is some independent constant. Consider the interpolating Hamiltonian $G_{k,s} = H^{(n_{k})}\#sF$. Since $H^{(n_{k})}|_{U\times \R} = n_{k}c > c_{0}$ for large enough $k$, we conclude that the new orbits appearing in the ball $B$ as $s$ varies from $0$ to $1$ have actions bigger than $c([\mathrm{pt}], H^{(n_{k})})$ and $ c([M], H^{(n_{k})})$ and hence by continuity of spectral invariant,
$$c([\mathrm{pt}], H^{(n_{k})}) = c([\mathrm{pt}], H^{(n_{k})}\#F), c([M], H^{(n_{k})}) = c([M], H^{(n_{k})}\#F),$$
which implies that $a \leq \gamma(H^{(n_{k})}\#F) = \gamma(H^{(n_{k})})$ that is a contradiction for large enough $k$ as the right hand side converges to zero.
\end{proof}
\subsection{Completely integrable Hamiltonian systems}\phantomsection\label{sec:comp_integ_ham_sys}
A Hamiltonian system $$(M,\omega, H)$$ consisting of a closed symplectic manifold and a Hamiltonian function $H: M \rightarrow \R$ is called completely integrable if it admits $n = \dim M/2$ commuting integrals of motion $f_{1} = H, f_{2}, \dots, f_{n}: M \rightarrow \R$ that are independent in some open dense subset of $M$; commuting means that they pairwise commute with respect to the Poisson bracket, $\{f_{i},f_{j}\} = 0$ for $i,j\in \{1,2,\dots, n\}$. For a completely integrable system $(M,\omega, H, \{f_{i}\}_{i=1}^{n})$ define $F: M \rightarrow \R^{n}$ by $F = (f_{1}, \dots, f_{n})$; see \cite{da2003symplectic, MR4605050} for details on integrable systems. For a regular value $c\in \R^{n}$ of $F$ the level set $F^{-1}(c)$ is a Lagrangian submanifold of $M$. A rich collection of examples are the symplectic toric manifolds that are shortly discussed in \S\ref{sec:symp_toric_mnfld}. The goal in this section is to study the behavior of the spectral norm under iterations of the particular class of Hamiltonian functions of the form $h\circ F$ where $h: \R^{n} \rightarrow \R$ is any smooth function. We study this problem for monotone symplectic toric manifolds in \S\ref{sec:complex_proj_spc} and \S\ref{sec:ext_to_symp_toric}, and for more general completely Hamiltonian integrable systems in \S\ref{sec:more_gen_int_ham_sys}.

\subsubsection{Symplectic toric manifolds}\label{sec:symp_toric_mnfld}
Let $(M,\omega)$ be a symplectic manifold of dimension $2n$ and let  $\rho: T^{n} \rightarrow \Ham(M,\omega)$ be an effective Hamiltonian action on $M$; see \cite[\S1.4]{da2003symplectic}. Let $\mu: M \rightarrow \R^{n}$ be a moment map of the action $\rho$. We call $(M,\omega, \rho, \mu)$ a symplectic toric manifold. The set of symplectic toric manifolds of dimension $n$, up to an equivalence relation \cite[Definition 1.6.2]{da2003symplectic}, is in one-to-one correspondence with the set of Delzant's polytopes in $\R^{n}$ via the map that takes $(M,\omega, \rho, \mu)$ to $\mu(M)$; see \cite[\S2.1]{da2003symplectic} and \cite{MR984900}. In the following, for a given Delzant polytope $\Delta\subset \R^{n}$, we will sketch a construction of a symplectic toric manifold $(M_{\Delta}, \omega_{\Delta}, \rho_{\Delta}, \mu_{\Delta})$ due to Delzant so that $\mu_{\Delta}(M_{\Delta}) = \Delta$. This construction will be used to study spectral rigidity of Hamiltonian functions of the form $h\circ \mu_{\Delta}$ where $h: \R^{n}\rightarrow \R$ is any smooth function. Note that by Delzant's theorem \cite{MR984900} any symplectic toric manifold will be equivalent to those constructed below. 

Let $\Delta\subset \R^{n}$ be a Delzant polytope, i.e.,
\begin{enumerate}
\item each vertex is adjacent to $n$ edges,
\item there are outward normal vectors to each facet of $\Delta$ that are primitive and integral,
\item for each vertex $v$ the primitive integral vectors $e_{i}(v)$ pointing along the edges of $\Delta$ from $v$ form a $\Z$-basis for $\Z^{n}\subset \R^{n}$.
\end{enumerate}
A Delzant polytope is called \emph{monotone} with monotonicity constant $\kappa >0$ if in addition it satisfies,
\begin{enumerate}[resume]
\item the vertices of $\Delta$ are integral and there is a unique integral point $\kappa^{-1}\xi$ in the interior of $\kappa^{-1}\Delta$,
\item for each vertex $v$, $v + \kappa\sum_{i=1}^{n} e_{i}(v) = \xi$; indeed, $\xi = \frac{1}{n}\sum_{v\in V(\Delta)} v.$ 
\end{enumerate}
Let $d$ be the number of facets of $\Delta$ and $v_{1},\dots, v_{d}$ be the outward primitive and integral normal vectors to its facets. Define the projection $\pi: \R^{d}\rightarrow \R^{n}$ by $\pi(e_{i}) =v_{i}$ and let $\pi:T^{d} \rightarrow T^{n}$ be induced by $\pi$ where $T^{i} = \R^{i}/\Z^{i}, i=d,n$. Let $N = \ker\pi$ and $\iota:N\rightarrow T^{d}$ be the inclusion. Let $\sigma: T^{n}\rightarrow T^{d}$ be a section of $\pi$, i.e., $\pi \circ \sigma = id$; see \cite[\S2.5]{da2003symplectic} for the construction of $\sigma$. So the following short exact sequence splits
$$0\rightarrow N \overset{\iota}\rightarrow T^{d} \underset{\sigma}{\overset{\pi}\rightleftarrows} T^{n} \rightarrow 0,$$
and $T^{d} = \iota(N)\oplus \sigma(T^{n})$. One can describe the polytope $\Delta$ in the dual space,
$$\Delta = \{x\in (\R^{n})^{*} : \langle x, v_{i}\rangle \leq \lambda_{i}, 1\leq i\leq n\},$$
for some vector $\lambda = (\lambda_{1},\dots, \lambda_{n})$. Let $\Delta' := \pi^{*}(\Delta)\subset (\R^{d})^{*}$. Consider the action of $T^{d}$ on $\C^{d}$ defined by $(t_{1},\dots, t_{d})z = (e^{-2\pi it_{1}}z, \dots,e^{-2\pi it_{n}}z_{n})$. This is a Hamiltonian action with moment map
$$\phi: \C^{d} \rightarrow (\R^{d})^{*}; z\mapsto -(\pi \abs{z_{1}}^{2}, \dots, \pi \abs{z_{d}}^{2}) + \lambda.$$
Letting $\mathfrak{n}$ be the Lie algebra of $N$ we have $(\R^{d})^{*} = \mathfrak{n}^{*}\oplus (\R^{n})^{*}$ with projection maps $\iota^{*}$ and $\sigma^{*}$ induced by $\iota$ and $\sigma$. Let $Z = (\iota^{*}\circ\phi)^{-1}(0)$. One has
$$\phi(Z) = \Delta' = \pi^{*}(\Delta).$$
One can show that $N$ acts freely on $Z$ and $M_{\Delta} = Z/N$ is a smooth manifold that admits a symplectic form $\omega_{\Delta}$ so that $\pr^{*}\omega_{\Delta} = \iota^{*}\omega_{\mathrm{st}}$ where $\pr:Z \rightarrow Z/N$ is the projection map. One also observes that $\sigma(T^{n})$ acts on $Z$ and its moment map $\sigma^{*}\circ\phi$ is preserved under the action of $N$ and hence $\sigma(T^{n})$ acts Hamiltonian on $M_{\Delta}$ with moment map $\mu_{\Delta}:M_{\Delta} \rightarrow (\R^{n})^{*}$ which satisfies,
$$\mu_{\Delta}\circ \pr = \sigma^{*}\circ \phi \circ \iota;$$
see \cite[\S2.4(1)]{da2003symplectic}. Defining $\rho_{\Delta}$ to be the action of $\sigma(T^{n})$ on $M_{\Delta}$ we claim that $(M_{\Delta}, \omega_{\Delta}, \rho_{\Delta}, \mu_{\Delta})$ is the desired symplectic toric manifold. One last thing to check is,
$$\mu_{\Delta}(M_{\Delta}) = \sigma^{*}\circ\phi(\iota(Z))=\sigma^{*}\pi^{*}(\Delta) = \Delta.$$
We now describe the group $\pi_{2}(M_{\Delta}, L_{\eta})$ for a an interior point $\eta \in \Delta$ where $L_{\eta} = \mu_{\Delta}^{-1}(\eta)$; this is used in \S\ref{sec:ext_to_symp_toric}. First, it is easy to show that $L_{\eta} = L_{\pi^{*}\eta}/N$ where $L_{\pi^{*}\eta} = \phi^{-1}(\pi^{*}\eta)$. We have $\langle\pi^{*}\eta, e_{i}\rangle = \langle \eta, v_{i}\rangle.$ So,
$$L_{\pi^{*}\eta} = \{z\in Z : -\pi\abs{z_{i}}^{2}+\lambda_{i} = \langle \eta, v_{i}\rangle, 1\leq i\leq d\},$$
is an $d$-dimensional torus and each circle $S_{\pi^{*}\eta,i}:=\{-\pi \abs{z_{i}}^{2} + \lambda_{i}= \langle \eta,v_{i}\rangle, z_{j}=0; j\neq i\}$ inside $L_{\pi^{*}\eta}$ has a unique bounding disk $$D_{\pi^{*}\eta,i} = \{\langle \eta,v_{i}\rangle\leq -\pi \abs{z_{i}}^{2}+\lambda_{i} \leq \lambda_{i}, z_{j}=0; j\neq i\}\subset Z.$$
Hence, it follows that
$$\pi_{2}(Z, L_{\pi^{*}\eta}) = \Z\langle D_{\pi^{*}\eta,1},\dots, D_{\pi^{*}\eta,d}\rangle.$$
To compute the group $\pi_{2}(M_{\Delta}, L_{\eta})$ consider the long exact sequence 
$$\dots \rightarrow \pi_{2}(L_{\eta}) \rightarrow \pi_{2}(M_{\Delta})\overset{a}\rightarrow\pi_{2}(M_{\Delta},L_{\eta}) \overset{c}\rightarrow \pi_{1}(L_{\eta}) \overset{b}\rightarrow \pi_{1}(M) \rightarrow \dots$$
and observe that $\pi_{2}(M_{\Delta}, L_{\eta}) = \mathrm{ker}(c)\oplus \ker(b).$ By a diagram chasing argument involving homotopy long exact sequences for pairs $(L_{\pi^{*}\eta}, Z),(L_{\eta}, M_{\Delta})$ and the fibrations $N\rightarrow Z\rightarrow M_{\Delta}$ and $N\rightarrow L_{\pi^{*}\eta}\rightarrow L_{\eta}$ one shows that $b = 0$ and $\pi_{2}(L_{\eta}) = 0$ which implies $a$ is injective and so
$$\pi_{2}(M_{\Delta}, L_{\eta}) = \ker(c)\oplus \ker(b) =\pi_{2}(M_{\Delta})\oplus \pi_{1}(L_{\eta}),$$
where one also has $\pi_{1}(L_{\eta}) = \pi_{1}(L_{\pi^{*}\eta})/\pi_{1}(N)$. Denote by $D_{\eta,i}$ the projection of the disk $D_{\pi^{*}\eta, i}$ by the map $\pr:Z\rightarrow M_{\Delta}$ for $i=1,\dots, d$. It is clear that $\omega_{\Delta}(D_{\eta,i}) = \omega_{\mathrm{st}}(D_{\pi^{*}\eta,i}) = \lambda_{i} - \langle \eta, v_{i}\rangle$ given that $\pr^{*}\omega_{\Delta} = \iota^{*}\omega_{\mathrm{st}}$. The spheres in $\pi_{2}(M_{\Delta})\subset\pi_{2}(M_{\Delta},L_{\eta})$ are of the form $\sum a_{i}[D_{\pi^{*}\eta, i}]$ where $\sum a_{i}[S_{\eta,i}] = 0$ where $S_{\eta,i}$ is the projection of $S_{\pi^{*}\eta,i}$. So,
$$\begin{aligned}
\omega_{\Delta}(\pi_{2}(M_{\Delta}, L_{\eta})) &= \Z\langle \{\omega_{\Delta}(D_{\eta, i})\}_{i=1}^{d}\rangle\\
&= \Z\langle \{\omega_{\mathrm{st}}(D_{\pi^{*}\eta, i})\}_{i=1}^{d}\rangle,\\
&=\Z\langle \{\lambda_{i} - \langle\eta,v_{i}\rangle\}_{i=1}^{d}\rangle,
\end{aligned}$$
The number $\lambda_{i} - \langle \eta, v_{i}\rangle$ is the affine distance of $\eta$ to the facet of $\Delta$ corresponding to $v_{i}$.

Recall that $(M_{\Delta}, \omega_{\Delta})$ is called monotone if $[\omega_{\Delta}] = \kappa_{\Delta}c_{1}(TM_{\Delta}, \omega_{\Delta})$ for some constant $\kappa_{\Delta}>0$. It is shown in \cite[Proposition 1.10]{MR2507748} that $(M_{\Delta},\omega_{\Delta})$ is monotone if and only if $\Delta$ is monotone with monotonicity constant $\kappa_{\Delta}$ in the sense above; see also \cite[Lemma 3.3]{MR2768658}. In this case one easily checks that $\lambda_{i} - \langle \eta, v_{i}\rangle = \kappa_{\Delta}$ for all $1\leq i\leq d$. So, for a monotone symplectic toric manifold $(M_{\Delta}, \omega_{\Delta})$ and the integral point $\kappa_{\Delta}^{-1}\xi\in \kappa_{\Delta}^{-1}\Delta$ 
$$\omega_{\Delta}(\pi_{2}(M_{\Delta}, L_{\xi}))\subset \kappa_{\Delta}\Z.$$
It follows from the above discussion that $L_{\xi}$ is a monotone Lagrangian torus in $M_{\Delta}$. We will use this special fiber $L_{\xi}$ in the subsequent section to prove spectral rigidity of Hamiltonian functions of the form $h \circ \mu_{\Delta}$. One useful change of coordinate, in the monotone case, is to work with a basis of $\R^{n}$ that is consist of the primitive integral vectors $\{e_{1}(v), \dots, e_{n}(v)\}$ that point along the edges of $\Delta$ from a vertex $v$ and also shift $\Delta$ so that $v$ lies at the point $-(\kappa_{\Delta}, \dots, \kappa_{\Delta})$. In the new set of coordinates, for the shifted $\Delta$, we have $\lambda_{i} = \kappa_{\Delta}$ for all $1\leq i\leq d$, and for any point $\eta\in \Delta$ we have
$$\omega_{\Delta}(D_{\eta,i}) = \kappa_{\Delta} - \langle \eta,v_{i}\rangle, 1\leq i\leq d.$$
Moreover, by item $(\text{v})$ in the definition of a monotone polytope above, in the new coordinate one has $\xi = 0.$

\subsubsection{The case of the sphere}\label{sec:complex_proj_spc}
We first study the case of the two dimensional sphere to show the ideas without getting too technical and in \S\ref{sec:ext_to_symp_toric} we extend the results to all monotone symplectic toric manifolds. In the following let $S^{2}\subset \R^{3}$ be the standard sphere of radius $1/2$, shift up in $z$-axis by $+1/2$, and equipped with the symplectic form
$$\omega:= \frac{1}{\pi}\iota_{\nu}(dx\wedge dy \wedge dz) = \frac{1}{4\pi}\sin(\theta)d\theta \wedge d\phi$$ 
for $\theta\in [0,\pi]$ and $\phi \in [0,2\pi)$, where $\nu$ is the unit normal vector field to $S^{2}$. Let also $z = \frac{1}{2}\cos(\theta)$ be the height function on $S^{2}$. Then, for a smooth function $h: \R \rightarrow \R$ the solution to $\omega(X_{h},-) = -d(h\circ z)$ is of the form  $X = -2\pi h'(z)\partial_{\phi}$. So if $h'(a) \in \Z$ one has closed Hamiltonian orbits on $L_{a} := z^{-1}(a)$ where $a\in[0,1]$.

\begin{remark}
A Hamiltonian diffeomorphism of $\C P^{n}$ is called pseudo-rotation if it has finitely many periodic points, or equivalently, exactly $n+1$ periodic points \cite{MR4413744}. A Hamiltonian pseudo-rotation $\phi$ of $\C P^{n}$ is known \cite{gg-pseudo-rotations} to satisfy the following rigidity property
$$\liminf_{k\geq 1}\gamma(\phi^{k}) = 0.$$
In particular, whenever $\phi$ is the time-one map of $H = h\circ \mu$ where $\mu:\C P^{n}\rightarrow \R^{n}$ is the moment map of the standard Hamiltonian $T^{n}$-action, see below, and $h: \R^{n}\rightarrow \R$ is a linear function, i.e., $\nabla h\equiv \mathrm{const}$, then $\liminf_{k\geq 1}\gamma(\phi^{k}) = 0$. This is because either $\phi$ will be a pseudo-rotation or it will be torsion depending on rationality of the vector $\nabla h$; see \S\ref{sec:pseudo_rot_torsion}.
\end{remark}
The following theorem and the results of \S\ref{sec:ext_to_symp_toric} complement the above remark in the case of $\C P^{n}$, concerning functions of the moment polytope, and study the general case of monotone symplectic toric manifolds.

\begin{theorem}\phantomsection\label{thm:full_int_case_sphere}
Suppose $h:\R\rightarrow \R$ is a smooth non-linear function. Let $h\circ z:S^{2} \rightarrow \R$ generate the autonomous Hamiltonian flow $(\phi^{t})_{t\in \R}.$ Then $$\liminf_{t\geq 1}\gamma(\phi^{t}) >0.$$ 
\end{theorem}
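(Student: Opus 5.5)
Following the outline given in the introduction, we argue by contradiction and assume there are times $t_k\to\infty$ with $\gamma(\phi^{t_k})\to 0$. We then compare two $\phi^{t}$-invariant Lagrangian circles. The first is the monotone equator $L_{1/2}=z^{-1}(1/2)$. Each hemisphere it bounds has area $1/2$, and $L_{1/2}$ has non-vanishing Floer homology, since it is monotone with minimal Maslov number $2$. The second is a circle $L_a$ with $a$ close to $1/2$, chosen near a point where $h''\neq 0$; such a point exists because $h$ is non-linear. The circle $L_a$ is not monotone, so we do not work with it directly. Instead we use the Lagrangian version of the non-vanishing argument of Theorem \ref{mainthm:non_van_floer_hom}, carried out in an action window shorter than $\hbar(L_a,J)$, which is bounded below by the smaller disk area $\min(a,1-a)$. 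For $\gamma(\phi^{t_k})$ small, the double module action equals the module action by $\pss_0([S^2])$, and Proposition \ref{prop:pss_modul_incl_comm_diag} makes the latter non-zero. This yields an index-one capped chord of $t_kH$ on each of $L_{1/2}$ and $L_a$ whose action lies in the window $I+c([S^2],t_kH)$. The window has length about $\hbar/2$, with $\hbar$ uniform in $k$. For the non-monotone $L_a$ we must work in small windows without disk bubbling, which is exactly why the window length is at most $\hbar$. We also need a common reference level $c$ for both circles, which the double module action provides.

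Next we compute the actions of these chords explicitly. On $L_a$ the flow of $tH$ is the rotation by angle $-2\pi t h'(a)$. Choosing the non-degenerate perturbation $tH\#\delta g$ as in Theorem \ref{thm:action_of_index_one_chords}, the index-one generators are identified up to recapping, and their actions are
\[
t\,h(a) - t\,h'(a)\,\omega(D_a) + m\,\omega(D_a) + (\text{correction from the Maslov normalisation}) + e(\delta), \qquad m\in\Z ,
\]
where $D_a$ is a disk bounded by $L_a$ of area $a$ (or $1-a$). This is the usual toric formula $t(h(a)-h'(a)a)$ modulo the period lattice generated by the disk areas, and the integer $m$ is pinned down by requiring index exactly $1$. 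The same formula holds for $L_{1/2}$ with $a=1/2$. For a toric fiber the chord of index one is unique modulo recapping by disks of Maslov index $0$. On $S^2$, however, recapping changes the boundary winding and hence the index, so the index-one chord is essentially unique: there is a family indexed by integers, constrained by index. The upshot is that the difference of actions of the index-one chords on $L_a$ and $L_{1/2}$ equals
\[
t\,\bigl(F(a)-F(1/2)\bigr) \pmod{\text{the lattice generated by } a,\ 1-a,\ 1/2 \text{ with index compatibility}},
\]
where $F(x)=h(x)-x\,h'(x)$ is the Legendre-type function. Since both chords lie in a window of length about $\hbar/2$, this difference must be small compared with $\hbar$ for all $t=t_k$.

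The main obstacle is the final number-theoretic step. We must choose $a$ so that the real number $t\,(F(a)-F(1/2))$, corrected by the allowed integer combinations, which are tied to the integers $\lfloor t h'(a)\rfloor$ and $\lfloor t h'(1/2)\rfloor$ through the index constraint, stays a definite distance from $0$ for all large $t$. Because $F'(x)=-x\,h''(x)$, the quantity $F(a)-F(1/2)$ varies non-trivially with $a$ near a point where $h''\neq 0$. Our first attempt is to pick two nearby candidate circles $L_a$ and $L_{a'}$, so that each candidate produces such a constraint. The two constraints cannot both be satisfied: if both action differences were small, then subtracting them would leave an affine expression in $t$ with irrational-type non-zero slope, which is impossible for $t$ large. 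Failing that, we would exploit the freedom in choosing $a$ from a whole interval and a measure/pigeonhole argument to select one $a$ that works for the specific sequence $t_k$. Either way we obtain a contradiction with the assumed smallness of $\gamma(\phi^{t_k})$.
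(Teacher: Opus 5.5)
The overall strategy matches the paper's: argue by contradiction, compare the monotone equator $L_{1/2}$ with a non-monotone circle $L_a$, get index-one chords on both from the double module action in a window fixed by $\psi=\phi^t$ alone, compute their actions, then finish with number theory. However, the action computation is wrong, and the decisive number-theoretic step is missing.

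\textbf{The action computation.} Cap the chord $\tau\mapsto\phi^{t\tau}(p)$ by the capping that stays inside $L_a$; it has area $0$. In the $\phi$-invariant frame $(\partial_\phi,\partial_z)$ the linearized flow of $h\circ z$ is a shear fixing $\partial_\phi$, so it preserves $TL_a$. Hence this capped chord of $tH\#\delta g$ has index $0$ or $1$ regardless of $th'(a)$, and its action is $th(a)+e(\delta)$. The only index-preserving recappings are multiples of $D_a-D'_a$, which has Maslov index $0$ and area $2a-1\neq 0$. So, contrary to what you write, index-one chords on $L_a$ are not essentially unique. Their actions form exactly $th(a)+(1-2a)\Z+e(\delta)$, while on $L_{1/2}$ there is the single value $th(1/2)+e(\delta)$.

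There is no $-t\,a\,h'(a)$ term, and nothing depends on $\lfloor th'(a)\rfloor$. The Legendre value $t(h(a)-ah'(a))$ is the \emph{Hamiltonian} action of the closed orbits on $L_a$ capped by $D_a$ when $th'(a)\in\Z$. In the Lagrangian complex it is compensated by the Maslov term $\mu\hbar/2$; compare Theorem \ref{thm:lag_act_of_ind_one_chords_eq_ham_act}. So the quantity to control is $t(h(a)-h(1/2))$ modulo $(1-2a)\Z$, not $t(F(a)-F(1/2))$ modulo a lattice containing $a$, $1-a$, $1/2$. Choosing $a$ near $1/2$ also works against you, since the best possible separation is half the gap, namely $\frac12-a$.

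\textbf{The number-theoretic step.} Your first attempt cannot work. The integers in the constraints for $L_a$ and $L_{a'}$ are independent, so subtracting only constrains $t(h(a)-h(a'))$ modulo $(1-2a)\Z+(1-2a')\Z$. That group is either dense or a lattice, and in both cases the condition holds for arbitrarily large $t$. Since the sequence $t_k$ is handed to you by the contradiction hypothesis, you cannot avoid such times.

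The missing idea is to let $a$ depend on $t$. Take $[r,s]\subset(0,\frac12)$ (or symmetrically above $\frac12$) on which $h$ is not affine. Then $\Phi_h(a)=\frac{h(1/2)-h(a)}{1-2a}$ is non-constant on $[r,s]$, so $t\Phi_h([r,s])$ is an interval whose length grows linearly in $t$. For large $t$ it contains some $k+\frac12$, and at the corresponding $a\in[r,s]$,
\[
th(1/2)=th(a)+k(1-2a)+\tfrac12(1-2a).
\]
Thus $th(1/2)$ lies at distance exactly $\frac12-a\ge\frac12-s$ from $th(a)+(1-2a)\Z$, uniformly in $t$.

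\textbf{The window length.} This separation must beat the window, so you need the short window, not one of length $\hbar/2$. Run the argument of Theorem \ref{mainthm:non_van_floer_hom} with $I=[-\gamma(\phi^t)-\delta'/2,\delta'/2]$; this still satisfies $0\in I\cap(I+\gamma)$ and $|I|<\hbar$. It places both index-one chords in $[-c([S^2],\overline{tH})-\delta'/2,\,c([S^2],tH)+\delta'/2]$, whose length is $\gamma(\phi^t)+\delta'$. A window of length $\hbar(L_a)/2=a/2$ is useless once $a>1/3$, because then $a/2>\frac12-a$. With $\gamma(\phi^t)<\min\{r/2,(\frac12-s)/2\}$ the contradiction follows.
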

\begin{proof}
It is not hard to show that the area of $S^{2}\cap \{z\leq a\}$ is equal to $a$ for every $a\in [0,1]$. Observe that $L_{1/2}$ is a monotone Lagrangian submanifold. Without loss of generality assume that $h'(1/2)\geq 0$, otherwise we would work with $-h$ which has the same spectral norm as $h$.

Let $g:L_{a} \rightarrow \R$ be a small perfect Morse function with critical points $p$ of index zero and $q$ of index one and extend $g$ naturally to a neighborhood of $L$, we denote the extension by $g$ as well and its Hamiltonian flow by $\psi_{t}$. Let $\delta >0$ be some arbitrary small number. Denote $\psi_{\delta,t} = \psi_{\delta t}.$

Claim $1$. For every $a\in (0,1)$ there exist an index one chord on $L_{a}$ for the Hamiltonian $H\# \delta g$ with action equal to
$$h(a) + e(\delta),$$ 
where $e(\delta) \to 0$ as $\delta \to 0$.

Proof of Claim $1$. Let $H_{\delta} = H\# \delta g$ be a perturbation of $H$ generating $\phi_{\delta, t}:= \phi_{t}\psi_{\delta, t}$ so that $\phi_{\delta}^{-1}(L_{a})$ intersects $L_{a}$ transversely at $\{p,q\}$. Consider the following capped chords
\begin{enumerate}
\item $[p,u_{p}]$ where $u_{p}(s,t) = \phi_{t}\psi_{\delta,t}(p)$.
\item $[q,u_{q}]$ where $u_{q}(s,t) = \phi_{t}\psi_{\delta,t}(q)$.
\end{enumerate}
The index of these capped chords are either $0$ or $1$ depending on the index of $p$ and $q$ and one of them is equal to $1$. The action of both capped chords are equal to $h(a) + e(\delta)$ for some error term $e(\delta) \to 0$ as $\delta \to 0.$ This finishes the proof of Claim $1$.

Applying Claim $1$ above to $\ell h$ for integers $\ell \geq 1$ we observe $\ell h(a) + e(\delta)$ is the action of an index one chord in $\CF(L, (\ell h\circ z)\# \delta g)$.

Claim $2$. There exists $\eps >0$ and an integer $N>1$ depending only on $h$ so that for every $\ell \geq N$ there exists $0<a<1$ so that 
$$d\big(\ell h(1/2), \ell h(a) + (1-2a)\Z\big) \geq \eps,$$
where $d(A,B)$ is the Hausdorff distance between two sets $A,B \subset \R$.

Proof of Claim $2$. By the assumption that $h'$ is not constant there must be a small connected interval $[r,s]\subset (0,1)\backslash \{\frac{1}{2}\}$ so that
\begin{enumerate}
\item $h' \neq 0$ on $[r,s]$.
\item $h'$ is non-constant on $[r,s]$.
\end{enumerate}
Consider the following function,
$$\Phi_{h}:[r,s] \rightarrow \R; a\mapsto \frac{h(1/2) - h(a)}{1 - 2a}.$$
Obviously by the choice of the interval $[r,s]$ the function $\Phi$ is not constant. Observe that $\Phi([r,s])$ is a non-trivial interval. Define 
$$\eps:= \frac{1}{2} - s,$$
and observe that it only depends on $h$. Let $N$ be a large enough constant so that for all $\ell \geq N,$ $\Phi_{\ell h}([r,s])$ contains a number $k_{\ell} + 1/2$ for some $k_{\ell}\in \Z$. In other words, for every $\ell \geq N$ there exists $a\in [r,s]$ so that $\Phi_{\ell h}(a) = k_{\ell} + 1/2.$ This implies
$$h(1/2) = h(a) + k_{\ell}(1 -2a) + \frac{1}{2} - a.$$
Therefore, the distance of $h(1/2)$ from the set $h(a) + (1-2a)\Z$ is exactly $\frac{1}{2} - a \geq \frac{1}{2} - s = \eps$ which finishes the proof of Claim $2$.

We continue proof of the theorem. Let $[r,s]$, $\eps$ and $N$ be as in claim $2$ above. Suppose by contradiction that $\liminf_{k \geq 1}\gamma(\phi^{k}) =0$. Hence, there are infinitely many iterations $k$ for which 
$$\gamma(\phi^{k})<\min\{r/2,\eps/2\}.$$ 
Let $\ell >N$ be such an iteration and define $\psi := \phi^{\ell}$. By Theorem \ref{mainthm:non_van_floer_hom}
$$\HF^{I+c}_{1}(L_{a}, \psi) \neq 0 \text{ and } \HF^{I+c}_{1}(L_{1/2}, \psi) \neq 0$$
where $I = [\gamma(\psi)-\delta'/2,\delta'/2]$ for some arbitrary small $\delta' >0$, $c = c([S^{2}], \psi)$, and $a\in [r,s]$ is given by Claim $2$ above for $\ell$. Hence we must have
$$h(1/2)\in [\gamma(\psi)-\delta'/2, \delta'/2],$$
because $L_{1/2}$ is monotone and the action of any index one chord is equal to $h(1/2)$. The action of index one chords of $\ell h$ on the Lagrangian $L_{a}$ lie in the set $h(a) + (1-2a)\Z$. Thus there is a point of this set in the interval $[\gamma(\psi)-\delta'/2, \delta'/2]$ which implies
$$\eps \leq d(\ell h(1/2), \ell h(a) + (1-2a)\Z)\leq \gamma(\psi) + \delta'<\eps/2+\delta'$$
which is a contradiction once choosing $\delta'$ small enough.
\end{proof}

\begin{remark}
It is interesting to see why the above proof does not apply to rotations about the $z$-axis, that is, when $h$ is linear in $a.$ A direct inspection shows that in this case Claim 2 fails, since if $h(a)= \theta a$ then it is easy to see by breaking into the case where $\theta$ is rational and the case where it is irrational that \[\inf_{\ell\geq 1} d(\ell h(1/2),\ell h(a)+(1-2a)\Z) = d(\frac{\theta}{2} (1-2a)\Z, (1-2a)\Z) = 0.\]
\end{remark}


\subsubsection{Extension to monotone symplectic toric manifolds}\label{sec:ext_to_symp_toric}
Here we prove spectral rigidity of Hamiltonian functions of the form $h \circ \mu$ where $\mu$ is the moment map of a monotone symplectic toric manifold and $h:\R^{n}\rightarrow \R$ is non-linear. Let $(M,\omega)$ be a monotone symplectic toric manifold with monotonicity constant $\kappa$, the moment map $\mu: M \rightarrow \R^{n}$ and the monotone moment polytope $\Delta$; see \S\ref{sec:symp_toric_mnfld}. The following lemma is a higher dimensional analogue of Theorem \ref{thm:action_of_index_one_chords} where $H = h\circ\mu$, $H_{\delta} = H\# \delta g$, $g$ is the extension of a $C^{2}$-small Morse function on $L_{\eta}$, for some $\eta\in \mathrm{int}(\Delta)$, to $M$ and $\delta>0$ is an arbitrary small number. As before $\u{L}_{\eta} = (L_{\eta}, \alpha \equiv q, \Lambda \equiv T_{q}L_{\eta})$ where $q\in L_{\eta}$ is a fixed arbitrary point. We will always work with a set of coordinates in $\R^{n}$, introduced at the end of \S\ref{sec:symp_toric_mnfld}, with respect to which the unique integral point in $\kappa^{-1}\Delta$ is the origin which we denote it by $\xi = 0$. Recall that $L_{\xi} = \mu^{-1}(\xi)$ is a monotone Lagrangian torus. In the following lemma we denote
$$\eta_{i} :=\kappa -\langle \eta, v_{i}\rangle, 1\leq i \leq d$$
where $v_{i}$ is the primitive integral vector normal to the $i$-th facet of $\Delta$. In particular $\xi_{i}=\kappa$ for all $1\leq i\leq d$. The set of the actions of index one chords in the complex $\CF(\u{L}_{\eta}, H_{\delta})$ will be denoted by $\mathcal{S}_{h,\delta}(\eta)$.


\begin{lemma}\phantomsection\label{lem:spec_form}
The spectrum $\mathcal{S}_{h,\delta}(\eta)$ is a subset of the following set,
$$\left\{h(\eta) + 
\big\lfloor\frac{n - \mathrm{ind}(p)}{2}\big\rfloor\eta_{i_{0}} + e(\delta)+\sum_{i=1}^{d-1} k_{i}(\eta_{i+1}-\eta_{i})
\right\}_{p,k_{i}},$$
where $p\in \mathrm{Crit}(g), k\in \Z$, $i_{0}\in \{1,\dots, d\}$ is fixed and arbitrary, $k_{i}\in \Z$ and $e(\delta)\to 0$ as $\delta \to 0$.
\end{lemma}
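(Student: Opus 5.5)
We follow the proof of Theorem \ref{thm:action_of_index_one_chords} and Claim 1 in the proof of Theorem \ref{thm:full_int_case_sphere}, now in dimension $2n$.

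\emph{Step 1: the chords and their reference cappings.} Since $H=h\circ\mu$ is constant on $L_\eta$ with value $h(\eta)$, the flow $\phi_t$ of $H$ preserves $L_\eta$ and acts on it as the linear torus translation by $\nabla h(\eta)$. Perturbing by $\delta g$, the intersection $\phi_\delta^{-1}(L_\eta)\cap L_\eta$ is transverse, and for small $\delta$ its points correspond to $\mathrm{Crit}(g)$. Each chord is $x_p(t)=\phi_t\psi_{\delta,t}(p)$ for some $p\in\mathrm{Crit}(g)$. We give it the reference capping $u_p(s,t)=\phi_{st}\psi_{\delta,st}(p)$, up to concatenation with a path in $L_\eta$ from the anchor $q$ to $p$. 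The action of $[x_p,u_p]$ is $\int_0^1 H_\delta(x_p(t))\,dt-\int u_p^*\omega$. The first term equals $h(\eta)+O(\delta)$. The capping $u_p$ sweeps the orbit through the torus fiber, which is isotropic, so $\int u_p^*\omega=O(\delta)$. Hence the action is $h(\eta)+e(\delta)$.

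\emph{Step 2: the index of the reference capping.} We compute $\mathrm{CZ}([x_p,u_p])$ by the Robbin--Salamon recipe of \S\ref{sec:indices}. The linearized flow along $L_\eta$ splits into $n$ two-dimensional blocks. In action-angle coordinates $\phi_t$ is a shear generated by $\mathrm{Hess}\,h(\eta)$, and it contributes only endpoint half-terms. The perturbation $\delta g$ contributes the Morse term, exactly as in the example computing index $n-\mathrm{ind}(p)$ for a $C^2$-small Morse function on $L$. The expected result is
$$\mathrm{CZ}([x_p,u_p])=n-\mathrm{ind}(p)+\epsilon_p,$$
where $\epsilon_p$ is a bounded correction coming from the shear and the endpoint half-terms. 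This bookkeeping is the step I expect to be hardest. The $\lfloor\cdot\rfloor$ in the statement suggests that half-integer endpoint terms appear and are absorbed by this correction.

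\emph{Step 3: recappings.} Every other capping of $x_p$ is $u_p\# D$ with $D\in\pi_2(M,L_\eta)=\pi_2(M)\oplus\pi_1(L_\eta)$, as computed in \S\ref{sec:symp_toric_mnfld}. Write $D=\sum_i a_iD_{\eta,i}$. Each $D_{\eta,i}$ has Maslov index $2$ and area $\eta_i$, so the recapping changes the index by $-2\sum a_i$ and the action by $-\sum a_i\eta_i$.

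\emph{Step 4: the index-one condition.} Requiring index one forces $\sum_i a_i=m_p$, where $m_p$ is fixed by Step 2: $2m_p=\mathrm{CZ}([x_p,u_p])-1$, rounded to an integer. Now rewrite $\sum a_i\eta_i$ using the fixed index $i_0$. Since $\sum_i a_i=m_p$, we have $\sum a_i\eta_i=m_p\eta_{i_0}+\sum_i a_i(\eta_i-\eta_{i_0})$. Each difference $\eta_i-\eta_{i_0}$ is a telescoping sum of consecutive differences $\eta_{j+1}-\eta_j$. Therefore
$$\sum_i a_i(\eta_i-\eta_{i_0})=\sum_{j=1}^{d-1}k_j(\eta_{j+1}-\eta_j)$$
for some integers $k_j$. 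With $m_p=\lfloor(n-\mathrm{ind}(p))/2\rfloor$, up to the sign convention that the $k_j$ absorb, this gives exactly the set in the statement.
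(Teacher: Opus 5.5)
Your architecture matches the paper's. You use reference cappings inside the fiber with action $h(\eta)+e(\delta)$. You recap by $\pi_2(M,L_\eta)$, which is generated by the Maslov-$2$ disks $D_{\eta,i}$ of area $\eta_i$. You then split a recapping class into $m\,D_{\eta,i_0}$ plus a Maslov-zero part; your telescoping is equivalent to the paper's remark that the kernel of the Maslov class is generated by the $D_{\eta,i+1}-D_{\eta,i}$.

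\textbf{The gap is Step~2.} You leave an unknown correction $\epsilon_p$ and then read $m_p$ off the statement. So the coefficient $\lfloor (n-\mathrm{ind}(p))/2\rfloor$, which is the actual content of the lemma, is not proved. The computation is short.
\begin{itemize}
\item Action--angle coordinates $(\theta,I)$ near $L_\eta$ trivialize $TM|_{L_\eta}$ with $TL_\eta=\{\delta I=0\}$ constant. Since $u_p$ lies in $L_\eta$, every boundary piece built from $TL_\eta$ is then constant.
\item The linearized flow of $h\circ\mu$ is a shear $(\delta\theta,\delta I)\mapsto(\delta\theta+t\,C\,\delta I,\delta I)$, with $C$ a constant multiple of $\mathrm{Hess}\,h(\eta)$. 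This shear is the identity on $TL_\eta$.
\item Since $g$ is extended independently of $I$, the path $d\phi_{H_\delta,t}(T_pL_\eta)$ is the graph over $TL_\eta$ of a constant multiple of $t\delta\,\mathrm{Hess}\,g(p)$, up to $O(\delta^2)$. It is nondegenerate for $t\in(0,1]$, so it has the same crossing data as $\delta g$ alone, i.e.\ as the Morse example of \S\ref{sec:indices}.
\end{itemize}
Hence $\mathrm{CZ}([x_p,u_p])=n-\mathrm{ind}(p)$ exactly. So $\epsilon_p=0$: the shear contributes nothing, not even endpoint half-terms. This is what the paper uses implicitly through Claim~1 of Theorem~\ref{thm:full_int_case_sphere}.

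\textbf{The floor comes from parity, not from half-integer terms or rounding.} Every class in $\pi_2(M,L_\eta)$ has even Maslov index. So $x_p$ admits an index-one capping only when $n-\mathrm{ind}(p)$ is odd, and then the recapping class has Maslov index $n-\mathrm{ind}(p)-1=2\lfloor (n-\mathrm{ind}(p))/2\rfloor$. When $n-\mathrm{ind}(p)$ is even, $x_p$ simply has no index-one capping and contributes nothing to $\mathcal{S}_{h,\delta}(\eta)$. That is harmless, since the lemma only asserts an inclusion. There is nothing to ``round''.

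\textbf{The sign of $m_p\eta_{i_0}$ is not absorbed by the $k_j$.} The $k_j$ can absorb a sign on the Maslov-zero part, but not on $m_p\eta_{i_0}$. At $\eta=0$ all $\eta_i=\kappa$, the differences vanish, and the sign of $m_p\kappa$ is visible in the set. That sign must be fixed by the recapping convention of Theorem~\ref{thm:action_of_index_one_chords}. There, lowering the index by $2r$ via the Maslov-$2$ disk raises the action by $r$ times its area, which gives the $+$ sign in the statement.
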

\begin{proof}
The proof of this lemma is very similar to that of Claim $1$ in the proof of Theorem \ref{thm:full_int_case_sphere}. The only new input are, first to observe that $L_{\eta}$ and $\phi_{\delta}^{-1}(L_{\eta})$ intersect transversely at the critical points of $g:L_{\eta} \rightarrow \R$ whose Morse indices vary in the set $\{0,1,\dots, n\}$, and second, to consider all the re-cappings with Maslov index zero and non-zero symplectic areas. According to the discussion in \S\ref{sec:symp_toric_mnfld} the period group $\omega(\pi_{2}(M, L_{\eta}))$ is generated by the areas of the disks $\{D_{\eta, i}\}_{i=1}^{d}$, where $d$ is the number of facets of $\Delta$, since $\omega(D_{\eta,i}) = \eta_{i}$ for $1\leq i\leq d$,
$$\omega(\pi_{2}(M,L_{\eta})) = \Z\langle \eta_{1}, \dots, \eta_{d}\rangle.$$
Each $D_{\eta, i}$ has Maslov index $2$. Therefore, the kernel of the Maslov class is generated by $\{D_{\eta,i+1}-D_{\eta,i}\}_{i=1}^{d-1}$ and the area of these generators are $\eta_{i+1} - \eta_{i}$. Hence, the lemma is proved.
\end{proof}

\begin{theorem}\phantomsection\label{thm:full_int_case_proj_space1}
Let $(M,\omega)$ be a monotone symplectic toric manifold with moment map $\mu:M \rightarrow \R^{n}$. Let $h: \R^{n} \rightarrow \R$ be a smooth function which is not affine on the image of $\mu.$ If $h\circ \mu : M\rightarrow \R$ generates the Hamiltonian flow $(\phi^{t})_{t\in\R}$ then $\liminf_{t\geq 1}\gamma(\phi^{t}) >0.$
\end{theorem}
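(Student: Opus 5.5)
We follow the proof of Theorem \ref{thm:full_int_case_sphere}, with Lemma \ref{lem:spec_form} replacing Claim $1$. Work in the coordinates fixed at the end of \S\ref{sec:symp_toric_mnfld}, so that $\xi=0$, $\xi_i=\kappa$ for all $i$, and $L_\xi$ is the monotone toric fiber. The two Lagrangians will be $L_\xi$ and one further fiber $L_\eta$, where $\eta$ is an interior point chosen depending on the iterate $\ell$. Suppose, for contradiction, that $\gamma(\phi^{t_k})\to 0$ along some sequence $t_k\to\infty$. Since each fiber is invariant under the whole flow, real times cause no extra difficulty: Lemma \ref{lem:spec_form} applies to $t h$ for every real $t$. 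Following the argument for $S^2$ (the version of Theorem \ref{mainthm:non_van_floer_hom} for monotone tori, via the double module action and Proposition \ref{prop:pss_modul_incl_comm_diag}), $\gamma(\phi^{t})$ small gives nonvanishing of $\HF^{I+c}(L_\xi,\phi^t)$ and $\HF^{I+c}(L_\eta,\phi^t)$ in the same degree. The window $I$ has length about $\gamma(\phi^t)$ and is common to both. We would use degree $n$, the class of the minimum of the Morse function on the fiber; the statement for $S^2$ is the case $n=1$. The window must be smaller than $\hbar(L_\eta)$, which is bounded below by $\min_i\eta_i$. We therefore keep $\eta$ in a fixed compact subset of the interior of $\Delta$, as was done with $[r,s]$ for $S^2$.

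By Lemma \ref{lem:spec_form}, the chords in the relevant degree have the following actions. On $L_\xi$ the action is $th(0)+c_0\kappa+e(\delta)$, with no ambiguity because every difference $\xi_{i+1}-\xi_i$ vanishes. On $L_\eta$ the actions lie in
\[
t h(\eta)+c_0\eta_{i_0}+\Lambda_\eta+e(\delta), \qquad \Lambda_\eta=\Z\langle \eta_{i+1}-\eta_i\rangle=\Z\langle \langle\eta,v_i-v_{i+1}\rangle\rangle .
\]
The contradiction will come from showing that, for all large $t$, there is an $\eta$ in the fixed compact set with
\[
d\bigl(t(h(0)-h(\eta))+c_0(\kappa-\eta_{i_0}),\ \Lambda_\eta\bigr)\ \ge\ \eps>0,
\]
where $\eps$ depends only on $h$. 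This is the analogue of Claim $2$.

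The number-theoretic step, Claim $2$, is where we expect the main difficulty. We would reduce it to one variable by restricting to a segment $\eta=s w$, with $w$ a fixed direction and $s$ ranging over a small interval $[r,s_1]$ away from $0$. Along the segment, $\Lambda_{sw}=s\,\Lambda_w$, where $\Lambda_w=\Z\langle\langle w,v_i-v_{i+1}\rangle\rangle$. We choose $w$ rational, so that $\Lambda_w=\beta\Z$ for some $\beta>0$. Also $\kappa-\eta_{i_0}=s\langle w,v_{i_0}\rangle$. The quantity becomes
\[
d\Bigl(\tfrac{t(h(0)-h(sw))}{s}+c_0\langle w,v_{i_0}\rangle,\ \beta\Z\Bigr)\cdot s .
\]
Because $h$ is not affine on $\Delta$, $\nabla h$ is not constant there, so we can pick a rational direction $w$ and an interval of $s$ on which $\Phi(s)=(h(0)-h(sw))/s$ is non-constant. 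This is the toric analogue of $\Phi_h$, a secant slope of $h$ through $0$. Then $t\Phi([r,s_1])$ has length growing linearly in $t$. By continuity, for all large $t$ it contains a point congruent to $\beta/2-c_0\langle w,v_{i_0}\rangle$ modulo $\beta$. This yields distance at least $r\beta/2=:\eps$.

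The obstacle is guaranteeing that such a direction $w$ exists with $\Phi$ non-constant. If $s\mapsto h(sw)$ were affine along every rational ray from $\xi$, then by density and continuity $h$ would be affine on every ray through $0$. That alone does not force $h$ to be affine. If it happens, we will replace the base fiber: the base point $0$ must be the monotone fiber, but the secant can be taken from $0$ to points $\eta$ off a single ray. We would handle this by taking $\eta=u+sw$ with $u$ fixed and varying the line in $s$. Non-affineness of $h$ then gives some rational line along which $s\mapsto h(0)-h(u+sw)$ is not affine in the needed sense, while $\Lambda_\eta$ and $\eta_{i_0}$ remain affine in $s$. The same intermediate-value argument then applies after a little bookkeeping.

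Finally we compare the window with the spectra. Nonvanishing forces both $th(0)+\dots$ and some element of the $L_\eta$-spectrum to lie in a common window of length $\gamma(\phi^t)+\delta'$. This contradicts the lower bound $\eps$ once $\gamma(\phi^{t_k})<\eps/2$, which holds for all large $k$.
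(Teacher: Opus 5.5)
Your overall route matches the paper's: the monotone fiber $L_0$ against a fiber on a rational ray through $0$, Lemma \ref{lem:spec_form}, a common action window of length about $\gamma(\phi^t)$, and an intermediate value argument for the secant slope $\Phi$. The gap is in the number-theoretic step. You treat the Maslov-$2$ coefficient $c_0$ as determined by the degree, and it is not. In Lemma \ref{lem:spec_form} the coefficient of $\eta_{i_0}$ depends on the Morse index of the critical point $p$ of $g$ carrying the chord. With the thin capping that chord has degree $n-\mathrm{ind}(p)$, and reaching a prescribed degree requires recapping by a number of Maslov-$2$ disks that depends on $\mathrm{ind}(p)$. Every Morse function on $T^n$ has critical points of all indices. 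So in degree $n$, where $\mathrm{ind}(p)=0,2,4,\dots$ contribute, there are $\lfloor n/2\rfloor+1\geq 2$ possible coefficients once $n\geq 2$. Even on $L_0$ the degree-$n$ actions form a finite set $th(0)+C\kappa$ with $|C|\geq 2$, so ``no ambiguity'' is false.

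The two non-vanishing statements can be witnessed by chords with different coefficients $c\neq c'$. What must then be bounded below is
\[
d\bigl(t(h(0)-h(sw))+c's\langle w,v_{i_0}\rangle+(c-c')\kappa,\ \Lambda_{sw}\bigr).
\]
Placing $t\Phi(s)$ at the midpoint of a period does not control the shift $(c-c')\kappa$ modulo $s\beta$. For instance, if $c'=c_0$ and $c-c'=\pm1$, the distance vanishes whenever $\kappa\in(\Z+\tfrac12)s\beta$. Your intermediate value argument also gives no control over where in $[r,s_1]$ the point $s$ falls.

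The missing ingredient is the paper's condition (ii) on the interval, together with a small offset:
\begin{enumerate}
\item take $\zeta\in\Z^n$, so that all relevant actions on $L_{a\zeta}$ lie in $\ell h(a\zeta)+\Sigma_a$ with $\Sigma_a=\bigcup_{0\leq i\leq n}(a\Z+i\kappa)$, while those on $L_0$ lie in $\ell h(0)+\{0,\dots,n\}\kappa$;
\item choose $[r',s']$ avoiding the resonant values $a=j\kappa/k$ with $1\leq j\leq n$, so that $\Sigma_a$ has minimal gap at least $2\rho>0$ uniformly;
\item use the intermediate value theorem to get $\ell\Phi_h(a)\in\Z+\rho$, so that $\ell(h(0)-h(a\zeta))$ is offset from $a\Z$ by $\rho a$, less than half that gap.
\end{enumerate}
Then every element of $\mathcal{S}_{\ell h}(0)$ is at distance at least $\rho a\geq\rho r'$ from $\mathcal{S}_{\ell h}(a\zeta)$, for all pairs of coefficients at once.

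Separately, the difficulty you anticipate in choosing the direction does not exist, and your fallback would not work. If $a\mapsto h(a\zeta)$ were affine for every $\zeta$, its slope would equal its derivative at $a=0$. Then $h(a\zeta)=h(0)+a\langle dh(0),\zeta\rangle$, so $h$ would be affine; this is exactly how the paper produces an integral $\zeta$ with $a\mapsto h(a\zeta)$ non-affine.

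By contrast, take $\eta=u+sw$ with $u$ not parallel to $w$. The set $\Lambda_\eta=\{\langle\eta,\gamma\rangle:\gamma\in\Gamma\}$, with $\Gamma=\Z\langle v_i-v_{i+1}\rangle$, is dense in $\R$ for all but countably many $s$. This is because the $v_i$ affinely span $\R^n$, so $\Gamma$ has rank $n$. The distance you want to bound is then zero, which is precisely why the second fiber must sit on a rational ray through the monotone point.

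Finally, $L_{a\zeta}$ is not monotone, so the non-vanishing you invoke must be the small-window statement for arbitrary closed Lagrangians, with $\hbar(L_{a\zeta},J)$ bounded below uniformly in $a$. The paper gets this from Lemma \ref{lem:fukaya_trick}. Your bound $\hbar\geq\min_i\eta_i$ would need an external input such as the classification of holomorphic disks for the toric complex structure.
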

\begin{proof} 
The case of $n=1$, i.e., $M = S^{2}$, is proved in Theorem \ref{thm:full_int_case_sphere}, so assume $n\geq 2$. Since $\nabla h$ is not constant one can find a vector $\zeta\in \Z^{n}$ so that $h(a\zeta)$ is not linear as a function of $a\in \R$. Indeed, it is enough to find any direction $0\neq \zeta\in \R^{n}$ so that $h(a\zeta)$ is not linear in $a$, then by a small perturbation and scaling one can assume $\zeta\in \Z^{n}$. Suppose that for all $\zeta\in \R^{n}$ there exist constants $s_{\zeta},b_{\zeta}\in \R$ so that $h(a\zeta) = s_{\zeta}a + b_{\zeta}$. By evaluating at $a = 0,1,$ we obtain 
$$h(a\zeta) = a(h(\zeta) - h(0)) + h(0).$$
Differentiating at $a = 0$ one finds that $h(\zeta) = \langle dh(0), \zeta\rangle + h(0)$ for every $\zeta \in \R^{n}$ which is a contradiction to $\nabla h$ being non-constant. 

Denote by $\kappa$ the monotonicity constant of $M$. Let $[r,s]\subset (0,1)$ be a small enough interval for which
\begin{enumerate}
\item $h(a\zeta)$ is not linear on every $[r',s']\subset[r,s]$,
\item $[r,s] \cap \{\frac{\kappa}{k}, \frac{2\kappa}{k}, \dots, \frac{n\kappa}{k}\} = \emptyset$ for all $k\in \Z$.
\end{enumerate}
Let $\mathcal{S}_{h}(\eta) = \lim_{\delta \to 0}\mathcal{S}_{h,\delta}.$ For every $\eta\in \Delta := \mu(M)$ define
$$\mathcal{B}_{h}(\eta) := \{h(\eta) + i\eta_{i_{0}}, 0\leq i \leq n\},$$
where $i_{0}\in \{1,\dots, d\}$ is arbitrary and fixed. Observe that $\mathcal{B}_{h}(\eta)$ is a finite set of numbers and by Lemma \ref{lem:spec_form}
\begin{equation}\phantomsection\label{eq:spect_eq}
\mathcal{S}_{h}(\eta) \subset \mathcal{B}_{h}(\eta) + \sum_{1\leq i\leq d-1}(\eta_{i+1} - \eta_{i})\Z.
\end{equation}
For the particular case $\eta = 0, a\zeta$ we have
\begin{equation}\phantomsection\label{eq:spect_eq2}
\mathcal{S}_{h}(0) \subset \mathcal{B}_{h}(0) \text{ and } \mathcal{S}_{h}(a\zeta) \subset \mathcal{B}_{h}(a\zeta) + a\Z.
\end{equation}
The latter is by \eqref{eq:spect_eq} and that 
$$(a\zeta)_{i+1} - (a\zeta)_{i}= (\kappa -a\langle\zeta,v_{i+1}\rangle) - (\kappa -a\langle\zeta,v_{i}\rangle)\in a\Z, \forall i \in \{1,\dots, d-1\}.$$
Claim $1$. Let $[r',s'] \subset [r,s].$ There exists $\eps >0$ and $N>1$ so that for every $\ell \geq N$ there is $a\in [r',s']$ such that for every $A\in \mathcal{S}_{\ell h}(0)$
$$d(A, \mathcal{S}_{\ell h}(a\zeta))\geq \eps.$$
We postpone the proof of Claim $1$ to the end and explain how it implies the theorem. Suppose by contradiction $\liminf_{t\geq 1}\gamma(\phi^{t})=0$. 


In view of Lemma \ref{lem:fukaya_trick} we may pick an interval $[r',s'] \subset [r,s]$ such that \[\hbar_{\min} = \min_{a\in [r',s']} \hbar(L_{a\zeta}, J_{a}) \geq \delta(r',s') > 0\] for the certain lower bound depending continuously on the complex structure in the $C^2$ topology as in Remark \ref{rem:C2 top}. 



By the contrapositive assumption there exists $\ell \geq N$ so that
\begin{equation}\label{eq:small_gamma}
\gamma(\phi^{\ell})<\min\{\delta(r',s')/4,\eps/2\}.
\end{equation}
Note that for a given $a \in [r',s']$ and a sufficiently $C^2$-small perturbation $J$ of $J_a$ in view of \cite[Remark 4.3.2]{mcduff-salamon-12} we have \begin{equation}\label{ineq:hbar-delta}\hbar(L,J) \geq \delta(r',s')/2.\end{equation} 

Now define $\psi := \phi^{\ell}$. The following claim is a higher dimensional version of Theorem \ref{mainthm:non_van_floer_hom}.

Claim $2$. Let $L$ be a closed Lagrangian submanifold in a closed symplectic manifold $M$. Let $\psi$ be a Hamiltonian diffeomorphism of $M$, $J$ be a generic $\omega$-tamed almost complex structure on $M$, and $\hbar:=\hbar(L,J)$ be defined as in \S\ref{sec:minimal_enr_inv}. Suppose that $\gamma(\psi)\leq \hbar/2  - 2\delta'$ for some small $0 <\delta' \ll \hbar$. Then
$$\HF_{*}^{I+c}(L,\psi) \neq 0$$
where $I = [-\hbar/2 + \delta', \delta']$ and $c = c([M], \psi)$ and $*\in \{0,1,\dots, n\}$.

Proof of Claim $2$. This is shown similarly as in the proof of Theorem \ref{mainthm:non_van_floer_hom}; see also \cite[\S5]{kislev-shelukhin-21} and \cite{MR4879080}. The proof of claim $2$ is finished.

It follows from \eqref{eq:small_gamma} and Claim $2$ that
$$\HF^{I+c}_{1}(L_{a\zeta}, \psi) \neq 0 \text{ and } \HF^{I+c}_{1}(L_{0}, \psi) \neq 0,$$
where $I = [\gamma(\psi)-\delta'/2,\delta'/2]$ for some arbitrary small $\delta' >0$, $c = c([M], \psi)$, and $a\in [r',s']$ is given by Claim $1$ above for the given $\ell,$ while the generic almost complex structure is chosen sufficiently $C^2$-close to $J_a$ to satisfy \eqref{ineq:hbar-delta}. It follows that the interval $[\gamma(\psi)-\delta'/2, \delta'/2]$ contains elements from both $\mathcal{S}_{\ell h}(0)$ and $\mathcal{S}_{\ell h}(a\zeta)$. So there must exists $A\in \mathcal{S}_{\ell h}(0)$ so that
$$\eps \leq d(A, \mathcal{S}_{\ell h}(a\zeta)) \leq \gamma(\psi) + \delta'<\eps$$
which is a contradiction and this finishes the proof of the theorem. 

Proof of Claim $1$. First recall that
$$\mathcal{B}_{h}(\eta) = \{h(\eta) +i\eta_{i_{0}}:0\leq i \leq n\},$$
where $i_{0}\in \{1,\dots, d\}$ is arbitrary. Let $i_{0} = 1$ so that 
\begin{equation}\label{eq:xi_a_one}
(a\zeta)_{1} = \kappa - a\langle \zeta,v_{1}\rangle.
\end{equation}
Define the following function
$$\Phi_{h}:[r',s'] \rightarrow\R;\ a\mapsto \frac{h(0) -h(a\zeta)}{a}.$$
The minimal gap in the set $\Sigma_{a} :=\cup_{i=0}^{n} (a\Z + i\kappa)$ is equal to 
$$\mathrm{mg}(\Sigma_{a}) = \min_{k\in \Z, 0\leq \ell\leq n} \abs{ka - \ell\kappa}.$$
 
Define $2\rho := \min_{a\in [r',s']}\mathrm{mg}(\Sigma_{a})$. By the property $(iv)$ of the interval $[r,s]$ which also applies to $[r',s']$ we have $\rho >0$. Let $N$ be a large enough integer so that for every $\ell \geq N$ the image $\Phi_{\ell h}([r',s']) = \ell\Phi_{h}([r',s'])$ contains a number of the form $k + \rho$ where $k\in \Z$; this is where we used property $(\mathrm{i})$ of $[r,s]$ which implies that $\Phi_{h}$ is not constant on $[r',s']$. For an integer $\ell \geq N$ let $a\in [r',s']$ so that $\Phi_{\ell h}(a) = k + \rho$ which implies
$$\ell h(0) = \ell h(a\zeta) + ka + \rho a.$$
Let $A\in \mathcal{S}_{\ell h}(0)$. So, by \eqref{eq:spect_eq2} there is $p\in \{0,\dots, n\}$ so that
$$A=\ell h(0)+ p\kappa = \ell h(a\zeta) + ka + p\kappa + \rho a.$$
Because, according to \eqref{eq:spect_eq2} and \eqref{eq:xi_a_one}, we have $\mathcal{S}_{\ell h}(a\zeta)\subset \ell h(a\zeta) + \Sigma_{a}$ it follows that
$$\rho a \leq \rho s<\rho\leq \mathrm{mg}(\Sigma_{a})/2\leq \mathrm{mg}\big(\mathcal{S}_{\ell h}(a\zeta)\big)/2,$$ 
we get that
$$d(A, \mathcal{S}_{\ell h}(a\zeta)) = \rho a\geq \rho r'.$$
This finishes the proof of the claim for $\eps:= \rho r'$ which only depends on $[r',s']$ and so it only depends on $h$.
\end{proof}

\begin{remark}
The general case of not necessarily monotone symplectic toric manifolds seem to be not accessible via the current techniques used above. A seemingly serious issue is that when the manifold is not monotone we do not have a monotone Lagrangian fiber, therefore, even when $M$ is rational, working with two non-monotone fibers $L_{0}$ and $L_{a\zeta}$ one has to separate the finite set $\mathcal{B}_{\ell h}(0)$ from the infinite set $\mathcal{B}_{\ell h}(a\zeta) + a\Z + b\Z$ where $b$ is a fixed positive number depending on $0$, and $a$ is allowed to vary in a small interval $[r,s]$. However, for any infinite set of numbers $A\subset [r,s]$
$$\inf_{a\in A} \mathrm{mg}(\mathcal{B}_{h}(a\zeta) + a\Z + b\Z) = 0,$$
which is problematic when trying to find a uniform lower bound for $\gamma(\phi^{\ell})$ for all $\ell \geq 1$. This means that there are only finitely many choices of $a$ available for all $\ell$. This issue maybe resolvable in a similar fashion as above, however, we leave it for now and study this together with the irrational case in a future work.
\end{remark}

\subsection{Integrable systems beyond symplectic toric}\label{sec:more_gen_int_ham_sys}
In this section we provide a different proof for spectral rigidity of certain completely integrable Hamiltonian systems, not necessarily coming from symplectic toric structures, that is based on a module action of Hamiltonian Floer homology on the Lagrangian Floer homology developed by Albers \cite{albers-10}, Biran and Cornea \cite{biran-cornea-09}; the formulation we use here is a quantitative version of the latter studied by Cant and Zhang \cite{cant-zhang-24}, see also \cite{MR4879080}. The techniques here apply to symplectic manifolds beyond the symplectic toric manifolds such as certain complex Grassmannians and $T^{2n}$, and can be applied to certain non-autonomous Hamiltonian diffeomorphisms. For the definition of a completely integrable Hamiltonian system $(M,\omega, H,F)$ see \S\ref{sec:comp_integ_ham_sys}.

\begin{theorem}\phantomsection\label{thm:comp_int_sys}
Let $(M,\omega, H, F)$ be a completely integrable Hamiltonian system so that $(M,\omega)$ satisfies either of the following:
\begin{enumerate}
\item $(M,\omega)$ is symplectically aspherical,
\item or the spectral diameter of $\t{\Ham}(M,\omega)$ is finite,
\item or $\#\pi_{1}(\Ham(M,\omega)) < \infty$.
\end{enumerate}
Let $h: \R^{n}\rightarrow \R$ be a smooth non-constant function. Suppose that $\max h$ and $\min h$ are realized by some regular values $c_{\pm}$ of $F$. Then, there exist a constant $\hbar >0$ so that $\liminf_{k\geq 1}\gamma(\phi^{k}) \geq \hbar >0$ where $\phi$ is the time-one map of $h\circ F$.
\end{theorem}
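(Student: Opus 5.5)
The plan is to use the regular fibers $L_\pm := F^{-1}(c_\pm)$ (or components of them) as invariant Lagrangian tori on which $H = h\circ F$ attains its maximum and minimum. We then compare the Hamiltonian action of the unit with the action of chords on these fibers via the truncated module action of \S\ref{sec:module_action}, in the quantitative form of Cant--Zhang.

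Suppose by contradiction that $\gamma(\phi^{k_j})\to 0$ along a sequence $k_j\to\infty$. Normalize so that the spectral invariants of $k_j H$ make sense: in case (i) by mean normalization; in cases (ii) and (iii) by noting that changing the lift in $\t{\Ham}$ only shifts $c([M],\cdot)$ by a bounded amount, since the spectral diameter is finite or the loop group is finite. So we may assume $\gamma(k_jH)\le C_0$ uniformly, with $\gamma$ in fact tending to $0$ up to a bounded error, and $c([M],k_jH)$ bounded up to a fixed additive constant $C_0$.

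Fix $\hbar := \min\{\hbar(L_+,J),\hbar(L_-,J)\}>0$ (\S\ref{sec:minimal_enr_inv}). Applying the argument of Theorem \ref{mainthm:non_van_floer_hom} (Claim 2 in the proof of Theorem \ref{thm:full_int_case_proj_space1}) with $\psi=\phi^{k_j}$, we get $\HF_*^{I+c_j}(L_\pm,\psi)\neq 0$ for $I=[-\hbar/2+\delta,\delta]$, where $c_j=c([M],k_jH)$. Hence each of $L_+$ and $L_-$ carries a chord of $k_jH$ with action in the window $I+c_j$.

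The actions of these chords are computed as in Theorem \ref{thm:action_of_index_one_chords} and Lemma \ref{lem:spec_form}. Since $L_\pm$ are invariant fibers, a perturbation by a small Morse function yields chords at critical points. Their actions are $k_j h(c_\pm)$ plus a bounded Maslov/index term, plus elements of the period group $\omega(\pi_2(M,L_\pm))$ restricted to Maslov-zero classes. Because $h$ is extremized at $c_\pm$, $dh(c_\pm)=0$, so the flow is stationary to first order on $L_\pm$ and there is no rotation term of size $\sim k_j$. Subtracting the two windows then gives
\[
k_j\bigl(\max h-\min h\bigr) \in \Pi + [-\hbar/2,\hbar/2] + O(1),
\]
where $\Pi$ is the set of relevant period differences.

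The main obstacle is controlling $\Pi$. In case (i), asphericity together with the choice of cappings, which can be made via the naturality transformation of \S\ref{sec:naturality_transformation} to avoid disk contributions, makes the actions determined up to the bounded index term. So $\Pi$ is trivial, and the left side grows linearly, a contradiction for large $j$. In cases (ii) and (iii), the lift ambiguity is bounded, so I would instead use the spectral diameter bound, or pass to an iterate killing the $\pi_1$ ambiguity, to control $c_j$ on both sides. I expect that cappings with arbitrary period must be handled by restricting to the index-$*$ degree in Claim 2, where only Maslov-zero recappings survive. This is exactly the step I am least sure closes without additional monotonicity hypotheses.
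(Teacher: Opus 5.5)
There is a genuine gap, and it is the step you flag. Knowing only that $L_+$ and $L_-$ each carry \emph{some} capped chord with action in the common window $I+c_j$ determines $k_jh(c_\pm)$ only modulo $\om(\pi_2(M,L_\pm))$. If you fix the degree, it is determined modulo the areas of Maslov-zero classes. So you need the resulting set $\Pi$ to be trivial, or at least to have gaps larger than $\hbar$.

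Asphericity in case (i) does not give this. It concerns $\pi_2(M)$, not $\pi_2(M,L_\pm)$, and $\hbar(L,J)$ bounds only areas of holomorphic discs, not of all relative classes. Concretely, take $T^{2n}=(T^2)^n$ with $F=(f_1,\dots,f_n)$, each $f_i$ Morse on the $i$-th factor. The regular fibres near a local maximum of every $f_i$ are products $C_1\times\dots\times C_n$ of contractible circles bounding discs $D_i$ of area $a_i$ and Maslov index $2$. The Maslov-zero classes $D_i-D_j$ have areas $a_i-a_j$. For $n\ge 3$ and generic $a_i$ these generate a dense subgroup of $\R$, so your relation $k_j(\max h-\min h)\in\Pi+[-\hbar/2,\hbar/2]+O(1)$ carries no information.

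The naturality transformation recaps only by loops of Hamiltonian diffeomorphisms and cannot remove these disc classes. This is also why the two-window argument of Theorem \ref{thm:full_int_case_proj_space1} is run against the monotone fibre and still needs its number-theoretic Claim 1. The same obstruction persists in cases (ii) and (iii), and your sketch there does not address it.

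The missing idea is to replace the two-sided comparison of windows by a one-sided inequality that never sees cappings. By \cite{biran-cornea-09,cant-zhang-24}, if
\[
e_{G,L}:=\beta(\phi_G)+\int_0^1\max_M G_t\,dt-\int_0^1\min_L G_t\,dt<\hbar(L),
\]
then $c([M],G)\ge\int_0^1\min_L G_t\,dt$. Normalize $H=h\circ F$ to have zero mean and set $\hbar=\min\{\hbar(L_+),\hbar(L_-)\}$. Since $L_+$ lies in the maximum set of $k_jH$ and $L_-$ in the maximum set of $-k_jH$, one gets $e_{\pm k_jH,L_\pm}=\beta(\phi^{\pm k_j})\le\gamma(\phi^{k_j})<\hbar$ by \cite{kislev-shelukhin-21}. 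Hence $c([M],k_jH)\ge k_j\max h$ and $c([M],-k_jH)\ge -k_j\min h$, so $\gamma(k_jH)\ge k_j(\max h-\min h)\to\infty$.

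The smallness hypothesis enters only through the boundary depth, which depends only on the time-one map, so you need no control of $c_j$ or of lifts. The three cases are used only at the end. In (ii) the growth contradicts finiteness of the spectral diameter of $\t{\Ham}(M,\om)$. In (i) and (iii), $\gamma$ of a mean-normalized Hamiltonian depends only on its time-one map, so $\gamma(\phi^{k_j})=\gamma(k_jH)\to\infty$, contradicting $\gamma(\phi^{k_j})<\hbar$.
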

\begin{proof}
Up to shifting $h$ by a constant assume that $\int_{M}(h\circ F) \omega^{n} = 0$ which in particular implies $\max h >0$ and $\min h <0$. Let $\hbar = \min\{\hbar(L_{+}), \hbar(L_{-})\}>0$ where $L_{\pm} = F^{-1}(c_{\pm})$. Suppose that $\liminf_{k\geq 1} \gamma(\phi^{k}) < \hbar$ and let $\{n_{k}\}_{k \geq 1}$ be a sequence of numbers so that $n_{k} \to \infty$ and $\gamma(\phi^{n_{k}})< \hbar$ for all $k\geq 1$. It follows from \cite{kislev-shelukhin-21} that $\beta(\phi^{n_{k}}) <\hbar$ for all $k\geq 1$ where $\beta(\phi)$ is the boundary depth of the barcode $\mathcal{B}(\phi)$ of $\phi$; see \S\ref{sec:pers_mod_and_barcodes}. For a Hamiltonian function $G$ on $M$ and some closed Lagrangian submanifold $L\subset M$ define
$$e_{G,L}:= \beta(\phi_{G}) + \int_{0}^{1}\max_{M}G_{t}dt - \int_{0}^{1}\min_{L}G_{t} dt.$$
One can show that if $e_{G,L} < \hbar(L)$ then $c([M],G) \geq \int_{0}^{1}\min_{L}G_{t}dt$; see \cite{biran-cornea-09, cant-zhang-24}. Applying this fact to $G = \pm n_{k}(h\circ F)$ and $L = L_{\pm}$, and recalling that the boundary depth only depends on the time-one map, one finds that, 
$$c([M],\pm n_{k}(h\circ F))\geq n_{k}\max \pm h.$$ 
It follows that $\gamma(n_{k}(h\circ F)) = |n_{k}(h\circ F)|$ where the latter is the Hofer norm of $n_{k}(h\circ F)$ which obviously diverges to infinity. If $(M,\omega)$ satisfies the second item, i.e., the spectral diameter of the universal cover of $\Ham(M,\omega)$ is finite, then we have reached a contradiction and the proof is finished in this case. If $(M,\omega)$ is either aspherical or $\pi_{1}(\Ham(M,\omega))$ is a finite group then it is well-known that the spectral norm of a mean-normalized Hamiltonian $G$ only depends on the time-one map $\phi_{G}$ of $G$. Hence,
$$\abs{n_{k}(h\circ F)} = \gamma(n_{k} (h\circ F)) = \gamma(\phi^{n_{k}}),$$
which is again a contradiction. So $\liminf_{k\geq 1} \gamma(\phi^{k}) \geq \hbar > 0$.
\end{proof}
The theorem applies to completely integrable Hamiltonian systems on 
\begin{itemize}
\item $T^{2n}$, which satisfies $(i)$,
\item $\C P^{1} \times \C P^{1}$, which satisfies $(iii)$; see \cite{MR809718} and \cite[Chapter 9]{mcduff-salamon-12},
\item $\mathrm{Gr}_{\C}(2,n)$, the complex Grassmannians of $2$-planes in $\C^{n}$, whenever $n$ is a prime number, which satisfies $(ii)$ as shown in \cite{alizadeh2025spectral}.
\end{itemize}

\begin{remark}\phantomsection\label{rem:lag_in_max_set}
The above result holds in the following more general setting. Suppose $(M,\omega)$ is a closed symplectic manifold satisfying either of the properties listed in Theorem \ref{thm:comp_int_sys}. Let $H: M \times \R \rightarrow \R$ be a possibly non-autonomous Hamiltonian function with time-one map $\phi_{H}\neq id$. Assume that there are closed Lagrangian submanifolds $L_{\pm}$ so that $L_{+}\subset H_{t}^{-1}(\max H_{t})$ and $L_{-}\subset H_{t}^{-1}(\min H_{t})$ for all $t\in [0,1]$. Then, $\liminf_{k\geq 1}\gamma(\phi_{H}^{k}) \geq \hbar > 0$ where $\hbar = \min\{\hbar(L_{+}), \hbar(L_{-})\}$. Note that Proposition \ref{lem:just_const} does not follow from this remark as $c_{1}$ and $c_{2}$ are not necessarily the maximum or the minimum of the Hamiltonian function $H$, neither the results of this remark are obtained by the techniques used in the proofs of Theorem \ref{mainthm:auto_on_symp_hyperb} and Proposition \ref{lem:just_const} as the symplectic hyperbolicity is crucial in those proofs.
\end{remark}

\bibliographystyle{plain}
\bibliography{citations}

@article {Atallah-Shelukhin,
    AUTHOR = {Atallah, M. S. and Shelukhin, E.},
     TITLE = {Hamiltonian no-torsion},
   JOURNAL = {Geom. Topol.},
  FJOURNAL = {Geometry \& Topology},
    VOLUME = {27},
      YEAR = {2023},
    NUMBER = {7},
     PAGES = {2833--2897},
      ISSN = {1465-3060},
   MRCLASS = {37J11 (53D22 53D40 57R17 57S17)},
  MRNUMBER = {4645487},
       DOI = {10.2140/gt.2023.27.2833},
       URL = {https://doi.org/10.2140/gt.2023.27.2833},
}

@article {bochner-montgomery,
    AUTHOR = {Bochner, Salomon and Montgomery, Deane},
     TITLE = {Locally compact groups of differentiable transformations},
   JOURNAL = {Ann. of Math. (2)},
  FJOURNAL = {Annals of Mathematics. Second Series},
    VOLUME = {47},
      YEAR = {1946},
     PAGES = {639--653},}

@article {leroux-seyfaddini,
    AUTHOR = {Le Roux, F. and Seyfaddini, S.},
     TITLE = {The {A}nosov-{K}atok method and pseudo-rotations in symplectic
              dynamics},
   JOURNAL = {J. Fixed Point Theory Appl.},
  FJOURNAL = {Journal of Fixed Point Theory and Applications},
    VOLUME = {24},
      YEAR = {2022},
    NUMBER = {2},
     PAGES = {Paper No. 36, 39},}

@article {joksimovic-seyfaddini,
    AUTHOR = {Joksimovi\'{c}, D. and Seyfaddini, S.},
     TITLE = {A {H}\"{o}lder-type inequality for the {$C^0$} distance and
              {A}nosov-{K}atok pseudo-rotations},
   JOURNAL = {Int. Math. Res. Not. IMRN},
  FJOURNAL = {International Mathematics Research Notices. IMRN},
      YEAR = {2024},
    NUMBER = {8},
     PAGES = {6303--6324},}

@article {Polterovich-02,
    AUTHOR = {Polterovich, L.},
     TITLE = {Growth of maps, distortion in groups and symplectic geometry},
   JOURNAL = {Invent. Math.},
  FJOURNAL = {Inventiones Mathematicae},
    VOLUME = {150},
      YEAR = {2002},
    NUMBER = {3},
     PAGES = {655--686},
      ISSN = {0020-9910},
   MRCLASS = {53D35 (53D40)},
  MRNUMBER = {1946555},
MRREVIEWER = {Karl Friedrich Siburg},
       DOI = {10.1007/s00222-002-0251-x},
       URL = {https://doi.org/10.1007/s00222-002-0251-x},
}

@article {Sugimoto-23,
    AUTHOR = {Sugimoto, Y.},
     TITLE = {No {$C^1$}-recurrence of iterations of symplectomorphisms},
   JOURNAL = {Geom. Dedicata},
  FJOURNAL = {Geometriae Dedicata},
    VOLUME = {219},
      YEAR = {2025},
    NUMBER = {1},
     PAGES = {Paper No. 9, 13},
      ISSN = {0046-5755},
   MRCLASS = {53D05 (53D35)},
  MRNUMBER = {4840557},
       DOI = {10.1007/s10711-024-00969-1},
       URL = {https://doi.org/10.1007/s10711-024-00969-1},
}

@article {gg-aid,
    AUTHOR = {Ginzburg, V. L. and G\"{u}rel, Ba\c{s}ak Z.},
     TITLE = {Approximate identities and {L}agrangian {P}oincar\'{e}
              recurrence},
   JOURNAL = {Arnold Math. J.},
  FJOURNAL = {Arnold Mathematical Journal},
    VOLUME = {5},
      YEAR = {2019},
    NUMBER = {1},
     PAGES = {5--14},}

@article {cgg-growth-of-floer-barcode,
    AUTHOR = {\c{C}ineli, E. and Ginzburg, V. L. and G\"{u}rel, B. Z.},
     TITLE = {On the growth of the {F}loer barcode},
   JOURNAL = {J. Mod. Dyn.},
  FJOURNAL = {Journal of Modern Dynamics},
    VOLUME = {20},
      YEAR = {2024},
     PAGES = {275--298},
      ISSN = {1930-5311},
   MRCLASS = {53D40 (37J11 37J46)},
  MRNUMBER = {4799465},
       DOI = {10.3934/jmd.2024007},
       URL = {https://doi.org/10.3934/jmd.2024007},
}

@article {cgg-barcode-entropy,
    AUTHOR = {\c{C}ineli, E. and Ginzburg, V. L. and G\"{u}rel, B. Z.},
     TITLE = {Topological entropy of {H}amiltonian diffeomorphisms: a
              persistence homology and {F}loer theory perspective},
   JOURNAL = {Math. Z.},
  FJOURNAL = {Mathematische Zeitschrift},
    VOLUME = {308},
      YEAR = {2024},
    NUMBER = {4},
     PAGES = {Paper No. 73, 38},
      ISSN = {0025-5874},
   MRCLASS = {37J11 (37B40 37J46 53D40)},
  MRNUMBER = {4822772},
       DOI = {10.1007/s00209-024-03627-0},
       URL = {https://doi.org/10.1007/s00209-024-03627-0},
}

@article {cgg-generic-behaviour,
    AUTHOR = {\c{C}ineli, E. and Ginzburg, V. L. and G\"{u}rel, B. Z.},
     TITLE = {On the generic behavior of the spectral norm},
   JOURNAL = {Pacific J. Math.},
  FJOURNAL = {Pacific Journal of Mathematics},
    VOLUME = {328},
      YEAR = {2024},
    NUMBER = {1},
     PAGES = {119--135},
      ISSN = {0030-8730},
   MRCLASS = {37J11 (37J46 53D40)},
  MRNUMBER = {4732759},
MRREVIEWER = {Cristian L\u{a}zureanu},
       DOI = {10.2140/pjm.2024.328.119},
       URL = {https://doi.org/10.2140/pjm.2024.328.119},
}

@article {kp-recurrent,
    AUTHOR = {Kolev, B. and P\'{e}rou\`eme, M.-C.},
     TITLE = {Recurrent surface homeomorphisms},
   JOURNAL = {Math. Proc. Cambridge Philos. Soc.},
  FJOURNAL = {Mathematical Proceedings of the Cambridge Philosophical
              Society},
    VOLUME = {124},
      YEAR = {1998},
    NUMBER = {1},
     PAGES = {161--168},}

@article {hofer-metric,
    AUTHOR = {Hofer, H.},
     TITLE = {On the topological properties of symplectic maps},
   JOURNAL = {Proc. Roy. Soc. Edinburgh Sect. A},
  FJOURNAL = {Proceedings of the Royal Society of Edinburgh. Section A.
              Mathematics},
    VOLUME = {115},
      YEAR = {1990},
    NUMBER = {1-2},
     PAGES = {25--38},
}

@article {lalonde-mcduff,
    AUTHOR = {Lalonde, F. and McDuff, D.},
     TITLE = {The geometry of symplectic energy},
   JOURNAL = {Ann. of Math. (2)},
  FJOURNAL = {Annals of Mathematics. Second Series},
    VOLUME = {141},
      YEAR = {1995},
    NUMBER = {2},
     PAGES = {349--371},
}

@article{polterovich-lagrangian,
	AUTHOR = {Polterovich, L.},
	TITLE = {Symplectic displacement energy for {L}agrangian submanifolds},
	JOURNAL = {Ergodic Theory Dynam. Systems},
	FJOURNAL = {Ergodic Theory and Dynamical Systems},
	VOLUME = {13},
	YEAR = {1993},
	NUMBER = {2},
	PAGES = {357--367},
}

@book{polterovich-book,
    AUTHOR = {Polterovich, L.},
     TITLE = {The geometry of the group of symplectic diffeomorphisms},
    SERIES = {Lectures in Mathematics ETH Z\"{u}rich},
 PUBLISHER = {Birkh\"{a}user Verlag, Basel},
      YEAR = {2001},
       URL = {https://doi.org/10.1007/978-3-0348-8299-6},
}

@article{cant-zhang-24,
  title={On the spectral capacity of submanifolds},
  author={Cant, D. and Zhang, J.},
  journal={arXiv preprint arXiv:2409.14142},
  year={2024}
}

@article{SW-unpublished,
  title={Continuous {H}amiltonian no-torsion},
  author={Shelukhin, E. and Wilkins, N.},
  journal={Forthcoming},
  year = {2026},
}

@article {biran-cornea-09,
    AUTHOR = {Biran, P. and Cornea, O.},
     TITLE = {Rigidity and uniruling for {L}agrangian submanifolds},
   JOURNAL = {Geom. Topol.},
  FJOURNAL = {Geometry \& Topology},
    VOLUME = {13},
      YEAR = {2009},
    NUMBER = {5},
     PAGES = {2881--2989},
      ISSN = {1465-3060},
   MRCLASS = {53D12 (53D35 53D40)},
  MRNUMBER = {2546618},
MRREVIEWER = {Tobias Ekholm},
       DOI = {10.2140/gt.2009.13.2881},
       URL = {https://doi.org/10.2140/gt.2009.13.2881},
}

@article {albers-10,
    AUTHOR = {Albers, P.},
     TITLE = {Erratum for ``{O}n the extrinsic topology of {L}agrangian
              submanifolds'' [MR2180810]},
   JOURNAL = {Int. Math. Res. Not. IMRN},
  FJOURNAL = {International Mathematics Research Notices. IMRN},
      YEAR = {2010},
    NUMBER = {7},
    volume = {2},
     PAGES = {1363--1369},
      ISSN = {1073-7928},
   MRCLASS = {53D12 (53D35 53D40)},
  MRNUMBER = {2609024},
       DOI = {10.1093/imrn/rnq047},
       URL = {https://doi.org/10.1093/imrn/rnq047},
}

@article {gg-pseudo-rotations,
    AUTHOR = {Ginzburg, V. L. and G\"{u}rel, B. Z.},
     TITLE = {Hamiltonian pseudo-rotations of projective spaces},
   JOURNAL = {Invent. Math.},
  FJOURNAL = {Inventiones Mathematicae},
    VOLUME = {214},
      YEAR = {2018},
    NUMBER = {3},
     PAGES = {1081--1130},
      ISSN = {0020-9910},
   MRCLASS = {53D40 (37J10 37J45)},
  MRNUMBER = {3878727},
       DOI = {10.1007/s00222-018-0818-9},
       URL = {https://doi.org/10.1007/s00222-018-0818-9},
}

@article {Bramham-15,
    AUTHOR = {Bramham, B.},
     TITLE = {Pseudo-rotations with sufficiently {L}iouvillean rotation
              number are {$C^0$}-rigid},
   JOURNAL = {Invent. Math.},
  FJOURNAL = {Inventiones Mathematicae},
    VOLUME = {199},
      YEAR = {2015},
    NUMBER = {2},
     PAGES = {561--580},
      ISSN = {0020-9910},
   MRCLASS = {37A05 (37A25 37C40 37E45 37J10)},
  MRNUMBER = {3302121},
MRREVIEWER = {Xueting Tian},
       DOI = {10.1007/s00222-014-0525-0},
       URL = {https://doi.org/10.1007/s00222-014-0525-0},
}

@article {ginzburg-10-conley-conj,
    AUTHOR = {Ginzburg, V. L.},
     TITLE = {The {C}onley conjecture},
   JOURNAL = {Ann. of Math. (2)},
  FJOURNAL = {Annals of Mathematics. Second Series},
    VOLUME = {172},
      YEAR = {2010},
    NUMBER = {2},
     PAGES = {1127--1180},
      ISSN = {0003-486X},
   MRCLASS = {53D40 (37J05 53D35)},
  MRNUMBER = {2680488},
MRREVIEWER = {Hai-Long Her},
       DOI = {10.4007/annals.2010.172.1129},
       URL = {https://doi.org/10.4007/annals.2010.172.1129},
}

@book{KH-book,
    AUTHOR = {Katok, A. and Hasselblatt, B.},
     TITLE = {Introduction to the modern theory of dynamical systems},
    SERIES = {Encyclopedia of Mathematics and its Applications},
    VOLUME = {54},
      NOTE = {With a supplementary chapter by Katok and Leonardo Mendoza},
 PUBLISHER = {Cambridge University Press, Cambridge},
      YEAR = {1995},}

@article {MR4879080,
    AUTHOR = {Alizadeh, H. and Atallah, M. S. and Cant, D.},
     TITLE = {Lagrangian intersections and the spectral norm in
              convex-at-infinity symplectic manifolds},
   JOURNAL = {Math. Z.},
  FJOURNAL = {Mathematische Zeitschrift},
    VOLUME = {310},
      YEAR = {2025},
    NUMBER = {1},
     PAGES = {Paper No. 2, 48},
      ISSN = {0025-5874},
   MRCLASS = {53D12 (37J06 53D40)},
  MRNUMBER = {4879080},
       DOI = {10.1007/s00209-025-03710-0},
       URL = {https://doi.org/10.1007/s00209-025-03710-0},
}

@book{mcduff-salamon-12,
  title={J-holomorphic curves and symplectic topology},
  author={McDuff, D. and Salamon, D.},
  volume={52},
  year={2012},
  publisher={American Mathematical Soc.}
}

@article {sikorav-01,
    AUTHOR = {Sikorav, J.-C.},
     TITLE = {Growth of a primitive of a differential form},
   JOURNAL = {Bull. Soc. Math. France},
  FJOURNAL = {Bulletin de la Soci\'{e}t\'{e} Math\'{e}matique de France},
    VOLUME = {129},
      YEAR = {2001},
    NUMBER = {2},
     PAGES = {159--168},
      ISSN = {0037-9484},
   MRCLASS = {58A12 (53C20 53C65 58A10)},
  MRNUMBER = {1871291},
MRREVIEWER = {J. Eichhorn},
       DOI = {10.24033/bsmf.2390},
       URL = {https://doi.org/10.24033/bsmf.2390},
}

@phdthesis{bechara2023asymptotic,
  title={The asymptotic action and generalizations of a theorem of Fathi},
  author={Bechara Senior, D.},
  year={2023},
  school={Bochum, Ruhr-Universit{\"a}t Bochum, 2022}
}

@article {lecalvez-tal-22,
    AUTHOR = {Le Calvez, P. and Tal, F.},
     TITLE = {Topological horseshoes for surface homeomorphisms},
   JOURNAL = {Duke Math. J.},
  FJOURNAL = {Duke Mathematical Journal},
    VOLUME = {171},
      YEAR = {2022},
    NUMBER = {12},
     PAGES = {2519--2626},
      ISSN = {0012-7094,1547-7398},
   MRCLASS = {37E30 (37B40)},
  MRNUMBER = {4474474},
       DOI = {10.1215/00127094-2022-0057},
       URL = {https://doi.org/10.1215/00127094-2022-0057},
}

@article {hu-lalonde-10,
    AUTHOR = {Hu, S. and L., Fran\c{c}ois},
     TITLE = {A relative {S}eidel morphism and the {A}lbers map},
   JOURNAL = {Trans. Amer. Math. Soc.},
  FJOURNAL = {Transactions of the American Mathematical Society},
    VOLUME = {362},
      YEAR = {2010},
    NUMBER = {3},
     PAGES = {1135--1168},
      ISSN = {0002-9947,1088-6850},
   MRCLASS = {53D40 (53D12 53D45)},
  MRNUMBER = {2563724},
MRREVIEWER = {Umberto\ Leone\ Hryniewicz},
       DOI = {10.1090/S0002-9947-09-04986-1},
       URL = {https://doi.org/10.1090/S0002-9947-09-04986-1},
}

@incollection {biran-cornear-lag-quan-hom-09,
    AUTHOR = {Biran, P. and Cornea, O.},
     TITLE = {A {L}agrangian quantum homology},
 BOOKTITLE = {New perspectives and challenges in symplectic field theory},
    SERIES = {CRM Proc. Lecture Notes},
    VOLUME = {49},
     PAGES = {1--44},
 PUBLISHER = {Amer. Math. Soc., Providence, RI},
      YEAR = {2009},
      ISBN = {978-0-8218-4356-7},
   MRCLASS = {53D40 (53D12 53D42)},
  MRNUMBER = {2555932},
MRREVIEWER = {Timothy\ Perutz},
       DOI = {10.1090/crmp/049/01},
       URL = {https://doi.org/10.1090/crmp/049/01},
}

@article {schwarz-00,
    AUTHOR = {Schwarz, M.},
     TITLE = {On the action spectrum for closed symplectically aspherical
              manifolds},
   JOURNAL = {Pacific J. Math.},
  FJOURNAL = {Pacific Journal of Mathematics},
    VOLUME = {193},
      YEAR = {2000},
    NUMBER = {2},
     PAGES = {419--461},
      ISSN = {0030-8730,1945-5844},
   MRCLASS = {53D40 (57R58)},
  MRNUMBER = {1755825},
MRREVIEWER = {David\ E.\ Hurtubise},
       DOI = {10.2140/pjm.2000.193.419},
       URL = {https://doi.org/10.2140/pjm.2000.193.419},
}

@article {kislev-shelukhin-21,
    AUTHOR = {Kislev, A. and Shelukhin, E.},
     TITLE = {Bounds on spectral norms and barcodes},
   JOURNAL = {Geom. Topol.},
  FJOURNAL = {Geometry \& Topology},
    VOLUME = {25},
      YEAR = {2021},
    NUMBER = {7},
     PAGES = {3257--3350},
      ISSN = {1465-3060,1364-0380},
   MRCLASS = {57R17 (53D12 53D40 55N31)},
  MRNUMBER = {4372632},
MRREVIEWER = {J.\ Zhang},
       DOI = {10.2140/gt.2021.25.3257},
       URL = {https://doi.org/10.2140/gt.2021.25.3257},
}

@article {robbin-salamon-maslov-for-paths-93,
    AUTHOR = {Robbin, J. and Salamon, D.},
     TITLE = {The {M}aslov index for paths},
   JOURNAL = {Topology},
  FJOURNAL = {Topology. An International Journal of Mathematics},
    VOLUME = {32},
      YEAR = {1993},
    NUMBER = {4},
     PAGES = {827--844},
      ISSN = {0040-9383},
   MRCLASS = {58F05 (58E05)},
  MRNUMBER = {1241874},
MRREVIEWER = {J.\ S.\ J.},
       DOI = {10.1016/0040-9383(93)90052-W},
       URL = {https://doi.org/10.1016/0040-9383(93)90052-W},
}

@article {salamon-zehnder-92,
    AUTHOR = {Salamon, D. and Zehnder, E.},
     TITLE = {Morse theory for periodic solutions of {H}amiltonian systems
              and the {M}aslov index},
   JOURNAL = {Comm. Pure Appl. Math.},
  FJOURNAL = {Communications on Pure and Applied Mathematics},
    VOLUME = {45},
      YEAR = {1992},
    NUMBER = {10},
     PAGES = {1303--1360},
      ISSN = {0010-3640,1097-0312},
   MRCLASS = {58E05 (34C25 58F22 70H05)},
  MRNUMBER = {1181727},
MRREVIEWER = {Y-G.\ Oh},
       DOI = {10.1002/cpa.3160451004},
       URL = {https://doi.org/10.1002/cpa.3160451004},
}

@article {Gromov-kahlerhyp,
    AUTHOR = {Gromov, M.},
     TITLE = {K\"{a}hler hyperbolicity and {$L_2$}-{H}odge theory},
   JOURNAL = {J. Differential Geom.},
  FJOURNAL = {Journal of Differential Geometry},
    VOLUME = {33},
      YEAR = {1991},
    NUMBER = {1},
     PAGES = {263--292},}

@article{S-HS,
      title={A symplectic {H}ilbert-{S}mith conjecture}, 
      author={E. Shelukhin},
      note = {Preprint arXiv:2403.07195, 2024},
      journal = {arXiv:2403.07195},
      year = {2024},
}

@article{franks1988generalizations,
  title={Generalizations of the {P}oincar{\'e}-{B}irkhoff theorem},
  author={Franks, J.},
   JOURNAL = {Ann. of Math. (2)},
  FJOURNAL = {Annals of Mathematics. Second Series},

  volume={128},
  number={1},
  pages={139--151},
  year={1988},
  publisher={JSTOR}
}

@book {MR3674984,
    AUTHOR = {McDuff, D. and Salamon, D.},
     TITLE = {Introduction to symplectic topology},
    SERIES = {Oxford Graduate Texts in Mathematics},
   EDITION = {Third},
 PUBLISHER = {Oxford University Press, Oxford},
      YEAR = {2017},
     PAGES = {xi+623},
      ISBN = {978-0-19-879490-5; 978-0-19-879489-9},
   MRCLASS = {53D35 (53D40 57R17 57R57 57R58)},
  MRNUMBER = {3674984},
MRREVIEWER = {Hansj\"{o}rg\ Geiges},
       DOI = {10.1093/oso/9780198794899.001.0001},
       URL = {https://doi.org/10.1093/oso/9780198794899.001.0001},
}

@article {franks-handle-pp-ham-03,
    AUTHOR = {Franks, J. and Handel, M.},
     TITLE = {Periodic points of {H}amiltonian surface diffeomorphisms},
   JOURNAL = {Geom. Topol.},
  FJOURNAL = {Geometry and Topology},
    VOLUME = {7},
      YEAR = {2003},
     PAGES = {713--756},
      ISSN = {1465-3060,1364-0380},
   MRCLASS = {37J10 (37C25 37E30 37E45 57M60)},
  MRNUMBER = {2026545},
MRREVIEWER = {Eijirou\ Hayakawa},
       DOI = {10.2140/gt.2003.7.713},
       URL = {https://doi.org/10.2140/gt.2003.7.713},
}

@article {franks-handle-rot-vector-96,
    AUTHOR = {Franks, J.},
     TITLE = {Rotation vectors and fixed points of area preserving surface
              diffeomorphisms},
   JOURNAL = {Trans. Amer. Math. Soc.},
  FJOURNAL = {Transactions of the American Mathematical Society},
    VOLUME = {348},
      YEAR = {1996},
    NUMBER = {7},
     PAGES = {2637--2662},
      ISSN = {0002-9947,1088-6850},
   MRCLASS = {58F20},
  MRNUMBER = {1325916},
MRREVIEWER = {M.\ Hurley},
       DOI = {10.1090/S0002-9947-96-01502-4},
       URL = {https://doi.org/10.1090/S0002-9947-96-01502-4},
}

@article {qu-xia-24,
    AUTHOR = {Qu, H. and Xia, Z.},
     TITLE = {Action and periodic orbits of area-preserving diffeomorphisms},
   JOURNAL = {J. Differential Equations},
  FJOURNAL = {Journal of Differential Equations},
    VOLUME = {391},
      YEAR = {2024},
     PAGES = {246--264},
      ISSN = {0022-0396,1090-2732},
   MRCLASS = {37J40 (37E30 53D05)},
  MRNUMBER = {4703363},
MRREVIEWER = {Jo\~{a}o\ Lopes Dias},
       DOI = {10.1016/j.jde.2024.01.026},
       URL = {https://doi.org/10.1016/j.jde.2024.01.026},
}

@article {oh-duke,
    AUTHOR = {Oh, Y-G.},
     TITLE = {Spectral invariants, analysis of the {F}loer moduli space, and
              geometry of the {H}amiltonian diffeomorphism group},
   JOURNAL = {Duke Math. J.},
  FJOURNAL = {Duke Mathematical Journal},
    VOLUME = {130},
      YEAR = {2005},
    NUMBER = {2},
     PAGES = {199--295},}

@incollection {MR2884036,
    AUTHOR = {Entov, M. and Polterovich, L. and Py, P.},
     TITLE = {On continuity of quasimorphisms for symplectic maps},
 BOOKTITLE = {Perspectives in analysis, geometry, and topology},
    SERIES = {Progr. Math.},
    VOLUME = {296},
     PAGES = {169--197},
      NOTE = {With an appendix by M. Khanevsky},
 PUBLISHER = {Birkh\"{a}user/Springer, New York},
      YEAR = {2012},
      ISBN = {978-0-8176-8276-7},
   MRCLASS = {53D35},
  MRNUMBER = {2884036},
MRREVIEWER = {David\ F.\ Mart\'{\i}nez-Torres},
       DOI = {10.1007/978-0-8176-8277-4\{_}8}

@book {MR2167675,
    AUTHOR = {Krantz, S. G.},
     TITLE = {Geometric function theory},
    SERIES = {Cornerstones},
      NOTE = {Explorations in complex analysis},
 PUBLISHER = {Birkh\"{a}user Boston, Inc., Boston, MA},
      YEAR = {2006},
     PAGES = {xiv+314},
      ISBN = {978-0-8176-4339-3; 0-8176-4339-7},
   MRCLASS = {30-01 (46J15)},
  MRNUMBER = {2167675},
}

@article {MR614728,
    AUTHOR = {Epstein, D. B. A.},
     TITLE = {Prime ends},
   JOURNAL = {Proc. London Math. Soc. (3)},
  FJOURNAL = {Proceedings of the London Mathematical Society. Third Series},
    VOLUME = {42},
      YEAR = {1981},
    NUMBER = {3},
     PAGES = {385--414},
      ISSN = {0024-6115,1460-244X},
   MRCLASS = {30D40 (30C60 30G15 57N05)},
  MRNUMBER = {614728},
MRREVIEWER = {Yu.\ Yu.\ Trokhimchuk},
       DOI = {10.1112/plms/s3-42.3.385},
       URL = {https://doi.org/10.1112/plms/s3-42.3.385},
}

@book {MR2193309,
    AUTHOR = {Milnor, J.},
     TITLE = {Dynamics in one complex variable},
    SERIES = {Annals of Mathematics Studies},
    VOLUME = {160},
   EDITION = {Third},
 PUBLISHER = {Princeton University Press, Princeton, NJ},
      YEAR = {2006},
     PAGES = {viii+304},
      ISBN = {978-0-691-12488-9; 0-691-12488-4},
   MRCLASS = {37Fxx (30-01 30D05 37-01)},
  MRNUMBER = {2193309},
}

@article {caratheodory-original-paper,
    AUTHOR = {Carathéodory, C.},
     TITLE = {Über die gegenseitige Beziehung der Ränder bei der konformen Abbildung des Inneren einer Jordanschen Kurve auf einen Kreis},
   JOURNAL = {Mathematische Annalen},
  FJOURNAL = {Proceedings of the London Mathematical Society. Third Series},
    VOLUME = {73},
      YEAR = {1913},
    NUMBER = {2},
     PAGES = {305--320},
       DOI = {10.1007/BF01456720},
       URL = {https://doi.org/10.1007/BF01456720},
}

@article {MR3850107,
    AUTHOR = {Leclercq, R. and Zapolsky, F.},
     TITLE = {Spectral invariants for monotone {L}agrangians},
   JOURNAL = {J. Topol. Anal.},
  FJOURNAL = {Journal of Topology and Analysis},
    VOLUME = {10},
      YEAR = {2018},
    NUMBER = {3},
     PAGES = {627--700},
      ISSN = {1793-5253,1793-7167},
   MRCLASS = {57R17 (53D12 53D40)},
  MRNUMBER = {3850107},
MRREVIEWER = {Roman\ Golovko},
       DOI = {10.1142/S1793525318500267},
       URL = {https://doi.org/10.1142/S1793525318500267},
}

@book {MR3524783,
    AUTHOR = {Oh, Y-G.},
     TITLE = {Symplectic topology and {F}loer homology. {V}ol. 2},
    SERIES = {New Mathematical Monographs},
    VOLUME = {29},
      NOTE = {Floer homology and its applications},
 PUBLISHER = {Cambridge University Press, Cambridge},
      YEAR = {2015},
     PAGES = {xxiii+446},
      ISBN = {978-1-107-10967-4},
   MRCLASS = {53D40 (53Dxx 57R58)},
  MRNUMBER = {3524783},
MRREVIEWER = {Hansj\"{o}rg\ Geiges},
}

@article {MR4480149,
    AUTHOR = {Shelukhin, E.},
     TITLE = {Viterbo conjecture for {Z}oll symmetric spaces},
   JOURNAL = {Invent. Math.},
  FJOURNAL = {Inventiones Mathematicae},
    VOLUME = {230},
      YEAR = {2022},
    NUMBER = {1},
     PAGES = {321--373},
      ISSN = {0020-9910,1432-1297},
   MRCLASS = {53D40 (57R17)},
  MRNUMBER = {4480149},
MRREVIEWER = {Olga\ Bernardi},
       DOI = {10.1007/s00222-022-01124-x},
       URL = {https://doi.org/10.1007/s00222-022-01124-x},
}

@incollection {MR3220941,
    AUTHOR = {Auroux, D.},
     TITLE = {A beginner's introduction to {F}ukaya categories},
 BOOKTITLE = {Contact and symplectic topology},
    SERIES = {Bolyai Soc. Math. Stud.},
    VOLUME = {26},
     PAGES = {85--136},
 PUBLISHER = {J\'{a}nos Bolyai Math. Soc., Budapest},
      YEAR = {2014},
      ISBN = {978-3-319-02035-8; 978-3-319-02036-5},
   MRCLASS = {53D37 (53D40)},
  MRNUMBER = {3220941},
MRREVIEWER = {Nick\ Sheridan},
       DOI = {10.1007/978-3-319-02036-5\{_}3}

@article {conley-zehnder-83,
    AUTHOR = {Conley, C. C. and Zehnder, E.},
     TITLE = {The {B}irkhoff-{L}ewis fixed point theorem and a conjecture of
              {V}. {I}. {A}rnold},
   JOURNAL = {Invent. Math.},
  FJOURNAL = {Inventiones Mathematicae},
    VOLUME = {73},
      YEAR = {1983},
    NUMBER = {1},
     PAGES = {33--49},
      ISSN = {0020-9910,1432-1297},
   MRCLASS = {58F05 (58C30)},
  MRNUMBER = {707347},
MRREVIEWER = {Yu.\ E.\ Gliklikh},
       DOI = {10.1007/BF01393824},
       URL = {https://doi.org/10.1007/BF01393824},
}

@book {MR2441780,
    AUTHOR = {Seidel, P.},
     TITLE = {Fukaya categories and {P}icard-{L}efschetz theory},
    SERIES = {Zurich Lectures in Advanced Mathematics},
 PUBLISHER = {European Mathematical Society (EMS), Z\"{u}rich},
      YEAR = {2008},
     PAGES = {viii+326},
      ISBN = {978-3-03719-063-0},
   MRCLASS = {53D40 (16E45 32Q65 53D12)},
  MRNUMBER = {2441780},
MRREVIEWER = {Timothy\ Perutz},
       DOI = {10.4171/063},
       URL = {https://doi.org/10.4171/063},
}

@article {floer-morese-th-lag-int,
    AUTHOR = {Floer, A.},
     TITLE = {Morse theory for {L}agrangian intersections},
   JOURNAL = {J. Differential Geom.},
  FJOURNAL = {Journal of Differential Geometry},
    VOLUME = {28},
      YEAR = {1988},
    NUMBER = {3},
     PAGES = {513--547},
      ISSN = {0022-040X,1945-743X},
   MRCLASS = {58F05 (35J65 58E05)},
  MRNUMBER = {965228},
MRREVIEWER = {Jean-C.\ Sikorav},
       URL = {http://projecteuclid.org/euclid.jdg/1214442477},
}

@article {floer-symp-fix-pt,
    AUTHOR = {Floer, A.},
     TITLE = {Symplectic fixed points and holomorphic spheres},
   JOURNAL = {Comm. Math. Phys.},
  FJOURNAL = {Communications in Mathematical Physics},
    VOLUME = {120},
      YEAR = {1989},
    NUMBER = {4},
     PAGES = {575--611},
      ISSN = {0010-3616,1432-0916},
   MRCLASS = {58F05 (70H05)},
  MRNUMBER = {987770},
MRREVIEWER = {Y-G.\ Oh},
       URL = {http://projecteuclid.org/euclid.cmp/1104177909},
}

@incollection {MR1432464,
    AUTHOR = {Piunikhin, S. and Salamon, D. and Schwarz, M.},
     TITLE = {Symplectic {F}loer-{D}onaldson theory and quantum cohomology},
 BOOKTITLE = {Contact and symplectic geometry ({C}ambridge, 1994)},
    SERIES = {Publ. Newton Inst.},
    VOLUME = {8},
     PAGES = {171--200},
 PUBLISHER = {Cambridge Univ. Press, Cambridge},
      YEAR = {1996},
      ISBN = {0-521-57086-7},
   MRCLASS = {57R57 (58E05 58F05)},
  MRNUMBER = {1432464},
MRREVIEWER = {Vsevolod\ V.\ Shevchishin},
}

@article {MR2424172,
    AUTHOR = {Albers, P.},
     TITLE = {A {L}agrangian {P}iunikhin-{S}alamon-{S}chwarz morphism and
              two comparison homomorphisms in {F}loer homology},
   JOURNAL = {Int. Math. Res. Not. IMRN},
  FJOURNAL = {International Mathematics Research Notices. IMRN},
      YEAR = {2008},
     PAGES = {Art. ID rnm134, 56},
      ISSN = {1073-7928,1687-0247},
   MRCLASS = {53D40 (53D12)},
  MRNUMBER = {2424172},
MRREVIEWER = {M.\ J.\ Usher},
       DOI = {10.1093/imrn/rnm134},
       URL = {https://doi.org/10.1093/imrn/rnm134},
}

@article {MR2122744,
    AUTHOR = {Kati\'{c}, J. and Milinkovi\'{c}, D.},
     TITLE = {Piunikhin-{S}alamon-{S}chwarz isomorphisms for {L}agrangian
              intersections},
   JOURNAL = {Differential Geom. Appl.},
  FJOURNAL = {Differential Geometry and its Applications},
    VOLUME = {22},
      YEAR = {2005},
    NUMBER = {2},
     PAGES = {215--227},
      ISSN = {0926-2245,1872-6984},
   MRCLASS = {53D40 (53D12)},
  MRNUMBER = {2122744},
MRREVIEWER = {Cheol-Hyun\ Cho},
       DOI = {10.1016/j.difgeo.2004.10.008},
       URL = {https://doi.org/10.1016/j.difgeo.2004.10.008},
}

@article {MR1157321,
    AUTHOR = {Viterbo, C.},
     TITLE = {Symplectic topology as the geometry of generating functions},
   JOURNAL = {Math. Ann.},
  FJOURNAL = {Mathematische Annalen},
    VOLUME = {292},
      YEAR = {1992},
    NUMBER = {4},
     PAGES = {685--710},
      ISSN = {0025-5831,1432-1807},
   MRCLASS = {58F05 (53C15 53C23 57R50 58E05)},
  MRNUMBER = {1157321},
MRREVIEWER = {Nikolai\ K.\ Smolentsev},
       DOI = {10.1007/BF01444643},
       URL = {https://doi.org/10.1007/BF01444643},
}

@incollection {MR2103018,
    AUTHOR = {Oh, Y-G.},
     TITLE = {Construction of spectral invariants of {H}amiltonian paths on
              closed symplectic manifolds},
 BOOKTITLE = {The breadth of symplectic and {P}oisson geometry},
    SERIES = {Progr. Math.},
    VOLUME = {232},
     PAGES = {525--570},
 PUBLISHER = {Birkh\"{a}user Boston, Boston, MA},
      YEAR = {2005},
      ISBN = {0-8176-3565-3},
   MRCLASS = {53D40 (53D45 58E30)},
  MRNUMBER = {2103018},
MRREVIEWER = {D.\ Milinkovi\'{c}},
       DOI = {10.1007/0-8176-4419-9\{_}18}

@article {MR1979584,
    AUTHOR = {Entov, M. and Polterovich, L.},
     TITLE = {Calabi quasimorphism and quantum homology},
   JOURNAL = {Int. Math. Res. Not.},
  FJOURNAL = {International Mathematics Research Notices},
      YEAR = {2003},
     PAGES = {1635--1676},
      ISSN = {1073-7928,1687-0247},
   MRCLASS = {53D35 (53D40 53D45)},
  MRNUMBER = {1979584},
MRREVIEWER = {Ignasi\ Mundet-Riera},
       DOI = {10.1155/S1073792803210011},
       URL = {https://doi.org/10.1155/S1073792803210011},
}

@article{usher-bound-depth-13,
    AUTHOR = {Usher, M.},
     TITLE = {Hofer's metrics and boundary depth},
   JOURNAL = {Ann. Sci. \'{E}c. Norm. Sup\'{e}r. (4)},
  FJOURNAL = {Annales Scientifiques de l'\'{E}cole Normale Sup\'{e}rieure.
              Quatri\`eme S\'{e}rie},
    VOLUME = {46},
      YEAR = {2013},
    NUMBER = {1},
     PAGES = {57--128 (2013)},
      ISSN = {0012-9593,1873-2151},
   MRCLASS = {53D40 (53D12 53D35)},
  MRNUMBER = {3087390},
MRREVIEWER = {David\ E.\ Hurtubise},
       DOI = {10.24033/asens.2185},
       URL = {https://doi.org/10.24033/asens.2185},
}

@article{polterovich-shelukhin-16,
  title={Autonomous Hamiltonian flows, {H}ofer’s geometry and persistence modules},
  author={Polterovich, L. and Shelukhin, E.},
  journal={Selecta Mathematica},
  volume={22},
  number={1},
  pages={227--296},
  year={2016},
  publisher={Springer}
}

@article{polterovich-shelukhin-Stojisavljevic-17,
    AUTHOR = {Polterovich, L. and Shelukhin, E. and
              Stojisavljevi\'{c}, V.},
     TITLE = {Persistence modules with operators in {M}orse and {F}loer
              theory},
   JOURNAL = {Mosc. Math. J.},
  FJOURNAL = {Moscow Mathematical Journal},
    VOLUME = {17},
      YEAR = {2017},
    NUMBER = {4},
     PAGES = {757--786},}

@article {Zomorodian-Carlsson-05,
    AUTHOR = {Zomorodian, A. and Carlsson, G.},
     TITLE = {Computing persistent homology},
   JOURNAL = {Discrete Comput. Geom.},
  FJOURNAL = {Discrete \& Computational Geometry. An International Journal
              of Mathematics and Computer Science},
    VOLUME = {33},
      YEAR = {2005},
    NUMBER = {2},
     PAGES = {249--274},
      ISSN = {0179-5376},
   MRCLASS = {55N99 (55-04 55U10)},
  MRNUMBER = {2121296},
MRREVIEWER = {Donald M. Davis},
       DOI = {10.1007/s00454-004-1146-y},
       URL = {https://doi.org/10.1007/s00454-004-1146-y},
}

@article {franks-handel-zero-ent,
    AUTHOR = {Franks, J. and Handel, M.},
     TITLE = {Entropy zero area preserving diffeomorphisms of {$S^2$}},
   JOURNAL = {Geom. Topol.},
  FJOURNAL = {Geometry \& Topology},
    VOLUME = {16},
      YEAR = {2012},
    NUMBER = {4},
     PAGES = {2187--2284},
      ISSN = {1465-3060,1364-0380},
   MRCLASS = {37C05 (37C85)},
  MRNUMBER = {3033517},
       DOI = {10.2140/gt.2012.16.2187},
       URL = {https://doi.org/10.2140/gt.2012.16.2187},
}

@article {MR5026815,
    AUTHOR = {Senior, D. B. and Le Calvez, P. and Pirnapasov,
              A.},
     TITLE = {The asymptotic mean action and the asymptotic linking number
              for pseudo-rotations},
   JOURNAL = {Math. Z.},
  FJOURNAL = {Mathematische Zeitschrift},
    VOLUME = {312},
      YEAR = {2026},
    NUMBER = {2},
     PAGES = {Paper No. 65, 23},
      ISSN = {0025-5874,1432-1823},
   MRCLASS = {37E30 (37A25 37C86 37E45)},
  MRNUMBER = {5026815},
       DOI = {10.1007/s00209-026-03944-6},
       URL = {https://doi.org/10.1007/s00209-026-03944-6},
}

@article {MR3548467,
    AUTHOR = {Humili\`ere, V. and Le Roux, F. and
              Seyfaddini, S.},
     TITLE = {Towards a dynamical interpretation of {H}amiltonian spectral
              invariants on surfaces},
   JOURNAL = {Geom. Topol.},
  FJOURNAL = {Geometry \& Topology},
    VOLUME = {20},
      YEAR = {2016},
    NUMBER = {4},
     PAGES = {2253--2334},
      ISSN = {1465-3060,1364-0380},
   MRCLASS = {53D40 (37E30 53Dxx)},
  MRNUMBER = {3548467},
MRREVIEWER = {Chun-Gen\ Liu},
       DOI = {10.2140/gt.2016.20.2253},
       URL = {https://doi.org/10.2140/gt.2016.20.2253},
}

@article {MR5018404,
    AUTHOR = {Alizadeh, H. and Atallah, M. S. and Cant, D.},
     TITLE = {The spectral diameter of a symplectic ellipsoid},
   JOURNAL = {J. Symplectic Geom.},
  FJOURNAL = {The Journal of Symplectic Geometry},
    VOLUME = {23},
      YEAR = {2025},
    NUMBER = {5},
     PAGES = {995--1024},
      ISSN = {1527-5256,1540-2347},
   MRCLASS = {53D05},
  MRNUMBER = {5018404},
       DOI = {10.4310/jsg.251217110546},
       URL = {https://doi.org/10.4310/jsg.251217110546},
}

@book {MR648108,
    AUTHOR = {Walters, P.},
     TITLE = {An introduction to ergodic theory},
    SERIES = {Graduate Texts in Mathematics},
    VOLUME = {79},
 PUBLISHER = {Springer-Verlag, New York-Berlin},
      YEAR = {1982},
     PAGES = {ix+250},
      ISBN = {0-387-90599-5},
   MRCLASS = {28Dxx (54H20 58F11)},
  MRNUMBER = {648108},
MRREVIEWER = {M.\ A.\ Akcoglu},
}

@book{mirzakhani-ergodic-theory-14,
    AUTHOR = {Mirzakhani, M.},
     TITLE = {Introduction to ergodic theory},
 note ={Available at \url{https://math.berkeley.edu/~fengt/ergodic_theory.pdf}},
      YEAR = {2014},
     PAGES = {1-56},
     publisher = {notes by T. Feng},
}

@article {ginzburg-gurel-hyperbolic,
    AUTHOR = {Ginzburg, V. L. and G\"{u}rel, B. Z.},
     TITLE = {Hyperbolic fixed points and periodic orbits of {H}amiltonian
              diffeomorphisms},
   JOURNAL = {Duke Math. J.},
  FJOURNAL = {Duke Mathematical Journal},
    VOLUME = {163},
      YEAR = {2014},
    NUMBER = {3},
     PAGES = {565--590},
      ISSN = {0012-7094,1547-7398},
   MRCLASS = {53D40 (37J10 70H12)},
  MRNUMBER = {3165423},
MRREVIEWER = {Cristian\ L\u{a}zureanu},
       DOI = {10.1215/00127094-2410433},
       URL = {https://doi.org/10.1215/00127094-2410433},
}

@article {fish-11-gromov-comp,
    AUTHOR = {Fish, J. W.},
     TITLE = {Target-local {G}romov compactness},
   JOURNAL = {Geom. Topol.},
  FJOURNAL = {Geometry \& Topology},
    VOLUME = {15},
      YEAR = {2011},
    NUMBER = {2},
     PAGES = {765--826},
      ISSN = {1465-3060,1364-0380},
   MRCLASS = {53D45 (32Q65 53C15)},
  MRNUMBER = {2800366},
MRREVIEWER = {Antoine\ F. R. Gournay},
       DOI = {10.2140/gt.2011.15.765},
       URL = {https://doi.org/10.2140/gt.2011.15.765},
}

@article {franks-recurrence-88,
    AUTHOR = {Franks, J.},
     TITLE = {Recurrence and fixed points of surface homeomorphisms},
   JOURNAL = {Ergodic Theory Dynam. Systems},
  FJOURNAL = {Ergodic Theory and Dynamical Systems},
    VOLUME = {8$^*$},
      YEAR = {1988},
    NUMBER = {Charles Conley Memorial Issue},
     PAGES = {99--107},
      ISSN = {0143-3857,1469-4417},
   MRCLASS = {58F20},
  MRNUMBER = {967632},
MRREVIEWER = {Morton\ Brown},
       DOI = {10.1017/S0143385700009366},
       URL = {https://doi.org/10.1017/S0143385700009366},
}

@incollection {mather-top-proof-82,
    AUTHOR = {Mather, J. N.},
     TITLE = {Topological proofs of some purely topological consequences of {C}arath\'{e}odory's theory of prime ends},
 BOOKTITLE = {Selected studies: physics-astrophysics, mathematics, history
              of science},
     PAGES = {225--255},
 PUBLISHER = {North-Holland, Amsterdam-New York},
      YEAR = {1982},
      ISBN = {0-444-86161-0},
   MRCLASS = {57N05 (30C80 57N50 58F11)},
  MRNUMBER = {662863},
}

@article{ian_thesis,
  title={Uniform approximation of a volume-preserving homeomorphisms by volume-preserving diffeomorphisms},
  author={Leclaire, I.},
  journal={M.Sc. thesis, Universit\'{e} de Montr\'{e}al},
  year={2026},
}

@article {banyaga-moser-comp-mnflds,
    AUTHOR = {Banyaga, A.},
     TITLE = {Formes-volume sur les vari\'{e}t\'{e}s \`a bord},
   JOURNAL = {Enseign. Math. (2)},
  FJOURNAL = {L'Enseignement Math\'{e}matique. Revue Internationale. 2e
              S\'{e}rie},
    VOLUME = {20},
      YEAR = {1974},
     PAGES = {127--131},
      ISSN = {0013-8584},
   MRCLASS = {53C65 (58A10)},
  MRNUMBER = {358649},
MRREVIEWER = {R.\ S.\ Palais},
}

@incollection {MR1702944,
    AUTHOR = {Salamon, D.},
     TITLE = {Lectures on {F}loer homology},
 BOOKTITLE = {Symplectic geometry and topology ({P}ark {C}ity, {UT}, 1997)},
    SERIES = {IAS/Park City Math. Ser.},
    VOLUME = {7},
     PAGES = {143--229},
 PUBLISHER = {Amer. Math. Soc., Providence, RI},
      YEAR = {1999},
      ISBN = {0-8218-0838-9},
   MRCLASS = {53D40 (37J45 53D45 57R17 57R58)},
  MRNUMBER = {1702944},
MRREVIEWER = {David\ E.\ Hurtubise},
       DOI = {10.1016/S0165-2427(99)00127-0},
       URL = {https://doi.org/10.1016/S0165-2427(99)00127-0},
}

@article {MR4587314,
    AUTHOR = {Ganor, Y. and Tanny, S.},
     TITLE = {Floer theory of disjointly supported {H}amiltonians on
              symplectically aspherical manifolds},
   JOURNAL = {Algebr. Geom. Topol.},
  FJOURNAL = {Algebraic \& Geometric Topology},
    VOLUME = {23},
      YEAR = {2023},
    NUMBER = {2},
     PAGES = {645--732},
      ISSN = {1472-2747,1472-2739},
   MRCLASS = {53D40 (57R17 57R58)},
  MRNUMBER = {4587314},
       DOI = {10.2140/agt.2023.23.645},
       URL = {https://doi.org/10.2140/agt.2023.23.645},
}

@article{acgps-26,
  title={On the topology of the mean action spectrum},
  author={Alizadeh, H. and Cristofaro-Gardiner, D. and Prinapasov, A. and Shelukhin, E.},
  journal={In preparation},
  year={2026}
}

@article {MR2547825,
    AUTHOR = {K\c{e}dra, J.},
     TITLE = {Symplectically hyperbolic manifolds},
   JOURNAL = {Differential Geom. Appl.},
  FJOURNAL = {Differential Geometry and its Applications},
    VOLUME = {27},
      YEAR = {2009},
    NUMBER = {4},
     PAGES = {455--463},
      ISSN = {0926-2245,1872-6984},
   MRCLASS = {53D35 (53D05)},
  MRNUMBER = {2547825},
MRREVIEWER = {Tobias\ Ekholm},
       DOI = {10.1016/j.difgeo.2009.01.006},
       URL = {https://doi.org/10.1016/j.difgeo.2009.01.006},
}

@article {MR4696270,
    AUTHOR = {Brunnbauer, M. and Kotschick, D. and Sch\"{o}nlinner, L.},
     TITLE = {On atoroidal and hyperbolic cohomology classes},
   JOURNAL = {Topology Appl.},
  FJOURNAL = {Topology and its Applications},
    VOLUME = {344},
      YEAR = {2024},
     PAGES = {Paper No. 108830, 9},
      ISSN = {0166-8641,1879-3207},
   MRCLASS = {53C23 (20F65 57K43 58A12)},
  MRNUMBER = {4696270},
       DOI = {10.1016/j.topol.2024.108830},
       URL = {https://doi.org/10.1016/j.topol.2024.108830},
}

@article{alizadeh2025spectral,
  title={On the spectral diameter of the {G}rassmannians},
  author={Alizadeh, H. and Atallah, M. S and Cant, D. and Shang, Jianqiao},
  journal={arXiv preprint arXiv:2512.20125},
  year={2025}
}

@article{da2003symplectic,
  title={Symplectic toric manifolds},
  author={Da Silva, A Cannas},
  journal={Symplectic geometry of integrable Hamiltonian systems (Barcelona, 2001), Adv. Courses Math. CRM Barcelona},
  pages={85--173},
  year={2003}
}

@book {MR4605050,
    AUTHOR = {Evans, J.},
     TITLE = {Lectures on {L}agrangian torus fibrations},
    SERIES = {London Mathematical Society Student Texts},
    VOLUME = {105},
 PUBLISHER = {Cambridge University Press, Cambridge},
      YEAR = {2023},
     PAGES = {xiii+225},
      ISBN = {978-1-009-37262-6; 978-1-009-37263-3},
   MRCLASS = {53D12 (53D20 57S12)},
  MRNUMBER = {4605050},
       DOI = {10.1017/9781009372671},
       URL = {https://doi.org/10.1017/9781009372671},
}

@article {MR809718,
    AUTHOR = {Gromov, M.},
     TITLE = {Pseudo holomorphic curves in symplectic manifolds},
   JOURNAL = {Invent. Math.},
  FJOURNAL = {Inventiones Mathematicae},
    VOLUME = {82},
      YEAR = {1985},
    NUMBER = {2},
     PAGES = {307--347},
      ISSN = {0020-9910,1432-1297},
   MRCLASS = {53C15 (32F25 53C57 57R15)},
  MRNUMBER = {809718},
MRREVIEWER = {Yakov\ Eliashberg},
       DOI = {10.1007/BF01388806},
       URL = {https://doi.org/10.1007/BF01388806},
}

@article {MR3123670,
    AUTHOR = {Seyfaddini, S.},
     TITLE = {{$C^0$}-limits of {H}amiltonian paths and the {O}h-{S}chwarz
              spectral invariants},
   JOURNAL = {Int. Math. Res. Not. IMRN},
  FJOURNAL = {International Mathematics Research Notices. IMRN},
      YEAR = {2013},
     PAGES = {4920--4960},
      ISSN = {1073-7928,1687-0247},
   MRCLASS = {53D40 (53D45 58E30)},
  MRNUMBER = {3123670},
MRREVIEWER = {Karl\ Friedrich\ Siburg},
       DOI = {10.1093/imrn/rns191},
       URL = {https://doi.org/10.1093/imrn/rns191},
}

@article {MR5031842,
    AUTHOR = {Alizadeh, H.},
     TITLE = {Hamiltonian fragmentation in dimension four with application
              to spectral estimators},
   JOURNAL = {J. Topol. Anal.},
  FJOURNAL = {Journal of Topology and Analysis},
    VOLUME = {18},
      YEAR = {2026},
    NUMBER = {1},
     PAGES = {221--241},
      ISSN = {1793-5253,1793-7167},
   MRCLASS = {53D05},
  MRNUMBER = {5031842},
       DOI = {10.1142/S1793525324500298},
       URL = {https://doi.org/10.1142/S1793525324500298},
}

@article {MR682312,
    AUTHOR = {Katznelson, Y. and Weiss, B.},
     TITLE = {A simple proof of some ergodic theorems},
   JOURNAL = {Israel J. Math.},
  FJOURNAL = {Israel Journal of Mathematics},
    VOLUME = {42},
      YEAR = {1982},
    NUMBER = {4},
     PAGES = {291--296},}

@article {MR1567747,
    AUTHOR = {Franks, J.},
     TITLE = {Book {R}eview: {E}rgodic theory and differentiable dynamics},
   JOURNAL = {Bull. Amer. Math. Soc. (N.S.)},
  FJOURNAL = {American Mathematical Society. Bulletin. New Series},
    VOLUME = {20},
      YEAR = {1989},
    NUMBER = {2},
     PAGES = {190--193},
      ISSN = {0273-0979,1088-9485},
   MRCLASS = {DML},
  MRNUMBER = {1567747},
       DOI = {10.1090/S0273-0979-1989-15759-5},
       URL = {https://doi.org/10.1090/S0273-0979-1989-15759-5},
}

@book {MR832433,
    AUTHOR = {Cornfeld, I. P. and Fomin, S. V. and Sina\u{\i}, Ya. G.},
     TITLE = {Ergodic theory},
    SERIES = {Grundlehren der mathematischen Wissenschaften [Fundamental
              Principles of Mathematical Sciences]},
    VOLUME = {245},
      NOTE = {Translated from the Russian by A. B. Sosinski\u{\i}},
 PUBLISHER = {Springer-Verlag, New York},
      YEAR = {1982},
     PAGES = {x+486},
      ISBN = {0-387-90580-4},
   MRCLASS = {28D05 (54H20 58F11)},
  MRNUMBER = {832433},
MRREVIEWER = {D.\ Newton},
       DOI = {10.1007/978-1-4615-6927-5},
       URL = {https://doi.org/10.1007/978-1-4615-6927-5},
}

@article {poincare-85,
    AUTHOR = {Poincaré, H.},
     TITLE = {Sur les courbes définies par les équations différentielles},
   JOURNAL = {Journal de Mathématiques Pures et Appliquées, Serie 4},
    VOLUME = {1},
      YEAR = {1885},
     PAGES = {167--244},
       URL = {https://www.numdam.org/item/JMPA_1885_4_1__167_0},
}

@article {katok-89,
    AUTHOR = {Katok, A.},
     TITLE = {Lyapunov exponents, entropy and periodic orbits for
              diffeomorphisms},
   JOURNAL = {Inst. Hautes \'{E}tudes Sci. Publ. Math.},
  FJOURNAL = {Institut des Hautes \'{E}tudes Scientifiques. Publications
              Math\'{e}matiques},
      YEAR = {1980},
     PAGES = {137--173},
      ISSN = {0073-8301,1618-1913},
   MRCLASS = {28D20 (58F11 58F15)},
  MRNUMBER = {573822},
MRREVIEWER = {R.\ L.\ Adler},
       URL = {http://www.numdam.org/item?id=PMIHES_1980__51__137_0},
}

@inproceedings {smale-68,
    AUTHOR = {Smale, S.},
     TITLE = {Differentiable dynamical systems},
 BOOKTITLE = {Proc. {I}nternat. {C}ongr. {M}ath. ({M}oscow, 1966)},
     PAGES = {p. 139},
 PUBLISHER = {Izdat. ``Mir'', Moscow},
      YEAR = {1968},
   MRCLASS = {57.50},
  MRNUMBER = {233380},
}

@article {MR2339286,
    AUTHOR = {B\'{e}guin, F. and Crovisier, S. and Le Roux,
              F.},
     TITLE = {Construction of curious minimal uniquely ergodic homeomorphisms on manifolds: the {D}enjoy-{R}ees technique},
   JOURNAL = {Ann. Sci. \'{E}cole Norm. Sup. (4)},
  FJOURNAL = {Annales Scientifiques de l'\'{E}cole Normale Sup\'{e}rieure.
              Quatri\`eme S\'{e}rie},
    VOLUME = {40},
      YEAR = {2007},
    NUMBER = {2},
     PAGES = {251--308},
      ISSN = {0012-9593},
   MRCLASS = {37B05 (37A25 37B40)},
  MRNUMBER = {2339286},
MRREVIEWER = {Konstantin\ Medynets},
       DOI = {10.1016/j.ansens.2007.01.001},
       URL = {https://doi.org/10.1016/j.ansens.2007.01.001},
}

@article {MR616561,
    AUTHOR = {Rees, M.},
     TITLE = {A minimal positive entropy homeomorphism of the {$2$}-torus},
   JOURNAL = {J. London Math. Soc. (2)},
  FJOURNAL = {Journal of the London Mathematical Society. Second Series},
    VOLUME = {23},
      YEAR = {1981},
    NUMBER = {3},
     PAGES = {537--550},
      ISSN = {0024-6107,1469-7750},
   MRCLASS = {58F99 (28D20 58F11)},
  MRNUMBER = {616561},
MRREVIEWER = {Carmen\ Chicone},
       DOI = {10.1112/jlms/s2-23.3.537},
       URL = {https://doi.org/10.1112/jlms/s2-23.3.537},
}

@article {LeCalvez-Tal-forcing-theory-18,
    AUTHOR = {Le Calvez, P. and Tal, F. A.},
     TITLE = {Forcing theory for transverse trajectories of surface
              homeomorphisms},
   JOURNAL = {Invent. Math.},
  FJOURNAL = {Inventiones Mathematicae},
    VOLUME = {212},
      YEAR = {2018},
    NUMBER = {2},
     PAGES = {619--729},
      ISSN = {0020-9910,1432-1297},
   MRCLASS = {37E30 (37C85 55M20)},
  MRNUMBER = {3787834},
MRREVIEWER = {Olga\ Lukina},
       DOI = {10.1007/s00222-017-0773-x},
       URL = {https://doi.org/10.1007/s00222-017-0773-x},
}

@article {shelukhin-integ-wind-15,
    AUTHOR = {Shelukhin, E.},
     TITLE = {``{E}nlacements asymptotiques'' revisited},
   JOURNAL = {Ann. Math. Qu\'{e}.},
  FJOURNAL = {Annales Math\'{e}matiques du Qu\'{e}bec},
    VOLUME = {39},
      YEAR = {2015},
    NUMBER = {2},
     PAGES = {205--208},
      ISSN = {2195-4755,2195-4763},
   MRCLASS = {53D22},
  MRNUMBER = {3418805},
MRREVIEWER = {Pierre\ Py},
       DOI = {10.1007/s40316-015-0035-5},
       URL = {https://doi.org/10.1007/s40316-015-0035-5},
}

@article {MR1043272,
    AUTHOR = {Newhouse, S. E.},
     TITLE = {Corrections to: ``{C}ontinuity properties of entropy'' [{A}nn.
              of {M}ath. (2) {\bf 129} (1989), no. 2, 215--235; {MR}0986792
              (90f:58108)]},
   JOURNAL = {Ann. of Math. (2)},
  FJOURNAL = {Annals of Mathematics. Second Series},
    VOLUME = {131},
      YEAR = {1990},
    NUMBER = {2},
     PAGES = {409--410},
      ISSN = {0003-486X,1939-8980},
   MRCLASS = {58F11 (28D20 54C70)},
  MRNUMBER = {1043272},
       DOI = {10.2307/1971497},
       URL = {https://doi.org/10.2307/1971497},
}

@article {GLCPT-25,
    AUTHOR = {Guihéneuf, P-A and Le Calvez, P. and Passeggi, A. and Tal, F. A.},
     TITLE = {Zero entropy measure preserving surface homeomorphisms},
   JOURNAL = {In preparation},
      YEAR = {2026},
}

@article{alizadeh-shelukhin-locality-26,
    AUTHOR = {Alizadeh, H. and Shelukhin, E.},
     TITLE = {Locality of spectral invariants in monotone symplectic manifolds},
   JOURNAL = {In preparation},
      YEAR = {2026},
}

@article {hempel-72,
    AUTHOR = {Hempel, J.},
     TITLE = {Residual finiteness of surface groups},
   JOURNAL = {Proc. Amer. Math. Soc.},
  FJOURNAL = {Proceedings of the American Mathematical Society},
    VOLUME = {32},
      YEAR = {1972},
     PAGES = {323},
      ISSN = {0002-9939,1088-6826},
   MRCLASS = {55A05 (20J99)},
  MRNUMBER = {295352},
MRREVIEWER = {B.\ Maskit},
       DOI = {10.2307/2038357},
       URL = {https://doi.org/10.2307/2038357},
}

@article {MR4413744,
    AUTHOR = {Shelukhin, E.},
     TITLE = {On the {H}ofer-{Z}ehnder conjecture},
   JOURNAL = {Ann. of Math. (2)},
  FJOURNAL = {Annals of Mathematics. Second Series},
    VOLUME = {195},
      YEAR = {2022},
    NUMBER = {3},
     PAGES = {775--839},
      ISSN = {0003-486X,1939-8980},
   MRCLASS = {53D40},
  MRNUMBER = {4413744},
MRREVIEWER = {Roman\ Golovko},
       DOI = {10.4007/annals.2022.195.3.1},
       URL = {https://doi.org/10.4007/annals.2022.195.3.1},
}

@article {MR4822930,
    AUTHOR = {Shelukhin, E. and Tonkonog, D. and Vianna, R.},
     TITLE = {Geometry of symplectic flux and {L}agrangian torus fibrations},
   JOURNAL = {J. Topol.},
  FJOURNAL = {Journal of Topology},
    VOLUME = {17},
      YEAR = {2024},
    NUMBER = {4},
     PAGES = {Paper No. e70002, 56},
      ISSN = {1753-8416,1753-8424},
   MRCLASS = {53D12 (53D40)},
  MRNUMBER = {4822930},
MRREVIEWER = {Alexander\ Fel\cprime shtyn},
       DOI = {10.1112/topo.70002},
       URL = {https://doi.org/10.1112/topo.70002},
}

@article {MR984900,
    AUTHOR = {Delzant, T.},
     TITLE = {Hamiltoniens p\'{e}riodiques et images convexes de
              l'application moment},
   JOURNAL = {Bull. Soc. Math. France},
  FJOURNAL = {Bulletin de la Soci\'{e}t\'{e} Math\'{e}matique de France},
    VOLUME = {116},
      YEAR = {1988},
    NUMBER = {3},
     PAGES = {315--339},
      ISSN = {0037-9484},
   MRCLASS = {58F05},
  MRNUMBER = {984900},
MRREVIEWER = {J.\ J.\ Duistermaat},
       URL = {http://www.numdam.org/item?id=BSMF_1988__116_3_315_0},
}

@article {MR2507748,
    AUTHOR = {Entov, M. and Polterovich, L.},
     TITLE = {Rigid subsets of symplectic manifolds},
   JOURNAL = {Compos. Math.},
  FJOURNAL = {Compositio Mathematica},
    VOLUME = {145},
      YEAR = {2009},
    NUMBER = {3},
     PAGES = {773--826},
      ISSN = {0010-437X,1570-5846},
   MRCLASS = {53D40 (53D12 53D35)},
  MRNUMBER = {2507748},
MRREVIEWER = {Martin\ Pinsonnault},
       DOI = {10.1112/S0010437X0900400X},
       URL = {https://doi.org/10.1112/S0010437X0900400X},
}

@incollection {MR2768658,
    AUTHOR = {McDuff, D.},
     TITLE = {Displacing {L}agrangian toric fibers via probes},
 BOOKTITLE = {Low-dimensional and symplectic topology},
    SERIES = {Proc. Sympos. Pure Math.},
    VOLUME = {82},
     PAGES = {131--160},
 PUBLISHER = {Amer. Math. Soc., Providence, RI},
      YEAR = {2011},
      ISBN = {978-0-8218-5235-4},
   MRCLASS = {53D12 (52B12 53D20 53D35)},
  MRNUMBER = {2768658},
MRREVIEWER = {Mark\ Alan\ Branson},
       DOI = {10.1090/pspum/082/2768658},
       URL = {https://doi.org/10.1090/pspum/082/2768658},
}

@article {lecalvez-tal-forcing-theory,
    AUTHOR = {Le Calvez, P. and Tal, F. A.},
     TITLE = {Forcing theory for transverse trajectories of surface
              homeomorphisms},
   JOURNAL = {Invent. Math.},
  FJOURNAL = {Inventiones Mathematicae},
    VOLUME = {212},
      YEAR = {2018},
    NUMBER = {2},
     PAGES = {619--729},
      ISSN = {0020-9910,1432-1297},
   MRCLASS = {37E30 (37C85 55M20)},
  MRNUMBER = {3787834},
MRREVIEWER = {Olga\ Lukina},
       DOI = {10.1007/s00222-017-0773-x},
       URL = {https://doi.org/10.1007/s00222-017-0773-x},
}

@article{cggm-topics,
      title={Topics in Symplectic Dynamics: Barcode Entropy}, 
      author={E. \c{C}ineli and V. L. Ginzburg and B. Z. G\"{u}rel and M. Mazzucchelli},
      note={Preprint arXiv:2605.25965, 2026},
      JOURNAL = {arXiv:2605.25965},
      YEAR = {2026},
}

@article {MR1101087,
    AUTHOR = {Llibre, J. and MacKay, R. S.},
     TITLE = {Rotation vectors and entropy for homeomorphisms of the torus
              isotopic to the identity},
   JOURNAL = {Ergodic Theory Dynam. Systems},
  FJOURNAL = {Ergodic Theory and Dynamical Systems},
    VOLUME = {11},
      YEAR = {1991},
    NUMBER = {1},
     PAGES = {115--128},
      ISSN = {0143-3857,1469-4417},
   MRCLASS = {58F20},
  MRNUMBER = {1101087},
MRREVIEWER = {Hu\ Yi\ Hu},
       DOI = {10.1017/S0143385700006040},
       URL = {https://doi.org/10.1017/S0143385700006040},
}

@article {Guelman-Koropecki-Tal-14,
    AUTHOR = {Guelman, N. and Koropecki, A. and Tal, F. A.},
     TITLE = {A characterization of annularity for area-preserving toral
              homeomorphisms},
   JOURNAL = {Math. Z.},
  FJOURNAL = {Mathematische Zeitschrift},
    VOLUME = {276},
      YEAR = {2014},
    NUMBER = {3-4},
     PAGES = {673--689},
      ISSN = {0025-5874,1432-1823},
   MRCLASS = {37E30 (57N05)},
  MRNUMBER = {3175156},
MRREVIEWER = {Saadet\ \"{O}yk\"{u}\ Yurtta\c{s}},
       DOI = {10.1007/s00209-013-1218-x},
       URL = {https://doi.org/10.1007/s00209-013-1218-x},
}

@article {MR1507223,
    AUTHOR = {Montgomery, D.},
     TITLE = {Pointwise {P}eriodic {H}omeomorphisms},
   JOURNAL = {Amer. J. Math.},
  FJOURNAL = {American Journal of Mathematics},
    VOLUME = {59},
      YEAR = {1937},
    NUMBER = {1},
     PAGES = {118--120},
      ISSN = {0002-9327,1080-6377},
   MRCLASS = {DML},
  MRNUMBER = {1507223},
       DOI = {10.2307/2371565},
       URL = {https://doi.org/10.2307/2371565},
}

@article {MR4650154,
    AUTHOR = {Conejeros, J. and Tal, F. A.},
     TITLE = {Applications of forcing theory to homeomorphisms of the closed
              annulus},
   JOURNAL = {Ann. Sci. \'{E}c. Norm. Sup\'{e}r. (4)},
  FJOURNAL = {Annales Scientifiques de l'\'{E}cole Normale Sup\'{e}rieure.
              Quatri\`eme S\'{e}rie},
    VOLUME = {56},
      YEAR = {2023},
    NUMBER = {4},
     PAGES = {1155--1197},
      ISSN = {0012-9593,1873-2151},
   MRCLASS = {37E30 (37C86 37E45)},
  MRNUMBER = {4650154},
MRREVIEWER = {Luis\ Hern\'{a}ndez-Corbato},
       DOI = {10.24033/asens.2552},
       URL = {https://doi.org/10.24033/asens.2552},
}

@book {MR2850125,
    AUTHOR = {Farb, Benson and Margalit, Dan},
     TITLE = {A primer on mapping class groups},
    SERIES = {Princeton Mathematical Series},
    VOLUME = {49},
 PUBLISHER = {Princeton University Press, Princeton, NJ},
      YEAR = {2012},
     PAGES = {xiv+472},
      ISBN = {978-0-691-14794-9},
   MRCLASS = {57M50 (20F36 20F65 57M07 57N05)},
  MRNUMBER = {2850125},
MRREVIEWER = {Stephen\ P.\ Humphries},
}

@article {MR214087,
    AUTHOR = {Epstein, D. B. A.},
     TITLE = {Curves on {$2$}-manifolds and isotopies},
   JOURNAL = {Acta Math.},
  FJOURNAL = {Acta Mathematica},
    VOLUME = {115},
      YEAR = {1966},
     PAGES = {83--107},
      ISSN = {0001-5962,1871-2509},
   MRCLASS = {57.20},
  MRNUMBER = {214087},
MRREVIEWER = {P.\ Dedecker},
       DOI = {10.1007/BF02392203},
       URL = {https://doi.org/10.1007/BF02392203},
}

@article {MR1581099,
    AUTHOR = {Baer, R.},
     TITLE = {Kurventypen auf {F}l\"{a}chen},
   JOURNAL = {J. Reine Angew. Math.},
  FJOURNAL = {Journal f\"{u}r die Reine und Angewandte Mathematik. [Crelle's
              Journal]},
    VOLUME = {156},
      YEAR = {1927},
     PAGES = {231--246},
      ISSN = {0075-4102,1435-5345},
   MRCLASS = {DML},
  MRNUMBER = {1581099},
       DOI = {10.1515/crll.1927.156.231},
       URL = {https://doi.org/10.1515/crll.1927.156.231},
}

@article {MR1581156,
    AUTHOR = {Baer, R.},
     TITLE = {Isotopie von {K}urven auf orientierbaren, geschlossenen
              {F}l\"{a}chen und ihr {Z}usammenhang mit der topologischen
              {D}eformation der {F}l\"{a}chen},
   JOURNAL = {J. Reine Angew. Math.},
  FJOURNAL = {Journal f\"{u}r die Reine und Angewandte Mathematik. [Crelle's
              Journal]},
    VOLUME = {159},
      YEAR = {1928},
     PAGES = {101--116},
      ISSN = {0075-4102,1435-5345},
   MRCLASS = {DML},
  MRNUMBER = {1581156},
       DOI = {10.1515/crll.1928.159.101},
       URL = {https://doi.org/10.1515/crll.1928.159.101},
}

@article {MR4263685,
    AUTHOR = {Buhovsky, L. and Humili\`ere, V. and Seyfaddini, S.},
     TITLE = {The action spectrum and {$C^0$} symplectic topology},
   JOURNAL = {Math. Ann.},
  FJOURNAL = {Mathematische Annalen},
    VOLUME = {380},
      YEAR = {2021},
    NUMBER = {1-2},
     PAGES = {293--316},
      ISSN = {0025-5831,1432-1807},
   MRCLASS = {53D05 (37J06)},
  MRNUMBER = {4263685},
MRREVIEWER = {St\'{e}phane\ Tchuiaga},
       DOI = {10.1007/s00208-021-02183-w},
       URL = {https://doi.org/10.1007/s00208-021-02183-w},
}

@article {MR4504915,
    AUTHOR = {Kawamoto, Y.},
     TITLE = {On {$C^0$}-continuity of the spectral norm for symplectically
              non-aspherical manifolds},
   JOURNAL = {Int. Math. Res. Not. IMRN},
  FJOURNAL = {International Mathematics Research Notices. IMRN},
      YEAR = {2022},
     PAGES = {17187--17230},
      ISSN = {1073-7928,1687-0247},
   MRCLASS = {53D05 (37J06 58D05)},
  MRNUMBER = {4504915},
MRREVIEWER = {Jakob\ Hedicke},
       DOI = {10.1093/imrn/rnab206},
       URL = {https://doi.org/10.1093/imrn/rnab206},
}

@article {MR543339,
    AUTHOR = {Mather, J. N.},
     TITLE = {Area preserving twist homeomorphism of the annulus},
   JOURNAL = {Comment. Math. Helv.},
  FJOURNAL = {Commentarii Mathematici Helvetici},
    VOLUME = {54},
      YEAR = {1979},
    NUMBER = {3},
     PAGES = {397--404},
      ISSN = {0010-2571,1420-8946},
   MRCLASS = {58F10 (54H25 55M99)},
  MRNUMBER = {543339},
MRREVIEWER = {Carl\ P.\ Simon},
       DOI = {10.1007/BF02566283},
       URL = {https://doi.org/10.1007/BF02566283},
}

@article {MR3820002,
    AUTHOR = {Koropecki, Andres and Tal, Fabio Armando},
     TITLE = {Fully essential dynamics for area-preserving surface
              homeomorphisms},
   JOURNAL = {Ergodic Theory Dynam. Systems},
  FJOURNAL = {Ergodic Theory and Dynamical Systems},
    VOLUME = {38},
      YEAR = {2018},
    NUMBER = {5},
     PAGES = {1791--1836},
      ISSN = {0143-3857,1469-4417},
   MRCLASS = {37E30 (37E45)},
  MRNUMBER = {3820002},
MRREVIEWER = {Jingzhi\ Yan},
       DOI = {10.1017/etds.2016.110},
       URL = {https://doi.org/10.1017/etds.2016.110},
}

@article {MR3193751,
    AUTHOR = {Koropecki, Andres and Tal, Fabio Armando},
     TITLE = {Strictly toral dynamics},
   JOURNAL = {Invent. Math.},
  FJOURNAL = {Inventiones Mathematicae},
    VOLUME = {196},
      YEAR = {2014},
    NUMBER = {2},
     PAGES = {339--381},
      ISSN = {0020-9910,1432-1297},
   MRCLASS = {37E45 (37E10)},
  MRNUMBER = {3193751},
       DOI = {10.1007/s00222-013-0470-3},
       URL = {https://doi.org/10.1007/s00222-013-0470-3},
}
\end{document}